\newtheorem{theorem}{Theorem}[section]
\newtheorem{lemma}[theorem]{Lemma}
\newtheorem{proposition}[theorem]{Proposition}
\newtheorem{corollary}[theorem]{Corollary}
\theoremstyle{definition}
\newtheorem{definition}[theorem]{Definition}
\newtheorem{example}[theorem]{Example}
\theoremstyle{remark}
\newtheorem{remark}[theorem]{Remark}
\newtheorem*{acknowledgments}{Acknowledgments}
\numberwithin{equation}{section}
\newcommand{\CA}{C_*\hat{\mathcal A}}
\newcommand{\QA}{Q_*\hat{\mathcal A}}
\newcommand{\Ca}[1]{C_{#1}\hat{\mathcal A}}
\newcommand{\Qa}[1]{Q_{#1}\hat{\mathcal A}}
\newcommand{\CAxy}{C_*\hat{\mathcal A}^{\mathbf{x}}_y}
\newcommand{\QAxy}{Q_*\hat{\mathcal A}^{\mathbf{x}}_y}
\newcommand{\Caxy}[1]{C_{#1}\hat{\mathcal A}^{\mathbf{x}}_y}
\newcommand{\q}{q^{\mathbf{x}}_y}
\newcommand{\twig}[1] {\begin{pspicture}(0,#1)(0.4,0.2) \rput(0.2,0){$\leadsto$} \end{pspicture}}
\newcommand{\Pos}{\oplus}
\newcommand{\Neg}{\ominus}
\newcommand{\ost}{\omega^{\text{std}}}
\newcommand{\Z}{\mathbb Z }
\newcommand{\A}{T}
\newcommand{\sgn}{\text{sgn}}
\newcommand{\fcan}{f_\circlearrowright}
\newcommand{\M}{M}
\newcommand{\I}{I}
\newcommand{\T}{^{\text{th}}}
\newcommand{\iia}[2]{\resizebox{!}{.6cm}{ \begin{pspicture}(0,0.7)(2,2.2) \pscircle*(1,1){.15}  \psline[linewidth=3pt](0,1)(2,1) \psline(.6,1)(.4,1.6) \psline(.2,1)(0,1.6) \rput(.3,.7){$#1$} \rput(.8,.7){$#2$} \end{pspicture} } }
\newcommand{\iib}[2]{\resizebox{!}{.6cm}{ \begin{pspicture}(0,0.7)(2,2.2) \pscircle*(1,1){.15}  \psline[linewidth=3pt](0,1)(2,1) \psline(.5,1)(.5,1.4) \psline(.5,1.4)(0.1,1.8)\psline(.5,1.4)(0.9,1.8) \rput(.2,1.2){$#1$} \rput(.7,.7){$#2$} \end{pspicture} } }
\newcommand{\iic}[2]{\resizebox{!}{.6cm}{  \begin{pspicture}(0,0.7)(2,2.2)  \pscircle*(1,1){.15}  \psline[linewidth=3pt](0,1)(2,1) \psline(1.5,1)(1.5,1.4) \psline(1.5,1.4)(1.1,1.8)\psline(1.5,1.4)(1.9,1.8) \rput(1.8,1.2){$#1$} \rput(1.3,.7){$#2$} \end{pspicture} } }
\newcommand{\iid}[2]{\resizebox{!}{.6cm}{  \begin{pspicture}(0,0.7)(2,2.2) \pscircle*(1,1){.15}  \psline[linewidth=3pt](0,1)(2,1) \psline(1.4,1)(1.6,1.6) \psline(1.8,1)(2,1.6) \rput(1.3,.7){$#1$} \rput(1.7,.7){$#2$} \end{pspicture} } }
\newcommand{\iie}[2]{\resizebox{!}{.6cm}{  \begin{pspicture}(0,0.7)(2,2.2) \pscircle*(1,1){.15}  \psline[linewidth=3pt](0,1)(2,1) \psline(.5,1)(.3,1.6) \psline(1.5,1)(1.7,1.6) \rput(.75,.7){$#1$} \rput(1.25,.7){$#2$} \end{pspicture} } }
\newcommand{\iif}[1]{\resizebox{!}{.6cm}{ \begin{pspicture}(0,0.7)(2,2.2)  \pscircle*(1,1){.15}  \psline[linewidth=3pt](0,1)(2,1) \psline(.5,1)(.8,1.6) \psline(.5,1)(.2,1.6) \rput(.75,.7){$#1$}  \end{pspicture} } }
\newcommand{\iig}[1]{\resizebox{!}{.6cm}{ \begin{pspicture}(0,0.7)(2,2.2) \pscircle*(1,1){.15}  \psline[linewidth=3pt](0,1)(2,1) \psline(1,1)(1,1.5) \psline(1,1.5)(0.6,1.9)\psline(1,1.5)(1.4,1.9) \rput(.7,1.3){$#1$}  \end{pspicture} } }
\newcommand{\iih}[1]{\resizebox{!}{.6cm}{ \begin{pspicture}(0,0.7)(2,2.2) \pscircle*(1,1){.15}  \psline[linewidth=3pt](0,1)(2,1) \psline(1.5,1)(1.8,1.6) \psline(1.5,1)(1.2,1.6) \rput(1.25,.7){$#1$}  \end{pspicture} } }
\newcommand{\iii}[1]{\resizebox{!}{.6cm}{ \begin{pspicture}(0,0.7)(2,2.2) \pscircle*(1,1){.15} \psline[linewidth=3pt](0,1)(2,1) \psline(1,1)(1,1.6) \psline(1.5,1)(1.7,1.6) \rput(1.3,.7){$#1$}  \end{pspicture} } }
\newcommand{\iij}[1]{\resizebox{!}{.6cm}{ \begin{pspicture}(0,0.7)(2,2.2) \pscircle*(1,1){.15}  \psline[linewidth=3pt](0,1)(2,1) \psline(1,1)(1,1.6) \psline(.5,1)(.3,1.6) \rput(.7,.7){$#1$}  \end{pspicture} } }
\newcommand{\iik}{\resizebox{!}{.6cm}{  \begin{pspicture}(0,0.7)(2,2.2) \pscircle*(1,1){.15}  \psline[linewidth=3pt](0,1)(2,1) \psline(1,1)(1.3,1.6) \psline(1,1)(.7,1.6)  \end{pspicture} } }
\begin{document}

\title{Tensor Products of $A_\infty$-algebras with Homotopy Inner Products}
\author[Thomas~Tradler]{Thomas~Tradler$^{1}$}
  \address{Thomas Tradler,
  Department of Mathematics, College of Technology, City University of New York, 300 Jay Street, Brooklyn, NY 11201, USA}
  \email{ttradler@citytech.cuny.edu}
 \thanks{$^{1}$ This research funded in part by the PSC-CUNY grant PSCREG-41-316.} 
  
\author[Ronald~Umble]{Ronald~Umble$^{2}$}
\address{Department of Mathematics\\
Millersville University of Pennsylvania\\
Millersville, PA. 17551}
\email{ron.umble@millersville.edu}
\thanks{$^{2}$ This research funded in part by a Millersville University faculty
research grant.}  
 
\begin{abstract}
We show that the tensor product of two cyclic $A_\infty$-algebras is, in general, not a cyclic $A_\infty$-algebra, but an $A_\infty$-algebra with homotopy inner product. More precisely, following Markl and Shnider in \cite{MS}, we construct an explicit combinatorial diagonal on the pairahedra, which are contractible polytopes controlling the combinatorial structure of an $A_\infty$-algebra with homotopy inner products, and use it to define a categorically closed tensor product. A cyclic $A_\infty$-algebra can be thought of as an $A_\infty$-algebra with homotopy inner products whose higher inner products are trivial.  However, the higher inner products on the tensor product of cyclic $A_\infty$-algebras are not necessarily trivial. 
\end{abstract}
\keywords{$A_\infty$-algebra with homotopy inner product, colored operad, cyclic $A_\infty$-algebra, diagonal, pairahedron, tensor product, W-construction.}
\subjclass[2010]{55S15, 52B05, 18D50, 55U99}
\date{submitted August 25, 2011; revised February 9, 2012.}
 
\maketitle
\allowdisplaybreaks

\section{Introduction}

Let $R$ be a commutative ring with unity and let $C_{\ast}\left(  K\right)  $
denote the cellular chains of associahedra $K=\sqcup_{n\geq2}K_{n}$, see \cite{S}.
Identify $C_{\ast}\left(  K\right)  $ with the $A_{\infty}$-operad
$\mathcal{A}_{\infty}$ and consider the Saneblidze-Umble (S-U) diagonal
$\Delta_{K}:C_{\ast}\left(  K\right)  \rightarrow  C_{\ast}\left(  K\right)
\otimes C_{\ast}\left(  K\right)  $ \cite{SU}. Given $A_{\infty}$-algebras
$(A,\mu_{n})_{n\geq1}$ and $(B,\nu_{n})_{n\geq1}$ over $R$, represent $A$ and
$B$ as algebras over $\mathcal{A}_{\infty}$ via operadic maps $\left\{
\zeta_{A}:C_{\ast}\left(  K_{n}\right)  \rightarrow Hom\left(  A^{\otimes
n},A\right)  \right\}  _{n\geq1}$ and $\left\{  \zeta_{B}:C_{\ast}\left(
K_{n}\right)  \rightarrow Hom\left(  B^{\otimes n},B\right)  \right\}
_{n\geq1}$. Define $\varphi_{1}=\mu_{1}\otimes\mathbf{1+1\otimes}\nu_{1}$ and
\[
\varphi_{n}=\left[  \left(  \zeta_{A}\otimes\zeta_{B}\right)  \Delta
_{K}\left(  e^{n-2}\right)  \right]  \sigma_{n},
\]
where $\sigma_{n}:\left(  A\otimes B\right)  ^{\otimes n}\rightarrow
A^{\otimes n}\otimes B^{\otimes n}$ is the canonical permutation of tensor
factors and $e^{n-2}$ denotes the top dimensional cell of $K_{n}$. Then
$\left(  A\otimes B,\varphi_{n}\right)  _{n\geq1}$ is an $A_{\infty}$-algebra,
and for example,%
\[
\varphi_{2}=\left(  \mu_{2}\otimes\nu_{2}\right)  \sigma_{2}\text{ \ and
\ }\varphi_{3}=\left[  \mu_{2}(\mu_{2}\otimes\mathbf{1})\otimes\nu_{3}+\mu
_{3}\otimes\nu_{2}(\mathbf{1}\otimes\nu_{2})\right]  \sigma_{3}.
\]

An $A_{\infty}$-algebra $(V,\rho_{n})$ is \emph{cyclic} if $V$ is equipped with a
cyclically invariant inner product $\left\langle -,-\right\rangle _{V},$ \emph{i.e.},
$\left\langle \rho_{n}\left(  a_{1},\ldots,a_{n}\right)  ,a_{n+1}\right\rangle
_{V}=(-1)^\epsilon\left\langle \rho_{n}\left(  a_{2},\ldots,a_{n+1}\right)  ,a_{1}%
\right\rangle _{V}$. Thus if $(A,\mu_{n})$ and $(B,\nu_{n})$ are cyclic, it is
natural to ask whether $\left\langle -,-\right\rangle _{A}$ and $\left\langle
-,-\right\rangle _{B}$ induce a cyclically invariant inner product on
$\left(  A\otimes B,\varphi_{n}\right)  $. As a first approximation, consider
the inner product%
\[
\left\langle a_{1}|b_{1},a_{2}|b_{2}\right\rangle _{A\otimes B}=(-1)^{|a_2||b_1|}\left\langle
a_{1},a_{2}\right\rangle _{A}\left\langle b_{1},b_{2}\right\rangle _{B},
\]
which respects $\varphi_{1}$ and $\varphi_{2}$ but not $\varphi_{3}$ since%
\[
\left\langle \varphi_{3}(a_{1}|b_{1},a_{2}|b_{2},a_{3}|b_{3}),a_{4}%
|b_{4}\right\rangle -(-1)^\epsilon\left\langle \varphi_{3}(a_{2}|b_{2},a_{3}|b_{3}%
,a_{4}|b_{4}),a_{1}|b_{1}\right\rangle\neq 0. 
\]
However, there is a chain homotopy $\varrho_{2,0}:(A\otimes B)^{\otimes 4}\to R$ such that
\begin{multline*}
(\varrho_{2,0}\circ d)(a_{1}%
|b_{1},a_{2}|b_{2},a_{3}|b_{3},a_{4}|b_{4})= \\ 
\quad\quad\quad\quad\quad\left\langle \varphi_{3}(a_{1}|b_{1},a_{2}|b_{2},a_{3}|b_{3}),a_{4}%
|b_{4}\right\rangle -(-1)^\epsilon\left\langle \varphi_{3}(a_{2}|b_{2},a_{3}|b_{3}%
,a_{4}|b_{4}),a_{1}|b_{1}\right\rangle ,
\end{multline*}
where $d$ is the linear extension of $\varphi_{1}$ and 
\[
\varrho_{2,0}(x_{1}|y_{1},x_{2}|y_{2},x_{3}|y_{3},x_{4}|y_{4})=\pm\left\langle \mu
_{3}(x_{1},x_{2},x_{3}),x_{4}\right\rangle _{A}\left\langle y_{1},\nu
_{3}(y_{2},y_{3},y_{4})\right\rangle _{B}.
\]
Thus $\left\langle -,-\right\rangle
_{A\otimes B}$ respects $\varphi_3$ up to a homotopy, and there is hope.

In \cite{T}, the first author defined the notion of an $A_{\infty}%
$-\emph{algebra with} \emph{homotopy inner product,} which is an $A_{\infty}%
$-algebra\emph{\ }$\left(  A,\mu_{n}\right)  $ together with
\textquotedblleft compatible\textquotedblright\ families of module maps
\[
\left\{  \lambda_{j,k}:A^{\otimes j}\otimes A\otimes A^{\otimes k}\rightarrow
A\right\}  _{j,k\geq0}%
\]
and higher inner products
\[
\left\{  \varrho_{j,k}:A\otimes A^{\otimes j}\otimes A\otimes A^{\otimes k}\rightarrow R\right\}  _{j,k\geq0}.
\]
Homotopy inner products appear in
actions on moduli spaces (\emph{e.g.} \cite{T2, TZ}), in the deformation
theory of inner products \cite{TT}, and in symplectic structures on formal
non-commutative supermanifolds \cite{C}.

Following the construction of Markl 
and Shnider in \cite{MS}, we extend the inner product $\left\langle
-,-\right\rangle _{A\otimes B}$ defined above to a homotopy inner product on $\left(  A\otimes
B,\varphi_{n}\right)$ as follows:  
\medskip
\begin{enumerate}
\item We identify the cellular chains of the associahedra and pairahedra
with a three-colored operad $\CA$ (the pairahedra,
constructed by the first author in \cite{T}, encode the relations among $%
A_{\infty }$-operations, module maps, and homotopy inner products).
\medskip
\item We adapt Boardman and Vogt's $W$-construction \cite{BV} 
to obtain the three-colored operad $\QA$, which is a cubical decomposition of $\CA$.  
\medskip
\item We define a quasi-invertible subdivision map $q:\CA \rightarrow \QA$ 
whose quasi-inverse $p:\CA \rightarrow \QA$ is
defined in terms of an appropriate partial ordering on binary planar diagrams.
\medskip
\item The Serre
diagonal on cellular chains of the $n$-cube induces a coassociative diagonal 
$\Delta _{Q}:\QA \rightarrow \QA \otimes \QA$, which in turn induces a
non-coassociative diagonal
\[
\Delta_C :\CA \overset{q}{\longrightarrow }\QA \overset{\Delta _{Q}}{\longrightarrow }\QA
\otimes \QA \overset{p\otimes p}{\longrightarrow}\CA \otimes \CA.
\]

\item We represent the homotopy inner products on $A$ and $B$ as operadic maps $\psi
_{A}:\CA\rightarrow \mathcal{E}nd_{A}$ and $\psi
_{B}:\CA\rightarrow \mathcal{E}nd_{B}$, and define
\[
\hspace{.2in}\psi _{A\otimes B}:\CA\overset{\Delta_C }{
\longrightarrow }\CA\otimes \CA\overset{\psi _{A}\otimes \psi _{B}}{\longrightarrow }\mathcal{E}
nd_{A}\otimes \mathcal{E}nd_{B}\overset{\approx }{\longrightarrow }
\mathcal{E}nd_{A\otimes B}.
\]
\end{enumerate}

The paper is organized as follows. Section \ref{SEC:CA-QA} defines the three-colored operads $\CA$ and $\QA$. The map $q:\CA\to \QA$ is defined in Section \ref{q-sec} and a quasi-inverse $p$ is defined in Section \ref{p-sec}. Section \ref{SEC:diag} introduces the diagonal $\Delta_C$, and we conclude the paper with some computations in Section \ref{SEC:computations}. To maximize accessibility, longer proofs and other technical considerations are collected in the appendices.  Signs are discussed in Appendix \ref{APP:signs}; the contractibility of pairahedra is proved in Appendix \ref{I_k,l k,l>0}; the fact that relation ``$\leq$'' in the definition of the morphism $p$ is a partial ordering is established in Appendix \ref{APP:partial-order}; and the fact that $p$ is a chain map is proved in Appendix \ref{p-chain-proof}.

\section{The three-colored operads $\CA$ and $\QA$}\label{SEC:CA-QA}

In this section we construct the three-colored operads $\CA$ and $\QA$. Algebras over $\CA$ are $A_{\infty}$-algebras with homotopy inner products and
$\QA$ is a cubical subdivision of $\CA$. Both operads are defined in terms of three types of planar diagrams:\ (1) \emph{planar trees}, which encode the homotopy associativity structure, (2) \emph{module trees}, which encode the
homotopy bimodule structure, and (3) \emph{inner product diagrams}, which encode the
homotopy inner product structure. We shall refer to a diagram in any of these
categories as a \emph{planar diagram}. The term \emph{leaf} of a planar
diagram will always refer to an external inward directed edge; the term
\emph{root} will always refer to the single external outward directed edge of a tree; 
and the term \emph{edge} will always refer to an internal edge, which is
neither a root nor a leaf.

Planar diagrams are generated by three families of corollas: $\left\{
T_{n}\right\}  _{n\geq2},$\linebreak $\left\{  M_{j,k}\right\}  _{j+k\geq0},$ and
$\left\{  I_{j,k}\right\}  _{j+k\geq0}.$  The root and
leaves of a corolla have one of three colors: \emph{thick, thin}, or \emph{empty}. 
The corolla $T_{n}$ has $n$ thin
leaves and a thin root; $M_{j,k}$ has a thick vertical root, a thick vertical
leaf, $j$ thin leaves in the left half-plane, and $k$ thin leaves in the right
half-plane; and $I_{j,k}$ has an empty root, which is graphically represented by a thick vertex, two thick horizontal leaves, $j$
thin leaves in the upper half-plane, and $k$ thin leaves in the lower
half-plane, all meeting at the thick vertex. An example of each type appears in Figure \ref{F:3-corollas}.
\medskip
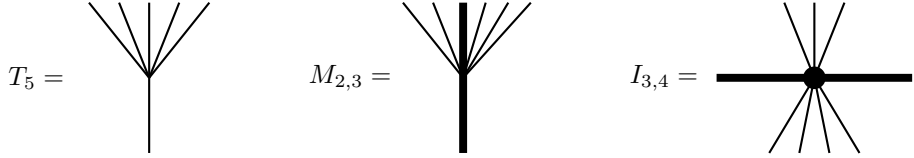
\begin{figure}[h]
\begin{pspicture}(0,1)(3,3)
 \rput(.5,2){$\A_5=$}
 \psline(2,2)(1.2,3)
 \psline(2,2)(1.6,3)
 \psline(2,1)(2,3)
 \psline(2,2)(2.4,3)
 \psline(2,2)(2.8,3)
\end{pspicture}
\quad\quad\quad
\begin{pspicture}(0,1)(3,3)
 \rput(.5,2){$\M_{2,3}=$}
 \psline(2,2)(1.2,3)
 \psline(2,2)(1.6,3)
 \psline(2,2)(2.3,3)
 \psline(2,2)(2.6,3)
 \psline(2,2)(2.9,3)
 \psline[linewidth=3pt](2,1)(2,3)
\end{pspicture}
\quad\quad\quad
\begin{pspicture}(-.5,1)(3.5,3)
 \rput(0,2){$\I_{3,4}=$}
 \psline(2,2)(1.6,3)
 \psline(2,2)(2,3)
 \psline(2,2)(2.4,3)
 \psline(2,2)(1.4,1)
 \psline(2,2)(1.8,1)
 \psline(2,2)(2.2,1)
 \psline(2,2)(2.6,1)
 \psline[linewidth=3pt](.7,2)(3.3,2)
 \pscircle*(2,2){.15}
\end{pspicture}
\caption{Three types of corollas}\label{F:3-corollas}
\end{figure}

A planar tree $T$ is composed
with a diagram $D$ by attaching the root of $T$ to a thin
leaf of $D$. Two module trees are composed by attaching the thick root of one
to the thick leaf of the other. A module tree $M$ is composed with an inner product 
diagram $I$ by attaching the thick root of $M$ to a thick leaf of $I.$ Two
inner product diagrams cannot be composed. A planar diagram resulting from each of
the various compositions appears in Figure \ref{F:3-planar}.
\medskip
\begin{figure}[h]
\resizebox{!}{3cm}{
\begin{pspicture}(0,1)(4,5)
 \psline(2,2)(1.2,3)
 \psline(2,2)(1.6,3)
 \psline(2,1)(2,3)
 \psline(2,2)(2.4,3)
 \psline(2,2)(2.8,3)
 \psline(1.2,3)(1.0,4)  \psline(1.2,3)(1.4,4)
 \psline(2.4,3)(2.2,4)  \psline(2.4,3)(2.6,4)  \psline(2.4,3)(3,4)  \psline(2.4,3)(1.8,4)
 \psline(2.2,4)(2.2,5)  \psline(2.2,4)(1.8,5)  \psline(2.2,4)(2.6,5)
\end{pspicture} }
\quad\quad\quad
\resizebox{!}{3cm}{
\begin{pspicture}(0,1)(4,5)
 \psline(2,2)(1.2,3)
 \psline(2,2)(0.8,3)
 \psline(2,2)(2.8,3)
 \psline(2,2)(3.2,3)
 \psline(2,2)(0.4,3)
 \psline[linewidth=3pt](2,1)(2,5)
 \psline(0.8,3)(0.6,4) \psline(0.8,3)(1.0,4)
 \psline(0.6,4)(0.5,5) \psline(0.6,4)(0.7,5) \psline(0.6,4)(0.9,5) \psline(0.6,4)(0.3,5)
 \psline(2,3)(1.7,4) \psline(2,3)(2.3,4)  \psline(2,3)(2.6,4)
 \psline(3.2,3)(3,4) \psline(3.2,3)(3.4,4)
 \psline(2.6,4)(2.4,5) \psline(2.6,4)(2.8,5)
 \psline(2,4)(1.7,5) \psline(2,4)(1.4,5) \psline(2,4)(1.1,5)
\end{pspicture} }
\quad\quad\quad
\resizebox{!}{3.75cm}{
\begin{pspicture}(0,0)(4,5)
 \psline(2,2)(1.6,3)
 \psline(2,2)(2,3)
 \psline(2,2)(2.4,3)
 \psline(2,2)(1.4,1)
 \psline(2,2)(1.8,1)
 \psline(2,2)(2.2,1)
 \psline(2,2)(2.6,1)
 \psline[linewidth=3pt](0,2)(4,2)
 \pscircle*(2,2){.15}
 \psline(2.5,2)(3.2,1.1) \psline(2.5,2)(3.2,1.4) \psline(2.5,2)(3.2,1.7) \psline(2.5,2)(3.2,2.3)
 \psline(2.5,2)(3.2,2.6) \psline(3.2,1.4)(4,1.3) \psline(3.2,1.4)(3.9,1.0)
 \psline(1.8,1)(1.8,0) \psline(1.8,1)(1.4,0) \psline(1.8,1)(2.2,0)
 \psline(1.5,2)(0.8,2.3) \psline(1.5,2)(0.8,2.6)  \psline(1.5,2)(0.8,2.9)
 \psline(0.7,2)(0,1.6) \psline(0.7,2)(0,1.2) \psline(0.7,2)(0,2.4)
 \psline(0.8,2.9)(0.2,3.4) \psline(0.8,2.9)(0.5,3.6)
 \psline(2.4,3)(2.2,4) \psline(2.4,3)(2.6,4)
 \psline(2.2,4)(2.0,5) \psline(2.2,4)(2.4,5) \psline(2.2,4)(2.8,5) \psline(2.2,4)(1.6,5)
\end{pspicture} }
\caption{Three types of planar diagrams (a tree diagram $T$, a module diagram $M$, an inner product diagram $I$)}\label{F:3-planar}
\end{figure}
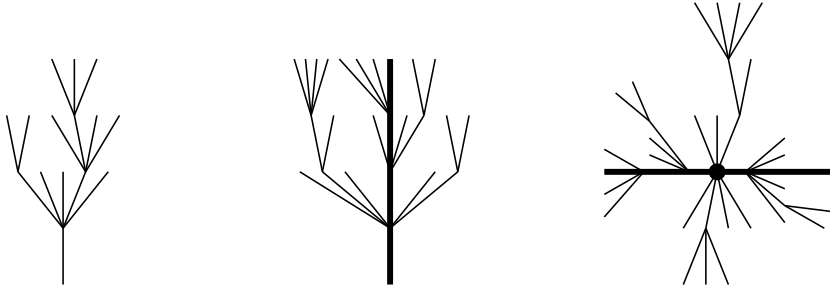

Given a planar diagram $D$, let $\mathcal L(D)$ denote the set of leaves of $D$. If $k=\#\mathcal L(D)$, we use a bijection $f:\{1,\dots, k\}\to \mathcal L(D)$ to label the elements of $\mathcal L(D)$.  In particular, $\fcan$ will denote the ``canonical'' labeling bijection, which numbers the leaves of a (planar or module) tree sequentially from left-to-right and the leaves of an inner product diagram clockwise starting with the left thick leaf (see Figure \ref{FIG:canon-leaf-order}). 
Let $\mathcal E(D)$ denote the set of edges of a planar diagram $D$. 
\begin{figure}[ht]
\[
\resizebox{!}{2.3cm}{
\begin{pspicture}(.5,.5)(4,3.5)
 \psline(2,2)(1.2,3) \psline(1.5,2.6)(1.6,3) \psline(2,1)(2,3)
 \psline(2,2)(3,3) \psline(2.4,2.4)(2.2,3) \psline(2.4,2.4)(2.6,3)
 \psline(2.5,2.7)(2.4,3)  \psline(2.5,2.7)(2.8,3)
 \rput(2.1,3.2){$1\,\,\, 2 \,\,\,\,\, 3 \, 4\, 5 6 7 8$}
\end{pspicture}}
\quad \quad \quad 
\resizebox{!}{2.3cm}{
\begin{pspicture}(-.5,.5)(5,3.5)
 \psline(1.3,2)(1,3) \psline(1.15,2.5)(1.3,3)  \psline(2,2)(1.7,3) 
 \psline(2,2)(2.3,3) \psline(2.15,2.5)(2,3) 
 \psline(2.4,2)(2.7,3) \psline(2.4,2)(3,3) \psline(1.3,2)(1,1)
 \psline(2,2)(2,1) \psline(2,1.5)(1.6,1) \psline(2,1.5)(2.4,1)
 \psline(2.4,2)(2.7,1) \psline(2.8,2)(3.2,1.4)
 \psline(3.2,1.4)(3.2,1)  \psline(3.2,1.4)(3.6,1)
 \psline[linewidth=3pt](.7,2)(3.6,2)
 \rput(.5,2){$1$} \rput(2,3.2){$2 \,\,\,3 \,\,\,4 \,\,\,5\,\,\, 6\,\,\, 7\,\,\, 8$}
 \rput(3.8,2){$9$} \rput(2.2,.7){$16 \quad 15\, 14 \,13 \,12\, 11 \,10$}
 \pscircle*(2,2){.15}
\end{pspicture}}
\]
\caption{Canonical numbering $\fcan$ of the leaves of a diagram}\label{FIG:canon-leaf-order}
\end{figure}
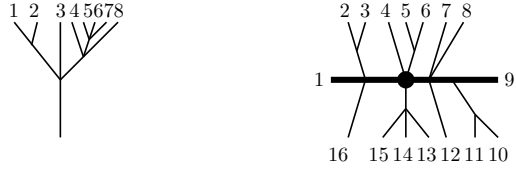

\begin{definition}
Define the \textbf{orientation} of a corolla to be $+1$ or $-1$; an \textbf{orientation} of a planar diagram $D$ with $\mathcal E(D)=\{e_1,\dots,e_k\}$
is a formal skew-commutative product $\omega = e_{i_1}\wedge \dots\wedge e_{i_k}$.
Thus $D$ admits exactly two orientations: $\omega$ and $-\omega$.   
\end{definition}

Following the construction of the colored operad governing homotopy inner
products in \cite{LT}, we define the three-colored operad $\CA$.
Let $\Z_3=\{0,1,2\}$, and denote the empty color by $0$, the thin color by $1$, and the thick color by $2$.

\begin{definition}\label{DEF:CA}
The \textbf{three-colored operad }
\[
\CA\hspace{.1in}=\bigoplus_{\substack{ \mathbf{x}\times y
\in \mathbb{Z}_{3}^{n+1} \\ n\geq 1}}\CAxy,
\]
is the graded $R$-module in which $\CAxy=0$ unless
\begin{enumerate}

\item[\textit{(i)}] 
$\mathbf{x}\times y =1\cdots1\times 1\in \mathbb{Z}_{3}^{n+1}$:\smallskip \\$\CAxy$ is generated by triples $\left(  T,f, \omega \right)  $ modulo $(T,f,-\omega)= -(T,f,\omega)$,  where $T$ is a planar tree with $n$ leaves and $\omega$ is an orientation on $T$.\smallskip
\item[\textit{(ii)}] 
$\mathbf{x}\times y=1\cdots 121\cdots 1\times 2\in \mathbb{Z}_{3}^{n+1}$
 with $2$ in the $i\T$ position of $\mathbf{x}$: \smallskip\\ 
$\CAxy$ is generated by triples $\left(  M,f,\omega \right)$ modulo $(M, f,-\omega)= -(M, f,\omega)$, where $M$ is a module tree with $n$ leaves of which $f(i)$ is thick, and $\omega$ is an orientation on $M$.\smallskip
\item[\textit{(iii)}] 
$\mathbf{x}\times y=1\cdots 121\cdots 121\cdots 1 \times 0 \in \mathbb{Z}_{3}^{n+1}$ with $2$ in the $i\T$ and $j\T$ positions:\smallskip \\
$\CAxy$ is generated by triples $\left(  I, f,\omega\right)$ modulo $(I, f,-\omega)= -(I, f,\omega)$, where $I$ is an inner
product diagram with $n$ leaves of which $f(i)$ and $f(j)$ are thick, and $\omega$ is an orientation on $I$.
\end{enumerate}
\noindent 
The \textbf{coloring} of a generator $\left(  D,f,\omega \right)  \in \CAxy$ is the pair $\mathbf{x}\times y$, and its \textbf{degree}   
\[
\left\vert(  D,f, \omega)  \right\vert :=\#\mathcal{L}\left(  D\right)
-\#\mathcal{E}\left(  D\right)  -2.
\]
When $\left\vert(  D,f, \omega)  \right\vert=n$, we write $(  D,f,\omega )  \in \Ca{n}$. 
Formally adjoin \textbf{units} to $\CA$ and define their degrees to be $0$. Given an edge $e\in\mathcal E(D),$ let $D/e$ denote the planar diagram obtained from $D$ by contracting $e$ to a point. Define
the \textbf{boundary} of $\left(  D,f,\omega\right)  $ by
\[
\partial_{C}(D,f,\omega):=\sum_{\substack{D^{\prime}/e^{\prime}=D \\e^{\prime}\in
\mathcal E(D^{\prime})}}\left(  D^{\prime},f,e'\wedge\omega \right)  ,
\]
summed over all diagrams $D^{\prime}$ and all edges $e^{\prime}\in\mathcal E(
D^{\prime})$ such that $D^{\prime}/e^{\prime}=D$. The relation $\partial_C^2=0$ follows from the fact that $e'\wedge e''\wedge \omega=-e''\wedge e'\wedge \omega$. If $n = \#\mathcal L(D)$ and $\sigma\in S_n$, define the \textbf{$S_n$-action} by
\[
\sigma\cdot(  D,f,\omega)  := \sgn(\sigma)\cdot (  D,f\circ \sigma,\omega).
\]
\noindent
Now, given generators
$\left(  D,f_{D},\omega_D\right)\in \Ca{m} $ and $\left(  E,f_{E},\omega_E\right)\in \Ca{n}$,  let
$k=\#\mathcal{L}\left(  D\right)  $ and $l=\#\mathcal{L}\left(  E\right)$. For $1\leq i\leq k$, define the $i\T$ \textbf{operadic composition } $\left(D,f_{D},\omega_D\right)  \circ_{i}\left(  E,f_{E},\omega_E\right)  $ to be zero unless the root of $E$ and the $i\T$ input of $D$ have the same color, in which case, 
\begin{multline}\label{EQ:circ-i}
(  D,f_{D}, \omega_D)  \circ_{i}(  E,f_{E},\omega_E)
:= (-1)^\epsilon( \, D\circ_{f_D(i)}E\, ,\, f_{D\circ E} \, ,\,  \omega_D\wedge\omega_E\wedge e \, ),
\end{multline}
where $\epsilon=i(l+1)+kn$, $D\circ_{f_D(i)}E$ is obtained by attaching the root of $E$ to leaf $f_{D}\left(  i\right)$ of $D$, ``$e$'' denotes the new edge, and
\[
f_{D\circ E}(j)  =\left\{
\begin{array}
[c]{ll}%
f_{D}(j)  , & 1\leq j<i, \\
f_{E}(j-i+1)  , & i\leq j<i+l, \\
f_{D}(j-l+1)  , & i+l\leq j\leq k+l-1.
\end{array}
\right.
\] 
For a justification of the sign $(-1)^\epsilon$, refer to Equations 
\eqref{EQ:A-infty-sign}, \eqref{EQ:A-module-sign}, \eqref{EQ:Inner-product-sign}, and Definition \ref{DEF:F:CA->End} in Appendix \ref{APP:A-infty-sign}. 
\end{definition}
\begin{remark}
We remark, that the $S_n$ action may be used to arrange the coloring $\mathbf{x}\times y$ in $(iii)$ above in the cyclic order starting with the color $2$ at the leftmost leaf. With this, an algebra over $\CA$ may be represented by inner product diagrams via maps $\rho_{j',j''}:M\otimes A^{\otimes j'}\otimes M\otimes A^{\otimes j''}\to R$. For more on algebras over $\CA$, see Appendix \ref{APP:A-infty-sign}.
\end{remark}

We need the following ``metric refinement'' of $\CA$ generated by diagrams whose edges are labeled either $m$ (metric) or $n$ (non-metric):

\begin{definition}
\label{operad-QA}The \textbf{three-colored operad} 
\[
\QA=\bigoplus_{\substack{ \mathbf{x}\times y
\in \mathbb{Z}_{3}^{n+1} \\ n\geq 1}} \QAxy,
\]
is the graded $R$-module generated by tuples $(D,f,g,\omega)$ modulo $(D,f,g, -\omega)= -(D,f,g,\omega)$, where $D$ and $f$ are as in Definition \ref{DEF:CA} and $g:\mathcal{E}\left(  D\right)
\rightarrow\{m,n\}$ labels the edges in $D$ (if $D$ is a corolla, $g$ is the empty map). The \textbf{metric edges} of $D$ form the set $\mathcal {M}(D)=g^{-1}(m)$; all other edges are \textbf{non-metric}. If $\mathcal {M}(D)=\{e_1,\dots,e_l\}$, an \textbf{orientation} of $D$ is a formal skew-commutative product $\omega = e_{i_1}\wedge \dots\wedge e_{i_l}$, and the \textbf{degree} $\left\vert (D,f,g,\omega)\right\vert := \#\mathcal {M}(D)$.
\textbf{Units} 
are inherited from $\CA$ and the \textbf{boundary} is given by
\begin{multline*}
\hspace{-.1in}\partial_{Q}(D,f,g,e_{i_1}\wedge\dots\wedge e_{i_l}):=\\
\hspace{.2in}\sum_{j=1}^l (-1)^{j} \Big[ (D/e_{i_j},f,g/e_{i_j},e_{i_1}\wedge\cdots \hat e_{i_j}\cdots\wedge e_{i_l})- (D_{e_{i_j}},f,g_{e_{i_j}}, e_{i_1}\wedge\cdots \hat e_{i_j}\cdots\wedge e_{i_l})\Big],
\end{multline*}
where $g/e_{i_j}(e)=g(e)$ if $e\neq e_{i_j}$, $D_{e_{i_j}}$ is obtained from $D$ by relabeling $e_{i_j}$ non-metric ($n$), and $g_{e_{i_j}}$ is the corresponding relabeling. It is straightforward to check that $\partial_Q^2=0$. This time the \textbf{$S_k$-action} is given by $\left.\sigma\cdot(D,f,g,\omega)=(D,f\circ \sigma,g,\omega)\right.$, and the \textbf{coloring} of $(D,f,g,\omega)\in \QAxy$ is the pair $\mathbf{x}\times y$.
Given generators $(D,f_{D},g_D,\omega_D)$ and $(E,f_{E},g_E,\omega_E)$, define the $i\T$ \textbf{operadic composition}\linebreak $(D,f_{D},g_D,\omega_D)\circ_i(E,f_{E},g_E,\omega_E)$ to be zero unless the root of $E$ and the $i\T$ leaf of $D$ have the same color, in which case
\[
(D,f_{D},g_D,\omega_D)\circ_{i}(E,f_{E},g_E,\omega_E):=\left(  D\circ_{f_D(i)}E, f_{D\circ E}, g_{D\circ E},\omega_D\wedge\omega_E\right),
\]
where $D\circ_{f_D(i)}E$ is defined as in Definition \ref{DEF:CA} and
\[
g_{D\circ E}\left( e\right) =\left\{ 
\begin{array}{ll}
n, & e\text{ is the new edge} \\ 
g_{D}\left( e\right) , & e\in \mathcal{E}\left( D\right)  \\ 
g_{E}\left( e\right) , & e\in \mathcal{E}\left( E\right). 
\end{array}%
\right. 
\]
\end{definition}
\noindent For comparison, note that the sign prefixes that appear in the formulas defining the $S_k$-action and $\circ_i$-composition in $\CA$, do not appear correspondingly in $\QA$.  Figure \ref{FIG-3} on page \pageref{FIG-3} displays a graphical representation of the combinatorial relationships among the generators in $\QA_{0}^{2112}$; this clearly illustrates the boundary $\partial_Q$. As we shall see in the next section, $\CA$ and $\QA$ are operadically quasi-isomorphic.

\section{The map $q:\CA\rightarrow \QA$}\label{q-sec}
In this section, we define an operadic map $q:\CA\to \QA$ and show that it is a quasi-isomorphism. Our proof depends on the fact that pairahedra are contractible. 
Let $\ost_B$ denote the standard orientation on a binary diagram $B\in\Ca{0}$ defined in Appendix \ref{APP:binary-standard-orientation}, and let $m$ denote the constant map $g(e)=m$.  We remark that a ``binary inner product diagram'' is an inner product diagram whose (empty) root has valence $2$ and whose other vertices have valence $3$.

\begin{definition}
Define $q:\CA \rightarrow \QA$ as follows:
\begin{enumerate}
\item[\textit{(i)}]
On units, define $q$ to be the identity.
\item[\textit{(ii)}] 
On a corolla $(c,\fcan,+1)\in\CAxy$, define 
\[ q(c,\fcan,+1)=\sum_{B \in \Caxy{0}} (B,\fcan,m,\ost_B). \]
\item[\textit{(iii)}]
Decompose a generator $(D,f,\omega)\in\CA$ as a $\circ_i$-composition of corollas, and define $q(D,f,\omega)$ by extending $S_k$-equivariantly and $\circ_i$-multiplicatively, \emph{i.e.},
\begin{eqnarray*}
q\Big(\sigma\cdot(D,f,\omega)\Big)&=&\sigma\cdot q(D,f,\omega);\\
\quad \quad q\Big((E,f,\omega)\circ_i (E',f',\omega')\Big)&=&q(E,f,\omega)  \circ_{i}q(E',f',\omega').
\end{eqnarray*}
This extension is well-defined since $\CA$ is freely generated by corollas $(c,\fcan,+1)$ (modulo the relation $(\cdots,-\omega)=-(\cdots,\omega)$).
\end{enumerate}
\end{definition}
\noindent By definition, $q$ respects units, the $S_k$-action, and $\circ_i$-composition. And furthermore:
\begin{proposition}
\label{q-bound}The map $q:\CA \rightarrow \QA$ is a chain map.
\end{proposition}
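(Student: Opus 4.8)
The plan is to verify that $q$ commutes with the two boundary operators, i.e. $\partial_Q \circ q = q \circ \partial_C$. Since $q$ is defined to be $S_k$-equivariant and $\circ_i$-multiplicative, and since both $\partial_C$ and $\partial_Q$ are (graded) derivations with respect to $\circ_i$-composition and are compatible with the $S_k$-action, it suffices to check the chain-map identity on the free generators, namely the corollas $(c,\fcan,+1)$. Concretely, I would first record the Leibniz rule: if $(D,f,\omega) = (E,f',\omega')\circ_i(E',f'',\omega'')$, then $\partial_C(D,f,\omega) = \partial_C(E,f',\omega')\circ_i(E',f'',\omega'') \pm (E,f',\omega')\circ_i\partial_C(E',f'',\omega'')$ with the sign dictated by $|E|$, and similarly for $\partial_Q$; these follow directly from the definitions of $\partial_C$, $\partial_Q$, and $\circ_i$ since contracting/relabelling an internal edge either happens inside $E$, inside $E'$, or is the new connecting edge (and the new edge is never contracted in a boundary term that stays a composition of the same two pieces — rather, contracting it is what witnesses $D$ as a composite). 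Granting the Leibniz rules, an induction on the number of internal edges reduces everything to the case of a single corolla, where $\partial_C$ vanishes.

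So the heart of the matter is: for a corolla $c$ with coloring $\mathbf{x}\times y$, show $\partial_Q\, q(c,\fcan,+1) = 0$, i.e.
\[
\partial_Q \sum_{B\in\Caxy{0}} (B,\fcan,m,\ost_B) = 0.
\]
Expanding via the definition of $\partial_Q$, each binary diagram $B$ with metric edges $\{e_1,\dots,e_l\}$ contributes, for each $j$, the term $(-1)^j(B/e_{i_j},\fcan,m/e_{i_j},\dots)$ minus the term $(-1)^j(B_{e_{i_j}},\fcan,m_{e_{i_j}},\dots)$ in which $e_{i_j}$ is relabelled non-metric. I would argue these cancel in pairs. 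The "relabel-to-non-metric" terms $(B_{e_{i_j}},\fcan,\dots)$ cancel against each other: a diagram with exactly one non-metric internal edge $e$ arises from $B$ by relabelling $e$, and $B/e$ — reading the contracted diagram the other way — this is precisely the mechanism by which $\QA$ is a cubical subdivision of $\CA$, so I expect $(B_{e},\fcan,\dots)$ to equal a boundary contribution coming from the contracted diagram $B/e$, matched against the "$B/e_{i_j}$" term of some other binary diagram $B'$ with $B'/e' = B/e$. The cleanest formulation: group the sum $\sum_B (B,\fcan,m,\ost_B)$ by the cell of $\CA$ (equivalently, by a non-binary diagram $D'$) that $B$ subdivides, and show each such group is a cycle for $\partial_Q$ because it is the fundamental cycle of the cubically-subdivided cell corresponding to the corolla $c$, whose (geometric) boundary is the subdivided boundary of that cell — which is empty when $c$ is a corolla since a corolla has no internal edges and hence its cell is a point... no: a corolla's associated polytope (associahedron/multiplihedron/pairahedron top cell) is top-dimensional, and $\partial_Q q(c)$ computing its boundary should match $q(\partial_C(c)) = q(0) = 0$ only if $c$ is $0$-dimensional, so in fact one must instead interpret $|(c,\fcan,+1)| = \#\mathcal L(c) - \#\mathcal E(c) - 2 = \#\mathcal L(c) - 2$ and note $q(c)$ lands in the same degree, whose $\partial_Q$ must land in degree one lower where $q(\partial_C c)$ lives — and $\partial_C c = 0$ as $c$ has no internal edges to contract into a lower diagram... wait, $\partial_C$ sums over $D'$ with $D'/e' = c$, so $\partial_C c \ne 0$ in general. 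Let me restate.

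Correcting the above: for a corolla $c$, $\partial_C(c,\fcan,+1) = \sum_{D'/e'=c}(D',\fcan,e'\wedge(+1))$, summing over all ways of "splitting" $c$ along one new edge. So I need $\partial_Q q(c) = q(\partial_C c) = \sum_{D'/e'=c} q(D',\fcan,\dots)$, and $q(D',\dots)$ is itself a sum over binary refinements of $D'$. The key combinatorial identity is thus: when one applies $\partial_Q$ to $\sum_B (B,\fcan,m,\ost_B)$, the contraction terms $(-1)^j(B/e_{i_j},\dots,m,\dots)$ organize into $\sum_{D'/e'=c}$ of the $q$-image of $D'$ (here I must match signs: this is exactly where $\ost_B$, the standard orientations defined in the appendix, were engineered to make signs work — citing Appendix \ref{APP:binary-standard-orientation}), while the relabelling terms $(-1)^{j+1}(B_{e_{i_j}},\dots)$ cancel among themselves via the involution that pairs a diagram with non-metric edge $e$ to its partner, the cancellation reflecting that two adjacent subcubes share a face. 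I expect the main obstacle to be precisely this sign bookkeeping — verifying that the orientations $\ost_B$ and $\ost_{B'}$ on a binary diagram and its neighbor (or on $B$ and on the non-binary $D'$ it refines) are consistently related under $\partial_Q$ so that the contraction terms assemble with the correct signs into $q(\partial_C c)$ and the relabelling terms genuinely telescope to zero. I would handle this by a careful local analysis of a single edge contraction/relabelling, reducing to the associahedron case $K_3$ (an interval, subdivided into two sub-intervals by $\QA$) and the first nontrivial pairahedron case to pin down the sign conventions, then invoking the multiplicativity already established to propagate. The contractibility of pairahedra (Appendix \ref{I_k,l k,l>0}) enters not in this proposition per se but is flagged for the subsequent quasi-isomorphism claim; here the statement is purely the chain-map property, which is this cancellation.
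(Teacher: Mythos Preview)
Your reduction to corollas via the derivation property and $S_k$-equivariance is correct, and matches the paper's opening move. The substantive error is in your identification of which boundary terms cancel and which survive: you have the two roles exactly reversed.

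In $\partial_Q\bigl(\sum_B (B,\fcan,m,\ost_B)\bigr)$ there are two kinds of terms. The \emph{contraction} terms $(B/e_{i_j},\fcan,m/e_{i_j},\ldots)$ are fully metric diagrams with one non-binary (valence-4) vertex; these cannot possibly assemble into $q(\partial_C c)$, because by the definition of $\circ_i$ in $\QA$ the image $q(D',\fcan,\ldots)=q(c')\circ_i q(c'')$ consists of \emph{binary} diagrams with exactly one \emph{non-metric} edge (the new edge created by $\circ_i$ is labelled $n$). It is therefore the \emph{relabelling} terms $(B_{e_{i_j}},\fcan,g_{e_{i_j}},\ldots)$, which are binary with a single non-metric edge, that match $q(\partial_C c)$. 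Conversely, your proposed ``involution pairing a diagram with non-metric edge $e$ to its partner'' does not exist: a binary diagram with one specified non-metric edge has no canonical partner of the same type. What \emph{does} exist is a pairing on the contraction side: any diagram $B/e$ with a single valence-4 vertex arises from exactly two binary diagrams $B$ and $B'$ (related by one of the six local moves in Definition~\ref{DEF:order}), and the standard orientations satisfy $\ost_{B'}=-\ost_B$ under the induced edge identification, so these terms cancel in pairs. This is the mechanism the paper uses, and it is the correct one; the sign check that $(-1)^{i+1}\omega_B^{\hat e_i}$ agrees with the sign on the surviving $B_{e_i}$ terms is then deferred to Appendix~\ref{APP:(3.1)-signs}.
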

\begin{proof}
Since $\partial_{C}$ and $\partial_{Q}$ respect $S_k$-actions and act as derivations of $\circ_{i}$, it is sufficient to check the result on a corolla $(c,f_{\circlearrowright },+1)\in\CAxy$:\vspace{.2in}\newline
\noindent $q(\partial _{C}(c,f_{\circlearrowright
},+1))=\sum\limits_{D/e=c}q(D,f_{\circlearrowright },e)$%
\begin{eqnarray*}
&& \\
&=&\sum_{\substack{ _{\substack{ D/e=c, \\ (D,f_{\circlearrowright
},e)=(-1)^{\epsilon _{1}}\sigma \cdot \left[ (c^{\prime
},f_{\circlearrowright }^{\prime },1)\circ _{j}(c^{\prime \prime
},f_{\circlearrowright }^{\prime \prime },1)\right] }} \\ (c^{\prime
},f_{\circlearrowright }^{\prime },1)\in C_{\ast }\hat{\mathcal{A}}%
_{y^{\prime }}^{\mathbf{x^{\prime }}};\text{ }(c^{\prime \prime
},f_{\circlearrowright }^{\prime \prime },1)\in C_{\ast }\hat{\mathcal{A}}%
_{y^{\prime \prime }}^{\mathbf{x^{\prime \prime }}}}}\hspace*{-0.5in}%
(-1)^{\epsilon _{1}}\sigma \cdot \left[ q(c^{\prime },f_{\circlearrowright
}^{\prime },1)\circ _{j}q(c^{\prime \prime },f_{\circlearrowright }^{\prime
\prime },1)\right]  \\
&& \\
&=&\sum_{\substack{ B^{\prime }\in \Ca{0}
_{y^{\prime }}^{\mathbf{x^{\prime }}} \\ B^{\prime \prime }\in
\Ca{0}_{y^{\prime \prime }}^{\mathbf{x^{\prime \prime }}}
}}\hspace*{-0.2in}(-1)^{\epsilon _{1}}\sigma \cdot \left[ (B^{\prime
},f_{\circlearrowright }^{\prime },m,\omega _{B^{\prime }}^{\text{std}%
})\circ _{j}(B^{\prime \prime },f_{\circlearrowright }^{\prime \prime
},m,\omega _{B^{\prime \prime }}^{\text{std}})\right]  \\
&& \\
&=&\sum_{\substack{ (B,f_{\circlearrowright },g,\omega _{j})\in Q_{\ast }%
\hat{\mathcal{A}}_{y}^{\mathbf{x}} \\ \#g^{-1}(n)=1}}(-1)^{\epsilon
_{2}}(B,f_{\circlearrowright },g,\omega _{j}),
\end{eqnarray*}%
where $\epsilon_2$ is a function of $\epsilon_1$, and both signs are computed in Appendix \ref{APP:(3.1)-signs}. The last expression is summed over all binary diagrams $B$ with exactly one non-metric edge (the one coming from the composition $\circ_j$) and $\omega_j$ is the induced orientation. On the other hand,
\begin{multline}\label{EQ:d(q(c))}
\partial_{Q}(q(c,\fcan,+1))=\sum_{B\in\Caxy{0}}\partial_{Q}\left(B,\fcan,m,\ost_B=e_1\wedge\dots\wedge e_k\right) \\
 =\sum_{B}  \sum_{e_i}(-1)^i\left[(  B/e_i,\fcan,m/e_i,\omega_B^{\hat{e_i}}) -(  B_{e_i},\fcan, g_{e_i},\omega_B^{\hat {e_i}}) \right],
\end{multline}
where $\omega_B^{\hat {e_i}}=e_1\wedge\dots\wedge \hat{e_i}\wedge\dots\wedge e_k$. First, we claim that the ``$B/e_i$'' terms cancel. To see this, note that if $e_i$ is an edge of a binary diagram $B$, there is second binary diagram $B'$ and an edge $e'$ in $B'$ such that $B'/e'=B/e_i$. In fact, $B$ and $B'$ are related by one of the 6 local moves depicted in Definition \ref{DEF:order} (if $B < B'$, obtain $B/e_i$ by collapsing the edges within the two circles). Thus for purposes of orientation, we may symbolically identify $e'$ with $e_i$ and express their standard orientations as $\ost_B=e_1\wedge\dots\wedge e_i\wedge \dots\wedge e_k$ and $\ost_{B'}=- e_1 \wedge \dots\wedge e_i\wedge \dots\wedge e_k$. Consequently, each ``$B/e_i$'' term appears twice with opposite signs.
Formula \eqref{EQ:d(q(c))} now simplifies to \begin{equation*}
\partial_{Q}(q(c,\fcan,1))=\sum_{B}  \sum_{e_i}(-1)^{i+1}(B_{e_i},\fcan, g_{e_i},\omega_B^{\hat {e_i}}) =q(\partial_{C}(c,\fcan,1)),
\end{equation*}
summed over all binary diagrams $B$ with exactly one non-metric edge $e_i$; 
the fact that $(-1)^{i+1}\omega_B^{\hat {e_i}}=(-1)^{\epsilon_2}\omega_j$ is verified in Appendix \ref{APP:(3.1)-signs}. 
\end{proof}

The fact that $q$ is a quasi-isomorphism follows from the next proposition.

\begin{proposition}\label{contract}
The polytopes associated with $\A_{n}$, $M_{k,l}$, and
$I_{k,l}$ are contractible.
\end{proposition}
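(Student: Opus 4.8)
The plan is to prove contractibility by exhibiting each of the three families of polytopes as the output of an explicit cone/collapse construction, realized combinatorially on the cellular chains. Concretely, I would argue that each polytope $P$ (a pairahedron $I_{k,l}$, a multiplihedron-like $M_{k,l}$, or an associahedron $\A_n$) admits a distinguished vertex $v_0$ together with a deformation retraction onto $v_0$. For associahedra this is classical (Stasheff), so the real content is the module trees $M_{k,l}$ and the inner product diagrams $I_{k,l}$. The key observation is that every planar diagram of the relevant type degenerates, by a sequence of edge contractions, to the corolla $T_n$, $M_{k,l}$, resp. $I_{k,l}$, and conversely the corolla is the unique cell of top dimension; so the face poset of $P$ has a maximum (the corolla) and the geometric realization of the order complex of the face poset with a maximal element is a cone, hence contractible. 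Thus I would first show the face poset of each polytope has a greatest element, which is immediate from the combinatorial description: contracting all edges of any $B$ yields the corolla.

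Next I would make this rigorous at the level of polytopes rather than just posets. The clean route is the \emph{W-construction / straightening} picture already invoked in the paper: $I_{k,l}$ is built as a metric space of trees with edge lengths in $[0,1]$, and the map sending every edge length $t_e$ to $\lambda t_e$ gives an explicit linear deformation retraction of the polytope onto the point where all edge lengths are $0$, i.e.\ the barycentre of the top cell (the corolla). Concretely, if I parametrize a point of the polytope associated with $I_{k,l}$ by a binary inner product diagram $B$ together with a metric $(t_e)_{e\in\mathcal E(B)}\in[0,1]^{\mathcal E(B)}$ subject to the usual face identifications (an edge of length $0$ is collapsed, an edge of length $1$ is an honest face inclusion), then $H((B,(t_e)),s) = (B,((1-s)t_e))$ is well-defined on the identification space, continuous, equals the identity at $s=0$, and at $s=1$ lands on the single point corresponding to the corolla with the empty metric. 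The same formula works verbatim for $M_{k,l}$ and $\A_n$. So after setting up this coordinate description (which is exactly the cubical structure underlying $\QAxy$), contractibility is a one-line homotopy.

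The steps, in order: (1) recall that the cells of the polytope associated to a corolla $c$ are indexed by planar diagrams $D$ with $D/(\text{all edges}) = c$, with $D$ a face of $D'$ iff $D'$ degenerates to $D$ by contracting a subset of its edges; (2) deduce that the corolla itself indexes the unique top cell and is the greatest element of the face poset; (3) realize the polytope as the identification space of cubes $\{B\}\times[0,1]^{\mathcal E(B)}$ over binary diagrams $B$, glued along the face relations $t_e=0$ (collapse $e$) and $t_e=1$ (the two-diagram relation of Definition \ref{DEF:order}), using the contractibility proof of $I_{k,l}$ deferred to Appendix \ref{I_k,l k,l>0} for the nontrivial pairahedra with $k,l>0$; (4) write down $H((B,(t_e)),s)=(B,((1-s)t_e))$ and check it descends to the identification space (the $t_e=0$ identifications are preserved since scaling fixes $0$, and an interior point never reaches $t_e=1$ for $s>0$, so no face collision is created) and is continuous; (5) conclude $P$ deformation retracts to the corolla point, hence is contractible.

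I expect the main obstacle to be step (3)/(4): verifying that the scaling homotopy is compatible with \emph{all} the gluing relations defining the polytope, in particular the six local moves of Definition \ref{DEF:order} that identify faces carried by different binary diagrams at $t_e=1$. Scaling down never revisits $t_e=1$ once $s>0$, so those identifications cause no trouble in the interior of the homotopy; the only delicate point is continuity at $s=0$ across cubes meeting along a common $t_e=1$ face, where one must check the two parametrizations agree after scaling — this is where the precise form of the local moves matters, and it is exactly the computation the paper has pushed into Appendix \ref{I_k,l k,l>0}. For $\A_n$ and for $M_{k,l}$ with $k=0$ or $l=0$ the identifications are the standard ones and this is routine; the genuinely new case is $I_{k,l}$ with $k,l>0$, whose contractibility I would simply cite from that appendix rather than reprove here.
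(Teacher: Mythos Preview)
Your proposal is circular: you twice propose to cite Appendix~\ref{I_k,l k,l>0} for the case $I_{k,l}$ with $k,l>0$, but that appendix \emph{is} the paper's proof of Proposition~\ref{contract}. If the scaling homotopy on the $W$-construction works at all, it works uniformly for every corolla (planar, module, or inner-product); there is nothing special about $I_{k,l}$ that would require a separate argument, so invoking the appendix is both unnecessary and self-referential.

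You also misidentify the gluing in the $W$-construction. Cubes $[0,1]^{\mathcal E(B)}$ for distinct binary diagrams $B,B'$ are glued along their faces $\{t_e=0\}$ and $\{t_{e'}=0\}$ whenever $B/e=B'/e'$, and \emph{this} is where the edge-pair relation of Definition~\ref{DEF:order} enters. The face $t_e=1$ is where the edge becomes non-metric, i.e.\ a boundary facet of the polytope representing an operadic decomposition; there is no identification between top-dimensional cubes there. Once this is corrected, the scaling homotopy $(B,(t_e))\mapsto(B,((1-s)t_e))$ obviously descends to the quotient (it fixes every $t_e=0$ face pointwise), so the difficulty you anticipate at ``$t_e=1$'' simply does not arise.

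With those corrections your scaling argument is the standard Boardman--Vogt proof that the $W$-construction space realizing $\QAxy$ is contractible, and this is a genuinely different route from the paper's. The paper instead proves the stronger statement $|I_{k,l}|\cong\bar B^{k+l}$ by induction on $k+l$: it splits $\partial|I_{k,l}|$ into an ``upper'' hemisphere $\overline{I_{k,l}^+}$ and a ``lower'' hemisphere $\overline{I_{k,l}^-}$ (diagrams carrying only upper, resp.\ only lower, edges), exhibits an explicit cell-bijection between each hemisphere and a union of boundary components $\partial_2,\ldots,\partial_{k+2}$ of the associahedron $K_{k+l+2}$, and then invokes Stasheff's cubical realization of $K_n$ to identify each hemisphere as a closed $(k{+}l{-}1)$-ball, glued to the other along a common $(k{+}l{-}2)$-sphere. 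The paper's argument thus yields the homeomorphism type, not merely contractibility; your approach is shorter but delivers less.
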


The proof of Proposition \ref{contract} is technical and appears in Appendix \ref{I_k,l k,l>0}.

\begin{corollary}
\label{q-quasi}The map $q:\CA\rightarrow \QA$ is a quasi-isomorphism of operads.
\end{corollary}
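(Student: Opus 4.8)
The plan is to deduce Corollary~\ref{q-quasi} from the two facts already established: that $q$ is an operadic chain map (Proposition~\ref{q-bound} together with the remark that $q$ respects units, the $S_k$-action, and $\circ_i$-composition), and that the polytopes $\A_n$, $M_{k,l}$, $I_{k,l}$ are contractible (Proposition~\ref{contract}). Since $q$ is already known to be a morphism of operads, the only thing left to check is that it induces an isomorphism on homology in each arity and color $\mathbf{x}\times y$. I would reduce this, via the $\circ_i$-multiplicativity of $q$, to the generating cells: $\CAxy$ and $\QAxy$ are built from the corolla cells $T_n$, $M_{k,l}$, $I_{k,l}$ by operadic composition, and $q$ is defined to agree with the cubical subdivision on each corolla. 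So it suffices to show that for a single corolla $c$ (of any of the three types), $q$ restricts to a quasi-isomorphism from the chain complex $C_*(P)$ of the associated polytope $P$ to the chain complex $Q_*(P)$ of its cubical subdivision.

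The key steps, in order, are as follows. First, identify the subcomplex of $\CA$ generated (under $\partial_C$) by the single corolla $c$ with the cellular chain complex $C_*(P)$ of the polytope $P$ that $c$ indexes (an associahedron $K_n$, a multiplihedron-type polytope for $M_{k,l}$, or a pairahedron for $I_{k,l}$); likewise identify the subcomplex of $\QA$ generated by $q(c)$ with the cellular chains $Q_*(P)$ of the $W$-construction cubical decomposition of that same polytope $P$. Under these identifications $q$ becomes the standard subdivision chain map. Second, invoke Proposition~\ref{contract}: $P$ is contractible, so $H_*(C_*(P)) = H_*(Q_*(P)) = R$ concentrated in degree $0$, and one checks directly that $q$ sends the unit (the $0$-cell, or rather the generator in degree $0$, realized here via the corolla in top codimension) to the sum of top-dimensional cubes, which represents the generator of $H_0(Q_*(P))$; hence $q$ is a quasi-isomorphism on this piece. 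Third, pass from corollas to general generators: because $q$ is $S_k$-equivariant and strictly compatible with $\circ_i$, and because every generator of $\CAxy$ is an iterated $\circ_i$-composite of corollas, a Künneth/induction argument on the number of internal edges (equivalently on the number of corollas in the decomposition) propagates the quasi-isomorphism from the corolla cells to all of $\CAxy$; alternatively, one observes that $\CAxy$ and $\QAxy$ are themselves the cellular and cubically-subdivided chains of a single contractible polytope (a face of a pairahedron), so the same contractibility argument applies verbatim arity by arity.

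I expect the main obstacle to be the bookkeeping in identifying the $W$-construction operad $\QA$, arity by arity and color by color, with an honest cubical subdivision of the polytopes underlying $\CA$, and verifying that $q$ corresponds to the standard chain-level subdivision map under this identification — in particular that the signs and orientations introduced in the definitions of $\partial_Q$, the $S_k$-action, and $\circ_i$ on $\QA$ match those coming from a genuine cell structure, so that "contractible polytope $\Rightarrow$ acyclic augmented chain complex" can be applied. Once that dictionary is in place, the homological statement is immediate from Proposition~\ref{contract}, and the operadic refinement ("quasi-isomorphism \emph{of operads}") is automatic since $q$ was already shown to be an operad morphism. A clean way to organize the argument is to note that it is enough to prove $q$ induces an isomorphism $H_*(\CAxy) \to H_*(\QAxy)$ for each fixed arity $n$ and coloring $\mathbf{x}\times y$; both sides compute the cellular homology of the (contractible, by Proposition~\ref{contract}) polytope attached to that data, $q$ is the subdivision map, and subdivision is a chain homotopy equivalence — so both have the homology of a point and $q$ realizes the identity on it.
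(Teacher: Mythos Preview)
Your proposal is correct and follows essentially the paper's approach: $q$ is already an operad morphism, and for each fixed coloring $\mathbf{x}\times y$ both $\CAxy$ and $\QAxy$ are the cellular chains (respectively, the cubically subdivided cellular chains) of one contractible polytope by Proposition~\ref{contract}, so $\q$ is a quasi-isomorphism arity-by-arity (the paper simply cites \cite{BV} for this standard $W$-construction fact). One small correction: your sentence about $q$ sending ``the generator in degree~$0$'' to ``the sum of top-dimensional cubes'' conflates top and bottom degree---that sum is $q(c)$ in degree $n-2$, while the $H_0$ check is that $q$ sends a binary diagram (degree~$0$ in $\CA$) to the same diagram with all edges non-metric (degree~$0$ in $\QA$)---and your K\"unneth/induction step is unnecessary, since (as you note in your alternative) each $\CAxy$ is already the chain complex of a single polytope.
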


\begin{proof}
Since $q$ is a chain map respecting the $S_k$-action, $\circ_i$-composition, and units, it is operadic. Furthermore, $\q:\CAxy\rightarrow \QAxy$ is a quasi-isomorphism for each pair $\mathbf{x}\times y$ by Proposition \ref{contract} (\emph{c.f.} \cite{BV}).
\end{proof}

Geometrically, $\CA$ is represented by the cellular chains of the various polytopes mentioned above, $\QA$ is represented by the cellular chains of an appropriate subdivision, and the morphism $q$ associates each cell of $\CA$ with the sum of cells in its subdivision (\emph{c.f.} Figure \ref{FIG-3}). However, constructing a quasi-inverse of $q$ requires us to make a choice, which we shall do in the next section.

\section{The map $p:\QA\rightarrow \CA$}\label{p-sec}

In this section, we define a quasi-inverse $p$ of the morphism $q$ defined in the previous section. Our strategy is to extend the usual Tamari partial ordering on planar binary trees to the set $\mathcal{B}$ of all planar binary diagrams.  Given a planar diagram $D$, let $\mathcal{B}_{D}$ denote the
subposet of $\mathcal{B}$ given by applying all possible sequences of edge
insertions to $D.$ Then $\mathcal{B}_{D}$ has a unique minimal
element $D_{\min}$ and maximal element $D_{\max}$ (Lemma \ref{max-min}), and $p$ applied to a fully metric diagram $D$ is the sum of all diagrams $S$ such that $S_{\max}\leq D_{\min}$, and extending this in the general case $S_k$-equivariantly and $\circ_i$-multiplicatively (see Definition \ref{DEF:p-map}).

\begin{definition}\label{DEF:order}
A pair of binary diagrams
$( B_{1},B_{2})  \in\mathcal{B\times B}$ is an \textbf{edge-pair} if $B_1$ and $B_{2}$ differ only within a single neighborhood in one of the following six possible ways:
\medskip
\[
\resizebox{!}{1.5cm}{
\begin{pspicture}(3,0)(5.5,2)
 \rput(5.5,1){$<$} \rput(5.5,1.5){(1)}
\pscircle(4,1){.9}
 \psline(4,0)(4,.8) \psline(4,.8)(3.2,1.6) \psline(4,.8)(4.8,1.6) \psline(3.6,1.2)(4.4,2)
 \end{pspicture}
\begin{pspicture}(-.5,0)(2,2)
 \pscircle(1,1){.9} 
 \psline(1,0)(1,.8) \psline(1,.8)(.2,1.6) \psline(1,.8)(1.8,1.6) \psline(1.4,1.2)(.6,2)
 \end{pspicture}
  \quad\quad\quad\quad\quad\quad\quad\quad\quad
\begin{pspicture}(3,0)(5.5,2)
 \rput(5.5,1){$<$} \rput(5.5,1.5){(2)}
\pscircle(4,1){.9} \psline[linewidth=3pt](4,0)(4,2)
 \psline(4,.6)(3.1, 1.5) \psline(4,1.4)(4.5,1.9)
 \end{pspicture}
\begin{pspicture}(-.5,0)(2,2)
 \pscircle(1,1){.9}  \psline[linewidth=3pt](1,0)(1,2)  
 \psline(1,.6)(1.9, 1.5) \psline(1,1.4)(.5,1.9)
 \end{pspicture}}
\]
\[
\resizebox{!}{1.5cm}{
\begin{pspicture}(3,0)(5.5,2)
 \rput(5.5,1){$<$} \rput(5.5,1.5){(3)}
 \pscircle(4,1){.9}  \psline[linewidth=3pt](4,0)(4,2)
 \psline(4,.6)(3.1, 1.5) \psline(3.6,1)(3.4,1.8)
 \end{pspicture}
\begin{pspicture}(-.5,0)(2,2)
 \pscircle(1,1){.9}  \psline[linewidth=3pt](1,0)(1,2) 
 \psline(1,.6)(.1, 1.5) \psline(1,1.4)(.5,1.9)
 \end{pspicture}
 \quad\quad\quad\quad\quad\quad\quad\quad\quad
\begin{pspicture}(3,0)(5.5,2)
 \rput(5.5,1){$<$} \rput(5.5,1.5){(4)}
 \pscircle(4,1){.9}  \psline[linewidth=3pt](4,0)(4,2)
 \psline(4,.6)(4.9, 1.5) \psline(4,1.4)(4.5,1.9)
 \end{pspicture}
\begin{pspicture}(-.5,0)(2,2)
 \pscircle(1,1){.9}  \psline[linewidth=3pt](1,0)(1,2)  
 \psline(1,.6)(1.9, 1.5) \psline(1.4,1)(1.6,1.9)
 \end{pspicture} }
\]
\[
\resizebox{!}{1.5cm}{
\begin{pspicture}(3,0)(5.5,2)
 \rput(5.5,1){$<$} \rput(5.5,1.5){(5)}
\pscircle(4,1){.9} \pscircle*(4,1){.15}
 \psline[linewidth=3pt](3,1)(5,1)
 \psline(3.6,1)(3.2,1.6)
 \end{pspicture}
\begin{pspicture}(-.5,0)(2,2)
 \pscircle(1,1){.9} \pscircle*(1,1){.15}
 \psline[linewidth=3pt](0,1)(2,1) \psline(1.4,1)(1.8,1.6)
 \end{pspicture}
 \quad\quad\quad\quad\quad\quad\quad\quad\quad
\begin{pspicture}(3,0)(5.5,2)
 \rput(5.5,1){$<$} \rput(5.5,1.5){(6)}
 \pscircle(4,1){.9} \pscircle*(4,1){.15}
\psline[linewidth=3pt](3,1)(5,1) \psline(4.4,1)(4.8,.4)
 \end{pspicture} 
\begin{pspicture}(-.5,0)(2,2)
 \pscircle(1,1){.9} \pscircle*(1,1){.15} 
 \psline[linewidth=3pt](0,1)(2,1)  \psline(.6,1)(.2,.4) 
 \end{pspicture} }
\]
\end{definition}
\noindent The set of all edge-pairs $(B_1 ,B_2)$ generate a partial order \textquotedblleft$\leq$\textquotedblright on $\mathcal{B}$; for a proof see Appendix \ref{APP:partial-order}. 

\begin{lemma}\label{max-min}
The subposet $\mathcal{B}_{D}$ has a unique minimal
element $D_{\min}$ and maximal element $D_{\max}$.
\end{lemma}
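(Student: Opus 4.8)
The plan is to exhibit $D_{\min}$ and $D_{\max}$ explicitly and then verify that every element of $\mathcal B_D$ lies above $D_{\min}$ and below $D_{\max}$ in the partial order ``$\leq$''. Since $\mathcal B_D$ is, by construction, the set of all planar binary diagrams obtained from $D$ by sequences of edge insertions, every element of $\mathcal B_D$ is a binary refinement of $D$ having the same leaf set and the same underlying combinatorial ``skeleton'' of thick/thin/empty vertices. So the task splits along the three types of planar diagram: planar trees, module trees, and inner product diagrams. For planar trees the statement is the classical fact that the interval of binary trees refining a fixed planar tree $T$ has, in the Tamari order, a least element (the fully left-combed, or ``left-bracketed'', refinement at each vertex) and a greatest element (the fully right-combed refinement); I would cite this and then reduce the general case to it.

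First I would set up notation: given $D$ and a vertex $v$ of $D$ with its cyclically/linearly ordered set of incident edges, a binary refinement of $D$ amounts to choosing, independently at each vertex, a binary ``expansion'' of that vertex's corolla — a planar binary tree (for a $T$-vertex), a binary module tree (for an $M$-vertex), or a binary inner product diagram (for the $I$-vertex) — subject only to the local planarity/color constraints. The six edge-pair moves of Definition \ref{DEF:order} are exactly the covering relations among such local expansions: moves (1)--(4) are the module-tree analogues of the Tamari rotation (with (1) the pure tree rotation and (2)--(4) the cases where thick edges are involved), and moves (5)--(6) are the two ways an edge can be slid past the empty root of an inner product diagram. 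Crucially, each move is \emph{confined to a single vertex's expansion}: collapsing the two circled edges recovers a neighborhood of $D$. Hence the poset $\mathcal B_D$ is a product, over the vertices $v$ of $D$, of the corresponding ``local'' posets of binary expansions of the corolla at $v$.

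Next I would identify the local extrema. For a $T$-vertex this is the Tamari interval $[\,t_{\ell}(n),t_r(n)\,]$ inside binary trees with $n$ leaves, whose min $t_\ell(n)$ and max $t_r(n)$ are the left- and right-combs. For an $M$-vertex, the analogous statement is proved by the same argument: among binary module trees refining $M_{j,k}$, there is a minimal one $D_{\min}^{(v)}$ obtained by attaching all left-half-plane thin leaves and the thick leaf in the ``lowest'' (leftmost-combed) fashion and all right-half-plane leaves likewise, and a maximal one obtained by the opposite combing; moves (1)--(4) strictly decrease (resp.\ increase) a suitable statistic (e.g.\ the sum over internal edges of the number of leaves above the edge, or a lexicographic word read off the diagram), so the local poset is graded with unique top and bottom. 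For the (unique) $I$-vertex, moves (5)--(6) in addition let edges migrate across the empty root; here the min is the binary inner product diagram with all thin leaves pushed as far ``counterclockwise'' past the root as the planarity allows and each side left-combed, and the max the mirror configuration. Then $D_{\min}$ (resp.\ $D_{\max}$) is the diagram whose expansion at each vertex is the local minimum (resp.\ maximum), and uniqueness of $D_{\min},D_{\max}$ follows from the product decomposition together with uniqueness of each local extremum.

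The main obstacle I anticipate is the inner-product vertex: unlike the tree and module cases, the relevant moves (5) and (6) are not rotations but ``transport across the root,'' so the local poset of binary inner product diagrams refining $I_{j,k}$ is not literally a Tamari lattice, and one must check by hand that it still has a unique maximum and minimum — i.e.\ that one can always get from any binary refinement to the distinguished extremal one by applying moves (5)/(6) and (2)--(4) in a consistent direction, and that no two distinct ``locally extremal'' configurations exist. The clean way to do this is to produce a monotone rank function on each local poset: a nonnegative integer statistic $r(\cdot)$ that strictly decreases under each of the applicable $<$-moves and is $0$ only at a unique diagram (the local min) and maximal only at a unique diagram (the local max). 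Defining such an $r$ for the $I$-vertex — counting, say, for each thin leaf the number of edges between it and the root together with a tie-breaking term for left-vs-right combing — and checking its strict monotonicity against moves (2)--(6) is the one genuinely case-heavy verification; everything else is bookkeeping. Once the local extrema and their uniqueness are in hand, Lemma \ref{max-min} follows immediately from the product structure of $\mathcal B_D$.
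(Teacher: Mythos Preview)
Your proposal is correct and follows essentially the same approach as the paper: reduce to the case of a single corolla via the $\circ_i$-decomposition of $D$ (your ``product over vertices'' is exactly this), and then identify the unique extremal binary refinement for each corolla type $T_n$, $M_{k,l}$, $I_{k,l}$. The paper's proof is considerably terser than yours --- it simply displays $c_{\min}$ and $c_{\max}$ for each corolla type in a picture, gives one worked example of a chain from $D_{\min}$ to $D_{\max}$, and asserts that the general case follows from the composition structure --- so your proposed rank-function verification for the corolla case (especially the inner-product vertex) is more careful than what the paper actually writes down, but not a different strategy.
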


\begin{proof}
Applying all possible sequences of edge insertions to a corolla $c$ generates
the subposet $\mathcal{B}_{c}$ of binary diagrams with unique 
minimal and maximal elements of the following types:
\medskip
\[
 \begin{pspicture}(0,.5)(4,2)
 \rput(1,1.25){$(T_n)_{\min}=$} \rput(3.6,1.2){,}
 \psline(3,.5)(3,1) \psline(3,1)(2,2) \psline(3,1)(4,2)
 \psline(2.8,1.2)(3.6,2)  \psline(2.6,1.4)(3.2,2)  \psline(2.4,1.6)(2.8,2)  \psline(2.2,1.8)(2.4,2)
 \end{pspicture} 
 \quad
 \begin{pspicture}(0,.5)(4,2)
 \rput(1,1.25){$(M_{k,l})_{\min}=$} \rput(3.6,1.2){,}
 \psline[linewidth=3pt](3,.5)(3,2) \psline(3,1)(2,2)
 \psline(3,1.2)(3.8,2) \psline(3,1.4)(3.6,2) \psline(3,1.6)(3.4,2) \psline(3,1.8)(3.2,2)
 \psline(2.8,1.2)(2.8,2) \psline(2.6,1.4)(2.6,2) \psline(2.4,1.6)(2.4,2) \psline(2.2,1.8)(2.2,2)
 \end{pspicture} 
 \quad
 \begin{pspicture}(0,.5)(3.8,2)
 \rput(1,1.25){$(I_{k,l})_{\min}=$}
 \psline[linewidth=3pt](2,1.25)(4,1.25) \pscircle*(3,1.25){.15}
 \psline(3.2,1.25)(3.4,.5) \psline(3.4,1.25)(3.6,.5) \psline(3.6,1.25)(3.8,.5) \psline(3.8,1.25)(4,.5)
 \psline(2.8,1.25)(2.6,2) \psline(2.6,1.25)(2.4,2) \psline(2.4,1.25)(2.2,2) \psline(2.2,1.25)(2,2)
 \end{pspicture} \quad
\]
\[
 \begin{pspicture}(0,.5)(4,2)
 \rput(1,1.25){$(T_n)_{\max}=$} \rput(3.6,1.2){,}
 \psline(3,.5)(3,1) \psline(3,1)(2,2) \psline(3,1)(4,2)
 \psline(3.8,1.8)(3.6,2)  \psline(3.6,1.6)(3.2,2)  \psline(3.4,1.4)(2.8,2)  \psline(3.2,1.2)(2.4,2)
 \end{pspicture} 
 \quad
 \begin{pspicture}(0,.5)(4,2)
 \rput(1,1.25){$(M_{k,l})_{\max}=$} \rput(3.6,1.2){,}
 \psline[linewidth=3pt](3,.5)(3,2) \psline(3,1)(4,2)
 \psline(3,1.2)(2.2,2) \psline(3,1.4)(2.4,2) \psline(3,1.6)(2.6,2) \psline(3,1.8)(2.8,2)
 \psline(3.2,1.2)(3.2,2) \psline(3.4,1.4)(3.4,2) \psline(3.6,1.6)(3.6,2) \psline(3.8,1.8)(3.8,2)
 \end{pspicture} 
 \quad
 \begin{pspicture}(0,.5)(3.8,2)
 \rput(1,1.25){$(I_{k,l})_{\max}=$}
 \psline[linewidth=3pt](2,1.25)(4,1.25) \pscircle*(3,1.25){.15}
 \psline(3.2,1.25)(3.4,2) \psline(3.4,1.25)(3.6,2) \psline(3.6,1.25)(3.8,2) \psline(3.8,1.25)(4,2)
 \psline(2.8,1.25)(2.6,.5) \psline(2.6,1.25)(2.4,.5) \psline(2.4,1.25)(2.2,.5) \psline(2.2,1.25)(2,.5)
 \end{pspicture} \quad
\]
\noindent Given a diagram $D \in \mathcal{B}$, there is always a sequence of inequalities in $\mathcal{B}_D$ from $D_{\min}$ to $D_{\max}$. For example,
\begin{multline*}
\resizebox{!}{1.5cm}{
\begin{pspicture}(0,0)(4.5,2) 
 \psline[linewidth=3pt](0,1)(3.6,1) \pscircle*(1.5,1){.15} \rput(4.1,1){$<$}
\psline(1.1,1)(.3,1.8) \psline(.8,1)(0,.2) \psline(.5,1)(0,1.5) \rput(4.1,1.5){(2)}
           \psline(.7,1.4)(1.1,1.8) \psline(.4,.6)(.8,.2)
 \psline(1.9,1)(2.7,1.8)  \psline(2.2,1)(2.7,.5)  \psline(2.8,1)(3.6,.2)
           \psline(3.2,.6)(2.8,.2) \psline(3.4,.4)(3.2,.2)
           \psline(1.9,1.4)(2.3,1.8) \psline(1.7,1.6)(1.9,1.8) \psline(2.1,1.2)(1.5,1.8)
 \end{pspicture} 
\begin{pspicture}(-.3,0)(5.2,2)  
 \psline[linewidth=3pt](-.3,1)(3.6,1) \pscircle*(1.5,1){.15} \rput(4.5,1){$<$}
 \rput(4.5,1.5){(5),(2),(6)}
\psline(1.1,1)(.3,1.8) \psline(.5,1)(-.3,.2) \psline(.8,1)(.3,1.5) 
           \psline(.7,1.4)(1.1,1.8) \psline(.1,.6)(.5,.2)
 \psline(2.8,1)(3.6,1.8)  \psline(1.9,1)(2.4,.5)  \psline(2.5,1)(3.3,.2)
           \psline(2.9,.6)(2.5,.2) \psline(3.1,.4)(2.9,.2)
           \psline(2.8,1.4)(3.2,1.8) \psline(2.6,1.6)(2.8,1.8) \psline(3,1.2)(2.4,1.8)
 \end{pspicture} 
\begin{pspicture}(-.8,0)(4.7,2)  
 \psline[linewidth=3pt](-.8,1)(4.7,1) \pscircle*(1.5,1){.15} 
\psline(1.1,1)(.3,.2) \psline(0,1)(-.8,.2) \psline(.7,1)(.2,.5) 
           \psline(.7,.6)(1.1,.2) \psline(.9,.4)(.7,.2) \psline(-.4,.6)(.1,.2) 
\psline(1.9,1)(2.4,1.5)  \psline(2.5,1)(3.3,1.8)  \psline(3.9,1)(4.7,1.8)  
           \psline(2.9,1.4)(2.5,1.8) 
           \psline(3.9,1.4)(4.3,1.8) \psline(3.7,1.6)(3.9,1.8) \psline(4.1,1.2)(3.5,1.8)
 \end{pspicture} }
 \\
 \resizebox{!}{1.5cm}{
\begin{pspicture}(-1.4,0)(5.2,2) 
 \psline[linewidth=3pt](-.8,1)(4.7,1) \pscircle*(1.5,1){.15} \rput(-1.3,1){$<$} \rput(5.1,1){$<$}
\rput(-1.3,1.5){(1)} \rput(5.1,1.5){(3)}
\psline(1.1,1)(.3,.2) \psline(0,1)(-.8,.2) \psline(.7,1)(.2,.5) 
           \psline(.7,.6)(1.1,.2) \psline(.5,.4)(.7,.2) \psline(-.4,.6)(.1,.2) 
\psline(1.9,1)(2.4,1.5)  \psline(2.5,1)(3.3,1.8)  \psline(3.9,1)(4.7,1.8)  
           \psline(2.9,1.4)(2.5,1.8) 
           \psline(4.5,1.6)(4.3,1.8) \psline(4.3,1.4)(3.9,1.8) \psline(4.1,1.2)(3.5,1.8)
 \end{pspicture} 
\begin{pspicture}(-.3,0)(3.8,2) 
 \psline[linewidth=3pt](0,1)(3.8,1) \pscircle*(1.8,1){.15}
 \psline(.5,1)(0,.5)  \psline(.7,1)(0.2,.5)  \psline(.9,1)(0.4,.5)
   \psline(1.1,1)(.6,.5)  \psline(1.3,1)(0.8,.5)  \psline(1.5,1)(1,.5)
 \psline(2.1,1)(2.6,1.5) \psline(2.3,1)(2.8,1.5) \psline(2.5,1)(3,1.5) \psline(2.7,1)(3.2,1.5) 
   \psline(2.9,1)(3.4,1.5) \psline(3.1,1)(3.6,1.5) \psline(3.3,1)(3.8,1.5) 
\end{pspicture}}
\end{multline*}
Since every diagram $D$ decomposes as a $\circ_{i}$-composition of corollas,
the conclusion follows.
\end{proof}

\begin{remark}
The six local inequalities in Definition \ref{DEF:order} define one of $2^{6}$ such local systems, some of which fail to generate a partial order on $\mathcal{B}$. Of those that do, some fail to satisfy the conclusion of Lemma \ref{max-min}. However, any partial order generated by one of these local systems satisfying the conclusion of Lemma \ref{max-min} can be used to construct the desired quasi-inverse $p$.  
\end{remark}

We are ready to define the operadic quasi-inverse $p:\QA\to \CA$. We shall refer to a diagram with strictly metric edges as a \emph{fully metric} diagram.

\begin{definition}\label{DEF:p-map}
Define $p:\QA\to \CA$ on generators as follows: 
\begin{enumerate}
\item[\textit{(i)}]
On units, define $p$ to be the identity.
\item[\textit{(ii)}] 
On a fully metric diagram $(D,\fcan,m,\omega_D)  \in Q_k\hat{\mathcal A}^{\mathbf{x}}_y$, define
\[
p(D,\fcan,m,\omega_D)=\sum_{\substack{(S,\fcan,\omega(S,D))  \in C_k\hat{\mathcal A}^{\mathbf{x}}_y \\S_{\max}\leq D_{\min}}}(S,\fcan,\omega(S,D)),
\]
where $\omega(S,D)$ is the induced orientation defined in \ref{APP:binary-standard-orientation}.\medskip
\item[\textit{(iii)}]
Decompose a generator $(D,f,g,\omega)  \in Q_{\ast}\hat{\mathcal A}$ with non-metric edges as a $\circ_i$-composition of fully metric diagrams, and define $p(D,f,g,\omega)$ by extending $S_k$-equivariantly and $\circ_i$-multiplicatively,
 \emph{i.e.},
\begin{eqnarray*}
p\Big(\sigma\cdot(D,f,g,\omega)\Big)&=&\sigma\cdotp(D,f,g,\omega),\\
\quad \quad p\big((E,f,g,\omega)\circ_i (E',f',g',\omega')\big)&=&p(E,f,g,\omega)  \circ_{i}p(E',f',g',\omega').
\end{eqnarray*}
\end{enumerate}
\end{definition}

\begin{lemma}\label{p-values}
Let $(c,f,g,1)\in \QA$ be a corolla, let $(B,f,m,\ost_B)\in\QA$
be a fully metric binary diagram, and let $\omega(c_{\min},c)$ be the orientation of $c_{\min}$ given by Equation \eqref{EQ:omega(c)=xi} in \ref{APP:binary-standard-orientation}.  Then

\begin{enumerate}
\item[\textit{(i)}] $p(c,f,g,1)  =(c_{\min},f,\omega(c_{\min},c))$.
\item[\textit{(ii)}] $p(B,f,m,\ost_B)  =\left\{
\begin{array}[c]{ll}
(c,f,1), & \text{if } B=c_{\max}\\
0, & \text{otherwise.}
\end{array}
\right. $
\end{enumerate}
\end{lemma}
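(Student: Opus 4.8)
\textbf{Proof plan for Lemma \ref{p-values}.}

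The plan is to derive both statements directly from the definition of $p$ on fully metric diagrams (Definition \ref{DEF:p-map}(ii)) together with the explicit description of $D_{\min}$ and $D_{\max}$ from the proof of Lemma \ref{max-min}. For part (i), I first observe that a corolla $c$ carries no edges, so it is vacuously a fully metric diagram, and $\mathcal{B}_c$ is the entire poset of binary diagrams refining $c$. Applying Definition \ref{DEF:p-map}(ii) with $D = c$, I need the set of $S\in\mathcal{B}$ with $S_{\max}\leq c_{\min}$. Since $c_{\min}$ is itself a binary diagram (the specific left/right-combed diagram exhibited in Lemma \ref{max-min}), and $S_{\max}$ is always $\geq S$, the only way $S_{\max}\leq c_{\min}$ can hold is $S = S_{\max} = c_{\min}$: indeed $c_{\min}$ is the minimum of $\mathcal{B}_c$, so $S_{\max}\leq c_{\min}$ forces $S_{\max}=c_{\min}$, and a binary diagram equal to its own $\max$ must be $c_{\min}$ (here one uses that $c_{\min}$ has no proper refinement above it inside $\mathcal{B}_{c_{\min}}$, i.e.\ $(c_{\min})_{\max}=c_{\min}$, which is immediate from the combed shape). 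Hence the sum has exactly one term, $(c_{\min}, \fcan, \omega(c_{\min},c))$, and then $S_k$-equivariance (Definition \ref{DEF:p-map}(iii)) upgrades $\fcan$ to an arbitrary labeling $f$, giving $p(c,f,g,1)=(c_{\min},f,\omega(c_{\min},c))$. The identification of the induced orientation $\omega(c_{\min},c)$ with the quantity in Equation \eqref{EQ:omega(c)=xi} is just unwinding the definition in Appendix \ref{APP:binary-standard-orientation}.

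For part (ii), $B$ is already binary, so $\mathcal{B}_B=\{B\}$, whence $B_{\min}=B_{\max}=B$. By Definition \ref{DEF:p-map}(ii), $p(B,f,m,\ost_B)$ is the sum over $S\in\mathcal{B}$ with $S_{\max}\leq B_{\min}=B$. Now I split on dimension: since $B$ is fully metric binary of top degree, the only $S$ contributing must also have top degree $k=\#\mathcal{L}(B)-\#\mathcal{E}(B)-2$ matching that of $B$ in $\Ca{k}$, so $S$ ranges over binary diagrams as well, and $S_{\max}\leq B$ becomes $S\leq B$ with $S$ binary. A binary diagram $S$ satisfies $S\leq B$ for \emph{all} relevant purposes only when $S$ lies below $B$ in the partial order; but since we need this for the image to be nonzero, and $B$ is an arbitrary binary diagram, the condition $S_{\max}\le B_{\min}$ with $S_{\max}=S$, $B_{\min}=B$ reads $S\le B$. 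The point is that this sum — run over $S\in\Ca{k}$ with $S\le B$ — is nonzero precisely when there is such an $S$, and a single canonical choice survives: here I invoke the description of $c_{\max}$ and the fact (to be checked by inspecting the six local moves of Definition \ref{DEF:order}) that $c_{\min}$ is the unique binary diagram with $S\le c_{\min}\Rightarrow S=c_{\min}$ being too restrictive; rather, the correct bookkeeping is that $p(B,f,m,\ost_B)$ picks out the corolla $c$ exactly when $B=c_{\max}$, because then $B_{\min}=c_{\max}$ is the top element of $\mathcal{B}_c$ and the sum collapses (after the cancellations coming from $\partial^2=0$-type arguments already present for $q$) to the single term $(c,f,1)$; for every other binary $B$ the corresponding sum either is empty or cancels in pairs under the local moves, giving $0$.

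The cleanest route, and the one I would actually write, is to deduce (ii) from (i) and the fact that $p$ is a chain map (which, as stated in the excerpt, is proved in Appendix \ref{p-chain-proof}) together with $pq=\mathrm{id}$ at the level of corollas: applying $p$ to $q(c,\fcan,+1)=\sum_{B}(B,\fcan,m,\ost_B)$ and knowing the left side must equal $(c,\fcan,+1)$ up to the contractibility input (Proposition \ref{contract}, Corollary \ref{q-quasi}) forces $\sum_B p(B,\fcan,m,\ost_B)=(c,\fcan,+1)$; then a degree/leading-term argument — $c_{\max}$ is the unique binary diagram whose $D_{\min}$ dominates the whole poset $\mathcal{B}_c$ — isolates which single $B$ contributes, namely $B=c_{\max}$, and shows all others give $0$. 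I expect the main obstacle to be precisely this last isolation step: verifying that for $B\neq c_{\max}$ the defining sum $\sum_{S_{\max}\le B_{\min}}(S,\fcan,\omega(S,B))$ vanishes, which requires a careful combinatorial argument with the partial order "$\le$" (showing the contributing $S$'s pair off via the six local moves of Definition \ref{DEF:order}, with orientations forcing cancellation exactly as in the "$B/e_i$ terms cancel" step of Proposition \ref{q-bound}), and the sign/orientation bookkeeping from Appendix \ref{APP:binary-standard-orientation} to confirm the surviving term when $B=c_{\max}$ comes out as $+(c,f,1)$ rather than its negative.
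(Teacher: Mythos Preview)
Your argument for (i) is essentially the paper's, but you gloss over the one step that makes it work: the reason the summand $S$ must be binary is the \emph{degree constraint}. In Definition~\ref{DEF:p-map}(ii) the sum runs over $(S,\fcan,\omega(S,D))\in C_k\hat{\mathcal A}^{\mathbf x}_y$, and for a corolla $(c,f,g,1)\in Q_0\hat{\mathcal A}$ this forces $|S|=0$, i.e.\ $\#\mathcal L(S)=\#\mathcal E(S)+2$, so $S$ is binary. You simply wrote ``$S\in\mathcal B$'' without saying why. Once that is in place, $S=S_{\max}\le c_{\min}$ and minimality of $c_{\min}$ give $S=c_{\min}$, as you say.

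For (ii) there is a genuine error. You claim that since $B$ has ``top degree'' the contributing $S$ are also binary. The opposite is true: if $B$ has $k+2$ leaves and $k$ metric edges, then $|(B,f,m,\ost_B)|=k$, so any summand $(S,f,\omega)\in C_k\hat{\mathcal A}$ satisfies $k=\#\mathcal L(S)-\#\mathcal E(S)-2=(k+2)-\#\mathcal E(S)-2$, hence $\#\mathcal E(S)=0$ and $S$ is a \emph{corolla} $c$. This single observation replaces your entire second and third paragraphs: the condition $S_{\max}\le B_{\min}$ becomes $c_{\max}\le B$, and since $c_{\max}$ is the unique maximal element of $\mathcal B_c$ (Lemma~\ref{max-min}), this holds iff $B=c_{\max}$. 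No cancellation argument, no pairing under local moves, no appeal to $pq=\mathrm{id}$ is needed.

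Your proposed ``cleanest route'' via $pq=\mathrm{id}$ is also circular: in the paper, the identity $pq=\mathrm{id}$ on corollas is \emph{proved} by invoking Lemma~\ref{p-values}(ii), not the other way around. And even granting $\sum_B p(B,\ldots)=(c,\ldots)$, that equation alone would not preclude cancellations among the $p(B,\ldots)$, so it could not isolate which $B$ contributes.
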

\begin{proof}
\emph{(i)} Let $(S,f,\omega)\in C_{\ast}\hat{\mathcal{A}}$ be a summand of $p(c,f,g,1).$ Then $|(S,f,\omega)|=|(c,f,g,1)|=0$, which implies $\#\mathcal{L}\left(  S\right)  =\#\mathcal{E}\left(S\right)+2$. Thus $S$ is a binary diagram and $S=S_{\max}.$ Since $c$ is a corolla, $S=S_{\max}\leq c_{\min}$ implies $S=c_{\min};$ hence $p(c,f,g,1)  =(c_{\min},f,\omega(c_{\min},c))$.

\emph{(ii)} Let $(B,f,m,\ost_B)\in Q_{k}\hat{\mathcal{A}}$ be a fully metric binary diagram and let $(S,f,\omega)\in C_{k}\hat{\mathcal{A}}$ be a summand of $p(B,f,m,\ost_B)$. Then $B=B_{\min}$ has $k+2$ leaves and $k$ strictly metric edges
so that $k=|(B,f,m,\ost_B)|=|(S,f,\omega)|=(k+2)  -\#\mathcal{E}\left(  S\right)  -2.$ But $\#\mathcal{E}(S)  =0\,$implies $S=c$ is a corolla. The condition $c_{\max}\leq B_{\min}=B$ shows that when $B$ is not a maximum, $p(B,f,m,\ost_B)=0$; otherwise $p(B,f,m,\ost_B)=(c,f,+1)$ (the fact that $\omega(c,c_{\max})=+1$ is verified in Equation \eqref{EQ:omega(max)=1} in \ref{APP:binary-standard-orientation}.
\end{proof}

\begin{proposition}
\label{p-chain}The map $p:\QA\to \CA$ is a chain map.
\end{proposition}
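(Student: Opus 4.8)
The strategy mirrors the proof of Proposition \ref{q-bound}: since $\partial_C$, $\partial_Q$ respect the $S_k$-action and act as derivations with respect to $\circ_i$-composition, and since $p$ is defined to be $S_k$-equivariant and $\circ_i$-multiplicative on generators, it suffices to verify $p\,\partial_Q = \partial_C\, p$ on a single fully metric diagram $(D,\fcan,m,\omega_D)\in \QA$. First I would expand $\partial_Q(D,\fcan,m,\omega_D)$ using the boundary formula in Definition \ref{operad-QA}: it is an alternating sum over metric edges $e_i$ of the ``contraction'' terms $(D/e_i,\fcan,m/e_i,\omega_D^{\hat e_i})$ and the ``relabeling'' terms $(D_{e_i},\fcan,g_{e_i},\omega_D^{\hat e_i})$. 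Applying $p$ to the relabeling terms uses clause \textit{(iii)} of Definition \ref{DEF:p-map} (decompose $D_{e_i}$ along its unique non-metric edge $e_i$ into two fully metric pieces, then apply $p$ multiplicatively), while applying $p$ to the contraction terms uses clause \textit{(ii)} directly.

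Next I would compute $\partial_C(p(D,\fcan,m,\omega_D)) = \sum_{S_{\max}\leq D_{\min}}\partial_C(S,\fcan,\omega(S,D))$. Each $\partial_C(S,\ldots)$ is a sum over planar diagrams $S'$ with $S'/e' = S$; equivalently, $S'$ is obtained from $S$ by a single edge insertion, so $S' \in \mathcal B_S \subseteq \mathcal B$. The heart of the argument is a bijective matching of terms: I claim that the non-metric summands of $p\,\partial_Q(D,\ldots)$ coming from the relabeling terms $p(D_{e_i},\ldots)$ are exactly the diagrams appearing in $\partial_C\, p(D,\ldots)$ — i.e. a pair $(S',e')$ contributes to $\partial_C(S,\ldots)$ for some $S$ with $S_{\max}\leq D_{\min}$ precisely when, writing $D_{e_i}$ as the composite of its two fully-metric pieces $D' \circ_j D''$, the diagram $S'$ arises as $S'_\text{left}\circ_j S'_\text{right}$ with $(S'_\text{left})_{\max}\leq (D')_{\min}$ and $(S'_\text{right})_{\max}\leq (D'')_{\min}$. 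This is where Lemma \ref{max-min} is used: the poset $\mathcal B_D$ has well-defined $D_{\min}, D_{\max}$, and $\circ_i$-decomposition of $D$ along $e_i$ is compatible with the partial order, so that $S_{\max}\leq D_{\min}$ holds iff the two factors satisfy the corresponding inequalities. Simultaneously I would show the contraction terms $\sum_i (-1)^? p(D/e_i,\ldots)$ cancel among themselves, by an argument analogous to the ``$B/e_i$ terms cancel'' step in Proposition \ref{q-bound}: each $D/e_i$ that is still a diagram of the right type either has $p$ vanish on it (if $D/e_i$ is not of the form $c_{\max}$ for a corolla, by Lemma \ref{p-values}\textit{(ii)}), or it pairs up with a second contraction term $D/e_j$ via one of the six local moves, the two appearing with opposite signs because the standard orientations $\ost_D$ and $\ost_{D'}$ differ by a sign.

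The bookkeeping of signs is the main obstacle. One must track: the $(-1)^j$ in $\partial_Q$, the sign $\sgn(\sigma)$ picked up when rearranging leaves to decompose $D_{e_i}$ as a $\circ_j$-composition, the sign $(-1)^\epsilon$ with $\epsilon = i(l+1)+kn$ built into $\circ_i$ in $\CA$ (Definition \ref{DEF:CA}, Equation \eqref{EQ:circ-i}), the $(-1)^j$ appearing in $\partial_C$ through the wedge $e'\wedge\omega$, and the induced orientations $\omega(S,D)$, $\omega_D^{\hat e_i}$, and $\ost_B$ from Appendix \ref{APP:binary-standard-orientation}. These must be shown to agree on both sides of $p\,\partial_Q = \partial_C\,p$. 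I would isolate this as a separate sign computation (the paper does exactly this — deferring it to Appendix \ref{p-chain-proof}), reducing the combinatorial matching above to a statement about orientations of binary diagrams and then verifying the orientation identity edge-move by edge-move, exactly as the proof of Proposition \ref{q-bound} reduced its sign check to ``$(-1)^{i+1}\omega_B^{\hat e_i} = (-1)^{\epsilon_2}\omega_j$.'' Once the combinatorial bijection is in place and the orientation conventions are normalized, the equality $p\,\partial_Q(D,\ldots) = \partial_C\,p(D,\ldots)$ follows term by term.
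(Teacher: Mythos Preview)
Your reduction to fully metric diagrams is correct, but the core combinatorial claims that follow are wrong, and the gap is not merely a matter of sign bookkeeping.

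First, the assertion that ``the contraction terms $\sum_i (-1)^? p(D/e_i,\ldots)$ cancel among themselves'' by analogy with Proposition \ref{q-bound} fails. In Proposition \ref{q-bound} one sums over \emph{all} binary diagrams $B$ with a fixed profile, and each $B/e_i$ is matched with $B'/e'$ for a \emph{different} $B'$ related by a local move. Here there is a single fully metric diagram $D$, and the diagrams $D/e_i$ and $D/e_j$ obtained by contracting distinct edges of the same $D$ are in general unrelated. Moreover, Lemma \ref{p-values}\textit{(ii)} applies only to \emph{binary} diagrams, whereas $D/e_i$ is a fully metric diagram with $k-1$ edges, typically not binary. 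In fact the contraction terms do \emph{not} cancel among themselves: the paper's proof (Lemma \ref{k-1-lemma}) shows that in the relevant cases one has $p(D/e_0,\ldots)=p(D_{e_0},\ldots)$ or $p(D/e_0,\ldots)=p(D_{e_0},\ldots)\pm p(D/e',\ldots)$, so contraction terms are matched with relabeling terms, not with each other.

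Second, the proposed bijection between relabeling terms $p(D_{e_i},\ldots)$ and terms of $\partial_C\,p(D,\ldots)$ is not valid as stated: a diagram $S'$ appearing in $\partial_C(S,\ldots)$ with $S_{\max}\leq D_{\min}$ need not decompose along its new edge $e'$ compatibly with any decomposition of $D$ along an edge $e_i$. The paper's argument instead hinges on the positive/negative edge count $|D_{\min}|_\Pos$ (Lemma \ref{neq-k}): one shows that $p(D,\ldots)=0$ unless $|D_{\min}|_\Pos=k$, and that $p(\partial_Q D)$ can be nonzero only when $|D_{\min}|_\Pos\in\{k-1,k\}$. The $k-1$ case is handled by the delicate case analysis of Lemma \ref{k-1-lemma} (Figures \ref{5cases}--\ref{Case2:e'}), and the $k$ case by a downward induction on $k$ using an auxiliary diagram $D^+$ obtained by inserting a negative edge at a non-binary vertex (with base case Lemma \ref{B_max-lemma}). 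These ingredients---the positive/negative edge filtration and the $D^+$ induction---are the essential ideas missing from your outline.
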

A rather lengthy proof of Proposition \ref{p-chain} appears in
Appendix \ref{p-chain-proof}.

\begin{proposition}
The composition $p q=\operatorname*{Id}_{\CA}$, and there is a chain homotopy from the identity $\operatorname*{Id}_{\QA}$ to the composition $q p$. Thus, $p$ and $q$ are operadic chain homotopy equivalences.
\end{proposition}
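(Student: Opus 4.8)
The plan is to prove the two assertions $pq = \operatorname{Id}_{\CA}$ and $qp \simeq \operatorname{Id}_{\QA}$ separately, deducing the final sentence immediately afterward. For the first assertion, I would argue that since both $p$ and $q$ respect units, the $S_k$-action, and $\circ_i$-composition (the former by Definition \ref{DEF:p-map} and the latter by construction), the composite $pq$ does as well; hence it suffices to verify $pq(c,\fcan,+1) = (c,\fcan,+1)$ on each corolla $(c,\fcan,+1)\in\CAxy$. Unwinding the definitions, $q(c,\fcan,+1) = \sum_{B\in\Caxy{0}}(B,\fcan,m,\ost_B)$, a sum of fully metric binary diagrams, so $pq(c,\fcan,+1) = \sum_B p(B,\fcan,m,\ost_B)$. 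By Lemma \ref{p-values}(ii), every term vanishes except $B = c_{\max}$, which contributes $(c,\fcan,+1)$ since $\omega(c,c_{\max}) = +1$. This gives $pq = \operatorname{Id}_{\CA}$.

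For the homotopy $qp \simeq \operatorname{Id}_{\QA}$, the cleanest route is to invoke the contractibility of the associated polytopes (Proposition \ref{contract}) together with the cubical/cellular structure supplied by the $W$-construction, exactly as in Boardman–Vogt \cite{BV}. Concretely, for each coloring $\mathbf{x}\times y$, both $\CAxy$ and $\QAxy$ are free chain complexes computing the (reduced) homology of the same contractible polytope, $p_y^{\mathbf x}$ and $q_y^{\mathbf x}$ are mutually inverse quasi-isomorphisms up to homotopy by Corollary \ref{q-quasi} and Proposition \ref{p-chain}, and since $p_y^{\mathbf x}q_y^{\mathbf x} = \operatorname{Id}$ on the nose, a standard homological-algebra argument (a map inducing the identity on homology of a complex of projectives that is acyclic in positive degrees, or simply the fact that $q_y^{\mathbf x}p_y^{\mathbf x} - \operatorname{Id}$ is a chain map inducing zero on homology between free complexes of the homotopy type of a point) produces a chain homotopy $h_y^{\mathbf x}\colon \QAxy \to \QAxy$ with $\partial_Q h_y^{\mathbf x} + h_y^{\mathbf x}\partial_Q = q_y^{\mathbf x}p_y^{\mathbf x} - \operatorname{Id}$. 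Assembling these over all colorings gives a chain homotopy $h\colon\QA\to\QA$ from $\operatorname{Id}_{\QA}$ to $qp$. The last sentence of the statement is then immediate: $p$ and $q$ are chain maps (Propositions \ref{q-bound}, \ref{p-chain}), respect all the operadic structure, and are mutually inverse up to chain homotopy, hence they are operadic chain homotopy equivalences.

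I expect the main obstacle to be the homotopy $qp \simeq \operatorname{Id}$, and more precisely whether one wants the homotopy $h$ to itself be compatible with the operad structure. The identity $pq = \operatorname{Id}$ and the bare existence of a chain homotopy $h$ per coloring are both routine given the tools already assembled; what takes care is to ensure the collection $\{h_y^{\mathbf x}\}$ can be chosen coherently — i.e., so that $h$ is $S_k$-equivariant and behaves predictably under $\circ_i$ — which is what justifies calling $p$ and $q$ \emph{operadic} homotopy equivalences rather than merely chain homotopy equivalences coloring-by-coloring. In practice one builds $h$ inductively on the number of vertices (equivalently on degree), using the contractibility of each polytope at each stage to extend, and checking at each step that the extension respects the (finitely many) faces coming from $\circ_i$-decompositions; the freeness of $\QA$ on fully metric diagrams makes the equivariant extension automatic once representatives are chosen. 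Since the statement as phrased only asks for chain homotopy equivalences of the underlying complexes (with the operadic nature of $p,q$ already established), I would present the coloring-wise construction and remark that the homotopies assemble, deferring any finer coherence to the fact that only $p$ and $q$ — not $h$ — are needed to be operadic in the applications that follow.
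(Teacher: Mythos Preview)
Your proof is correct and follows essentially the same route as the paper. The argument for $pq=\operatorname{Id}_{\CA}$ is identical to the paper's. For $qp\simeq\operatorname{Id}_{\QA}$, the paper also appeals to the contractibility of the pairahedra (Proposition~\ref{contract}) to build the homotopy; the only difference is that the paper makes the standard homological-algebra lifting explicit as an induction on degree, starting at degree~$0$ by computing $qp(c,f,g,1)=(c_{\min},f,n,1)$ via Lemma~\ref{p-values}(i), choosing a path to $(c,f,g,1)$, and extending $S$ over all of $\Qa{0}$ as an $(\operatorname{Id},qp)$-derivation before proceeding to higher degrees by the usual ``cycle is a boundary'' step. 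Your concern about operadic compatibility of the homotopy is well placed, and the paper addresses it only partially (the derivation extension handles degree~$0$; higher degrees are treated pointwise on fully metric generators), which is consistent with your observation that only $p$ and $q$---not $h$---need to be operad maps for the stated conclusion.
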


\begin{proof}
Since $p$ and $q$ respect $\circ_{i}$-compositions and $C_{\ast} \hat{\mathcal{A}}$ is generated by $\circ_{i}$-compositions of corollas, it is sufficient to check $p q$ on a corolla $(c,f,1)\in \CAxy$. With Lemma \ref{p-values} \emph{(ii)}, we calculate
\[ p(q(c,f,1))=\sum_{B\in \Caxy{0}} p(B,f,m,\ost_B)=p(c_{\max}, f, m,\ost_{c_{\max}})=(c,f,1). \] 

On the other hand, we construct a chain homotopy $S:{\QA}\to {\QA}$, such that $ \partial_Q S+S \partial_Q=q p -\text{Id}_{\QA}$. The chain homotopy $S:\Qa{n}\to \Qa{n+1}$ is defined by induction on the degree $n$. For $n=0$, and the corolla $(c,f,g,1)\in \Qa{0}$, Lemma \ref{p-values}\emph{(i)} shows that
\[
q(p(c,f,g,1))=q(c_{\min},f,\omega(c_{\min},c))=(c_{\min},f,n,1),
\]
where $n$ is the constant non-metric assignment. The last sign follows from Equation \eqref{EQ:omega(c)=xi}, $\omega(c_{\min},c)=\xi_{c_{\min}}$, and the description of $\xi_{c_{\min}}$ given in Remark \ref{REM:xi-by-composition}. Since, by Proposition \ref{contract}, all pairahedra are contractible, we can find a path $S(c,f,g,1)\in \Qa{1}$ with boundary endpoints $(q p)(c,f,g,1)=(c_{\min},f,n,1)$ and $(c,f,g,1)$, so that with $\partial_Q (c,f,g,1)=0$, we have:
$$\left(\partial_Q  S+S \partial_Q\right) (c,f,g,1)=\left(q p-\operatorname*{Id}{}_{\QA}\right)(c,f,g,1).$$ 
Note that $q p$ is $\circ_i$-multiplicative and we extend $S$ to all of $\Qa{0}$ as an $(\operatorname*{Id},q p)$-derivation. With this, $\partial_Q S+S \partial_Q=q p -\operatorname*{Id}_{\QA}$ holds on all of $\Qa{0}$.

Inductively, assume that for all $k<n,$ the map $S$ has been extended as a chain homotopy from $\operatorname*{Id}_{\QA}$ to $q p$ on $Q_{k}\mathcal{\hat{A}}$.
Then, on $\Qa{n}$,
\begin{multline*}
\partial_{Q}\left(  qp-\operatorname*{Id}{}_{\QA}-S\partial_{Q}\right) 
=\left(  qp-\operatorname*{Id}{}_{\QA}\right)    \partial_{Q}  -\partial_{Q}S\partial
_{Q} \\
  =\left(  qp-\operatorname*{Id}{}_{\QA}-\partial_{Q}S\right)    \partial_{Q}
=S\partial_{Q}^{2}=0.
\end{multline*}
%
%
%
Thus, since the pairahedra are contractible, 
$(qp-\operatorname*{Id}_{\QA}-S\partial_{Q})  (D,f,m,\omega) \in Q_{n}\mathcal{\hat{A}}$ is a boundary for every $(D,f,m,\omega)\in Q_{n}\mathcal{\hat{A}}$. Choose $(D',f',g',\omega')\in Q_{n+1}\mathcal{\hat{A}}$ such that $\partial_{Q} (D',f',g',\omega')  =(q p-\operatorname*{Id}_{\QA}-S\partial_{Q})   (D,f,m,\omega)  $ and define $S (D,f,m,\omega)   = (D',f',g',\omega') $. Then $(  qp-\operatorname*{Id}_{\QA}-S\partial_{Q}-\partial_{Q}S)  (D,f,m,\omega) =0$, so that $S\partial_{Q}+\partial_{Q}S=q p-\operatorname*{Id}_{\QA}$ holds on $Q_{n}\mathcal{\hat{A}}$.
%
%
Hence $S:q p\simeq\operatorname*{Id}_{\QA}$.
\end{proof}

\section{The diagonal $\Delta_{C}:C_{\ast}\hat{\mathcal{A}}\rightarrow
C_{\ast}\hat{\mathcal{A}}\otimes C_{\ast}\hat{\mathcal{A}}$}\label{SEC:diag}

Let $I^{n}$ denote the standard $n$-cube, let $I=\sqcup_{n}I^{n}$ and let
$C_{\ast}\left(  I\right)  $ denote cellular chains. The Serre diagonal
$\Delta_{I}:C_{\ast}\left(  I\right)  \rightarrow C_{\ast}\left(  I\right)
\otimes C_{\ast}\left(  I\right)  $ acts on each cube $I^{n}$ of the cubical
complex $Q_{\ast}\hat{\mathcal{A}}$ and induces a coassociative diagonal
$\Delta_{Q}$ on $\QA$. In turn, $\Delta_{Q}$ induces a non-coassociative diagonal $\Delta_C$ 
on $\CA$ via $p$ and $q$. 

Given a generator $(D,f,g,\omega)  \in \QA$ and a subset $X\subseteq \left\{e_1, \ldots, e_k\right\}= g^{-1}(m)$, let $\bar X =g^{-1}(m)  \smallsetminus X$ and $\rho(X)=\#\left\{(e_i,e_j)\in X\times \bar{X}\mid i<j\right\}$. Consider the following related generators: 
\begin{itemize}
\item
$(D/X,f,g_{D/X},\omega_{D/X})$, where 
$D/X$ is obtained from $D$ by contracting the edges of $X$, 
$g_{D/X}$ is the labeling induced by $g$, and $\omega_{D/X}$ is the orientation obtained from $\omega$ by deleting all factors in $X$;\smallskip

\item
$(D_{\bar{X}},f,g_{D_{\bar{X}}},\omega_{D_{\bar{X}}})$, where 
$D_{\bar{X}}$ is obtained from $D$ by reversing labels on the edges in $\bar{X}$,  
$g_{D_{\bar{X}}}$ agrees with $g$ except on $\bar{X}$, and 
$\omega_{D_{\bar{X}}}$ is the orientation obtained from $\omega$ by deleting all factors in $\bar{X}$.
\end{itemize}

\begin{definition}
Define $\Delta_{Q}:\QA\rightarrow \QA\otimes \QA$ on generators by
\[
\Delta_{Q}(D,f,g,\omega)=\sum_{X\subseteq {g}^{-1}(m)  }(-1)^{\rho(X)} (D/X,f,g_{D/X},\omega_{D/X})  \otimes(D_{\bar{X}},f,g_{D_{\bar{X}}},\omega_{D_{\bar{X}}}).
\]

\end{definition}
\noindent
In particular, if $c$ is a corolla, then $\Delta_{Q}(c,f,g,1)=(c,f,g,1) \otimes(c,f,g,1)  ,$ where $g$ is the empty map.

The restriction of $\Delta_{Q}$ to the submodule $\mathcal{A}_\infty$ generated by metric planar rooted trees defines a (strictly coassociative) DG coalgebra structure on $\mathcal{A}_\infty$ that commutes with the operadic structure, see \cite[Proposition 5.1]{MS}.

\begin{definition}
Define $\Delta _{C}:\CA\rightarrow \CA\otimes \CA$ to be the composition 
\begin{equation*}
\CA\overset{q}{\longrightarrow }\QA\overset{\Delta _{Q}}{\longrightarrow }\QA%
\otimes \QA\overset{p\otimes p}{\longrightarrow }\CA\otimes \CA.
\end{equation*}
\end{definition}

\noindent To simplify notation, we sometimes abuse notation and write $D\in\CAxy$ when we mean $(D,f,\omega_D)$.

\begin{proposition}
\label{P:Delta(c)}On a corolla $(c,f,+1)\in \Caxy{k}$ we have 
\begin{equation*}
\Delta _{C}(c,f,+1)=\sum_{\substack{ S\otimes T \in \Caxy{i}\otimes \Caxy{j} \\ S_{\max
}\leq T_{\min } \\ i+j=k}}\pm (S,f,\omega _{S})\otimes (T,f,\omega _{T}) .
\end{equation*}
\end{proposition}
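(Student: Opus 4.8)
The plan is to compute $\Delta_C(c,f,+1) = (p\otimes p)\Delta_Q(q(c,f,+1))$ by unwinding the three maps in sequence and using the explicit values of $p$ on binary diagrams from Lemma \ref{p-values}. First I would apply $q$: by definition $q(c,f,+1) = \sum_{B\in\Caxy{0}}(B,\fcan,m,\ost_B)$, a sum over all fully metric binary diagrams with the shape of $c$ (absorbing $f$ versus $\fcan$ into the $S_k$-equivariance, which both $q$, $\Delta_Q$, and $p\otimes p$ respect). Next I would apply $\Delta_Q$ to each summand: $\Delta_Q(B,\fcan,m,\ost_B) = \sum_{X\subseteq\mathcal E(B)}(-1)^{\rho(X)}(B/X,\fcan,g_{B/X},\omega_{B/X})\otimes(B_{\bar X},\fcan,g_{B_{\bar X}},\omega_{B_{\bar X}})$. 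The key point is that $B/X$ has metric edges exactly $X$ (so it is fully metric as a diagram with $\#X$ metric edges, sitting in degree $|X|$) and $B_{\bar X}$ has metric edges exactly $X$ as well, with the edges of $\bar X$ reverted to non-metric.

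Then I would apply $p\otimes p$. On the left tensor factor $(B/X,\fcan,g_{B/X},\omega_{B/X})$ — a fully metric diagram — Definition \ref{DEF:p-map}(ii) gives $p(B/X,\fcan,\dots) = \sum_{S_{\max}\leq (B/X)_{\min}} \pm(S,\fcan,\omega(S,B/X))$. On the right factor $(B_{\bar X},\fcan,g_{B_{\bar X}},\omega_{B_{\bar X}})$, which has non-metric edges $\bar X$, Definition \ref{DEF:p-map}(iii) says to decompose along those non-metric edges into fully metric pieces and apply $p$ multiplicatively; concretely, reverting the edges of $\bar X$ to non-metric and then contracting them (which is what $p$ does on each fully metric piece via part (ii) of Lemma \ref{p-values}, picking out the maximum) amounts to evaluating $p$ on the fully metric diagram $B/\bar X$ with the appropriate sign — more precisely $p(B_{\bar X},\dots)$ is nonzero only when each maximal fully metric constituent is itself a $c_{\max}$, which forces $B$ to have a specific form, and the surviving term is $(B/\bar X, \fcan, \omega)$ provided $(B/\bar X)$ equals $(B/\bar X)_{\min}$'s maximum, i.e. provided $B_{\bar X}$'s fully metric pieces are maxima. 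The cleanest way to organize this: set $T := B/\bar X$ when the right factor survives, and $S$ ranges over diagrams with $S_{\max}\leq (B/X)_{\min}$; one then checks that $B/X$ and $B/\bar X$ together recover $B$, and the surviving contributions are indexed by pairs $(S,T)$ with $T$ fully metric of the shape of $B/\bar X$, $S$ of the shape of $B/X$, and $i+j = k$ where $i=|X|$, $j=|\bar X|$.

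The main combinatorial step is to show the constraint "$S_{\max}\leq T_{\min}$" emerges correctly. The condition from the left factor is $S_{\max}\leq (B/X)_{\min}$. The condition for the right factor to survive is that $B_{\bar X}$'s non-metric edges, when reverted and the diagram decomposed, yield only maxima — equivalently $B$ lies "above" $B/\bar X$ in the partial order in the sense that $(B/\bar X)_{\min}\leq B$ when we view the metric part, so that $T = B/\bar X$ with $T_{\max} \leq B$... Here I would invoke that $\mathcal B_B$ ranges between $B_{\min}$ and $B_{\max}$ (Lemma \ref{max-min}), and that contracting $X$ versus $\bar X$ splits $B$'s edge set; the key identity is that $S_{\max}\leq (B/X)_{\min}$ and $T_{\max}\leq (B/\bar X)_{\min}\leq B$ combine, as $X,\bar X$ partition the edges, into $S_{\max}\leq T_{\min}$ (one uses that $(B/X)_{\min}$ and $T=B/\bar X$ are "complementary contractions" of a common binary diagram $B$, so that $(B/X)_{\min} \leq T_{\min}$ when $T_{\max}\leq B$). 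I expect this order-theoretic bookkeeping — tracking which contractions of $B$ give valid $(S,T)$ pairs and verifying the resulting inequality is exactly $S_{\max}\leq T_{\min}$ with $i+j=k$ — to be the principal obstacle; the signs $\pm$ I would leave packaged, since the statement only claims them up to sign, citing Appendix \ref{APP:binary-standard-orientation} and the sign conventions of $\Delta_Q$, $p$, $q$ for their precise values.
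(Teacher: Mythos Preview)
Your overall plan --- unwind $(p\otimes p)\circ\Delta_Q\circ q$ and use Lemma~\ref{p-values}(ii) on the right factor --- is exactly the paper's approach, but a bookkeeping error has led you to invent an obstacle that isn't there.

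First, a minor slip: $B/X$ has remaining metric edges $\bar X$, not $X$ (you've contracted $X$), so $|B/X|=|\bar X|$.

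The real issue is your identification of the surviving right factor. When you decompose $B_{\bar X}$ along its non-metric edges $\bar X$ into fully metric binary pieces $B_1,\dots,B_r$, each $B_i$ has edges drawn from $X$. By Lemma~\ref{p-values}(ii), $p(B_i)$ is the corolla $c_i$ (all its edges, which lie in $X$, collapsed) when $B_i$ is maximal, and zero otherwise. Re-composing these corollas along the edges of $\bar X$ therefore yields $B$ with all of $X$ contracted --- that is, $B/X$, not $B/\bar X$. So $p(B_{\bar X},\dots)=\pm(B/X,\dots)$ when nonzero.

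This is the key point you missed: the surviving right tensor factor is the \emph{same diagram} $B/X$ that appears as the input to $p$ on the left. Setting $T:=B/X$, the left factor is exactly $p(T,\fcan,m,\dots)=\sum_{S_{\max}\le T_{\min}}(S,\dots)$ by Definition~\ref{DEF:p-map}(ii), and the inequality $S_{\max}\le T_{\min}$ falls out immediately. There is no further ``order-theoretic bookkeeping'' to do; the only remaining step is to observe that for each $T$ there is a unique pair $(B,X)$ with all pieces $B_i$ maximal and $T=B/X$ (namely, insert the maximal binary tree at each vertex of $T$), which re-indexes the double sum over $(B,X)$ as a single sum over $T$.
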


\begin{proof}
We evaluate the composition $(p\otimes p)\circ \Delta _{Q}\circ q$: 
\begin{eqnarray*}
\Delta _{C}(c,f,+1) &=&(p\otimes p)\Delta _{Q}(q(c,f,+1))\\ && \\&=&\sum_{B\in\Caxy{0}}(p\otimes p)\Delta _{Q}(B,f,m,\ost_{B})\\
&&  \\
&=&\sum_{\substack{ B\in\Caxy{0} \\ X\subseteq \mathcal E(B)}}\pm \,\,p(B/X,f,g_{B/X},\omega_{B/X})\otimes p(B_{\bar{X}},f,g_{B_{\bar{X}}},\omega_{B_{\bar{X}}}).  
\end{eqnarray*}%
To evaluate $p(B_{\bar{X}},f,g_{B_{\bar{X}}},\omega_{B_{\bar{X}}}),$ note that the non-metric
edges of the binary diagram $B$ are exactly the edges in $\bar{X}=\left\{
e_{1},\ldots ,e_{k-1}\right\} ,$ and $B$ decomposes as a $\circ _{i}$%
-composition of fully metric binary diagrams $B_{1},\dots ,B_{k}$ along the
edges of $\bar{X}:$%
\begin{equation*}
(B_{\bar{X}},f,g_{B_{\bar{X}}},\omega_{B_{\bar{X}}})=\pm \sigma \cdot \left[(B_{1},f_{1},m,\omega
_{1})\circ _{e_{1}}\cdots \circ _{e_{k-1}}(B_{k},f_{k},m,\omega _{k})\right]
\end{equation*}%
(such decompositions are unique up to operadic associativity). Since $B_{i}$
is fully metric, Lemma \ref{p-values} implies that $p(B_{i},f_{i},m,\omega
_{i})$ is a corolla if $B_{i}$ is maximal and vanishes otherwise. Hence if $%
p(B_{\bar{X}},f,g_{B_{\bar{X}}},\omega_{B_{\bar{X}}})\neq 0,$ each $B_{i}$ is maximal and $p(B_{\bar{X}},f,g_{B_{\bar{X}}},\omega_{B_{\bar{X}}})=\pm (B/X,f,\omega_{B/X})$. Furthermore,
if $(T,f,\omega _{T})\in \CAxy,$ there is a unique $B$
with maximal $B_{i}$'s such that $T=B/X.$ Consequently,%
\begin{multline*}
\Delta _{C}(c,f,1)=\sum_{\substack{ B\in\Caxy{0} \\ %
X\subseteq \mathcal{E}(B) \\ \text{all $B_{i}$ maximal}}}\pm
p(B/X,f,g_{B/X},\omega_{B/X})\otimes (B/X,f,\omega_{B/X}) \\
=\sum_{\substack{ S\otimes T\in \Caxy{i}\otimes \Caxy{j} \\ S_{\max
}\leq T_{\min } \\ i+j=k}}\pm (S,f,\omega _{S})\otimes (T,f,\omega _{T}) .
\end{multline*}
\end{proof}

\begin{remark}
In \cite[Proposition 5.1]{MS}, Markl and Shnider proved the special case of
Proposition \ref{P:Delta(c)} with $\Delta _{C}$ restricted to the submodule $%
\mathcal{A}_{\infty }$ generated by planar rooted trees. Since $\Delta _{Q}$
is induced by the Serre diagonal on $I^{n},$ it is strictly coassociative on 
$\QA$. However, $\Delta _{C}$ is not coassociative. In fact, Markl and
Shnider also remarked that $\Delta _{C}$ cannot be chosen to be
coassociative on $\mathcal{A}_{\infty }$. Nevertheless, this structure is
homotopy coassociative, and the formula for $\Delta _{C}$ in Proposition \ref%
{P:Delta(c)} extends to a $A_{\infty }$-coalgebra structure on $\CA$ in a
natural way (for the special $\mathcal{A}_{\infty }$ case see \cite{L}).
\end{remark}

\section{Computations}\label{SEC:computations}
In this section, we make explicit computations and remarks about the diagonal $\Delta_C:\CA\rightarrow \CA\otimes \CA$ in various situations. In Example \ref{EX:Delta(I_2,0)} we calculate the diagonal of $I_{2,0}$, and in Example \ref{Delta-for-cyclic} we make some general remarks about the diagonal as it would appear for a cyclic $A_\infty$-algebra. Both of these examples are given on the level of operads, and may be transferred to the corresponding statements for homotopy inner products. In Remark \ref{REM:strong} we comment on the diagonal for strong homotopy inner products in the sense of \cite{C}. In all of these considerations, we ignore signs and always write ``$+$'' regardless of orientation.

\begin{example}\label{EX:Delta(I_2,0)}
To evaluate $\Delta_{C}=\left(  p\otimes p\right)  \circ\Delta_{Q}\circ q$ on the corolla $I_{2,0},$ note that
\[
q(I_{2,0})=\iia{m}{m}+\iib{m}{m}+\iie{m}{m}+\iic{m}{m}+\iid{m}{m}
 \]
is the sum of the five squares in the metric pairahedron pictured in Figure \ref{FIG-3}.
\begin{figure}[ht]
\[
\includegraphics[height=3.3in,width=3.65in]{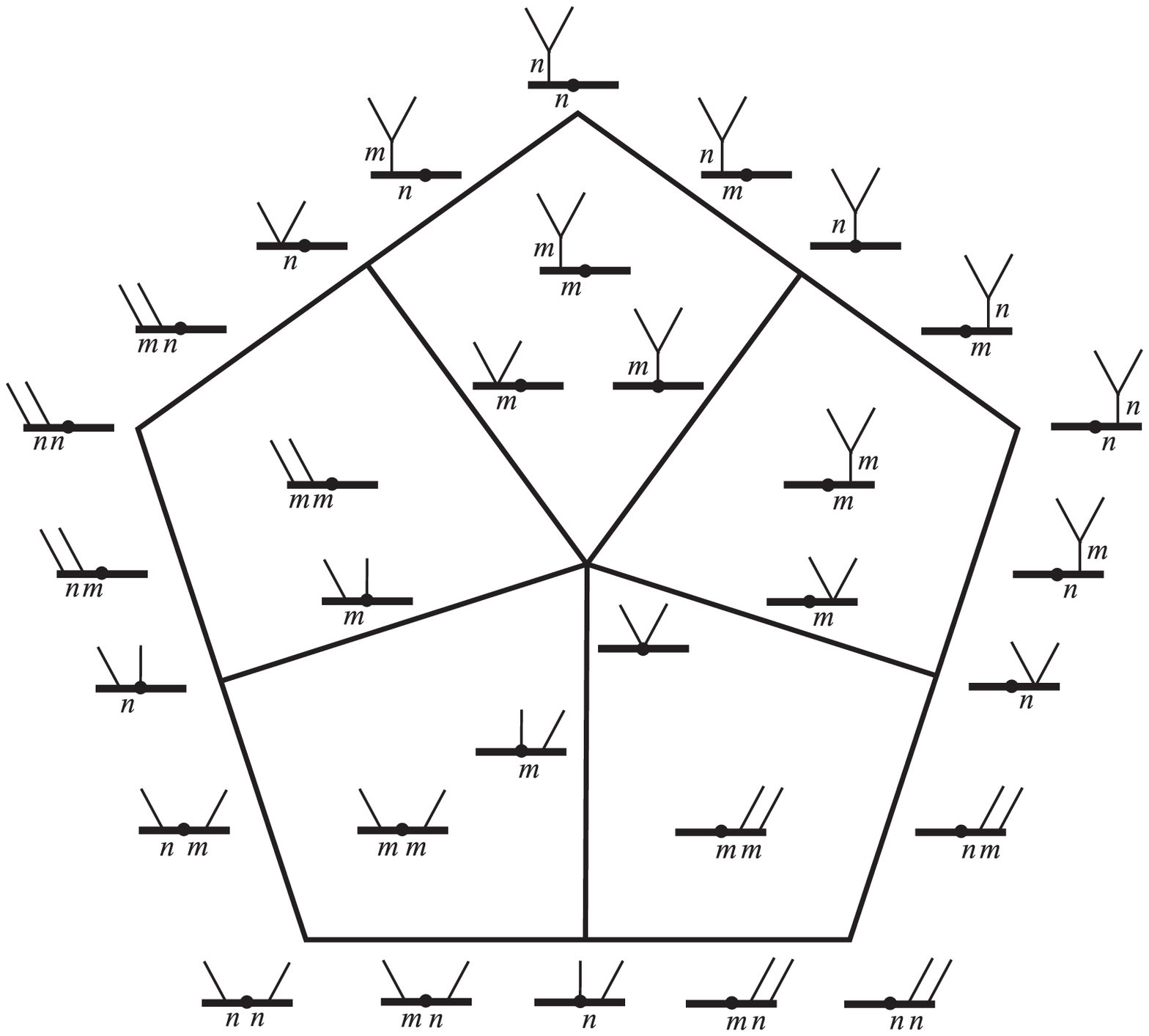}
\]
\caption{The metric pairahedron associated with $I_{2,0}$ (metric edges $m$ vary in length; non-metric edges $n$ have fixed length $1$).}\label{FIG-3}
\end{figure}
Evaluating the Serre diagonal on each of these squares gives
\begin{eqnarray*}
&  &  \Delta_Q\Big(\iia{m}{m}+\iib{m}{m}+\iie{m}{m}+\iic{m}{m}+\iid{m}{m}\Big)= \\
&  & \,\,\,\,\, \iik\otimes \iia{m}{m}+\iia{m}{m}\otimes \iia{n}{n}+\iif{m}\otimes \iia{m}{n}+\iij{m}\otimes \iia{n}{m} \\
&  & +\iik\otimes\iib{m}{m}+\iib{m}{m}\otimes \iib{n}{n}+\iif{m}\otimes \iib{m}{n}+\iig{m}\otimes \iib{n}{m} \\
&  & +\iik\otimes \iie{m}{m}+\iie{m}{m}\otimes \iie{n}{n}+\iii{m}\otimes \iie{m}{n}+\iij{m}\otimes \iie{n}{m} \\
&  & +\iik\otimes\iic{m}{m}+\iic{m}{m}\otimes \iic{n}{n}+ \iig{m}\otimes \iic{n}{m}+\iih{m}\otimes \iic{m}{n}\\
&  & +\iik\otimes \iid{m}{m}+\iid{m}{m}\otimes \iid{n}{n}+\iii{m}\otimes \iid{m}{n}+\iih{m}\otimes \iid{n}{m}.
\end{eqnarray*}
Next, we need to apply $p\otimes p$ to the above terms. It turns out, that many of the diagrams above lie in the kernel of $p$. For example, $p\big(\iij{m}\big)$ is given by a sum of diagrams $S$ of degree $1$ with $S_{\max}\leq \big(\iij{}\big)_{\min}=\iia{}{}$. However, $\iia{}{}$ is the global minimum of $\iik$, and there is no diagram $S$ with the required property.
An explicit check shows that the only terms that are not in the kernel of $p\otimes p$ are the following:
\[
\iig{m}\otimes \iic{n}{m}+\iik\otimes \iid{m}{m}+\iid{m}{m}\otimes \iid{n}{n}+\iii{m}\otimes \iid{m}{n}+\iih{m}\otimes \iid{n}{m}.
\]
Applying $p\otimes p$ we obtain
\begin{eqnarray*}
\Delta_C\Big(\iik \Big)&=&\iif{}\otimes\iig{} +\iia{}{}\otimes \iik+\iik\otimes \iid{}{}\\ 
&& +\iij{}\otimes \iii{}+\Big(\iif{}+\iig{}\Big)\otimes \iih{}.
\end{eqnarray*}
Note that if $D$ is a right-factor in a non-primitive term of $\Delta_C(I_{2,0})$, the set of all left-hand factors that pair off with $D$ in $\Delta_C(I_{2,0})$ form a path from the minimal vertex of $I_{2,0}$ to the maximal vertex of $I_{2,0}$ (see Figure \ref{FIG-4}).  
\begin{figure}[ht]
\includegraphics[height=2.2139in,width=2.5737in]{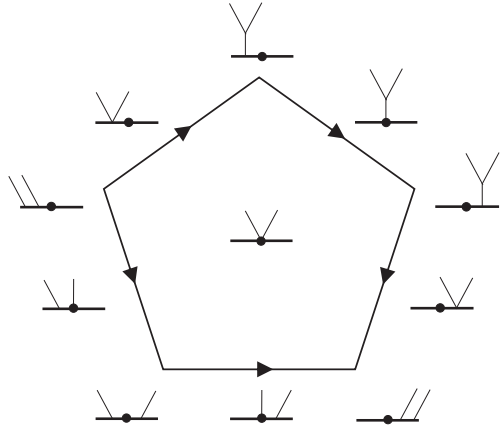}
\caption{The vertex poset of $I_{2,0}.$}\label{FIG-4}
\end{figure}
\end{example}
\medskip

\begin{example}\label{Delta-for-cyclic}
Let us calculate $\Delta_{C}$ in low dimensions as it would appear in the setting of a \emph{cyclic} $A_{\infty}$-algebra. In this case, the corolla $I_{k,l}$ corresponds to a zero map whenever $k+l>0$, and the non-trivial inner product diagram $I_{0,0}$ consists of two thick horizontal edges joined at a common vertex. Thus we compute $\Delta_{C}(c)=\sum\pm
S\otimes T$ by summing over all diagrams for inner products $S$ and $T$ such that $S_{\max}\leq T_{\min}$ (see Proposition \ref{P:Delta(c)}), and we will only keep track of terms involving $I_{0,0}$, that is we mod out by terms involving higher inner products $I_{k,l}$ with $k+l>0$. The results below can be obtained as in Example \ref{EX:Delta(I_2,0)}, and subsequently removing all solutions involving higher inner products $I_{k,l}$ with $k+l>0$. However, these calculations are much more involved than those in Example \ref{EX:Delta(I_2,0)}, and we leave them as an exercise to the reader. We obtain the following:
\begin{eqnarray*}
\Delta_C(I_{0,0})&=& I_{0,0}\otimes I_{0,0}, \\
\Delta_C(I_{1,0})&=&\Delta_C(I_{0,1})\,\,\, =\,\,\, \Delta_C(I_{1,1})\,\,\, =\,\,\, 0,\\
\Delta_C(I_{2,0})&\equiv&
\resizebox{!}{.5cm}{
 \begin{pspicture}(0,.8)(2,1.7)
 \pscircle*(1,1){.15}  \psline[linewidth=3pt](0,1)(2,1) 
 \psline(.5,1)(.3,1.6)  \psline(.5,1)(.7,1.6)
 \end{pspicture} }
 \otimes
\resizebox{!}{.5cm}{
 \begin{pspicture}(3,.8)(5,1.7)
 \pscircle*(4,1){.15}  \psline[linewidth=3pt](3,1)(5,1)
 \psline(4.5,1)(4.7,1.6) \psline(4.5,1)(4.3,1.6)
 \end{pspicture} } \quad\quad (\text{mod }\{I_{k,l}\,\,:\,\,k+l>0 \}).
\end{eqnarray*}
Note that $\Delta_C(I_{2,0})$ corresponds to the chain homotopy $\varrho_{2,0}$ mentioned in the introduction. More precisely, in the introduction, we considered the canonical example of the $\infty$-bimodule $A$ over the $A_\infty$-algebra $A$ as defined in the appendix, \emph{cf.} Equation \eqref{EQU:canon-example}, whose $\infty$-bimodule structure $\lambda_{j',j''}$ is given by the $A_\infty$-algebra structure $\mu_{j'+j''+1}$, and similarly for the $\infty$-bimodule structure for $B$.

The above expression is a special case of the following more general formula:
\[
\Delta_C(I_{k,l})=\sum_{r=0}^{k+l} \sum_{i=1}^{p_r} S_i^{(r)}\otimes T_i^{(k+l-r)},
\]
where $S_i^{(q)}, T_i^{(q)}\in \Ca{q}$ are of degree $q$. In the special case of concern in this example, that is for cyclic $A_\infty$-algebras, the $r=0$ and $r=k+l$ terms vanish, and the $r=1$ and $r=k+l-1$ terms are given by the formula:
\begin{eqnarray*}
\Delta_C(I_{k,l})& \equiv &
\resizebox{!}{.5cm}{  \begin{pspicture}(0,.9)(2,1.8)  
 \pscircle*(1,1){.15}  \psline[linewidth=3pt](0,1)(2,1)
 \psline(.5,1)(.2,1.6)  \psline(.5,1)(.4,1.6) \psline(.5,1)(.6,1.6) \psline(.5,1)(.8,1.6)
 \psline(.5,1)(.2,.4)  \psline(.5,1)(.4,.4)  \psline(.5,1)(.6,.4)  \psline(.5,1)(.8,.4)
 \end{pspicture} } 
 \otimes \left(
\resizebox{!}{.5cm}{  \begin{pspicture}(0,.9)(2,1.8) 
 \pscircle*(1,1){.15}  \psline[linewidth=3pt](0,1)(2,1)
 \psline(1.5,1)(1.3,1.3)  \psline(1.5,1)(1.95,1.9) \psline(1.65,1.3)(1.45,1.6) \psline(1.8,1.6)(1.6,1.9)
\psline(.8,1)(.6,.4) \psline(.6,1)(.4,.4) \psline(.4,1)(.2,.4) \psline(.2,1)(0,.4)
 \end{pspicture} }  
 +
\resizebox{!}{.5cm}{  \begin{pspicture}(0,.9)(2,1.8)  
 \pscircle*(1,1){.15}  \psline[linewidth=3pt](0,1)(2,1)
 \psline(1.7,1)(1.5,1.5)  \psline(1.7,1)(2.1,1.8) \psline(1.9,1.4)(1.7,1.8) \psline(1.45,1)(1.25,1.5)
\psline(.8,1)(.6,.4) \psline(.6,1)(.4,.4) \psline(.4,1)(.2,.4) \psline(.2,1)(0,.4)
 \end{pspicture} }  
 + \dots+
\resizebox{!}{.5cm}{  \begin{pspicture}(0,.9)(2,1.8)  
 \pscircle*(1,1){.15}  \psline[linewidth=3pt](0,1)(2,1)
\psline(1.3,1)(1.1,1.6) \psline(1.5,1)(1.3,1.6) \psline(1.7,1)(1.5,1.6) \psline(1.7,1)(1.9,1.6)
\psline(.8,1)(.6,.4) \psline(.6,1)(.4,.4) \psline(.4,1)(.2,.4) \psline(.2,1)(0,.4)
 \end{pspicture} }  
\right) \\ 
&& +
 \scalebox{-1}[1] {
\resizebox{!}{.5cm}{  \begin{pspicture}(0,.9)(2,1.8)  
 \pscircle*(1,1){.15}  \psline[linewidth=3pt](0,1)(2,1)
 \psline(.5,1)(.2,1.6)  \psline(.5,1)(.4,1.6) \psline(.5,1)(.6,1.6) \psline(.5,1)(.8,1.6)
 \psline(.5,1)(.2,.4)  \psline(.5,1)(.4,.4)  \psline(.5,1)(.6,.4)  \psline(.5,1)(.8,.4)
 \end{pspicture} } }
 \otimes \left(
   \scalebox{-1}{
\resizebox{!}{.5cm}{  \begin{pspicture}(0,1.2)(2,2.1) 
 \pscircle*(1,1){.15}  \psline[linewidth=3pt](0,1)(2,1)
\psline(1.3,1)(1.1,1.6) \psline(1.5,1)(1.3,1.6) \psline(1.7,1)(1.5,1.6) \psline(1.7,1)(1.9,1.6)
\psline(.8,1)(.6,.4) \psline(.6,1)(.4,.4) \psline(.4,1)(.2,.4) \psline(.2,1)(0,.4)
 \end{pspicture} }  }
 +
  \scalebox{-1}{
\resizebox{!}{.5cm}{  \begin{pspicture}(0,1.2)(2,2.1) 
 \pscircle*(1,1){.15}  \psline[linewidth=3pt](0,1)(2,1)
 \psline(1.7,1)(1.5,1.5)  \psline(1.7,1)(2.1,1.8) \psline(1.9,1.4)(1.7,1.8) \psline(1.45,1)(1.25,1.5)
\psline(.8,1)(.6,.4) \psline(.6,1)(.4,.4) \psline(.4,1)(.2,.4) \psline(.2,1)(0,.4)
 \end{pspicture} }  }
 +\dots+
  \scalebox{-1}{
\resizebox{!}{.5cm}{  \begin{pspicture}(0,1.2)(2,2.1) 
 \pscircle*(1,1){.15}  \psline[linewidth=3pt](0,1)(2,1)
 \psline(1.5,1)(1.3,1.3)  \psline(1.5,1)(1.95,1.9) \psline(1.65,1.3)(1.45,1.6) \psline(1.8,1.6)(1.6,1.9)
\psline(.8,1)(.6,.4) \psline(.6,1)(.4,.4) \psline(.4,1)(.2,.4) \psline(.2,1)(0,.4)
 \end{pspicture} }  }
\right) \\ 
&&+\sum_{r=2}^{k+l-2} \sum_{i} S_i^{(r)}\otimes T_i^{(k+l-r)}\\
&& + 
\left(
   \scalebox{1}[-1]{
\resizebox{!}{.5cm}{  \begin{pspicture}(0,1.2)(2,2.1) 
 \pscircle*(1,1){.15}  \psline[linewidth=3pt](0,1)(2,1)
\psline(1.3,1)(1.1,1.6) \psline(1.5,1)(1.3,1.6) \psline(1.7,1)(1.5,1.6) \psline(1.7,1)(1.9,1.6)
\psline(.8,1)(.6,.4) \psline(.6,1)(.4,.4) \psline(.4,1)(.2,.4) \psline(.2,1)(0,.4)
 \end{pspicture} }  }
 +
  \scalebox{1}[-1]{
\resizebox{!}{.5cm}{  \begin{pspicture}(0,1.2)(2,2.1) 
 \pscircle*(1,1){.15}  \psline[linewidth=3pt](0,1)(2,1)
 \psline(1.7,1)(1.5,1.5)  \psline(1.7,1)(2.1,1.8) \psline(1.9,1.4)(1.7,1.8) \psline(1.45,1)(1.25,1.5)
\psline(.8,1)(.6,.4) \psline(.6,1)(.4,.4) \psline(.4,1)(.2,.4) \psline(.2,1)(0,.4)
 \end{pspicture} }  }
 +\dots+
  \scalebox{1}[-1]{
\resizebox{!}{.5cm}{  \begin{pspicture}(0,1.2)(2,2.1) 
 \pscircle*(1,1){.15}  \psline[linewidth=3pt](0,1)(2,1)
 \psline(1.5,1)(1.3,1.3)  \psline(1.5,1)(1.95,1.9) \psline(1.65,1.3)(1.45,1.6) \psline(1.8,1.6)(1.6,1.9)
\psline(.8,1)(.6,.4) \psline(.6,1)(.4,.4) \psline(.4,1)(.2,.4) \psline(.2,1)(0,.4)
 \end{pspicture} }  }
\right) \otimes
\resizebox{!}{.5cm}{  \begin{pspicture}(0,.9)(2,1.8)  
 \pscircle*(1,1){.15}  \psline[linewidth=3pt](0,1)(2,1)
 \psline(.5,1)(.2,1.6)  \psline(.5,1)(.4,1.6) \psline(.5,1)(.6,1.6) \psline(.5,1)(.8,1.6)
 \psline(.5,1)(.2,.4)  \psline(.5,1)(.4,.4)  \psline(.5,1)(.6,.4)  \psline(.5,1)(.8,.4)
 \end{pspicture} } 
\\ && +
 \left(
 \scalebox{-1}[1]{
\resizebox{!}{.5cm}{  \begin{pspicture}(0,.9)(2,1.8) 
 \pscircle*(1,1){.15}  \psline[linewidth=3pt](0,1)(2,1)
 \psline(1.5,1)(1.3,1.3)  \psline(1.5,1)(1.95,1.9) \psline(1.65,1.3)(1.45,1.6) \psline(1.8,1.6)(1.6,1.9)
\psline(.8,1)(.6,.4) \psline(.6,1)(.4,.4) \psline(.4,1)(.2,.4) \psline(.2,1)(0,.4)
 \end{pspicture} }  }
 +
 \scalebox{-1}[1]{
\resizebox{!}{.5cm}{  \begin{pspicture}(0,.9)(2,1.8)  
 \pscircle*(1,1){.15}  \psline[linewidth=3pt](0,1)(2,1)
 \psline(1.7,1)(1.5,1.5)  \psline(1.7,1)(2.1,1.8) \psline(1.9,1.4)(1.7,1.8) \psline(1.45,1)(1.25,1.5)
\psline(.8,1)(.6,.4) \psline(.6,1)(.4,.4) \psline(.4,1)(.2,.4) \psline(.2,1)(0,.4)
 \end{pspicture} }  }
 +\dots+
 \scalebox{-1}[1]{
\resizebox{!}{.5cm}{  \begin{pspicture}(0,.9)(2,1.8)  
 \pscircle*(1,1){.15}  \psline[linewidth=3pt](0,1)(2,1)
\psline(1.3,1)(1.1,1.6) \psline(1.5,1)(1.3,1.6) \psline(1.7,1)(1.5,1.6) \psline(1.7,1)(1.9,1.6)
\psline(.8,1)(.6,.4) \psline(.6,1)(.4,.4) \psline(.4,1)(.2,.4) \psline(.2,1)(0,.4)
 \end{pspicture} }  }
\right) \otimes
\scalebox{-1}{
\resizebox{!}{.5cm}{  \begin{pspicture}(0,1.1)(2,2)  
 \pscircle*(1,1){.15}  \psline[linewidth=3pt](0,1)(2,1)
 \psline(.5,1)(.2,1.6)  \psline(.5,1)(.4,1.6) \psline(.5,1)(.6,1.6) \psline(.5,1)(.8,1.6)
 \psline(.5,1)(.2,.4)  \psline(.5,1)(.4,.4)  \psline(.5,1)(.6,.4)  \psline(.5,1)(.8,.4)
 \end{pspicture} }  }\\
 && \hspace{2in}   (\text{mod }\{I_{k,l}\,\,:\,\,k+l>0 \}) .
\end{eqnarray*}
Computing $\Delta_C$ in general requires more work, as the formula for $\Delta_C(I_{4,0})$ shows:
\begin{eqnarray*}
\Delta_C(I_{2,1})&\equiv&
\resizebox{!}{.5cm}{  \begin{pspicture}(0,.8)(2,1.7) 
 \pscircle*(1,1){.15}  \psline[linewidth=3pt](0,1)(2,1)
 \psline(.5,1)(.3,1.6)  \psline(.5,1)(.7,1.6) \psline(.5,1)(.5,.4)
 \end{pspicture} } 
 \otimes
\resizebox{!}{.5cm}{  \begin{pspicture}(0,.8)(2,1.7) 
 \pscircle*(1,1){.15}  \psline[linewidth=3pt](0,1)(2,1)
 \psline(1.5,1)(1.3,1.6)  \psline(1.5,1)(1.7,1.6) \psline(.5,1)(.5,.4)
 \end{pspicture} } 
  +
\resizebox{!}{.5cm}{  \begin{pspicture}(0,.8)(2,1.7) 
 \pscircle*(1,1){.15}  \psline[linewidth=3pt](0,1)(2,1)
  \psline(.5,1)(.3,1.6)  \psline(.5,1)(.7,1.6)  \psline(1.5,1)(1.5,.4)
 \end{pspicture} } 
 \otimes
\resizebox{!}{.5cm}{  \begin{pspicture}(0,.8)(2,1.7) 
 \pscircle*(1,1){.15}  \psline[linewidth=3pt](0,1)(2,1)
  \psline(1.5,1)(1.3,1.6)  \psline(1.5,1)(1.7,1.6) \psline(1.5,1)(1.5,.4)
 \end{pspicture} } 
   (\text{mod }\{I_{k,l}\,\,:\,\,k+l>0 \}), \\
 \\
\Delta_C(I_{3,0})&\equiv&
\resizebox{!}{.5cm}{  \begin{pspicture}(0,.9)(2,1.8) 
 \pscircle*(1,1){.15}  \psline[linewidth=3pt](0,1)(2,1)
 \psline(.5,1)(.3,1.6)  \psline(.5,1)(.7,1.6) \psline(.5,1)(.5,1.6)
 \end{pspicture} } 
 \otimes \left(
\resizebox{!}{.5cm}{  \begin{pspicture}(0,.9)(2,1.8)  
 \pscircle*(1,1){.15}  \psline[linewidth=3pt](0,1)(2,1)
 \psline(1.5,1)(1.3,1.5)  \psline(1.5,1)(1.9,1.8) \psline(1.7,1.4)(1.5,1.8)
 \end{pspicture} }  
 +
\resizebox{!}{.5cm}{  \begin{pspicture}(0,.9)(2,1.8) 
 \pscircle*(1,1){.15}  \psline[linewidth=3pt](0,1)(2,1)
 \psline(1.7,1)(1.5,1.6)  \psline(1.7,1)(1.9,1.6) \psline(1.3,1)(1.3,1.6)
 \end{pspicture} }  
\right) \\ 
&&+\left(
 \scalebox{-1}[1] {
 \resizebox{!}{.5cm}{  \begin{pspicture}(0,.9)(2,1.8) 
 \pscircle*(1,1){.15}  \psline[linewidth=3pt](0,1)(2,1)
 \psline(1.7,1)(1.5,1.6)  \psline(1.7,1)(1.9,1.6) \psline(1.3,1)(1.3,1.6)
 \end{pspicture} } }
 +
  \scalebox{-1}[1] { 
\resizebox{!}{.5cm}{  \begin{pspicture}(0,.9)(2,1.8)  
 \pscircle*(1,1){.15}  \psline[linewidth=3pt](0,1)(2,1)
 \psline(1.5,1)(1.3,1.5)  \psline(1.5,1)(1.9,1.8) \psline(1.7,1.4)(1.5,1.8)
 \end{pspicture} }  }
  \right)\otimes 
\resizebox{!}{.5cm}{  \begin{pspicture}(0,.9)(2,1.8) 
 \pscircle*(1,1){.15}  \psline[linewidth=3pt](0,1)(2,1)
 \psline(1.5,1)(1.3,1.6)  \psline(1.5,1)(1.7,1.6) \psline(1.5,1)(1.5,1.6)
 \end{pspicture} } 
\quad (\text{mod }\{I_{k,l}\,\,:\,\,k+l>0 \}),\\
\Delta_C(I_{4,0})&\equiv&
\resizebox{!}{.5cm}{  \begin{pspicture}(0,.9)(2,1.8) 
 \pscircle*(1,1){.15}  \psline[linewidth=3pt](0,1)(2,1)
 \psline(.5,1)(.2,1.6)  \psline(.5,1)(.4,1.6) \psline(.5,1)(.6,1.6) \psline(.5,1)(.8,1.6)
 \end{pspicture} } 
 \otimes \left(
\resizebox{!}{.5cm}{  \begin{pspicture}(0,.9)(2,1.8) 
 \pscircle*(1,1){.15}  \psline[linewidth=3pt](0,1)(2,1)
 \psline(1.5,1)(1.3,1.3)  \psline(1.5,1)(1.95,1.9) \psline(1.65,1.3)(1.45,1.6) \psline(1.8,1.6)(1.6,1.9)
 \end{pspicture} }  
 +
\resizebox{!}{.5cm}{  \begin{pspicture}(0,.9)(2,1.8)  
 \pscircle*(1,1){.15}  \psline[linewidth=3pt](0,1)(2,1)
 \psline(1.7,1)(1.5,1.5)  \psline(1.7,1)(2.1,1.8) \psline(1.9,1.4)(1.7,1.8) \psline(1.45,1)(1.25,1.5)
 \end{pspicture} }  
 +
\resizebox{!}{.5cm}{  \begin{pspicture}(0,.9)(2,1.8)  
 \pscircle*(1,1){.15}  \psline[linewidth=3pt](0,1)(2,1)
\psline(1.3,1)(1.1,1.6) \psline(1.5,1)(1.3,1.6) \psline(1.7,1)(1.5,1.6) \psline(1.7,1)(1.9,1.6)
 \end{pspicture} }  
\right) \\ 
&& + 
\resizebox{!}{.5cm}{  \begin{pspicture}(0,.9)(2,1.8) 
 \pscircle*(1,1){.15}  \psline[linewidth=3pt](0,1)(2,1)
 \psline(.5,1)(.1,1.8) \psline(.5,1)(.75,1.5) \psline(.3,1.4)(.3,1.8) \psline(.3,1.4)(.5,1.8)
 \end{pspicture} }  
\otimes
\resizebox{!}{.5cm}{  \begin{pspicture}(0,.9)(2,1.8) 
 \pscircle*(1,1){.15}  \psline[linewidth=3pt](0,1)(2,1)
 \psline(1.3,1)(1.3,1.6) \psline(1.7,1)(1.5,1.6)  \psline(1.7,1)(1.9,1.6) \psline(1.7,1)(1.7,1.6)
 \end{pspicture} } 
\quad\quad +
 \scalebox{-1}[1]{
\resizebox{!}{.5cm}{  \begin{pspicture}(0,.9)(2,1.8) 
 \pscircle*(1,1){.15}  \psline[linewidth=3pt](0,1)(2,1)
 \psline(1.3,1)(1.3,1.6) \psline(1.7,1)(1.5,1.6)  \psline(1.7,1)(1.9,1.6) \psline(1.7,1)(1.7,1.6)
 \end{pspicture} } }
 \otimes
 \scalebox{-1}[1]{
\resizebox{!}{.5cm}{  \begin{pspicture}(0,.9)(2,1.8) 
 \pscircle*(1,1){.15}  \psline[linewidth=3pt](0,1)(2,1)
 \psline(.5,1)(.1,1.8) \psline(.5,1)(.75,1.5) \psline(.3,1.4)(.3,1.8) \psline(.3,1.4)(.5,1.8)
 \end{pspicture} }   }
  \\
  && + 
\resizebox{!}{.5cm}{  \begin{pspicture}(0,.9)(2,1.8) 
 \pscircle*(1,1){.15}  \psline[linewidth=3pt](0,1)(2,1)
 \psline(.5,1)(.25,1.5) \psline(.5,1)(.75,1.5) \psline(.5,1)(.5,1.5)
 \psline(.5,1.5)(.3,1.8)  \psline(.5,1.5)(.7,1.8)
 \end{pspicture} }  
\otimes
\resizebox{!}{.5cm}{  \begin{pspicture}(0,.9)(2,1.8) 
 \pscircle*(1,1){.15}  \psline[linewidth=3pt](0,1)(2,1)
 \psline(1.3,1)(1.3,1.6) \psline(1.7,1)(1.5,1.6)  \psline(1.7,1)(1.9,1.6) \psline(1.7,1)(1.7,1.6)
 \end{pspicture} } 
\quad\quad +
 \scalebox{-1}[1]{
\resizebox{!}{.5cm}{  \begin{pspicture}(0,.9)(2,1.8) 
 \pscircle*(1,1){.15}  \psline[linewidth=3pt](0,1)(2,1)
 \psline(1.3,1)(1.3,1.6) \psline(1.7,1)(1.5,1.6)  \psline(1.7,1)(1.9,1.6) \psline(1.7,1)(1.7,1.6)
 \end{pspicture} } }
 \otimes
 \scalebox{-1}[1]{
\resizebox{!}{.5cm}{  \begin{pspicture}(0,.9)(2,1.8) 
 \pscircle*(1,1){.15}  \psline[linewidth=3pt](0,1)(2,1)
 \psline(.5,1)(.25,1.5) \psline(.5,1)(.75,1.5) \psline(.5,1)(.5,1.5)
 \psline(.5,1.5)(.3,1.8)  \psline(.5,1.5)(.7,1.8)
 \end{pspicture} }  }
 \\
&& +
\resizebox{!}{.5cm}{  \begin{pspicture}(0,.9)(2,1.8) 
 \pscircle*(1,1){.15}  \psline[linewidth=3pt](0,1)(2,1)
 \psline(.5,1)(.25,1.5) \psline(.5,1)(.75,1.5) \psline(.5,1)(.5,1.5)
 \psline(.25,1.5)(.05,1.8)  \psline(.25,1.5)(.45,1.8)
 \end{pspicture} }  
\otimes
\resizebox{!}{.5cm}{  \begin{pspicture}(0,.9)(2,1.8) 
 \pscircle*(1,1){.15}  \psline[linewidth=3pt](0,1)(2,1)
 \psline(1.35,1)(1.2,1.6) \psline(1.35,1)(1.5,1.6)  \psline(1.8,1)(1.65,1.6) \psline(1.8,1)(1.95,1.6)
 \end{pspicture} } 
\quad\quad +
 \scalebox{-1}[1]{
\resizebox{!}{.5cm}{  \begin{pspicture}(0,.9)(2,1.8) 
 \pscircle*(1,1){.15}  \psline[linewidth=3pt](0,1)(2,1)
 \psline(1.35,1)(1.2,1.6) \psline(1.35,1)(1.5,1.6)  \psline(1.8,1)(1.65,1.6) \psline(1.8,1)(1.95,1.6)
 \end{pspicture} } }
 \otimes
 \scalebox{-1}[1]{
\resizebox{!}{.5cm}{  \begin{pspicture}(0,.9)(2,1.8) 
 \pscircle*(1,1){.15}  \psline[linewidth=3pt](0,1)(2,1)
 \psline(.5,1)(.25,1.5) \psline(.5,1)(.75,1.5) \psline(.5,1)(.5,1.5)
 \psline(.25,1.5)(.05,1.8)  \psline(.25,1.5)(.45,1.8)
 \end{pspicture} }  }
  \\
&&+\left(
 \scalebox{-1}[1] { 
\resizebox{!}{.5cm}{  \begin{pspicture}(0,.9)(2,1.8)  
 \pscircle*(1,1){.15}  \psline[linewidth=3pt](0,1)(2,1)
\psline(1.3,1)(1.1,1.6) \psline(1.5,1)(1.3,1.6) \psline(1.7,1)(1.5,1.6) \psline(1.7,1)(1.9,1.6)
 \end{pspicture} }   }
 +
 \scalebox{-1}[1] {
\resizebox{!}{.5cm}{  \begin{pspicture}(0,.9)(2,1.8)  
 \pscircle*(1,1){.15}  \psline[linewidth=3pt](0,1)(2,1)
 \psline(1.7,1)(1.5,1.5)  \psline(1.7,1)(2.1,1.8) \psline(1.9,1.4)(1.7,1.8) \psline(1.45,1)(1.25,1.5)
 \end{pspicture} }    }
 +
 \scalebox{-1}[1] {
 \resizebox{!}{.5cm}{  \begin{pspicture}(0,.9)(2,1.8) 
 \pscircle*(1,1){.15}  \psline[linewidth=3pt](0,1)(2,1)
 \psline(1.5,1)(1.3,1.3)  \psline(1.5,1)(1.95,1.9) \psline(1.65,1.3)(1.45,1.6) \psline(1.8,1.6)(1.6,1.9)
 \end{pspicture} }  }
  \right)\otimes 
 \scalebox{-1}[1] {
\resizebox{!}{.5cm}{  \begin{pspicture}(0,.9)(2,1.8) 
 \pscircle*(1,1){.15}  \psline[linewidth=3pt](0,1)(2,1)
 \psline(.5,1)(.2,1.6)  \psline(.5,1)(.4,1.6) \psline(.5,1)(.6,1.6) \psline(.5,1)(.8,1.6)
 \end{pspicture} } }  \\
 && \hspace{2in} (\text{mod }\{I_{k,l}\,\,:\,\,k+l>0 \})
\end{eqnarray*}
\end{example}

\begin{remark}\label{REM:strong}
In Example \ref{Delta-for-cyclic} we observed that the tensor product of cyclic $A_\infty$-algebras is, in general, not cyclic. One could also consider strong homotopy inner products considered by Cho \cite{C}, which are cyclic $A_\infty$-algebras up to homotopy, and ask whether or not the tensor product preserves such structures. In \cite[Theorem 5.1]{C}, Cho showed that a homotopy inner product transforms into a strong homotopy inner product if and only if it satisfies the following three conditions:
\begin{enumerate}
\item Skew Symmetry: \quad $\rho_{k,l}(a, \underline b,c, \underline d)=\pm \rho_{l,k}(c, \underline d, a, \underline b)$,
\item Closedness: \quad 
\begin{multline*}
\quad \quad \quad \rho_{k+l+1,m}(\dots,a, \dots, \underline b,\dots, \underline c)\pm \rho_{k,l+m+1}(\dots, \underline a,\dots,b,\dots, \underline c)\\
\pm \rho_{l+m+1,k}(\dots,c, \dots, \underline a,\dots, \underline b)=0,
\end{multline*}
\item Homological non-degeneracy: \quad $(\rho_{0,0})_*$ is non-degenerate.
\end{enumerate}
For more details on the notation and signs, we refer the reader to \cite{C} or \cite{T2}. Now the symmetrical nature of the definition of $\Delta_C$ implies that the tensor product of two skew-symmetric homotopy inner products (satisfying condition (1)) is also skew-symmetric: If $S^*$ denotes the diagram obtained from $S$ by rotating $180^\circ$, then $(S^{*})_{\max}=(S_{\max})^{*}$, $(D^{*})_{\min}=(D_{\min})^{*}$, and $S_{\max}\leq D_{\min} \Leftrightarrow (S_{\max})^*\leq (D_{\min})^*$. 
Furthermore, under reasonable conditions, it is clear that the tensor product of two homologically non-degenerate homotopy inner products (satisfying property (3)) is also homologically non-degenerate.  Thus we conjecture that tensor products also preserve property (2), and that the tensor product is closed in the strong homotopy inner product category. 
\end{remark}

\begin{acknowledgments} 
We wish to thank Jean-Louis Loday and Jim Stasheff for sharing their thoughts and insights with us during discussions related to this topic.
\end{acknowledgments}

\medskip 

\appendix

\section{Signs}\label{APP:signs}
In this appendix we discuss various issues related to the definition and calculation of signs in this paper. To begin, we review the signs in an $A_\infty$-algebra with homotopy inner products, define the canonical orientation of a binary diagram, and check the signs in Proposition \ref{q-bound}.

\smallskip 

\subsection{Signs in an $A_\infty$-algebra with homotopy inner product}\label{APP:A-infty-sign}
Let $A=\bigoplus_{i\in \Z} A_i$ be a differential graded $R$-module (DGM) with differential $d:A\to A$ of degree $+1$.  The tensor product of DGM maps $f$ and $g$ satisfies $(f\otimes g)(a\otimes b)=(-1)^{|g|\cdot |a|}f(a)\otimes g(b)$. If $(A^1,d^1), \dots, (A^{k+1},d^{k+1})$ are DGMs, the induced differential $d$ on $A^1\otimes \cdots \otimes A^k$ is given by 
\[
d = \sum_{i=1}^{k} \mathbf{1}^{\otimes (i-1)} \otimes d^i \otimes \mathbf{1}^{\otimes (k-i)},
\]
and the induced differential $D$ on $Hom(A^1\otimes \cdots \otimes A^k,A^{k+1})$ is given by the commutator
\[
[D,f]=d^{k+1} f-(-1)^{|f|} f d.
\]

An $A_\infty$-\emph{algebra structure on} $A$ consists of a family of maps $\{\mu_k:A^{\otimes k}\to A\}_{k\geq 2}$ such that $|\mu_k|=2-k$ and
\begin{equation}\label{EQ:A-infty-sign}
[D,\mu_k] 
= \sum_{ j+\ell=k+1} \sum_{i=1}^{k-j+1} (-1)^{i (j+1)+j \ell}\thinspace \mu_\ell \circ \left( \mathbf{1}^{\otimes(i-1)}\otimes \mu_j \otimes \mathbf{1}^{\otimes(k-j-i+1)}\right).
\end{equation}
 
\begin{remark}
There are various choices of signs in the $A_\infty$-algebra structure relations. For example, one could define an $A_\infty$-algebras in terms of maps $\mu'_k:A^{\otimes k}\to A$ such that 
\begin{equation*}
[d,\mu'_k] 
= \sum_{ j+\ell-1=k} \sum_{i=1}^{k-j+1} (-1)^{(i-1)\cdot (j+1)+ \ell} \cdot \mu'_\ell \circ \left(\mathbf{1}^{\otimes (i-1)} \otimes \mu'_j \otimes \mathbf{1}^{\otimes (k-j-i+1)}
\right).
\end{equation*}
The two definitions are related via the relation $\mu'_k=(-1)^{\frac{k(k+1)}{2}+1} \cdot \mu_k$.
Note that the simplest signs arise by shifting dimension in $A$ up by $1$ and removing all signs. We refer the reader \emph{e.g.} to \cite{T} and \cite{SU} for details.
\end{remark}

An $\infty$-\emph{bimodule over} $A$ consists of a DGM $M$ together with a family of module maps $\{\lambda_{j',j''}:A^{\otimes j'}\otimes M\otimes A^{\otimes j''}\to M\}$ such that
\begin{equation}\label{EQ:A-module-sign}
[D, \lambda_{k',k''}] 
= \sum_{j+\ell=k+1} \sum_{i=1}^{k-j+1} (-1)^{i (j+1)+j \ell}\thinspace \lambda_{\ell',\ell''} \left( \mathbf{1}^{\otimes(i-1)} \otimes q_J \otimes \mathbf{1}^{\otimes(k-j-i+1)}\right),
\end{equation}
where $k=k'+k''+1$, $\ell=\ell'+\ell''+1$, and $q_J$ is either $\lambda_{j',j''}$ or $\mu_j$, which is determined by the $i\T$ to the $(k-j+i)\T$ input in $A^{\otimes k'}\otimes M\otimes A^{\otimes k''}$.
An important example of an $\infty$-bimodule is given by setting $M=A$ and 
\begin{equation}\label{EQU:canon-example}
 \lambda_{j',j''}=\mu_{j'+j''+1}.
\end{equation}
This example also helps to clarify the signs in the formula above.

Finally, given $\infty$-bimodule over $A$, a \emph{homotopy inner product} consists of a family of maps $\{\varrho_{j',j''}:M\otimes A^{\otimes j'}\otimes M\otimes A^{\otimes j''}\to R\}$, such that 
\begin{multline}\label{EQ:Inner-product-sign}
[D, \varrho_{k',k''}] = \\
\sum_{j'+j''+\ell=k} (-1)^{(j+1)+j \ell+j' (j''+\ell)} \thinspace\varrho_{\ell',\ell''} \left(\lambda_{j',j''} \otimes \mathbf{1}^{\otimes (k-j'-j''-1)}\right) \underbrace{(\tau_\#\circ \cdots\circ\tau_\#)}_{j' \text{ cyclic permut.}}
\\
+\sum_{j+\ell-1=k} \sum_{i=2}^{k-j+1} (-1)^{i (j+1)+j \ell} \thinspace\varrho_{\ell',\ell''} \left( \mathbf{1}^{\otimes (i-1)}\otimes q_J \otimes \mathbf{1}^{\otimes (k-j-i+1)} \right),
\end{multline}
where $k=k'+k''+2$, $\ell=\ell'+\ell''+2$, and $q_J$ is either $\lambda_{j',j''}$ or $\mu_j$ depending on the inputs.  The first line of the formula involves composition in the first position ($i=1$) after cyclically permuting $j'$ elements from back-to-front, i.e,  
\[
\tau_\#: A^{\otimes i'}\otimes M\otimes A^{\otimes i''}\otimes M\otimes A^{\otimes i'''+1}\to A^{\otimes i'+1}\otimes M\otimes A^{\otimes i''}\otimes M\otimes A^{\otimes i'''},
\] 
then applying $\lambda_{j',j''}$.  This cyclical rotation of $j'$ elements gives rise to the additional sign coefficient here.  

\smallskip

The signs \eqref{EQ:A-infty-sign}, \eqref{EQ:A-module-sign}, and \eqref{EQ:Inner-product-sign}, which appear in the definitions of an $A_\infty$-algebra and a homotopy inner product, also appear in the definition of $\CA$. Let $\mathcal End_{(A,M,R)}$ denote the endomorphism operad of the triple $(A,M,R)$.  Then under the conventions above, a morphism of operads $\CA\to \mathcal End_{(A,M,R)}$ defines an $A_\infty$-algebra $A$ with homotopy inner product structure on $M$, as is evident in the next definition.

\begin{definition}\label{DEF:F:CA->End}
Given an $A_\infty$-algebra with $\infty$-bimodule and homotopy inner product structures $(A,M,R,\{\mu_i\}_i,\{\lambda_{i,j}\}_{i,j}, \{\varrho_{i,j}\}_{i,j})$, define the operad map $F:\CA\to \mathcal End_{(A,M,R)}$ as follows: For a corolla $c=T_i, M_{i,j}$ or $I_{i,j}$ (see Figure \ref{F:3-corollas}), the canonical clockwise assignment $\fcan$ of inputs (see Figure \ref{FIG:canon-leaf-order}), and the canonical orientation $\omega=+1$ of the empty set of edges, define $F(c,\fcan,+1)$ to be the structure associated with this corolla, i.e., 
\[
F(T_i,\fcan,+1)=\mu_i, \quad F(M_{i,j},\fcan,+1)=\lambda_{i,j}, \quad F(I_{i,j},\fcan,+1)=\varrho_{i,j}.
\]
For a diagram $D$ with exactly one edge $e$ and the canonical labeling $\fcan$, the signs in $F$ are determined by \eqref{EQ:A-infty-sign}, \eqref{EQ:A-module-sign}, and \eqref{EQ:Inner-product-sign}. For example, if $D\in \CA^{11\cdots 1}_{1}$ is a tree and the edge $e=e(i,j)$ determines the subtree $T_j$ attached to $T_\ell$ at position $i$, then, from \eqref{EQ:A-infty-sign},
\[
F(D,\fcan,e(i,j))=d_{e(i,j)}( \mu_k) := (-1)^{i(j+1)+j \ell} \mu_\ell (1^{\otimes i-1}\otimes \mu_j\otimes 1^{\otimes \ell-i}).
\]

Similarly, for a general diagram $D$ and any edge $e\in \mathcal E(D)$, we have an operation $d_e$ of degree $+1$,
and set
\[
F(D,\fcan,e_1\wedge\dots\wedge e_r) = d_{e_1} \circ\dots\circ d_{e_r} (q_J),
\]
where $q_J$ is one of the maps $\mu_i, \lambda_{i,j}$, or $\varrho_{i,j}$. When $d_e$ passes a structure map $q'_{J'}$, the usual Koszul sign commutation rule applies: $d_e \circ q'_{J'}=(-1)^{|q'_{J'}|}\thinspace q'_{J'} \circ d_e$. 

Finally, for a non-trivial labeling $f:\{1,\dots,k\}\to \mathcal L(D)$, we uniquely write $f$ as a composition of a permutation $\sigma\in S_k$ and the clockwise labeling, $f= \fcan\circ \sigma$, and denoting by $\sigma_\#(x_1\otimes\dots\otimes x_k)=x_{\sigma^{-1}(1)}\otimes\dots\otimes x_{\sigma^{-1}(k)}$ a permutation of tensor factors, we set 
\[ F(D,\fcan\circ \sigma,\omega)=\sgn(\sigma)\cdot F(D,\fcan,\omega)\circ \sigma_\#. \]
\end{definition}
\noindent With this, one can check that the signs appearing in Equation \eqref{EQ:circ-i} make $F$ into an operad map. For example, for $(D,\fcan,\omega_D)\in C_n\hat{\mathcal A}^{11\cdots 1}_1,\text{ } k=\#\mathcal L(D)$, and $(E,id,\omega_E)\in C_m\hat{\mathcal A}^{11\cdots 1}_1,\text{ } l=\#\mathcal L(E)$, we have \vspace{.1in}

\noindent $F(( D,\fcan, e_1^D\wedge\dots \wedge e_{k-n-2}^D)  \circ_{i}(  E,\fcan,e_1^E\wedge\dots\wedge e_{l-m-2}^E))$
\begin{eqnarray*}
&=& (-1)^{km+i(l+1)}F \left(  D\circ_{i}E,\fcan, e_1^D\wedge\dots \wedge e_{k-n-2}^D\wedge e_1^E\wedge\dots\wedge e_{l-m-2}^E \wedge e \right)
\\
&=& (-1)^{km+i(l+1)} d_{e_1^D}\circ\dots \circ d_{e_{k-n-2}^D}\circ d_{e_1^E}\circ\dots\circ d_{e_{l-m-2}^E} \circ d_e (\mu_{k+l-1})
\\
&=& (-1)^{k(m+ l)} d_{e_1^D}\circ\dots \circ d_{e_{k-n-2}^D}\circ d_{e_1^E}\circ\dots\circ d_{e_{l-m-2}^E} (\mu_{k}\circ (1^{\otimes i-1}\otimes \mu_l\otimes 1^{\otimes l-j}))
\\
&=& d_{e_1^D}\circ\dots \circ d_{e_{k-n-2}^D} \big(\mu_{k}\circ (1^{\otimes i-1}\otimes d_{e_1^E}\circ\dots\circ d_{e_{l-m-2}^E} (\mu_l) \otimes 1^{\otimes l-j})\big)
\\
&=& F( D,\fcan, e_1^D\wedge\dots \wedge e_{k-n-2}^D) \circ_i F( E,\fcan,e_1^E\wedge\dots\wedge e_{l-m-2}^E).
\end{eqnarray*}
A similar calculation applies in the other cases.

\smallskip

\subsection{Orientation on binary trees}\label{APP:binary-standard-orientation}
In this appendix, we describe the canonical orientation of a binary diagram, either as element of $C_{\ast}\hat{\mathcal{A}}$ or as a non-metric element of $Q_{\ast}\hat{\mathcal{A}}$. The main ingredient is an extension of Mac Lane's Coherence Theorem \cite{MacL} to homotopy inner products, by which any two paths of binary diagrams are connected via sequences of pentagons, hexagons, and squares.

\begin{definition}\label{DEF:local-move}
A \textbf{path of binary diagrams} is a sequence of binary diagrams $\beta=(B_1,\dots,B_n)$ such that $(B_i,B_{i+1})$ is an edge-pair for all $i$ (see Definition \ref{DEF:order}).
\end{definition}

\noindent
We consider paths up to equivalence, where the equivalence relation is generated by,
\[
(\dots,B_{i-1},B_i,B_{i+1},\dots) \sim(\dots,B_{i-1},B_i,\widetilde B_i,B_i,B_{i+1},\dots),
\]
where $\tilde B_i$ is related to $B_i$ by a local move.

For example, the boundaries of the pentagons $\A_4, M_{0,3}, M_{1,2}, M_{2,1}, M_{3,0}, I_{2,0}, I_{0,2}$  and the hexagon $I_{1,1}$ are paths for any choice of starting/ending point.  Furthermore, two disjoint local moves define a square path (called a \emph{naturality square}) by applying move 1, then move 2, then undoing move 1, and undoing move 2.  A \emph{fundamental path} is a naturality square or the boundary paths of one of the aforementioned corollas. 

\begin{definition}
Two (equivalence classes of) paths are \textbf{one-step-connected} if they differ (locally) by a fundamental path. Two paths $\beta_1$ and $\beta_m$ are \textbf{connected} if there is a sequence of paths $\beta_1,\dots, \beta_m$ such that $\beta_i$ and $\beta_{i+1}$ are one-step-connected for all $i$.
\end{definition}

For example, consider a diagram $P_1$ and a sequence of diagrams \linebreak $(P_1,P_2,P_3,P_4,P_5,P_1)$ that differ locally from $P_1$ by a fundamental path, which is the boundary of the Stasheff pentagon. Then the following paths $\beta_1$ and $\beta_2$ are one-step-connected: $\beta_1=(B_1,\dots,B_{k-1},P_1,B_{k+1},\dots, B_n)$  and  $\beta_2=(B_1,\dots,B_{k-1},\linebreak P_1, P_2,P_3,P_4,P_5,P_1,B_{k+1},\dots, B_n)$. 

\begin{lemma}[Coherence Lemma]\label{LEM:coherence}
Any two paths $\beta=(B_1,\dots,B_n)$ and $\beta'=(B'_1,\dots,B'_{n'})$ such that $B_1=B'_1$ and $B_n=B'_{n'}$ are connected.
\end{lemma}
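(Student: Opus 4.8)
The plan is to translate the combinatorics into topology and then invoke the contractibility of the pairahedra (Proposition \ref{contract}). Since the six local moves of Definition \ref{DEF:order} preserve the underlying corolla type, every diagram occurring in $\beta$ or in $\beta'$ lies in the subposet $\mathcal{B}_{c}$, where $c$ is the corolla type of $B_1 = B'_1$. Let $\mathcal{P}$ be the polytope associated with $c$ in Proposition \ref{contract} (the associahedron $K_n$ when $c = \A_n$, the corresponding module polytope when $c = M_{k,l}$, and the pairahedron when $c = I_{k,l}$), so that the elements of $\mathcal{B}_c$ are exactly the vertices of $\mathcal{P}$ and the edge-pairs are exactly its edges. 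The key combinatorial input, to be read off from the cell structure of $\mathcal{P}$ developed in Appendix \ref{I_k,l k,l>0}, is that each $2$-face of $\mathcal{P}$, regarded as a cyclic sequence of vertices joined by edges, is a fundamental path: locally a pentagon (a boundary path of $\A_4$ or of some $M_{j',j''}$), a hexagon (a boundary path of $I_{1,1}$), a square of type $\partial I_{2,0}$ or $\partial I_{0,2}$, or a naturality square arising from two disjoint local moves. Granting this, "two edge-paths with common endpoints are connected" is precisely the assertion that they are homotopic rel endpoints inside the $2$-skeleton of $\mathcal{P}$.

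First I would reduce to the case of a loop. Edge-pairs are symmetric, so the reversal $\bar\beta'$ of $\beta'$ is again a path, and the concatenation $\gamma = \beta \ast \bar\beta'$ is a loop at $B_1$. The insertion equivalence of Definition \ref{DEF:local-move}, namely $(\dots,B_i,B_{i+1},\dots)\sim(\dots,B_i,\widetilde B_i,B_i,B_{i+1},\dots)$, lets one cancel a back-and-forth excursion to a neighbor, and "connectedness" is compatible with concatenation, since a fundamental-path move performed inside $\beta$ is equally a move performed inside $\gamma$. Hence it suffices to show that every loop of binary diagrams is connected to the constant loop $(B_1)$.

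Next I would invoke simple connectivity. By Proposition \ref{contract}, $\mathcal{P}$ is contractible, hence simply connected, and for any simply connected CW complex $X$ the fundamental group of the $1$-skeleton $X^{(1)}$ is normally generated by the boundary words of the $2$-cells of $X$; equivalently, every edge-loop in $X^{(1)}$ can be carried to the constant loop by a finite sequence of insertions or deletions of backtracks $e\,\bar e$ together with replacements of a subword lying along a $2$-cell boundary by the complementary subword of that boundary. Under the dictionary of the first paragraph this says exactly that every loop of binary diagrams is connected to its constant loop via insertion equivalences and fundamental-path moves, which, combined with the reduction above, proves the lemma.

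The main obstacle is the combinatorial claim that the $2$-faces of $\mathcal{P}$ are exhausted by the fundamental paths listed above, with no exotic $2$-faces. The cleanest route is to decompose an arbitrary binary diagram $B\in\mathcal{B}_c$ as a $\circ_i$-composition of corollas, unique up to operadic associativity, and to observe that a codimension-$2$ face of $\mathcal{P}$ incident to the vertex $B$ corresponds either to a single subcorolla of type $\A_4$, $M_{j',j''}$, $I_{2,0}$, $I_{0,2}$, or $I_{1,1}$ being "activated" — yielding the grafted boundary path of that corolla as the $2$-face — or to two disjoint subcorollas, each supporting one local move, being activated simultaneously — yielding a naturality square. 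That these account for all codimension-$2$ strata is exactly the content of the cell decomposition of the pairahedron used in Appendix \ref{I_k,l k,l>0}; once that is in hand, the remainder is the standard CW-theoretic packaging sketched above, and is the homotopy-inner-product analogue of Mac Lane's coherence theorem.
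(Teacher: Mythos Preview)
Your approach is correct but genuinely different from the paper's. The paper simply defers to Mac Lane's associativity coherence argument \cite[Section~VII.2]{MacL}: one fixes a canonical form (say $c_{\min}$, via Lemma~\ref{max-min}) and shows combinatorially that any path can be rewritten to the canonical descending path, with the pentagon/hexagon/square relations absorbing the ambiguity. This is purely combinatorial and logically prior to any polytope realization. You instead run the argument topologically: realize $\mathcal{B}_c$ as the $1$-skeleton of the pairahedron, invoke contractibility (Proposition~\ref{contract}), and read coherence off from the presentation of $\pi_1$ of the $1$-skeleton by $2$-cell boundaries. This is elegant and conceptually transparent, and there is no circularity since Appendix~\ref{I_k,l k,l>0} does not use the Coherence Lemma. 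Two small points: first, $I_{2,0}$ and $I_{0,2}$ are pentagons, not squares (see Figure~\ref{FIG-4}); second, the ``main obstacle'' you flag---that every $2$-face of the pairahedron is a fundamental path---is not actually worked out in Appendix~\ref{I_k,l k,l>0}, which establishes contractibility by a boundary-matching argument with associahedra rather than by an explicit face lattice description. Your sketch via $\circ_i$-decomposition into corollas is the right way to fill this in (a degree-$2$ cell corresponds either to one corolla factor of degree $2$ with the rest binary, or to two corolla factors of degree $1$), but it is genuine additional work beyond what the paper provides. The Mac Lane route avoids this by never needing the polytope at all.
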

The lemma can be proved in the same manner as the ``associative Coherence Lemma'' in \cite[Section VII.2.]{MacL}. Thus to define a concept on binary diagrams via paths, it is sufficient to check that the definition is independent of path with respect to fundamental paths.  Let us do this for the notion of the standard orientation.  First note that each local move from $B\leadsto B'$ (as given by $(1)$-$(6)$ in Definition \ref{DEF:order} with its induced identification of edges) transfers an orientation $\omega$ from $B$ to $B'$ by setting $\omega=e_1\wedge\dots\wedge e_k=-\omega'$. One can check that  traversing any of the fundamental paths preserves orientation (see Figure \ref{FIG:pent-hex-orient} for example).
\begin{figure}[ht]
\[ 
 \begin{pspicture}(0,-.6)(1.2,1) \rput(.6,-.4){$e\wedge e'$} 
 \psline(.6,0)(.6,.4) \psline(.6,.4)(0,1) \psline(.6,.4)(1.2,1) \rput(.3,.4){$e$}  \rput(1,.5){$e'$}  
 \psline(.2,.8)(.4,1) \psline(1,.8)(.8,1)
 \end{pspicture} 
\,\, \twig{-1}\,\, %
 \begin{pspicture}(0,-.6)(1.2,1) \rput(.5,-.4){$-e\wedge e'$} 
\psline(.6,0)(.6,.4) \psline(.6,.4)(0,1) \psline(.6,.4)(1.2,1)\rput(.8,.4){$e$} \rput(1.05,.6){$e'$}  
 \psline(.8,.6)(.4,1) \psline(1,.8)(.8,1)
 \end{pspicture} 
\,\, \twig{-1}\,\, %
 \begin{pspicture}(0,-.6)(1.2,1) \rput(.6,-.4){$e\wedge e'$} 
\psline(.6,0)(.6,.4) \psline(.6,.4)(0,1) \psline(.6,.4)(1.2,1)\rput(.8,.4){$e$} \rput(.85,.85){$e'$}  
 \psline(.8,.6)(.4,1) \psline(.6,.8)(.8,1)
 \end{pspicture} 
\,\, \twig{-1}\,\, %
 \begin{pspicture}(0,-.6)(1.2,1) \rput(.5,-.4){$-e\wedge e'$} 
 \psline(.6,0)(.6,.4) \psline(.6,.4)(0,1) \psline(.6,.4)(1.2,1) \rput(.4,.4){$e$}  \rput(.7,.75){$e'$}  
 \psline(.6,.8)(.4,1) \psline(.4,.6)(.8,1)
 \end{pspicture} 
\,\, \twig{-1}\,\, %
 \begin{pspicture}(0,-.6)(1.2,1) \rput(.6,-.4){$e\wedge e'$}  
 \psline(.6,0)(.6,.4) \psline(.6,.4)(0,1) \psline(.6,.4)(1.2,1) \rput(.4,.4){$e$}  \rput(.15,.55){$e'$}  
 \psline(.2,.8)(.4,1)  \psline(.4,.6)(.8,1)
 \end{pspicture} 
\,\, \twig{-1}\,\, %
 \begin{pspicture}(0,-.6)(1.2,1) \rput(.5,-.4){$-e\wedge e'$} 
 \psline(.6,0)(.6,.4) \psline(.6,.4)(0,1) \psline(.6,.4)(1.2,1) \rput(.2,.4){$e'$}  \rput(1,.5){$e$}  
 \psline(.2,.8)(.4,1) \psline(1,.8)(.8,1)
 \end{pspicture} 
\,\, {\begin{pspicture}(0,-1)(0.4,0.2) \rput(0.2,0){$=$} \end{pspicture}} \,\, %
 \begin{pspicture}(0,-.6)(1.2,1) \rput(.6,-.4){$e\wedge e'$} 
 \psline(.6,0)(.6,.4) \psline(.6,.4)(0,1) \psline(.6,.4)(1.2,1) \rput(.3,.5){$e$}  \rput(1,.5){$e'$}  
 \psline(.2,.8)(.4,1) \psline(1,.8)(.8,1)
 \end{pspicture} 
 \]\[ 
 \begin{pspicture}(0,-.6)(1.2,1.4) \rput(.6,-.4){$e\wedge e'$} 
  \psline[linewidth=2pt](0,.2)(1.2,.2) \pscircle*(.6,.2){.1} \rput(.75,0){$e$}  \rput(.5,.5){$e'$}  
 \psline(.4,.2)(.2,.6) \psline(.8,.2)(1,-.2)
 \end{pspicture} 
\,\, \twig{-.8}\,\, %
 \begin{pspicture}(0,-.6)(1.2,1) \rput(.5,-.4){$-e\wedge e'$} 
  \psline[linewidth=2pt](0,.2)(1.2,.2) \pscircle*(.6,.2){.1} \rput(.9,0){$e$}  \rput(.75,.5){$e'$}  
 \psline(.8,.2)(1,.6) \psline(1,.2)(1.2,-.2)
 \end{pspicture} 
\,\, \twig{-.8}\,\, %
 \begin{pspicture}(0,-.6)(1.2,1) \rput(.6,-.4){$e\wedge e'$} 
  \psline[linewidth=2pt](0,.2)(1.2,.2) \pscircle*(.6,.2){.1} \rput(1.05,0){$e$}  \rput(.75,.5){$e'$}  
 \psline(1,.2)(1.2,.6) \psline(.8,.2)(1,-.2)
 \end{pspicture} 
\,\, \twig{-.8}\,\, %
 \begin{pspicture}(0,-.6)(1.2,1) \rput(.5,-.4){$-e\wedge e'$} 
  \psline[linewidth=2pt](0,.2)(1.2,.2) \pscircle*(.6,.2){.1} \rput(.75,0){$e$}  \rput(.5,.5){$e'$}  
 \psline(.8,.2)(1,.6) \psline(.4,.2)(.2,-.2)
 \end{pspicture} 
\,\, \twig{-.8}\,\, %
 \begin{pspicture}(0,-.6)(1.2,1) \rput(.6,-.4){$e\wedge e'$}  
  \psline[linewidth=2pt](0,.2)(1.2,.2) \pscircle*(.6,.2){.1} \rput(.45,0){$e$}  \rput(.1,.4){$e'$}  
 \psline(.4,.2)(.2,.6) \psline(.2,.2)(0,-.2)
 \end{pspicture} 
\,\, \twig{-.8}\,\, %
 \begin{pspicture}(0,-.6)(1.2,1) \rput(.5,-.4){$-e\wedge e'$} 
  \psline[linewidth=2pt](0,.2)(1.2,.2) \pscircle*(.6,.2){.1} \rput(.45,0){$e$}  \rput(.3,.5){$e'$}  
 \psline(.2,.2)(0,.6) \psline(.4,.2)(.2,-.2)
 \end{pspicture} 
\,\, \twig{-.8}\,\, %
 \begin{pspicture}(0,-.6)(1.2,1) \rput(.6,-.4){$e\wedge e'$} 
  \psline[linewidth=2pt](0,.2)(1.2,.2) \pscircle*(.6,.2){.1} \rput(.75,0){$e$}  \rput(.5,.5){$e'$}  
 \psline(.4,.2)(.2,.6) \psline(.8,.2)(1,-.2)
 \end{pspicture} 
\]
\caption{Orientation is preserved along boundaries of corollas $\A_4$ (top line) and $I_{1,1}$ (bottom line).}\label{FIG:pent-hex-orient}
\end{figure}
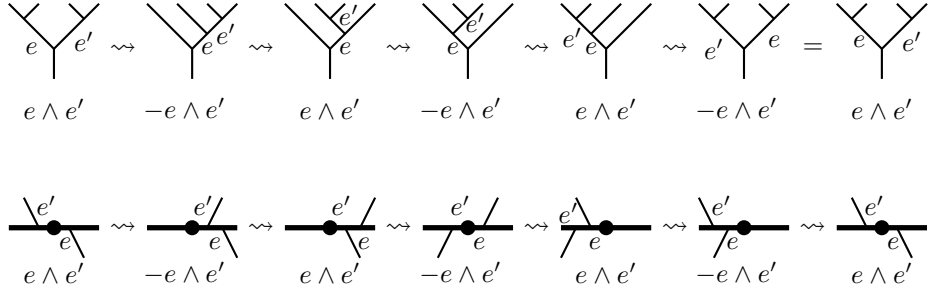

\noindent
Thus if $\beta=(B,\dots,B')$ is a path from diagram $B$ to diagram $B'$, 
an orientation $\omega$ on $B$ transfers to an orientation $\omega_\beta$ on $B'$.  And furthermore, Lemma \ref{LEM:coherence} assures us that $\omega_\beta$ is independent of path since it is preserved along paths coming from fundamental paths.  Let us use this idea to define the standard orientation $\ost_B$ on a binary diagram $B$.


\begin{definition}
Let $B$ be a binary diagram of the three types pictured in Figure \ref{FIG:standard-orientation}.  Define the \textbf{standard orientation} $\ost_{B}$ on $B$ by 
\[
\ost_{B}:=e_1\wedge \dots \wedge e_k \in \bigwedge^k \mathcal E(B)
\] 
The standard orientation on a general binary diagram $B$ is induced by any path from one of the diagrams in Figure \ref{FIG:standard-orientation} to $B$.
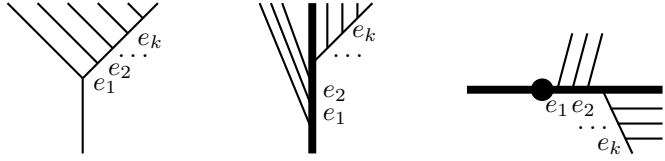
\begin{figure}[ht]
\[ 
 \begin{pspicture}(2,0)(4,2)
 \rput(3.3,.9){$e_1$}  \rput(3.5,1.1){$e_2$}   \rput(3.7,1.3){$\dots$} \rput(3.9,1.5){$e_k$}
 \psline(3,0)(3,1) \psline(3,1)(2,2) \psline(3,1)(4,2)
 \psline(3.8,1.8)(3.6,2)  \psline(3.6,1.6)(3.2,2)  \psline(3.4,1.4)(2.8,2)  \psline(3.2,1.2)(2.4,2)
 \end{pspicture} 
 \quad \quad \quad
 \begin{pspicture}(2,0)(4,2)
 \rput(3.3,.5){$e_1$}  \rput(3.3,.8){$e_2$}   \rput(3.5,1.3){$\dots$} \rput(3.7,1.6){$e_k$}
 \psline[linewidth=3pt](3,0)(3,2) \psline(3,1.2)(3.8,2)
 \psline(3,.9)(2.6,2)  \psline(3,.6)(2.45,2)   \psline(3,.3)(2.3,2) 
 \psline(3.2,1.4)(3.2,2) \psline(3.4,1.6)(3.4,2) \psline(3.6,1.8)(3.6,2) 
 \end{pspicture} 
 \quad \quad \quad
 \begin{pspicture}(2,0.4)(4.6,2)
 \rput(3.2,1){$e_1$}  \rput(3.55,1){$e_2$}   \rput(3.7,.75){$\dots$} \rput(3.9,.5){$e_k$}
 \psline[linewidth=3pt](2,1.25)(4.6,1.25) \pscircle*(3,1.25){.15}
 \psline(3.2,1.25)(3.4,2) \psline(3.4,1.25)(3.6,2) \psline(3.6,1.25)(3.8,2) 
 \psline(3.8,1.25)(4.2,.4) 
 \psline(3.9,1)(4.6,1) \psline(4,.8)(4.6,.8) \psline(4.1,.6)(4.6,.6)
 \end{pspicture}
\]\caption{Diagrams with standard orientation $e_1\wedge\dots\wedge e_k$}\label{FIG:standard-orientation}
\end{figure}
\end{definition}

\begin{example}
\label{EXA:I_kl-orientation}
As an example, we calculate the standard orientation of the diagram $(I_{k,l})_{\max}$:
\[
\resizebox{!}{1.7cm}{  \begin{pspicture}(-.2,.5)(4,2)  
 \psline[linewidth=2pt](2,1.25)(4,1.25) \pscircle*(3,1.25){.12} 
 \rput(.6,1.3){$(I_{k,l})_{\max}=$}
 \rput(3.5,1){$_{e_1 \dots e_{k}}$} \rput(2.5,1.5){$_{e_{k+l} \dots e_{k+1}}$}
\psline(3.2,1.25)(3.4,2)\psline(3.4,1.25)(3.6,2)\psline(3.6,1.25)(3.8,2)\psline(3.8,1.25)(4,2)
\psline(2.8,1.25)(2.6,.5)\psline(2.6,1.25)(2.4,.5)\psline(2.4,1.25)(2.2,.5)\psline(2.2,1.25)(2,.5)  \end{pspicture}}
\]
where $k$ leaves are attached at the top right and $l$ leaves are attached at the bottom left, and we labeled the edges by $e_1,\dots,e_{k+l}$ as shown. We claim that the standard orientation is:
\[\ost_{(I_{k,l})_{\max}}=(-1)^l\cdot e_1\wedge\dots\wedge e_{k+l}\]
\end{example}
\begin{proof}
Starting from the orientation of the right diagram in Figure \ref{FIG:standard-orientation}, we move the lower branch across the upper $k$ leaves, with the following induced orientation:
\[
\resizebox{!}{2.2cm}{  \begin{pspicture}(-1,-1.6)(3,.6)
 \psline[linewidth=2pt](-1,0)(3,0) \pscircle*(0,0){.12} 
 \rput(.65,-.2){$_{e_1 \dots \, e_{k}}$} 
\psline(.4,0)(.6,.6)\psline(.6,0)(.8,.6)\psline(.8,0)(1,.6)\psline(1,0)(1.2,.6)
\psline(1.4,0)(1.8,-1.2)  \rput(1.45,.15){$_{e_{k+1}}$}
\psline(1.74,-1)(2.74,-1) \rput(1.8,-.2){$_{e_{k+2}}$}
\psline(1.5,-.33)(2.5,-.33) \rput(1.8,-.5){$_{\dots}$}
\psline(1.62,-.66)(2.62,-.66) \rput(2.05,-.85){$_{e_{k+l}}$}
\rput(0.2,-1.5){$e_1\wedge\dots\wedge e_{k+l}$}
 \end{pspicture}}
\resizebox{!}{2.2cm}{  \begin{pspicture}(-3.6,-1.6)(3,.6) 
 \psline[linewidth=2pt](-1,0)(3,0) \pscircle*(0,0){.12}  \rput(-2.4,0){$\leadsto\dots\leadsto$}
 \rput(2.52,-.2){$_{e_3 \dots \, e_{k+1}}$}  \rput(1.5,.15){$_{e_2}$} 
\psline(2,0)(2.2,.6)\psline(2.2,0)(2.4,.6)\psline(2.4,0)(2.6,.6)\psline(2.6,0)(2.8,.6)
\psline(.4,0)(.8,-1.2)  \rput(.3,.15){$_{e_{1}}$}
\psline(.74,-1)(1.74,-1) \rput(.8,-.2){$_{e_{k+2}}$}
\psline(.5,-.33)(1.5,-.33) \rput(.8,-.5){$_{\dots}$}
\psline(.62,-.66)(1.62,-.66) \rput(1.05,-.85){$_{e_{k+l}}$}
\rput(1.2,-1.5){$(-1)^k\cdot e_1\wedge\dots\wedge e_{k+l}$}
 \end{pspicture}}
\]
Moving the lower branch over the thick vertex, and relabeling the edges, we obtain the induced standard orientation:
\[
\resizebox{!}{2.2cm}{  \begin{pspicture}(-3.4,-1.6)(2,.6)
 \psline[linewidth=2pt](-2,0)(2,0) \pscircle*(0,0){.12} \rput(-3,0){$\leadsto$}
 \rput(1.32,-.2){$_{e_3 \dots  e_{k+1}}$}  \rput(.45,-.2){$_{e_2}$} 
\psline(.8,0)(1,.6)\psline(1,0)(1.2,.6)\psline(1.2,0)(1.4,.6)\psline(1.4,0)(1.6,.6)
\psline(-1.4,0)(-1.8,-1.2)  \rput(-.6,.15){$_{e_{1}}$}
\psline(-1.74,-1)(-.74,-1) \rput(-1.1,-.2){$_{e_{k+2}}$}
\psline(-1.5,-.33)(-.5,-.33) \rput(-1.1,-.5){$_{\dots}$}
\psline(-1.62,-.66)(-.62,-.66) \rput(-1.32,-.85){$_{e_{k+l}}$}
\rput(0,-1.5){$(-1)^{k+1}\cdot e_1\wedge\dots\wedge e_{k+l}$}
 \end{pspicture}}
 \resizebox{!}{2.2cm}{  \begin{pspicture}(-4,-1.6)(2,.6)
 \psline[linewidth=2pt](-2,0)(2,0) \pscircle*(0,0){.12} \rput(-3,0){$=$}
 \rput(1.16,-.2){$_{e_2 \dots  e_{k}}$}  \rput(.45,-.2){$_{e_1}$} 
\psline(.8,0)(1,.6)\psline(1,0)(1.2,.6)\psline(1.2,0)(1.4,.6)\psline(1.4,0)(1.6,.6)
\psline(-1.4,0)(-1.8,-1.2)  \rput(-.6,.15){$_{e_{k+1}}$}
\psline(-1.74,-1)(-.74,-1) \rput(-1.1,-.2){$_{e_{k+2}}$}
\psline(-1.5,-.33)(-.5,-.33) \rput(-1.1,-.5){$_{\dots}$}
\psline(-1.62,-.66)(-.62,-.66) \rput(-1.32,-.85){$_{e_{k+l}}$}
\rput(0,-1.5){$(-1)^1\cdot e_1\wedge\dots\wedge e_{k+l}$}
 \end{pspicture}}
 \]
Finally, we move the edges $e_{k+2},\dots, e_{k+l}$ over to the thick left edge.
\[
\resizebox{!}{2.2cm}{  \begin{pspicture}(-4.4,-1.6)(1.6,.6)
 \psline[linewidth=2pt](-3,0)(1.6,0) \pscircle*(0,0){.12} \rput(-4,0){$\leadsto$}
 \rput(.65,-.2){$_{e_1 \dots \, e_{k}}$} 
\psline(.4,0)(.6,.6)\psline(.6,0)(.8,.6)\psline(.8,0)(1,.6)\psline(1,0)(1.2,.6)
\psline(-2.4,0)(-2.8,-1.2)  \rput(-.4,.15){$_{e_{k+1}}$}
\psline(-.8,0)(-1,-.6)  \rput(-1.2,.15){$_{e_{k+2}}$}
\psline(-2.74,-1)(-1.74,-1) \rput(-2.1,-.2){$_{e_{k+3}}$}
\psline(-2.5,-.33)(-1.5,-.33) \rput(-2.1,-.5){$_{\dots}$}
\psline(-2.62,-.66)(-1.62,-.66) \rput(-2.32,-.85){$_{e_{k+l}}$}
\rput(-0.2,-1.5){$(-1)^2\cdot e_1\wedge\dots\wedge e_{k+l}$}
 \end{pspicture}}
 \resizebox{!}{2.2cm}{  \begin{pspicture}(-4.4,-1.6)(1.6,.6)
 \psline[linewidth=2pt](-1.6,0)(1.6,0) \pscircle*(0,0){.12} 
 \rput(-3.1,0){$\leadsto\dots\leadsto$}
\psline(.4,0)(.6,.6)\psline(.6,0)(.8,.6)\psline(.8,0)(1,.6)\psline(1,0)(1.2,.6)
 \rput(.65,-.2){$_{e_1 \dots \, e_{k}}$} 
\psline(-.4,0)(-.6,-.6)\psline(-.6,0)(-.8,-.6)\psline(-.8,0)(-1,-.6)\psline(-1,0)(-1.2,-.6)
 \rput(-.65,.2){$_{e_{k+l} \dots  e_{k+1}}$} 
\rput(-0.2,-1.5){$(-1)^l\cdot e_1\wedge\dots\wedge e_{k+l}$}
 \end{pspicture}}
 \]
Thus, $\ost_{(I_{k,l})_{\max}}=(-1)^l\cdot e_1\wedge\dots\wedge e_{k+l}$ is the standard orientation with edges labeled as described above. This completes the proof.
\end{proof}


Our last task in this subsection is to define the orientation $\omega(S,D)$ that is needed in the definition of the morphism $p$ for a generator $(D,\fcan,m,\omega_D)\in \Qa{k}$ of degree $k$, and a diagram $S$ with $S_{\max}\leq D_{\min}$. This will be done in four steps: First, we define the orientation $\xi_B$ for binary diagrams $B$, second, we define the contraction $\omega\rfloor \xi_{B}$, third, we define notion of positive and negative edges and show how they are relevant for $p$, and fourth we use the notion of positive and negative edges to find the orientation $\omega(S,D)$ on $S$ by contracting $\omega\rfloor \xi_{S_{\max}}$ for some $\omega$.

{\bf Step 1.} 
Following \cite{MS}, we first define the orientation 
$$ \xi_B:=(-1)^{\frac{(n-2)(n-3)}{2}}\cdot \ost_B =(-1)^{1+2+\dots+(n-3)}\cdot \ost_B $$ 
for a binary diagram $B$ with standard orientation $\ost_B$, $n$ leaves, and $n-2$ edges.

\begin{remark}\label{REM:xi-by-composition}
There is an alternative description of $\xi_B$ given in \cite{MS}. For this, first define $\xi_B=+1$ for the binary diagrams $B$ in $\Ca{0}$ with $2$ leaves and no edges  (\emph{i.e.} $B\, =\A_2,M_{1,0}, M_{0,1},$ or $I_{0,0}$). Then for a general binary diagram $B$, the orientation $\xi_B$ is determined by the composition relation in $\Ca{0}$,
\[
(B'\circ_{\fcan(i)}B'',\fcan,\xi_{B'\circ_{\fcan(i)}B''})=\sigma\cdot( (B',\fcan,\xi_{B'})\circ_i(B'',\fcan,\xi_{B''}) ).
\]
We can derive this formula by comparing $\xi_B$ for two binary diagrams related by one of the local moves from Definition \ref{DEF:local-move}.
\end{remark}

{\bf Step 2.} 
If $B$ is a binary tree with orientation $\omega$, and $\omega'$ is an orientation on a subset of edges of $B$, then define $\omega'\rfloor \omega$ by the relation $<e',e>=\delta_{e',e}\in \{0,1\}$, where $\delta_{e',e}$ denotes the Kronecker delta. In particular, if $\omega=e_1\wedge\dots\wedge e_r\wedge e_{r+1}\wedge \dots\wedge e_k$ and $\omega'=e_1\wedge\dots\wedge e_r$, then
\[
\omega'\rfloor \omega =(-1)^{\frac{r(r-1)}{2}} \cdot e_{r+1}\wedge \dots\wedge e_k.
\]
Now, if $S_{\max}=D_{\min}$, then as we shall see in Step 3, $S=D_{\min}/\{$edges in $D\}$, in which case we define $\omega(S,D)=\omega_D\rfloor\xi_{D_{\min}}$. Now, consider a corolla $c$ for example. Since $1\rfloor \omega=\omega$ and $\ost_B\rfloor\xi_B=+1$, we obtain
\begin{eqnarray}
\label{EQ:omega(c)=xi}
S=c_{\min}, \,\, D=c ,\,\, \omega_D=1 & \Rightarrow & \omega(S,D)=1\rfloor \xi_{c_{\min}}=\xi_{c_{\min}}, \\
\label{EQ:omega(max)=1}
S=c,\,\, D=c_{\max}, \,\, \omega_D=\ost_{c_{\max}} & \Rightarrow & \omega(S,D)=\ost_{c_{\max}}\rfloor\xi_{c_{\max}}=+1.
\end{eqnarray}

{\bf Step 3.} 
To further analyze the condition $S_{\max}\leq D_{\min}$ we need to introduce the notion of positive and negative edges in a binary diagram. Markl and Shnider (\cite{MS}) refer to these notions as left-leaning and right-leaning, respectively.

\begin{definition}\label{DEF:pos-neg-edge}
Let $B$ be a binary diagram. We define an edge to be positive, denoted by $\Pos$, respectively negative, denoted by $\Neg$, if it appears in $B$ in the following way,
\[
\resizebox{!}{1.5cm}{
\begin{pspicture}(3,0)(6,2) \rput(3.6,.8){$\Neg$} 
 \psline(4,0)(4,.8) \psline(4,.8)(3.2,1.6) \psline(4,.8)(4.8,1.6) \psline(3.6,1.2)(4.4,2)
 \end{pspicture}
\begin{pspicture}(-.5,0)(3.5,2) \rput(1.4,.8){$\Pos$} 
 \psline(1,0)(1,.8) \psline(1,.8)(.2,1.6) \psline(1,.8)(1.8,1.6) \psline(1.4,1.2)(.6,2)
 \end{pspicture}
\begin{pspicture}(3,0)(6,2) \rput(4.3,1.2){$\Neg$} 
 \psline[linewidth=3pt](4,0)(4,2) \psline(4,.6)(3.1, 1.5) \psline(4,1.4)(4.5,1.9)
 \end{pspicture}
\begin{pspicture}(-.5,0)(2,2) \rput(.7,1.2){$\Pos$} 
  \psline[linewidth=3pt](1,0)(1,2)  \psline(1,.6)(1.9, 1.5) \psline(1,1.4)(.5,1.9)
 \end{pspicture}}
\]
\[
\resizebox{!}{1.5cm}{
\begin{pspicture}(3,0)(6,2) \rput(3.6,.6){$\Neg$} 
 \psline[linewidth=3pt](4,0)(4,2) \psline(4,.6)(3.1, 1.5) \psline(3.6,1)(3.4,1.8)
 \end{pspicture}
\begin{pspicture}(-.5,0)(3.5,2) \rput(1.3,1.1){$\Pos$} 
  \psline[linewidth=3pt](1,0)(1,2)  \psline(1,.6)(.1, 1.5) \psline(1,1.4)(.5,1.9)
 \end{pspicture}
\begin{pspicture}(3,0)(6,2) \rput(3.7,1.1){$\Neg$} 
 \psline[linewidth=3pt](4,0)(4,2) \psline(4,.6)(4.9, 1.5) \psline(4,1.4)(4.5,1.9)
 \end{pspicture}
\begin{pspicture}(-.5,0)(2,2) \rput(1.4,.6){$\Pos$} 
  \psline[linewidth=3pt](1,0)(1,2) \psline(1,.6)(1.9, 1.5) \psline(1.4,1)(1.6,1.9)
 \end{pspicture} }
\]
\[
\resizebox{!}{1.5cm}{
\begin{pspicture}(3,0)(6,2) \rput(3.7,.7){$\Neg$} 
 \pscircle*(4,1){.15} \psline[linewidth=3pt](3,1)(5,1) \psline(3.6,1)(3.2,1.6)
 \end{pspicture}
\begin{pspicture}(-.5,0)(3.5,2) \rput(1.3,.7){$\Pos$} 
 \pscircle*(1,1){.15} \psline[linewidth=3pt](0,1)(2,1) \psline(1.4,1)(1.8,1.6)
 \end{pspicture}
\begin{pspicture}(3,0)(6,2) \rput(4.3,1.3){$\Neg$} 
 \pscircle*(4,1){.15}\psline[linewidth=3pt](3,1)(5,1) \psline(4.4,1)(4.8,.4)
 \end{pspicture} 
\begin{pspicture}(-.5,0)(2,2) \rput(.7,1.3){$\Pos$} 
 \pscircle*(1,1){.15}  \psline[linewidth=3pt](0,1)(2,1)  \psline(.6,1)(.2,.4) 
 \end{pspicture} }
\]
We denote the number of positive edges in a binary diagram $B$ by $|B|_\Pos$. 
\end{definition}
\begin{lemma}\label{neq-k}
Positive and negative edges have the following properties:
\begin{enumerate}
\item If $B\leq B'$, then $|B|_\Pos \leq |B'|_\Pos$, \emph{i.e.}, $|.|_\Pos$ preserves the order. The maximal (resp. minimal) binary diagram $c_{\max}$ (resp. $c_{\min}$) of a corolla $c$ given by Lemma \ref{max-min} is the unique diagram all of whose edges are positive (resp. negative).
\item If $S$ is a diagram, then $S=S_{\max}/\{e_1,\dots, e_r\}$ is a quotient in which only positive edges of $S_{\max}$ are collapsed. If $D$ is a diagram, then $D=D_{\min}/\{e_1,\dots, e_s\}$ is a quotient in which only negative edges of $D_{\min}$ are collapsed.
\item
Let $(D,\fcan,m,\omega_D)\in \Qa{k}$. If $|D_{\min}|_\Pos\neq k$, then $p(D,\fcan,m,\omega_D)=0$. If $|D_{\min}|_\Pos= k$, then $D=D_{\min}/\{$negative edges$\}$, and if 
$(S,f,\omega)\in \Ca{k}$ is a summand of $p(D,\fcan,m,\omega_D)$, then $S_{\max}$ has exactly $k$ positive edges and $S=S_{\max}/\{$positive edges$\}$.
\end{enumerate}
\end{lemma}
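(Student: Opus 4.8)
The plan is to prove parts~(1), (2), (3) in that order; the real content is in (1) and (2), and (3) then becomes a dimension count resting on them.

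For part~(1), since the edge-pairs of Definition~\ref{DEF:order} generate the order $\leq$, it suffices to show that $|\cdot|_\Pos$ does not decrease across a single local move $B<B'$. Each such check is purely local: using the identification of edges the move supplies, one compares the two distinguished neighborhoods and reads off from Definition~\ref{DEF:pos-neg-edge} what happens. In each of the six cases the one edge distinguishing the neighborhoods passes from a negative configuration to a positive one; every edge outside the neighborhood is untouched; and inside the neighborhood an edge that changes sign can only do so from positive to negative when it is forced by the $\Neg\to\Pos$ change just mentioned, so the net change in $|\cdot|_\Pos$ is $\geq 0$. (For the Tamari move~(1) this is \cite{MS}; the module moves~(2)--(4) and the inner-product moves~(5)--(6) are verified the same way.) For the remaining assertion, inspection of the diagrams displayed in Lemma~\ref{max-min} shows that every edge of $c_{\max}$ is positive and every edge of $c_{\min}$ is negative. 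Conversely, if a binary diagram $B$ has all edges positive, then $B$ cannot agree, in any neighborhood, with the left-hand diagram of any of the six moves, since by construction each such left-hand diagram is one of the negative configurations of Definition~\ref{DEF:pos-neg-edge} and hence contains a negative edge; so no move strictly raises $B$, i.e.\ $B$ is $\leq$-maximal, and the uniqueness in Lemma~\ref{max-min} forces $B=c_{\max}$. The claim about $c_{\min}$ is the mirror argument.

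For part~(2), I would invoke the decomposition from the proof of Lemma~\ref{max-min}: writing $S$ as a $\circ_i$-composition of its corollas $c^1,\dots,c^r$, the maximum $S_{\max}$ is the $\circ_i$-composition of $(c^1)_{\max},\dots,(c^r)_{\max}$, and dually $D_{\min}$ is the composition of the $(c^j)_{\min}$. The sign of an edge is determined by the local configuration around it --- its two endpoint vertices, their colors, and the attributes of the edges incident there --- none of which is altered by $\circ_i$-composition (which merely turns certain leaves or roots into internal edges). Hence the internal edges of $(c^j)_{\max}$, all positive in $(c^j)_{\max}$ by part~(1), remain positive in $S_{\max}$, and contracting all of them returns $S$. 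Thus $S=S_{\max}/\{e_1,\dots,e_r\}$ with each $e_i$ positive in $S_{\max}$; exchanging left and right gives the statement for $D_{\min}$.

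For part~(3), suppose $(S,f,\omega)\in\Ca{k}$ is a summand of $p(D,\fcan,m,\omega_D)$, so $S_{\max}\leq D_{\min}$. As $S_{\max}$ is binary we have $\#\mathcal E(S_{\max})=\#\mathcal L(S)-2$, and $|(S,f,\omega)|=k$ gives $\#\mathcal E(S)=\#\mathcal E(S_{\max})-k$; hence exactly $k$ edges of $S_{\max}$ are contracted to produce $S$, and by part~(2) they are positive, so $|S_{\max}|_\Pos\geq k$. By part~(1), $|S_{\max}|_\Pos\leq|D_{\min}|_\Pos$, whence $|D_{\min}|_\Pos\geq k$. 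Now $D$ is fully metric of degree $k$, so $\#\mathcal E(D)=\#\mathcal{M}(D)=k$, and by part~(2) $D=D_{\min}/E_0$ with $E_0$ a set of negative edges of $D_{\min}$; then $\#E_0=\#\mathcal E(D_{\min})-k$, and since $E_0$ is contained in the $\#\mathcal E(D_{\min})-|D_{\min}|_\Pos$ negative edges of $D_{\min}$ we get $|D_{\min}|_\Pos\leq k$. Combining, if $p(D,\fcan,m,\omega_D)\neq 0$ then $|D_{\min}|_\Pos=k$; contrapositively, $|D_{\min}|_\Pos\neq k$ forces $p(D,\fcan,m,\omega_D)=0$. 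When $|D_{\min}|_\Pos=k$ the inequality $\#E_0\leq\#\mathcal E(D_{\min})-|D_{\min}|_\Pos$ is an equality, so $E_0$ is the full set of negative edges and $D=D_{\min}/\{\text{negative edges}\}$; and for any summand $S$ we then have $|S_{\max}|_\Pos=k$, so the $k$ contracted edges are precisely the positive edges of $S_{\max}$, i.e.\ $S=S_{\max}/\{\text{positive edges}\}$.

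The step I expect to be the main obstacle is part~(1): one must fix a notion of positive/negative edge --- equivalently, make precise how the six configurations of Definition~\ref{DEF:pos-neg-edge} propagate --- that is at once local enough to survive the $\circ_i$-decompositions used in part~(2) and simple enough that weak monotonicity under all six moves (and not merely the Tamari move treated in \cite{MS}) is transparent, bearing in mind that a move may leave $|\cdot|_\Pos$ fixed. Once (1) and (2) are in place, part~(3) is purely a matter of counting.
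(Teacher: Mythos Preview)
Your proof is correct and follows essentially the same approach as the paper's. The paper is terser---it dismisses (1) as ``direct inspection'' and states (2) as the single observation that $S_{\max}$ (resp.\ $D_{\min}$) is obtained from $S$ (resp.\ $D$) by inserting positive (resp.\ negative) edges at each non-binary vertex---but your $\circ_i$-decomposition argument for (2) amounts to the same thing, and your treatment of (3) matches the paper's dimension count $k\le |S_{\max}|_\Pos\le |D_{\min}|_\Pos\le k$ almost verbatim.
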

\smallskip
\begin{proof} 
\quad
\begin{enumerate}
\item
This can be checked by direct inspection.
\item
For a diagram $S$, we obtain $S_{\max}$ by inserting positive edges at every non-binary vertex. For a diagram $D$, we obtain $D_{\min}$ by inserting negative edges at every non-binary vertex.
\item
Since $D$ has $k$ edges and $D_{\min}$ is obtained from $D$ by inserting only negative edges, $D_{\min}$ has at most $k$ positive edges. If $(S,f,\omega)\in \Ca{k}$, then $S_{\max}$ is obtained from $S$ by inserting $k$ positive edges; consequently $S_{max}$ has at least $k$ positive edges. Since $S_{\max}\leq D_{\min}$, we have $k\leq |S_{\max}|_\Pos\leq |D_{\min}|_\Pos\leq k$. Therefore $S_{\max}$ and $D_{\min}$ must have exactly $k$ positive edges. Furthermore, the $k$ positive edges in $D_{\min}$ are exactly the ones coming from $D$, and the $k$ positive edges in $S_{\max}$ are exactly the ones inserted in $S$.
\end{enumerate}
\end{proof}

{\bf Step 4.} 
Now, let $(D,\fcan,m,\omega_D)\in \Qa{k}$ be a generator of degree $k$, and let $S$ be a diagram with $S_{\max}\leq D_{\min}$. In order to define $\omega(S,D)$, we may assume by Lemma \ref{neq-k} (3), that exactly $k$ edges $e_1,\dots,e_k$ of $D_{\min}$ are positive, and these are the edges of $D$, \emph{i.e.}, $\omega_D=\eta\cdot e_1\wedge\dots\wedge e_k$ for some $\eta\in \{+1,-1\}$.

We can now define $\omega(S,D)$ in the general case.
\begin{definition}
If $S_{\max}=D_{\min}$, then $S=D_{\min}/\{e_1,\dots,e_k\}$, and we set $\omega(S,D):=\omega_D\rfloor \xi_{D_{\min}}$ as in step 2. If $S_{\max}<D_{\min}$, then $S=S_{\max}/\{$positive edges$\}$ by Lemma \ref{neq-k} (3), and we set $\omega(S,D):=(\eta\cdot e_1\wedge \dots\wedge e_k)\rfloor \xi_{S_{\max}}$, where the positive edges in $D_{\min}$ and $S_{\max}$ are identified using the local moves from Definition \ref{DEF:local-move}. The ambiguity of identifying positive edges under paths is given (according to the Coherence Lemma \ref{LEM:coherence}) by pentagons, hexagons and squares. Since the pentagons and hexagons change the number of positive edges, changing a path $\beta=(D_{\min},\dots,S_{\max})$ to another path $\beta'=(D_{\min},\dots,S_{\max})$ only consists of squares, for which the positive edges remain identified uniquely.
(Note, that for a local move $B \leadsto B'$ from Definition \ref{DEF:local-move} with constant number of positive edges, the procedure of keeping track of the positive edge by renaming a positive edge $e_\Pos$ by one negative edge $e_\Neg$ gives $\dots \wedge e_\Neg\wedge\dots \wedge e_\Pos\wedge\dots=-\dots \wedge e_\Pos\wedge\dots \wedge e_\Neg\wedge\dots$, which is the induced orientation $\xi_{B'}$ on $B'$.)
\end{definition}

\smallskip

\subsection{Sign check for Proposition \ref{q-bound}}\label{APP:(3.1)-signs}

We now give the remaining sign details for Proposition \ref{q-bound}. More precisely, in the notation of the proof of Proposition \ref{q-bound}, we will show that $(-1)^{i+1}\omega_B^{\hat {e_i}}$ equals $(-1)^{\epsilon_2}\omega_j$. 

We calculate $(-1)^{\epsilon_2}\omega_j$ of the binary diagram $B$ from the proof of Proposition \ref{q-bound} and compare it to $\omega_B^{\hat {e_i}}$.  Recall that $B$ is a composition of $B'$ and $B''$ with standard orientations corresponding to $c'$ and $c''$ with $(-1)^{\epsilon_1}\cdot \sigma\cdot\big((c',\fcan,1)\circ_j (c'',\fcan',1)\big)=(D',\fcan'',e')$ and $D'/e'=c$. If the corolla $c'$ has $r$ leaves and the corolla $c''$ has $s$ leaves, then the original corolla $c$ has $k=r+s-1$ leaves. Hence, from the definition of the composition in Equation \eqref{EQ:circ-i}, and of the $S_k$ action, we see that $(-1)^{\epsilon_1}=\sgn(\sigma)\cdot (-1)^{j\cdot (s+1)+r\cdot s}$. Furthermore, from the definition of the composition in $\QA$, we have $\omega_j=\ost_{B'}\wedge \ost_{B''}$. The only other signs come from comparing $\omega_j$ to $\ost_B$, and a possible application of $\sigma$. We consider two cases: Case 1: $\sigma=id_k$. Case 2: $\sigma\neq id_k$.
\begin{itemize}
\item[Case 1:]
Either the corolla $c$ is \emph{not} an inner product diagram, or $c$ is an inner product diagram and the composition $\circ_j$ is \emph{not} on the thick left module input.
\item[Case 2:]
The corolla $c$ is an inner product diagram and the composition $\circ_j$ is on the thick left module input.
\end{itemize}
In Case 1, the only way to obtain $(c,\fcan,1)$ as a composition of $c'$ and $c''$ is via the canonical labelings $\fcan$ for $c'$ and $c''$ and $\sigma=id$. In this case, the proof follows as in \cite[Proposition 4.2]{MS}. 
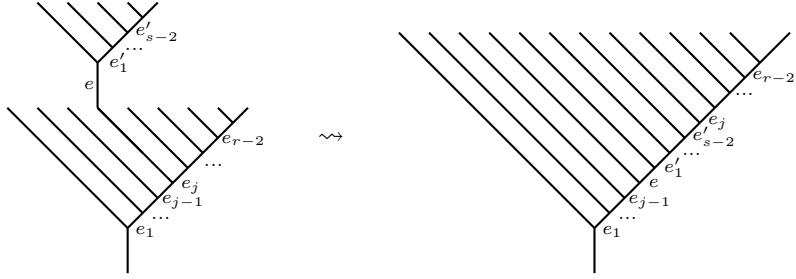
\begin{figure}[ht]
\[
 \begin{pspicture}(0.2,0.6)(3.8,4.4) 
 \psline(2,.8)(2,1.4) \psline(2,1.4)(.4,3) \psline(2,1.4)(3.6,3)
 \psline(2.2,1.6)(.8,3) \psline(2.4,1.8)(1.2,3) \psline(2.6,2)(1.6,3)
 \psline(2.8,2.2)(2,3) \psline(3,2.4)(2.4,3) \psline(3.2,2.6)(2.8,3) \psline(3.4,2.8)(3.2,3) 
\psline(2.2,2.4)(1.6,3) 
 \rput(2.25,1.35){$_{e_1}$} \rput(2.45,1.55){$_{\dots}$}  \rput(2.75,1.75){$_{e_{j-1}}$} 
 \rput(2.85,1.95){$_{e_j}$}  \rput(3.15,2.25){$_{\dots}$}  \rput(3.55,2.55){$_{e_{r-2}}$} 
 \psline(1.6,3)(1.6,3.6) \rput(1.5,3.3){$_e$}
 \psline(1.6,3.6)(.8,4.4) \psline(1.6,3.6)(2.4,4.4)
 \psline(1.8,3.8)(1.2,4.4) \psline(2,4)(1.6,4.4) \psline(2.2,4.2)(2,4.4)
 \rput(1.9,3.6){$_{e'_1}$} \rput(2.1,3.8){$_{\dots}$}  \rput(2.4,4){$_{e'_{s-2}}$} 
 \end{pspicture}
 \quad \quad \twig{-2} \quad \quad 
 \begin{pspicture}(-.6,0.6)(4.7,4) 
 \psline(2,.8)(2,1.4)  \psline(2,1.4)(4.6,4) 
 \psline(2,1.4)(-.6,4) \psline(2.2,1.6)(-.2,4) \psline(2.4,1.8)(.2,4) \psline(2.6,2)(.6,4)
 \psline(2.8,2.2)(1,4) \psline(3,2.4)(1.4,4) \psline(3.2,2.6)(1.8,4) \psline(3.4,2.8)(2.2,4) 
 \psline(3.6,3)(2.6,4) \psline(3.8,3.2)(3,4) \psline(4,3.4)(3.4,4) \psline(4.2,3.6)(3.8,4) 
 \rput(2.25,1.35){$_{e_1}$} \rput(2.45,1.55){$_{\dots}$}  \rput(2.75,1.75){$_{e_{j-1}}$} 
\rput(3.65,2.8){$_{e_{j}}$}  \rput(4,3.2){$_{\dots}$}  \rput(4.4,3.4){$_{e_{r-2}}$} 
  \rput(2.8,2){$_e$}
 \rput(3.1,2.2){$_{e'_1}\,$} \rput(3.3,2.4){$_{\dots}$}  \rput(3.6,2.6){$_{e'_{s-2}}$} 
 \end{pspicture}
\]
\caption{Move the edges $e, e'_1,\dots,e'_{s-2}$ to the right.}\label{FIG:q-signs-case1}
\end{figure}
More precisely, with the notation from the left diagram in Figure \ref{FIG:q-signs-case1}, we have
\begin{eqnarray*}
(-1)^{\epsilon_2} \omega_j &=& (-1)^{j(s+1)+rs}\cdot e_1\wedge\dots\wedge e_{r-2}\wedge e'_1\wedge \dots\wedge e'_{s-2}\\
&=& (-1)^{j+s}\cdot e_1\wedge\dots\wedge e_{j-1}\wedge e'_1\wedge \dots\wedge e'_{s-2}\wedge e_j\wedge\dots\wedge e_{r-2}.
\end{eqnarray*}
On the other hand, the standard orientation for the left diagram in Figure \ref{FIG:q-signs-case1} is $\ost_B=(-1)^{s-1}\cdot e_1\wedge\dots\wedge e_{j-1}\wedge e\wedge e'_1\wedge \dots\wedge e'_{s-2}\wedge e_j\wedge\dots\wedge e_{r-2}$, which can be seen via the $(s-1)$ local moves to the diagram on the right in Figure \ref{FIG:q-signs-case1}. Setting $i=j$, we obtain,
\[
(-1)^{i+1} \omega_B^{\hat{e}} = (-1)^{j+s}\cdot e_1\wedge\dots\wedge e_{j-1}\wedge e'_1\wedge \dots\wedge e'_{s-2}\wedge e_j\wedge\dots\wedge e_{r-2}=(-1)^{\epsilon_2} \omega_j.
\]
The considerations in other types of diagrams for Case 1 are similar to the one in Figure \ref{FIG:q-signs-case1}.

Now, for Case 2, where $c'=I_{r',r''}$ is an inner product corolla and $c''=M_{s',s''}$ is a module tree with $r=r'+r''+2$ and $s=s'+s''+1$, we have
\[
 (I_{r',r''},\fcan,1)\circ_1 (M_{s',s''},\fcan,1) = (-1)^{(s+1)+r\cdot s}\cdot (I_{r'+s'',r''+s'},\fcan\circ \tau^{s'},1),
\]
\[
\begin{pspicture}(.2,1)(2.8,3)  
 \psline(1.6,1.4)(2.4,2.6)  \psline(2.4,1.4)(1.6,2.6) \psline(2,1.4)(2,2.6)
 \psline(1,2)(.8,2.6) \psline(1,2)(.6,2.6) \psline(1,2)(.4,2.6) 
 \rput(2,2.9){$\overbrace{\quad\quad\quad}^{r'}$}  \rput(2,1.1){$\underbrace{\quad\quad\quad}_{r''}$}
 \rput(.6,2.9){$\overbrace{\quad}^{s''}$}  \rput(.6,1.1){$\underbrace{\quad}_{s'}$}
 \psline(1,2)(.8,1.4) \psline(1,2)(.6,1.4) \psline(1,2)(.4,1.4)
 \psline[linewidth=3pt](0.2,2)(2.8,2)
 \pscircle*(2,2){.15}
\end{pspicture} 
\]
where $\tau\in \Z_k\subset S_k$ is the cyclic rotation ``$-1\text{ (mod }k$)''. Thus, $\sigma=\tau^{s'}$ with $\sgn(\sigma)=(-1)^{s'\cdot(r+s'')}$, and thus $(-1)^{\epsilon_1}=(-1)^{s+1+rs+s'(r+s'')}$. Similarly we obtain (see Figure \ref{FIG:q-signs-case2})
\[
(-1)^{\epsilon_2} \omega_1 = (-1)^{s+1+rs+s'(r+s'')}\cdot e_1\wedge\dots\wedge e_{r-2}\wedge e'_1\wedge \dots\wedge e'_{s-2}.
\]
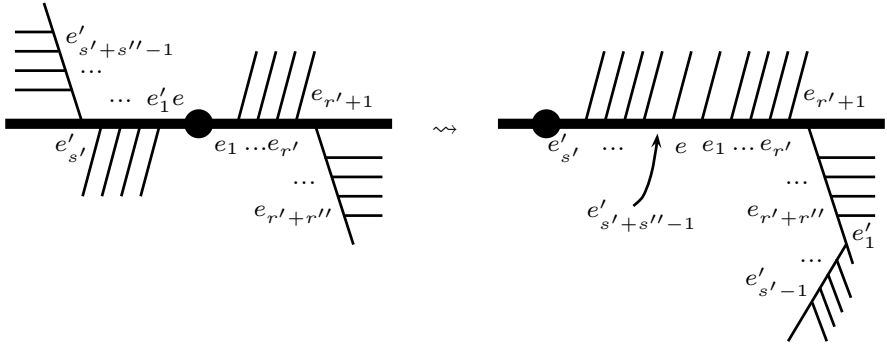
\begin{figure}[ht]
\[
\resizebox{!}{4.5cm}{
 \begin{pspicture}(1,-1)(5,2.5) 
 \psline[linewidth=3pt](1,1.25)(5,1.25) \pscircle*(3,1.25){.15}
 \psline(4,1.25)(4.2,2) \psline(3.4,1.25)(3.6,2) \psline(3.6,1.25)(3.8,2)
  \psline(3.8,1.25)(4,2) \psline(4.2,1.25)(4.6,0)  \psline(4.3,.9)(4.9,.9)
  \psline(4.37,.7)(4.9,.7)  \psline(4.45,.5)(4.9,.5)  \psline(4.5,.3)(4.9,.3)
  \rput(3.3,1){$_{e_1}$}  \rput(3.6,1){$_{\dots}$}  \rput(3.9,1){$_{e_{r'}}$}
 \rput(4.5,1.5){$_{e_{r'+1}}$} \rput(4.1,.65){$_{\dots}$} \rput(4,.3){$_{e_{r'+r''}}$}
 \psline(2,1.25)(1.8,.5) \psline(2.6,1.25)(2.4,.5) \psline(2.4,1.25)(2.2,.5)
  \psline(2.2,1.25)(2,.5) \psline(1.8,1.25)(1.4,2.5) \psline(1.7,1.6)(1.1,1.6)
  \psline(1.63,1.8)(1.1,1.8)\psline(1.55,2)(1.1,2)\psline(1.5,2.2)(1.1,2.2)
 \rput(2.6,1.5){$_{e'_1}$} \rput(2.2,1.5){$_{\dots}$} \rput(1.7,1){$_{e'_{s'}}$}
 \rput(1.9,1.8){$_{\dots}$} \rput(2.2,2.1){$_{e'_{s'+s''-1}}$}
 \rput(2.8,1.5){$_e$}
 \end{pspicture} }
 \quad \twig{-2.8} \quad
 \resizebox{!}{4.5cm}{
 \begin{pspicture}(1,-1)(5,2.5)  
 \psline[linewidth=3pt](1,1.25)(5,1.25) \pscircle*(1.5,1.25){.15}
 \psline(4,1.25)(4.2,2) \psline(3.4,1.25)(3.6,2) \psline(3.6,1.25)(3.8,2)
  \psline(3.8,1.25)(4,2) \psline(4.2,1.25)(4.67,-.2)  \psline(4.3,.9)(4.9,.9)
  \psline(4.37,.7)(4.9,.7)  \psline(4.45,.5)(4.9,.5)  \psline(4.5,.3)(4.9,.3)
  \rput(3.25,1){$_{e_1}$}  \rput(3.55,1){$_{\dots}$}  \rput(3.9,1){$_{e_{r'}}$}
 \rput(4.5,1.5){$_{e_{r'+1}}$} \rput(4.1,.65){$_{\dots}$} \rput(4,.3){$_{e_{r'+r''}}$}
\psline(3.1,1.25)(3.3,2) \psline(2.8,1.25)(3,2) \psline(2.5,1.25)(2.7,2)
\psline(2.3,1.25)(2.5,2)\psline(2.1,1.25)(2.3,2)\psline(1.9,1.25)(2.1,2)
 \rput(4.8,0.1){$_{e'_1}$} \rput(2.2,1){$_{\dots}$} \rput(1.7,1){$_{e'_{s'}}$}
 \rput(4.25,-.15){$_{\dots}$} \rput(2.5,.3){$_{e'_{s'+s''-1}}$} 
 \pscurve{->}(2.4,.4)(2.5,.5)(2.65,1.15)  \rput(2.9,1){$_e$} \psline(4.6,0)(4,-1)
 \psline(4.25,-.6)(4.4,-1) \psline(4.33,-.45)(4.48,-.85) \psline(4.41,-.3)(4.56,-.7)
  \psline(4.5,-.15)(4.65,-.55) \rput(3.9,-.4){$_{e'_{s'-1}}$}
 \end{pspicture} }
\]
\caption{The edges on the left are brought to the right in $(s+r-3)$ moves: $s'-1$ to move $e'_1,\dots, e'_{s'}$ on one branch; $2$ to move the obtained two main branches from left to right; $r'+r''$ to move the ``$s'$''-branch all the way to the right; and $s''-1$ to move the edges from the $``s''$''-branch to the thick right edge.}\label{FIG:q-signs-case2}
\end{figure}
On the other hand, the standard orientation of the left diagram in Figure \ref{FIG:q-signs-case2} is 
\begin{multline*}
 \ost_B = (-1)^{s-1+r}\cdot e'_{s'}\wedge\dots\wedge e_{s'+s''-1}\wedge e \wedge e_1\wedge \dots \wedge e_{r-2}\wedge e'_1\wedge\dots\wedge e'_{s'-1}\\
= (-1)^{s-1+r+s''(r+s')}\cdot e\wedge e_1\wedge\dots\wedge e_{r-2}\wedge e'_1\wedge \dots\wedge e'_{s'-1}\wedge e'_{s'}\wedge \dots\wedge e'_{s-2},
 \end{multline*}
which can be seen by performing $(s+r-3)$ local moves yielding the right diagram in Figure \ref{FIG:q-signs-case2}. Thus for $i=1$, we obtain
\[
(-1)^{i+1} \cdot \omega_B^{\hat e}=  (-1)^{s+r-3+s''(r+s')}\cdot e_1\wedge\dots\wedge e_{r-2}\wedge e'_1\wedge \dots\wedge e'_{s-2} = (-1)^{\epsilon_2}\cdot \omega_1,
\]
where we used the fact that $s=s'+s''+1$, so that $(-1)^{r+s''r}=(-1)^{rs+s'r}$.

This completes the check of both cases, and with this also the proof of Proposition \ref{q-bound}.

\section{Proof of Proposition \ref{contract}}\label{I_k,l k,l>0}

In this appendix, we prove Proposition \ref{contract}: \textit{The cellular
complexes associated with }$T_{n}$, $M_{k,l}$, and $I_{k,l}$ \textit{are
contractible.} The cellular complex associated with $T_{n}$ is the
associahedron $K_{n}$, whose contractiblity was proved by J. Stasheff in
\cite[Proposition 3]{S}. Since the cellular complexes associated with
$M_{k,n-k},I_{n,0}$ and $I_{0,n}$ are also isomorphic to $K_{n}$, our task is to
verify the contractibility of the cellular complex $\left\vert I_{k,l}%
\right\vert $ associated with $I_{k,l}$ for each $k,l\geq1$. To this end, we
derive an analog of Stasheff's result, which shows that $\left\vert I_{k,l}%
\right\vert $ is homeomorphic to the (closed) $(k+l)$-ball $\bar{B}^{k+l}$.

We begin with an outline of the ideas involved. Fix natural numbers $k,l\geq
1$, and let $n=k+l$. First, we identify the boundary of $\left\vert
I_{k,l}\right\vert $ with the union of two closed $(n-1)$-balls glued together
along their bounding $(n-2)$-spheres. Roughly speaking, these
\textquotedblleft upper\textquotedblright\ and \textquotedblleft
lower\textquotedblright\ boundary components are obtained by inserting edges
into the upper and the lower parts of $I_{k,l}$, respectively, \emph{cf.}
Definition \ref{edge-number-set}. Second, we identify this
common $\left(  n-2\right)  $-sphere with those diagrams obtained
from $I_{k,l}$ by inserting at least one upper and one lower edge. Third, recalling that
Stasheff represented the associahedron $K_{n+2}$ as a subdivision of a cube
\cite[Section 6]{S}, we establish bijections between the upper and lower
boundary components of $\left\vert I_{k,l}\right\vert $ and the interior of the union of the boundary components $\partial_2,\dots,\partial_{k+2}$ of $K_{n+2}$ (see Figure
\ref{I-and-K} on page \pageref{I-and-K}), which we denote by 
$\left\vert K_{n+2}^{\{2,\dots,k+2\}}\right\vert$. 

\begin{definition}
\label{edge-number} Label the leaves of $I_{k,l}$, $K_{n+2}$, and diagrams $D
$ in $\partial I_{k,l}$ or $\partial K_{n+2}$, using the canonical labeling
$f_{\circlearrowright}$. Given such a diagram $D$ and an edge $e$ of $D$, let
$t $ be the smallest positive integer that labels a leaf outward from $e$, and
assign the label $t$ to $e$. If $D$ is in $\partial I_{k,l}$ and $t
\in\{2,\dots, k+2\}$, we say that $e$ is an \emph{upper edge} of $D$ ;
otherwise $e$ is a \emph{lower edge} of $D$. Likewise, a leaf of $I_{k,l}$ or
$D$ with label in $\{2,\dots, k+2\}$ is an \emph{upper leaf}; otherwise it is
a \emph{lower leaf}.
\end{definition}

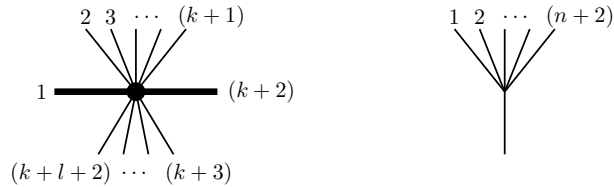
\begin{figure}[h]%
\[
\resizebox{!}{2.5cm}{ \begin{pspicture}(-.5,.5)(5,3.5) \psline(2,2)(1.6,3) \psline(2,2)(1.2,3) \psline(2,2)(2.8,3) \psline(2,2)(2,3) \psline(2,2)(2.4,3) \psline(2,2)(1.4,1) \psline(2,2)(1.8,1) \psline(2,2)(2.2,1) \psline(2,2)(2.6,1) \psline[linewidth=3pt](.7,2)(3.3,2) \rput(.5,2){$1$} \rput(1.2,3.2){$2$} \rput(1.6,3.2){$3$} \rput(2.2,3.2){$\dots$} \rput(3.2,3.2){$(k+1)$} \rput(4,2){$(k+2)$} \rput(3,.7){$(k+3)$} \rput(2,.7){$\dots$} \rput(.8,.7){$(k+l+2)$} \pscircle*(2,2){.15} \end{pspicture}}
\quad\quad\quad
\resizebox{!}{2.5cm}{ \begin{pspicture}(.5,.5)(4,3.5) \psline(2,2)(1.2,3) \psline(2,2)(1.6,3) \psline(2,1)(2,3) \psline(2,2)(2.4,3) \psline(2,2)(2.8,3) \rput(1.2,3.2){$1$} \rput(1.6,3.2){$2$} \rput(2.2,3.2){$\dots$} \rput(3.2,3.2){$(n+2)$} \end{pspicture}}
\]
\caption{The canonical labeling of leaves}%
\label{FIG:leaf-order}%
\end{figure}

The leaves and edges of the diagrams in Figure \ref{EQ:numbering}, for
example, are labeled as specified by Definition \ref{edge-number}; the upper
edges and leaves of the left-hand diagram are labeled by elements of
$\{2,\dots, 17\}$. 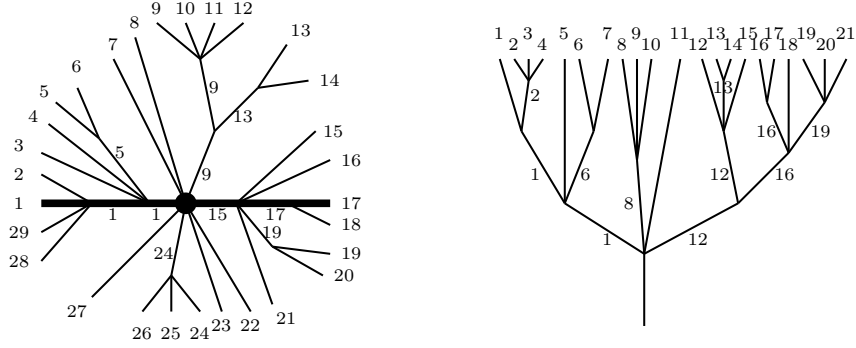
\begin{figure}[th]%
\[
\resizebox{!}{4.8cm}{ \begin{pspicture}(-1,0)(5,5) \psline[linewidth=3pt](0,2)(4,2) \pscircle*(2,2){.15} \psline(2,2)(1,4) \psline(2,2)(1.3,4.3) \psline(2,2)(2.4,3) \psline(2,2)(.7,.7) \psline(2,2)(1.8,1) \psline(2,2)(2.5,.5) \psline(2,2)(2.9,.5) \psline(2.7,2)(3.2,.6) \psline(2.7,2)(3.2,1.4) \psline(3.4,2)(4,1.7) \psline(2.7,2)(4,2.6) \psline(2.7,2)(3.8,3) \psline(3.2,1.4)(4,1.3) \psline(3.2,1.4)(3.9,1) \psline(1.8,1)(1.8,.5) \psline(1.8,1)(1.4,.5) \psline(1.8,1)(2.2,.5) \psline(1.5,2)(0,2.7) \psline(1.5,2)(.1,3.1) \psline(1.5,2)(0.8,2.9) \psline(0.7,2)(0,1.6) \psline(0.7,2)(0,1.2) \psline(0.7,2)(0,2.4) \psline(0.8,2.9)(0.2,3.4) \psline(0.8,2.9)(0.5,3.6) \psline(3,3.6)(3.4,4.2) \psline(3,3.6)(3.7,3.7) \psline(2.4,3)(2.2,4) \psline(2.4,3)(3,3.6) \psline(2.2,4)(2.0,4.5) \psline(2.2,4)(2.4,4.5) \psline(2.2,4)(2.8,4.5) \psline(2.2,4)(1.6,4.5) \rput(-.3,2){$_1$} \rput(-.3,2.4){$_2$} \rput(-.3,2.8){$_3$} \rput(-.1,3.2){$_4$} \rput(.05,3.55){$_5$} \rput(.5,3.9){$_6$} \rput(1,4.2){$_7$} \rput(1.3,4.5){$_8$} \rput(1.6,4.7){$_9$} \rput(2,4.7){$_{10}$} \rput(2.4,4.7){$_{11}$} \rput(2.8,4.7){$_{12}$} \rput(3.6,4.4){$_{13}$} \rput(4,3.7){$_{14}$} \rput(4.05,3){$_{15}$} \rput(4.3,2.6){$_{16}$} \rput(4.3,2){$_{17}$} \rput(4.3,1.7){$_{18}$} \rput(4.3,1.3){$_{19}$} \rput(4.2,1){$_{20}$} \rput(3.4,.4){$_{21}$} \rput(2.9,.3){$_{22}$} \rput(2.5,.3){$_{23}$} \rput(2.2,.2){$_{24}$} \rput(1.8,.2){$_{25}$} \rput(1.4,.2){$_{26}$} \rput(.5,.5){$_{27}$} \rput(-.3,1.2){$_{28}$} \rput(-.3,1.6){$_{29}$} \rput(1,1.85){$_1$} \rput(1.6,1.85){$_1$} \rput(1.1,2.7){$_5$} \rput(2.3,2.4){$_9$} \rput(2.4,3.6){$_9$} \rput(2.8,3.2){$_{13}$} \rput(2.45,1.85){$_{15}$} \rput(3.25,1.85){$_{17}$} \rput(3.2,1.6){$_{19}$} \rput(1.7,1.3){$_{24}$} \end{pspicture}}
\quad\quad
\resizebox{!}{4.8cm}{ \begin{pspicture}(0,0)(5.5,5) \psline(2.5,.3)(2.5,1.3) \psline(2.5,1.3)(1.4,2) \psline(2.5,1.3)(2.4,2.6) \psline(2.5,1.3)(3,4) \psline(2.5,1.3)(3.8,2) \psline(1.4,2)(.8,3) \psline(1.4,2)(1.4,4) \psline(1.4,2)(1.8,3) \psline(.8,3)(.5,4) \psline(.8,3)(.9,3.7) \psline(.9,3.7)(.9,4) \psline(.9,3.7)(1.1,4) \psline(.9,3.7)(.7,4) \psline(1.8, 3)(1.6,4) \psline(1.8, 3)(2,4) \psline(2.4,2.6)(2.2,4) \psline(2.4,2.6)(2.6,4) \psline(2.4,2.6)(2.4,4) \psline(3.8,2)(3.6,3) \psline(3.8,2)(4.5,2.7) \psline(3.6,3)(3.3,4) \psline(3.6,3.7)(3.5,4) \psline(3.6,3.7)(3.7,4) \psline(3.6,3)(3.6,3.7) \psline(3.6,3)(3.9,4) \psline(4.5,2.7)(4.2,3.4) \psline(4.5,2.7)(5,3.4) \psline(4.2,3.4)(4.1,4) \psline(4.2,3.4)(4.3,4) \psline(4.5,2.7)(4.5,4) \psline(5,3.4)(5,4) \psline(5,3.4)(4.7,4) \psline(5,3.4)(5.3,4) \rput(.5,4.35){$_1$} \rput(.7,4.2){$_2$} \rput(.9,4.35){$_3$} \rput(1.1,4.2){$_4$} \rput(1.4,4.35){$_5$} \rput(1.6,4.2){$_6$} \rput(2,4.35){$_7$} \rput(2.2,4.2){$_8$} \rput(2.4,4.35){$_9$} \rput(2.6,4.2){$_{10}$} \rput(3,4.35){$_{11}$} \rput(3.25,4.2){$_{12}$} \rput(3.5,4.35){$_{13}$} \rput(3.73,4.2){$_{14}$} \rput(3.95,4.35){$_{15}$} \rput(4.1,4.2){$_{16}$} \rput(4.3,4.35){$_{17}$} \rput(4.5,4.2){$_{18}$} \rput(4.75,4.35){$_{19}$} \rput(5.025,4.2){$_{20}$} \rput(5.3,4.35){$_{21}$} \rput(1,2.4){$_1$} \rput(2,1.5){$_1$} \rput(1.7,2.4){$_6$} \rput(2.3,2){$_8$} \rput(3.25,1.5){$_{12}$} \rput(3.55,2.4){$_{12}$} \rput(3.6,3.6){$_{13}$} \rput(4.45,2.4){$_{16}$} \rput(4.2,3){$_{16}$} \rput(4.95,3){$_{19}$} \rput(1,3.5){$_2$} \end{pspicture} }
\]
\caption{The labeling of the edges}%
\label{EQ:numbering}%
\end{figure}

Since the label of an edge $e$ in a labeled diagram $D$ is determined by the
labels of leaves outward from $e$, inserting an edge into $D$ or contracting
an edge of $D\smallsetminus\{e\}$ preserves the label of $e$. In particular,
the boundary operator $\partial$, which is defined by summing over all possible ways of inserting an edge, preserves labels.

\begin{definition}
\label{edge-number-set} Let $X\subset\{1,\dots,n+2\}$ and let $I_{k,l}^{X}$
(respectively $K_{n+2}^{X}$) denote the module generated by all diagrams in
$\partial I_{k,l}$ (respectively $\partial K_{n+2}$) whose edges are labeled
by elements of $X$. For example, the diagrams in Figure \ref{EQ:numbering} lie
in $I_{15,12}^{\{1,5,9,13,15,17,19,24\}}$ and $K_{21}%
^{\{1,2,6,8,12,13,16,19\}}$, respectively. Note that many edges may have the
same label, and $I_{k,l}^{X}=0$ for many subsets $X$. In particular,
$I_{k,l}^{+}:=I_{k,l}^{\{2,\dots,k+2\}}$ is the module generated by all
diagrams with \emph{at least one upper edge but no lower edges}, and
$I_{k,l}^{-}:=I_{k,l}^{\{1,k+3,\dots,k+l+2\}}$ is the module generated by all
diagrams with \emph{at least one lower edge but no upper edges}. Generators of
$I_{k,l}^{+}$ are called \emph{upper diagrams}; generators of $I_{k,l}^{-}$
are called \emph{lower diagrams}. Of course, $I_{k,l}^{+}\cap I_{k,l}%
^{-}=\varnothing$. Furthermore, denote the module generated by diagrams with
\emph{at least one upper edge} by $\overline{I_{k,l}^{+}}$; denote the module
generated by diagrams with \emph{at least one lower edge} by $\overline
{I_{k,l}^{-}}$. Then $I_{k,l}^{+}\subset\overline{I_{k,l}^{+}}$ and
$I_{k,l}^{-}\subset\overline{I_{k,l}^{-}},$ and in fact, the geometric
realzations $\left\vert \overline{I_{k,l}^{+}}\right\vert $ and $\left\vert
\overline{I_{k,l}^{-}}\right\vert $ are the respective closures of
$\left\vert I_{k,l}^{+}\right\vert $ and $\left\vert I_{k,l}^{-}\right\vert $,
\emph{i.e.,} $\overline{\left\vert I_{k,l}^{+}\right\vert }=\left\vert
\overline{I_{k,l}^{+}}\right\vert $ and $\overline{\left\vert I_{k,l}%
^{-}\right\vert }=\left\vert \overline{I_{k,l}^{-}}\right\vert .$ Finally,
$C_{\ast}(I_{k,l}):=\langle I_{k,l}\rangle\oplus\overline{I_{k,l}^{+}}%
\oplus\overline{I_{k,l}^{-}}$ is the module generated by $I_{k,l}$ and all
diagrams obtained from $I_{k,l}$ by inserting edges.
\end{definition}

There is the following important bijection $I^{+}_{k,l}\leftrightarrow
K^{\{2,\dots,k+2\}}_{k+l+2}$: Given an upper diagram $D$, produce the
corresponding tree $T\in K^{\{2,\dots,k+2\}}_{k+l+2}$ by changing thick colors
to thin and attaching a root between the first and last leaf (see Figure
\ref{FIG:attach-detach}). Recover $D$ from $T$ by detaching the root and
changing the color of the branches containing the first and $(k+2)^{nd}$
leaves from thin to thick. The tools we need to prove Proposition
\ref{contract} are now in place.

\begin{figure}[h]%
\[
\begin{pspicture}(-2,.5)(7.5,3.5) 
\pscircle*(2,2){.15} \psline(1.7,2.5)(1.6,3) \psline(1.7,2.5)(1.2,3) \psline(2,2)(1.7,2.5) \psline(2,2)(2.3,2.5) \psline(2.3,2.5)(2.8,3) \psline(2.3,2.5)(2,3) \psline(2.3,2.5)(2.4,3) \psline(2,2)(1.5,1) \psline(2,2)(2.5,1) \psline(2.5,2)(3.1,2.3) \psline(2.5,2)(3.1,1.7) \psline[linewidth=3pt](.7,2)(3.3,2) \psline[linestyle=dashed](0,1)(1,1.4) \rput(-.85,1.45){(attach root)} \rput(4.2,2){$\leftrightarrow$} \psline[linestyle=dashed](6,1)(6,1.8) \psline[linewidth=2pt](6,1.8)(5.0,3) \psline(6,1.8)(5.5,2.6) \psline(5.5,2.6)(5.2,3) \psline(5.5,2.6)(5.4,3) \psline(6,1.8)(5.9,2.6) \psline(5.9,2.6)(5.6,3) \psline(5.9,2.6)(5.8,3) \psline(5.9,2.6)(6.0,3) \psline[linewidth=2pt](6,1.8)(6.3,2.6) \psline(6.3,2.6)(6.2,3) \psline[linewidth=2pt](6.3,2.6)(6.4,3) \psline(6.3,2.6)(6.6,3) \psline(6,1.8)(6.8,3) \psline(6,1.8)(7.0,3) \end{pspicture} \begin{pspicture}(4.3,.5)(7.5,3.5) \rput(4.2,2){$\leftrightarrow$} \psline(6,1)(6,1.8) \psline(6,1.8)(5.0,3) \psline(6,1.8)(5.5,2.6) \psline(5.5,2.6)(5.2,3) \psline(5.5,2.6)(5.4,3) \psline(6,1.8)(5.9,2.6) \psline(5.9,2.6)(5.6,3) \psline(5.9,2.6)(5.8,3) \psline(5.9,2.6)(6.0,3) \psline(6,1.8)(6.3,2.6) \psline(6.3,2.6)(6.2,3) \psline(6.3,2.6)(6.4,3) \psline(6.3,2.6)(6.6,3) \psline(6,1.8)(6.8,3) \psline(6,1.8)(7.0,3) \end{pspicture}
\]
\caption{The correspondence $I^{+}_{k,l} \leftrightarrow K^{\{2,\dots
,k+2\}}_{k+l+2}$}%
\label{FIG:attach-detach}%
\end{figure}
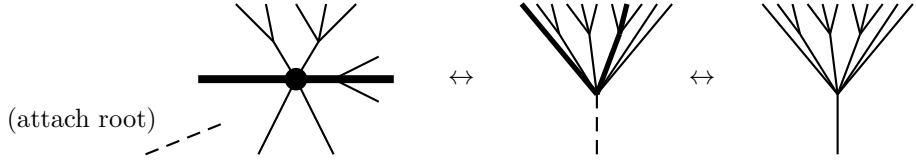

\begin{proof}
[Proof of Proposition \ref{contract}]We wish to realize $C_{\ast}\left(
I_{k,l}\right)$ as a cellular complex $|I_{k,l}|\cong \bar{B}^{k+l}$, for each $k,l\geq 1$. First note that $|I_{1,1}| \cong \bar{B}^{2}$ (see Figure \ref{FIG:the-I-(1-1)-hexagon}). 
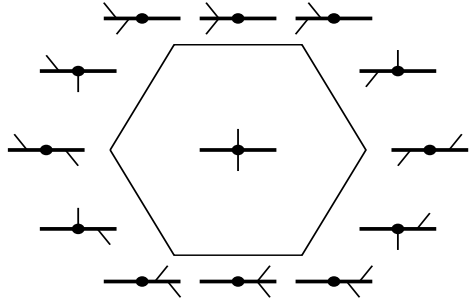
\begin{figure}[h]
 \scalebox{.85}[.7] {
\begin{pspicture}(1,0)(9,6) 
\psline(3,3)(4,1)(6,1)(7,3)(6,5)(4,5)(3,3)
 \pscircle*(5,3){.1}\psline[linewidth=2pt](4.4,3)(5.6,3)
 \psline(5,3)(5,3.4)\psline(5,3)(5,2.6)
\pscircle*(2,3){.1}\psline[linewidth=2pt](1.4,3)(2.6,3)
\psline(2.3,3)(2.5,2.7)\psline(1.7,3)(1.5,3.3)
\pscircle*(8,3){.1}\psline[linewidth=2pt](7.4,3)(8.6,3)
\psline(8.3,3)(8.5,3.3)\psline(7.7,3)(7.5,2.7)
\pscircle*(3.5,5.5){.1}\psline[linewidth=2pt](2.9,5.5)(4.1,5.5)
\psline(3.3,5.5)(3.1,5.2)\psline(3.1,5.5)(2.9,5.8)
\pscircle*(3.5,0.5){.1}\psline[linewidth=2pt](2.9,0.5)(4.1,0.5)
\psline(3.7,0.5)(3.9,0.8)\psline(3.9,0.5)(4.1,0.2)
\pscircle*(6.5,5.5){.1}\psline[linewidth=2pt](5.9,5.5)(7.1,5.5)
\psline(6.3,5.5)(6.1,5.8)\psline(6.1,5.5)(5.9,5.2)
\pscircle*(6.5,0.5){.1}\psline[linewidth=2pt](5.9,0.5)(7.1,0.5)
\psline(6.7,0.5)(6.9,0.2)\psline(6.9,0.5)(7.1,0.8)
\pscircle*(5,5.5){.1}\psline[linewidth=2pt](4.4,5.5)(5.6,5.5)
\psline(4.7,5.5)(4.5,5.8)\psline(4.7,5.5)(4.5,5.2)
\pscircle*(5,0.5){.1}\psline[linewidth=2pt](4.4,0.5)(5.6,0.5)
\psline(5.3,0.5)(5.5,0.8)\psline(5.3,0.5)(5.5,0.2)
\pscircle*(2.5,4.5){.1}\psline[linewidth=2pt](1.9,4.5)(3.1,4.5)
\psline(2.5,4.5)(2.5,4.1)\psline(2.2,4.5)(2,4.8)
\pscircle*(2.5,1.5){.1}\psline[linewidth=2pt](1.9,1.5)(3.1,1.5)
\psline(2.8,1.5)(3,1.2)\psline(2.5,1.5)(2.5,1.9)
\pscircle*(7.5,4.5){.1}\psline[linewidth=2pt](6.9,4.5)(8.1,4.5)
\psline(7.2,4.5)(7,4.2)\psline(7.5,4.5)(7.5,4.9)
\pscircle*(7.5,1.5){.1}\psline[linewidth=2pt](6.9,1.5)(8.1,1.5)
\psline(7.5,1.5)(7.5,1.1)\psline(7.8,1.5)(8,1.8)
\end{pspicture} }
\caption{The hexagon induced by $I_{1,1}$}\label{FIG:the-I-(1-1)-hexagon}
\end{figure}
Inductively, assume that $|I_{k^{\prime},l^{\prime}}|\cong\bar{B}^{k^{\prime}+l^{\prime}}$ for all
$k^{\prime}+l^{\prime}<k+l$, and note that $\partial C_{\ast}\left(
I_{k,l}\right)  =I_{k,l}^{+}\oplus I_{k,l}^{-}\oplus\left(  \overline
{I_{k,l}^{+}}\cap\overline{I_{k,l}^{-}}\right)  ,$ \emph{i.e., }all upper
diagrams, lower diagrams, and diagrams with both upper and lower edges. The
proof follows in three steps:

\begin{itemize}
\item[Step 1:] $\left\vert I_{k,l}^{+}\right\vert $ and $\left\vert
I_{k,l}^{-}\right\vert $ are homeomorphic to the open ball $B^{k+l-1}$.

\item[Step 2:] $\left\vert \overline{I_{k,l}^{+}}\right\vert $ and$\left\vert
\overline{I_{k,l}^{-}}\right\vert $ are homeomorphic to the closed ball
$\bar{B}^{k+l-1}$.

\item[Step 3:] $\left\vert \overline{I_{k,l}^{+}}\cap\overline{I_{k,l}^{-}%
}\right\vert \cong S^{k+l-2}$.
\end{itemize}

Steps 2 and 3 imply that $\left\vert \partial C_{\ast}(I_{k,l})\right\vert $
is homeomorphic to the union of two closed $(k+l-1)$-balls glued together along their
bounding $(k+l-2)$-spheres. Thus $|I_{k,l}|\cong \bar{B}^{k+l}$ and the proof
is complete.
\end{proof}

\begin{proof}
[Proof of step 1]In \cite[Section 6]{S}, Stasheff gave
an explicit realization of $K_{n+2}^{\{2,\dots,k+2\}}$ homeomorphic to an
(open) ball $B^{n-1}$ in $\partial I^{n}$. Since the correspondence $I_{k,l}^{+}\leftrightarrow
K_{n+2}^{\{2,\dots,k+2\}}$ preserves boundary, it defines a homeomorphism of
geometric realizations. Thus $\left\vert I_{k,l}^{+}\right\vert \cong\left\vert
K_{n+2}^{\{2,\dots,k+2\}}\right\vert \cong \bar{B}^{n-1}$. The identification $I_{k,l}%
^{-}\leftrightarrow K_{n+2}^{\{2,\dots,l+2\}}$ is established by relabeling
leaves: Assign \textquotedblleft$1$\textquotedblright\ to the right-most leaf
and continue clockwise, thereby replacing the original labels $k+3,\dots
,k+l+2,1$ with $2,\dots,l+2$, respectively; thus $\left\vert I_{k,l}%
^{-}\right\vert \cong\left\vert K_{n+2}^{\{2,\dots,k+2\}}\right\vert $ and the
conclusion follows.
\end{proof}

\begin{proof}
[Proof of step 2]Lemma \ref{boundary-expands} below asserts that the
homeomorphisms of realizations $\left\vert I_{k,l}^{+}\right\vert
\cong\left\vert I_{k,l}^{-}\right\vert \cong\left\vert K_{n+2}^{\{2,\dots
,k+2\}}\right\vert \cong B^{n-1}$ extend to homeomorphisms of closures. Thus
$\left\vert \overline{I_{k,l}^{+}}\right\vert \cong\left\vert
\overline{I_{k,l}^{-}}\right\vert \cong\overline{\left\vert K_{n+2}%
^{\{2,\dots,k+1\}}\right\vert }\cong\bar{B}^{n-1}$.
\end{proof}

\begin{proof}
[Proof of step 3]Note that the sets $\partial\overline{I_{k,l}^{+}}=\overline
{I_{k,l}^{+}}\smallsetminus I_{k,l}^{+}$ and $\partial
\overline{I_{k,l}^{-}}=\overline{I_{k,l}^{-}}\smallsetminus I_{k,l}^{-}$ are identical and consist of all diagrams with at
least one upper and one lower edge. Thus $\left\vert \overline{I_{k,l}^{+}%
}\cap\overline{I_{k,l}^{-}}\right\vert =\left\vert \partial\overline
{I_{k,l}^{+}}\right\vert =\partial\left\vert \overline{I_{k,l}^{+}}\right\vert
\cong S^{k+l-2}$.
\end{proof}

\begin{lemma}
\label{boundary-expands} The homeomorphism of realizations $\left\vert
I_{k,l}^{+}\right\vert \cong\left\vert K_{n+2}^{\{2,\dots,k+2\}}\right\vert $
defined above extends to a homeomorphism of closures $\left\vert
\overline{I_{k,l}^{+}}\right\vert \cong\overline{\left\vert K_{n+2}%
^{\{2,\dots,k+1\}}\right\vert }$.
\end{lemma}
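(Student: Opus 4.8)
The plan is to promote the cellular bijection underlying the homeomorphism $|I_{k,l}^{+}|\cong|K_{n+2}^{\{2,\dots,k+2\}}|$ to a cellular bijection defined on \emph{all} diagrams with at least one upper edge, using the very same recipe of Figure~\ref{FIG:attach-detach}: recolor every thick edge thin and attach a root at the image of the thick vertex of $I_{k,l}$. Call this extended map $\Phi$. Because the label of an edge is determined by the leaves outward from it (Definition~\ref{edge-number}), $\Phi$ is label-preserving, and because $\Phi$ is the identity away from the colored part it commutes with edge insertions, hence with the boundary operator $\partial$. Thus, once $\Phi$ is shown to be a bijection of generators of $\overline{I_{k,l}^{+}}$ onto a subcomplex of $\partial K_{n+2}$, it automatically induces a homeomorphism of the corresponding geometric realizations, and it remains only to identify that subcomplex with $\overline{|K_{n+2}^{\{2,\dots,k+2\}}|}$.

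The first point to check is that $\Phi$ is well defined and invertible on every generator of $\overline{I_{k,l}^{+}}$, i.e.\ on every diagram obtained from $I_{k,l}$ by edge insertions, whether or not it has lower edges. Here I would argue that the ``thick part'' of such a diagram is always a simple path running from one thick leaf, through a distinguished vertex (the descendant of the thick vertex of $I_{k,l}$), to the other thick leaf: composition of module trees is linear along thick edges, so no edge insertion can branch or merge this path, and inspection of the six local moves of Definition~\ref{DEF:order} shows the property is preserved along every path of diagrams. Consequently $\Phi(D)$ carries a canonical root vertex lying on the path between leaves $1$ and $k+2$, and $D$ is recovered by detaching the root and recoloring thick exactly the edges along that path; so $\Phi$ is a bijection from the generators of $\overline{I_{k,l}^{+}}$ onto the planar trees in $\partial K_{n+2}$ possessing at least one edge labeled in $\{2,\dots,k+2\}$, restricting to the bijection of Step~1 on those generators (the upper diagrams) with no lower edge.

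It then remains to match this subcomplex with $\overline{|K_{n+2}^{\{2,\dots,k+2\}}|}$. As in the proof of Step~1 one uses Stasheff's realization of $K_{n+2}$ as a subdivision of the cube $I^{n}$ (\cite[Section~6]{S}): there $|K_{n+2}^{\{2,\dots,k+2\}}|$ appears as an open $(n-1)$-ball in $\partial K_{n+2}$, namely the union of the interiors of the upper facets $\partial_{2},\dots,\partial_{k+2}$, and its frontier in $\partial K_{n+2}$ is the $(n-2)$-sphere consisting of trees lying simultaneously in an upper facet $\partial_{i}$ ($i\in\{2,\dots,k+2\}$) and a lower facet $\partial_{j}$ ($j\in\{1,k+3,\dots,n+2\}$), i.e.\ of trees with at least one upper and at least one lower edge. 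Adjoining this sphere to the open-ball cells (the trees all of whose edges are upper) yields precisely the trees with at least one upper edge, so the topological closure $\overline{|K_{n+2}^{\{2,\dots,k+2\}}|}$ coincides with the realization of $\Phi(\overline{I_{k,l}^{+}})$, and $\Phi$ is the asserted homeomorphism of closed balls extending the homeomorphism on the open balls.

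The main obstacle is this last step: one must verify that the union $\partial_{2}\cup\cdots\cup\partial_{k+2}$ of facets of $K_{n+2}$ really assembles into a PL ball whose boundary is the stated $(n-2)$-sphere, rather than a more complicated subcomplex of the sphere $\partial K_{n+2}$, and, correspondingly, that the module-theoretic closure $\overline{I_{k,l}^{+}}$ of Definition~\ref{edge-number-set} realizes the topological closure. Both come out of Stasheff's cube coordinates, in which the upper facets are exactly the faces of $K_{n+2}$ lying on a single coordinate face of $\partial I^{n}$, so that their union is a ball and its frontier is computed directly; transporting this computation through the label-preserving correspondence $I_{k,l}^{+}\leftrightarrow K_{n+2}^{\{2,\dots,k+2\}}$ then gives the lemma.
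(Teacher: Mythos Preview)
Your extension of $\Phi$ to all of $\overline{I_{k,l}^{+}}$ does not work as stated: the recipe ``attach a root at the thick vertex'' is incompatible with the planar structure once lower edges are present. Take $k=l=1$ and the binary diagram $D\in\overline{I_{1,1}^{+}}$ with one module vertex on each side of the thick vertex, leaf~$4$ attached below on the left module vertex and leaf~$2$ attached above on the right one. Its two edges have labels $1$ and $2$, so $D\in\overline{I_{1,1}^{+}}\cap\overline{I_{1,1}^{-}}$. Recoloring thin and rooting at the thick vertex produces a tree whose two subtrees carry leaf sets $\{1,4\}$ and $\{2,3\}$; no planar embedding of this puts the leaves in the order $1,2,3,4$, so $\Phi(D)\notin K_{4}$. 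Dually, the binary tree $(1(23))4\in\overline{|K_{4}^{\{2,3\}}|}$ has its root vertex \emph{off} the path from leaf~$1$ to leaf~$3$, so by your own description it cannot be $\Phi(D)$ for any $D$, contradicting the claimed surjectivity. The obstruction is intrinsic: an inner-product diagram carries a marked thick vertex on the thick path, while a planar rooted tree carries only a root, and on the boundary sphere $\overline{I_{k,l}^{+}}\cap\overline{I_{k,l}^{-}}$ these two markings do not match cell by cell (trying to root at the vertex adjacent to the region between leaves $n{+}2$ and $1$ instead leaves the former thick vertex with valence~$2$).

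The paper therefore does \emph{not} attempt to extend the bijection of Figure~\ref{FIG:attach-detach} to the closure. It realizes $|\overline{I_{k,l}^{+}}|$ directly as $\bar B^{\,n-1}$ by induction on cell dimension: each cell $\alpha\subset|I_{k,l}^{+}}|$ corresponds, via Stasheff's decomposition $\partial_{j}(s,t)(K_{s}\times K_{t})$, to a product of smaller polytopes, and the induction hypothesis supplies the cellular structure of $\bar\alpha\cap\partial\bar B^{\,n-1}$; a separate injectivity argument then shows each boundary cell is hit exactly once. The homeomorphism with $\overline{|K_{n+2}^{\{2,\dots,k+2\}}|}$ follows because both are closed $(n{-}1)$--balls extending the same open ball, not from an explicit cellular map on the boundary.
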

\begin{proof}
In the proof of \cite[Proposition 3]{S}, Stasheff showed how to construct a cellular homeomorphism $\left\vert
K_{n+2}^{\{2,\dots,k+2\}}\right\vert \cong B^{n-1}$ and extend it to closures $\overline{\left\vert
K_{n+2}^{\{2,\dots,k+2\}}\right\vert}\cong \bar{B}^{n-1}$. We wish to realize $\left\vert \overline
{I_{k,l}^{+}}\right \vert$ in a similar way (see Figure \ref{I-and-K}). 
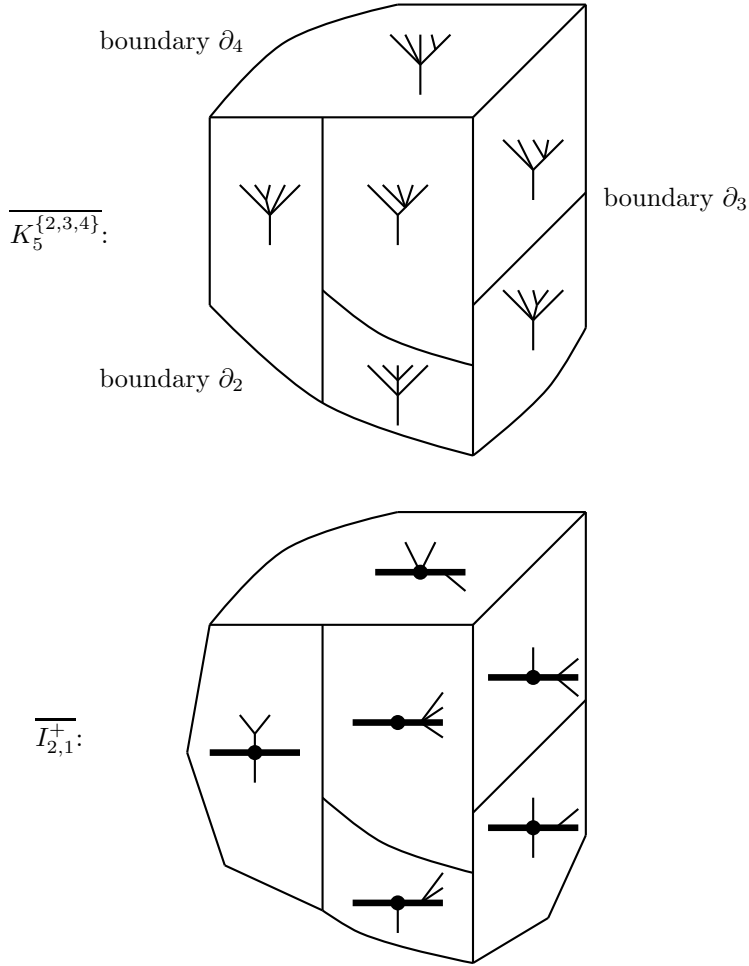
\begin{figure}[h]%
\[
\begin{pspicture}(-1,0)(9.5,6) \rput(0,3){$\overline{K^{\{2,3,4\}}_{5}}$:} \psline(4.5,6)(7,6) \pscurve(2,4.5)(3,5.5)(4.5,6) \psline(2,4.5)(5.5,4.5) \psline(5.5,4.5)(7,6) \psline(7,6)(7,1.7) \psline(2,4.5)(2,2) \psline(3.5,4.5)(3.5,.7) \psline(5.5,4.5)(5.5,0) \psline(5.5,2)(7,3.5) \pscurve(2,2)(3.5,.7)(5.5,0) \pscurve(5.5,0)(6.5,.9)(7,1.7) \pscurve(3.5,2.2)(4.3,1.6)(5.5,1.2) \rput(1.5,5.5){boundary $\partial_4$} \rput(8.2,3.4){boundary $\partial_3$} \rput(1.5,1){boundary $\partial_2$} \psline(4.8,5.2)(4.8,4.8) \psline(4.8,5.2)(4.8,5.6)\psline(4.8,5.2)(4.6,5.6) \psline(4.8,5.2)(4.4,5.6)\psline(5,5.4)(4.95,5.6)\psline(4.8,5.2)(5.2,5.6) \psline(2.8,3.2)(2.8,2.8) \psline(2.8,3.2)(2.4,3.6) \psline(2.75,3.4)(2.6,3.6) \psline(2.75,3.4)(2.8,3.6) \psline(2.8,3.2)(3,3.6) \psline(2.8,3.2)(3.2,3.6) \psline(2.75,3.4)(2.8,3.2) \psline(4.5,3.2)(4.5,2.8) \psline(4.5,3.2)(4.1,3.6) \psline(4.6,3.3)(4.3,3.6) \psline(4.6,3.3)(4.5,3.6) \psline(4.6,3.3)(4.7,3.6) \psline(4.5,3.2)(4.9,3.6) \psline(4.5,.8)(4.5,.4) \psline(4.5,.8)(4.1,1.2) \psline(4.5,1)(4.3,1.2) \psline(4.5,.8)(4.5,1.2) \psline(4.5,1)(4.7,1.2) \psline(4.5,.8)(4.9,1.2) \psline(6.3,3.8)(6.3,3.4) \psline(6.3,3.8)(5.9,4.2) \psline(6.3,3.8)(6.1,4.2) \psline(6.45,3.95)(6.3,4.2) \psline(6.45,3.95)(6.5,4.2) \psline(6.3,3.8)(6.7,4.2) \psline(6.3,1.8)(6.3,1.4) \psline(6.3,1.8)(5.9,2.2) \psline(6.3,1.8)(6.1,2.2) \psline(6.35,2)(6.3,2.2) \psline(6.35,2)(6.5,2.2) \psline(6.3,1.8)(6.7,2.2) \psline(6.35,2)(6.3,1.8) \end{pspicture}
\]
\par%
\[
\begin{pspicture}(-1,0)(9.5,6) \rput(0,3){$\overline{I^{+}_{2,1}}$:} \psline(4.5,6)(7,6) \pscurve(2,4.5)(3,5.5)(4.5,6) \psline(2,4.5)(5.5,4.5) \psline(5.5,4.5)(7,6) \psline(7,6)(7,1.7) \psline(3.5,4.5)(3.5,.7) \psline(5.5,4.5)(5.5,0) \psline(5.5,2)(7,3.5) \pscurve(3.5,.7)(4,.4)(5.5,0) \psline(2,4.5)(1.7,2.8) \psline(1.7,2.8)(2.2,1.3) \psline(2.2,1.3)(3.5,.7) \psline(5.5,0)(6.5,.6) \psline(6.5,.6)(7,1.7) \pscurve(3.5,2.2)(4.3,1.6)(5.5,1.2) \pscircle*(4.8,5.2){.1} \psline[linewidth=2.5pt](4.2,5.2)(5.4,5.2) \psline(4.8,5.2)(4.6,5.6) \psline(4.8,5.2)(5,5.6) \psline(5.1,5.2)(5.4,4.95) \pscircle*(2.6,2.8){.1} \psline[linewidth=2.5pt](2,2.8)(3.2,2.8) \psline(2.6,2.4)(2.6,3.05) \psline(2.6,3.05)(2.4,3.3)\psline(2.6,3.05)(2.8,3.3) \pscircle*(4.5,3.2){.1} \psline[linewidth=2.5pt](3.9,3.2)(5.1,3.2) \psline(4.8,3.2)(5.1,3.4) \psline(4.8,3.2)(5.1,3.6) \psline(4.8,3.2)(5.1,3) \pscircle*(4.5,.8){.1} \psline[linewidth=2.5pt](3.9,.8)(5.1,.8) \psline(4.8,.8)(5.1,1) \psline(4.8,.8)(5.1,1.2) \psline(4.5,.8)(4.5,.4) \pscircle*(6.3,3.8){.1} \psline[linewidth=2.5pt](5.7,3.8)(6.9,3.8) \psline(6.3,3.8)(6.3,4.2) \psline(6.6,3.8)(6.9,4.05) \psline(6.6,3.8)(6.9,3.55) \pscircle*(6.3,1.8){.1} \psline[linewidth=2.5pt](5.7,1.8)(6.9,1.8) \psline(6.3,1.4)(6.3,2.2) \psline(6.6,1.8)(6.9,2.05) \end{pspicture}
\]
\caption{Identification between $I_{2,1}^{+}$ and $K_{5}^{\{2,3,4\}}$ (see
also \cite[Figure 18]{S})}%
\label{I-and-K}%
\end{figure}
Given a diagram $D\in I_{k,l}^{+},$ let $T$
be the corresponding tree under the isomorphism $I_{k,l}^{+}\approx
K_{n+2}^{\{2,\dots,k+2\}}$ established in step 1, and let $\alpha$ be the cell of
$B^{n-1}$ identified with $T$ under Stasheff's identification. Then $D$ is
identified with $\alpha$. Our goal is to extend this identification to $\overline{I_{k,l}^{+}}\leftrightarrow \bar{B}^{n-1}$.

Consider a cell $\alpha\subset B^{n-1}$ and the corresponding diagram $D\in
I_{k,l}^{+}.$ If $\bar{\alpha}\subset B^{n-1}$, the cells of $\bar{\alpha}$
have already been identified with generators of $I_{k,l}^{+}$. So assume that
$\bar{\alpha}\cap\partial\bar{B}^{n-1}\neq\varnothing$. If $\dim\alpha=1$, the
$1$-cell of $\alpha$ corresponds to an upper diagram $D\in I_{k,l}^{+}$, which differs from a binary diagram at exactly one vertex, which is either the root with valence $3$ or another vertex of valence $4$. Some edge insertion at this particular vertex produces a binary diagram $D'\in\overline{I_{k,l}^{+}}$ identified with an endpoint of $\bar{\alpha}$ in $\partial \bar{B}^{n-1}.$

Inductively, if $\alpha^{\prime}\subset B^{n-1}$ is a cell of dimension less
than $r,$ assume that all cells in its closure $\bar{\alpha}^{\prime}$ have
been identified with diagrams in $\overline{I_{k,l}^{+}}$, and consider a
diagram $D\in I_{k,l}^{+}$ whose corresponding cell $\alpha\subset B^{n-1}$
has dimension $r$ and satisfies ${\bar{\alpha}\cap\partial \bar{B}^{n-1}}=\varnothing$. 
Since $\alpha$ also corresponds to a tree in $K_{n+2}%
^{\{2,\dots,k+2\}}$, we know that ${\bar{\alpha}\cong\bar{B}^{r}}$ and
$\overline{\partial\bar{\alpha}\cap{B}^{n-1}}\cong\bar{B}^{r-1}$. Using Stasheff's
notation in \cite[Sections 3 and 6]{S}, recall that $\left\vert K_{n+2}%
^{\left\{  2,\ldots,k+2\right\}  }\right\vert $ is homeomorphic to the
complement of the union of the closures of boundary components $\partial
_{j}\left(  s,t\right)  \left(  K_{s}\times K_{t}\right)$ over all $s+t=n+1$
with either $j=1$ or $j>k+2$. Now if $\alpha$ corresponds to a product of cells $\partial_{j}\left(  s,t\right) \left(  K_{s}\times K_{t}\right)$, there is the corresponding identification with some product of diagrams $D'\times D''$, and
Stasheff's decomposition of faces of associahedra as Cartesian products of associahedra identifies $D'\times D''$ with the corresponding diagram $D \in I^+_{k,l}$, the exact form of which is unimportant here.  But what \emph{is} important here is the fact that $|D| \cong B^r$. 
Furthermore, the cells in the boundary component $\overline{\partial\overline{ \alpha}\cap B^{n-1}}$ can be realized by a union of cells $\partial_{j}\left(  s,t\right) \left(  \partial K_{s}\times K_{t}\right)$ or $\partial_{j}\left(  s,t\right) \left(  K_{s}\times \partial K_{t}\right)$ whose union is a cellular complex homeomorphic to $\bar{B}^{r-1}$. By the induction hypothesis, there is a corresponding sum of products of diagrams, which corrsponds to a sum of diagrams in $\overline{I^+_{k,l}}$ such that $\overline{\partial\overline{ \alpha}\cap B^{n-1}}\cong \bar B^{r-1}$. Thus the cellular structure of $\alpha$ extends to all of $\bar \alpha$ in such a way that the cells of $\partial \bar\alpha$ are realizations of the diagrams in $\overline{I^+_{k,l}}$ obtained by inserting edges into $D$ in all possible ways.

Furthermore there is a bijective correspondence between cells of
$\overline{I_{k,l}^{+}}$ and those in $\bar{B}^{n-1}$, which realize
$\overline{I_{k,l}^{+}}$ as a closed $(n-1)$-ball. To see this, note that the
assignment of diagrams from $\overline{I_{k,l}^{+}}$ to cells of $\bar
{B}^{n-1}$ is surjective by construction, and respects the boundary, since it
does so locally for each closed cell. We also claim that two diagrams $D_{1}$
and $D_{2}$ that correspond to the same cell in $\bar{B}^{n-1}$ must be equal.
First, this is clear for $\alpha\subset B^{n-1}$. To see this for $\alpha\subset\partial\bar{B}^{n-1}$, assume $\alpha$ corresponds to
$D_{1}$ and $D_{2}$. Then $\alpha$ is in the boundary of a unique smallest
dimensional cell $\alpha^{\prime}$ in the interior of $B^{n-1}$. Denote by
$D^{\prime}$ the diagram corresponding to $\alpha^{\prime}$. Now, $D_{1}$
itself is in the boundary of some diagram $D^{\prime\prime}$ in $I_{k,l}^{+}$, and we denote by $\alpha^{\prime\prime}$ the cell
corresponding to $D^{\prime\prime}$. Since $D_{1}$ is in the boundary of
$D^{\prime\prime}$, $\alpha$ must be in the boundary of $\alpha^{\prime\prime
}$. Since $\alpha^{\prime}$ is the lowest dimensional cell containing $\alpha
$, $\alpha^{\prime}$ must be in the boundary of $\alpha^{\prime\prime}$ or
equal to it. Since $B^{n-1}$ realizes $I_{k,l}^{+}$, it must also be that
$D^{\prime}$ is in the boundary of $D^{\prime\prime}$ or equal to it. In any
case, since $D^{\prime\prime}$ is realized by $\alpha^{\prime\prime}$, we see
that $D_{1}$ is in the boundary of $D^{\prime}$. A similar argument shows that
$D_{2}$ is also in the boundary of $D^{\prime}$. Again, using that $D^{\prime
}$ is realized by $\alpha^{\prime}$, we see that there is only one diagram
$D_{1}=D_{2}$ corresponding to the cell $\alpha$. Therefore $\overline
{I_{k,l}^{+}}$ is realized by a cell decomposition of $\bar{B}^{n-1}$.

This completes the proof of the lemma.
\end{proof}

\section{Proof that ``$\leq$'' is a partial order}\label{APP:partial-order}

In this appendix, we show that the relation defined Definition \ref{DEF:order} induces a well-defined partial ordering on binary diagrams. For diagrams $\CAxy$ with a coloring $\mathbf{x}=(1,\dots,1), y=1$, this is a well-known fact known as the Tamari partial order, see \cite{Ta}. We will extend this to binary diagrams in $\CA$ of all colors.

It is sufficient to prove antisymmetry: $D=D'$ whenever $D\leq D'$ and $D'\leq D$. For a module diagram $D\in \CAxy$ of coloring $\mathbf{x}=(1,\dots,2,\dots,1),  y=2$ let $\ell(D)$ (respectively $r(D)$) be the number of edges attached to the thick vertical edge coming from the left (respectively from the right), see Figure \ref{FIG:l-r-u}.
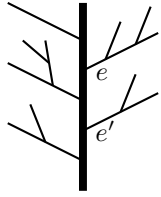
\begin{figure}[ht]
\[
\begin{pspicture}(3,0)(5,2.5) 
 \psline[linewidth=3pt](4,0)(4,2.5) 
 \psline(4,.4)(3,.9)  \psline(4,.8)(5,1.3) \psline(4,1.2)(3,1.7) 
        \psline(4,1.6)(5,2.1) \psline(4,2)(3,2.5)
  \rput(4.3,.8){$e'$}   \rput(4.25,1.55){$e$}
  \psline(4.5,1.05)(4.7,1.55) \psline(4.3,1.75)(4.5,2.25) \psline(4.7,1.95)(4.9,2.45)
  \psline(3.5,.65)(3.3, 1.15) \psline(3.6,1.4)(3.5,2) \psline(3.55,1.7)(3.2,2)
 \end{pspicture}
\]
\caption{An example with $\ell(D)=3$, $r(D)=2$, $u(e)=1$, $u(e')=2$, $u(D)=1+2=3$}\label{FIG:l-r-u}
\end{figure}
 Note that $\ell$ and $r$ respect the order in the sense, if $D\leq D'$ then $\ell(D)\leq \ell(D')$ and $r(D)\geq r(D')$. Now, consider an edge $e$ of $D$ that is attached the the thick vertical edge from the right, and let $u(e)$ denote the number of edges that are attached to the thick edge from the left {\it and} that lie above $e$. Define $u(D)=\sum_{e} u(e)$, where the sum is taken over all edges $e$ that are attached to the thick edge from the right (see Figure \ref{FIG:l-r-u}). Note, that relation (1) from Definition \ref{DEF:order} leaves $u$ invariant, whereas (2) preserves the order, \emph{i.e.}, if $D < D'$ via the relation $(2)$ then $u(D)< u(D')$.

Now, let $D, D' \in \CAxy$ with $\mathbf{x}=(1,\dots,2,\dots,1),  y=2$. If $D\leq D'$ and $D'\leq D$ then we have $\ell(D)=\ell(D')$ and $r(D)=r(D')$. Since relation (3) strictly increases $\ell$ and relation (4) strictly decreases $r$, only relations (1) and (2) can be applied to generate $D\leq D'$ and $D'\leq D$. Furthermore, since (2) strictly increases $u$, we obtain that $u(D)= u(D')$, and only relation (1) can be applied. Using the known fact that (1) is a partial order implies the claim $D=D'$ for module diagrams $D,D'\in \CAxy$.

Finally, for inner product diagrams $D\in \CAxy$ with $\mathbf{x}=(1,\dots,2,\dots,2,\dots,1)$,  $y=0$, let $t\ell(D)$ (resp. $b\ell(D)$, $tr(D)$, $br(D)$) be the number of edges attached to the top of the thick horizontal edge and left of the the thick vertex (resp. bottom left, top right, and bottom right), see Figure \ref{FIG:tl-bl-tr-br}.
\begin{figure}[ht]
\[
\begin{pspicture}(-.3,.2)(3.8,1.8) 
 \psline[linewidth=3pt](0,1)(3.8,1) \pscircle*(1.8,1){.15}
 \psline(.5,1)(0,.2)  \psline(.7,1)(0.2,1.8)  \psline(.9,1)(0.4,.2)
   \psline(1.1,1)(.6,1.8)  \psline(1.5,1)(1,.2) 
  \psline(2.3,1)(2.8,.2) \psline(2.5,1)(3,.2) \psline(2.7,1)(3.2,1.8) 
   \psline(2.9,1)(3.4,.2) \psline(3.1,1)(3.6,.2) \psline(3.3,1)(3.8,.2) 
   \psline(2.55,.6)(2.4,.2) \psline(2.95,1.4)(2.8,1.8) 
  \psline(.65,.6)(.8,.2) \psline(1.25,.6)(1.4,.2) 
  \psline(.85,1.4)(1.2,1.8)  \psline(1,1.57)(.85,1.8)
\end{pspicture}
\]
\caption{An example with $t\ell(D)=2$, $b\ell(D)=3$, $tr(D)=1$, and $br(D)=5$}\label{FIG:tl-bl-tr-br}
\end{figure}
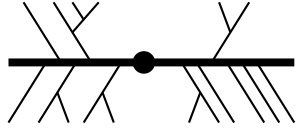
For $D\leq D'$, we have that $t\ell(D)\geq t\ell(D')$, $b\ell(D)\leq b\ell(D')$, $tr(D)\leq tr(D')$, and $br(D)\geq br(D')$. Thus, for $D\leq D'$ and $D'\leq D$, all of $t\ell$, $b\ell$, $tr$, and $br$ coincide on $D$ and $D'$. Since relations (3)-(6) change at least one of $t\ell$, $b\ell$, $tr$, or $br$, these cannot be applied to generate $D\leq D'$ and $D'\leq D$. To finish the proof, we refer back to the case of the module diagram with  $\mathbf{x}=(1,\dots,2,\dots,1),  y=2$, using the function $u$ and the Tamari partial ordering as before to see that $D=D'$.

\section{Proof of Proposition \ref{p-chain} ($p$ is a chain map)}\label{p-chain-proof}

In this appendix, we proof that $p:\QA\to\CA$ is a chain map. The proof is an extension of Markl and Shnider's proof of Proposition 4.6 in \cite{MS} and uses the notion of positive or negative edges in a binary inner product diagram as defined in Definition \ref{DEF:pos-neg-edge} in Appendix \ref{APP:binary-standard-orientation}. The maximal binary diagrams is the one with only positive edges, and the minimal binary diagram is the one with only negative edges. (In \cite{MS} these edges were called left and of right leaning edges, respectively.) The main part of the proof amounts to checking that for $(D,\fcan,m,\omega_D)\in \Qa{k}$ such that $D_{\min}$ has either $k$ or $(k-1)$ positive edges,  $\partial_C \circ p(D,\fcan,m,\omega_D)=p\circ \partial_Q (D,\fcan,m,\omega_D)$.  The case of $(k-1)$ positive edges will be checked in Lemma \ref{k-1-lemma}, and the case of $k$ positive edges will be checked by induction beginning the induction in Lemma \ref{B_max-lemma}. 

\begin{proof}[Proof of Proposition \ref{p-chain}]
Since $\partial_C$ and $\partial_Q$ are derivations and $p$ is multiplicative with respect to the composition $\circ_i$, it is enough to show that $\partial_C\circ p=p\circ \partial_Q$ on fully metric diagrams in $D\in\QA$. We do this by induction on the number of leaves $n=\mathcal L(D)$. For $n=2$, it is trivial to check that $p:\Qa{*}\to \Ca{*}$ is a chain map, since in this case $\partial_Q=0$ and $\partial_C=0$. We now assume that $p:\Qa{*}\to \Ca{*}$ is a chain map when applied to any diagram with $\mathcal L(D)<n$ leaves, or compositions of diagrams with $\mathcal L(D)<n$ leaves. We need to show the same is true for metric diagrams with $\mathcal L(D)=n$ leaves.

For $(D,\fcan,m,\omega_D)\in \Qa{k}$ with $n=\mathcal L(D)$ leaves, we have by Lemma \ref{neq-k}(3), that $p(D,\fcan,m,\omega_D)=0$ when $|D_{\min}|_\Pos\neq k$. Furthermore, if $\partial_Q (D,\fcan,m,\omega_D)=\sum_i (D_i,\dots)$ and $|(D_i)_{\min}|_\Pos\neq k-1$ then $p(D_i,\dots)=0$. Since $\partial_Q$ either preserves $|.|_\Pos$ or decreases it by one, we see that $|(D_i)_{\min}|_\Pos$ either equals $|D_{\min}|_{\Pos}$ or $|D_{\min}|_\Pos-1$. Thus, $p(\partial_Q (D,\fcan,m,\omega_D))=0$ when $|D_{\min}|_\Pos\notin \{k,k-1\}$. In particular for $|D_{\min}|_\Pos\notin \{k,k-1\}$, we have $p(\partial_Q(D,\fcan,m,\omega_D))=0=\partial_C(p(D,\fcan,m,\omega_D))$. In the case of  $|D_{\min}|_\Pos=k-1$, $p(\partial_Q(D,\fcan,m,\omega_D))=\partial_C(p(D,\fcan,m,\omega_D))$ is the statement of Lemma \ref{k-1-lemma} below.

It remains to check the case $|D_{\min}|_\Pos=k$. The proof is a second induction on $k$ starting from $k=n$. 
If $(B,\fcan,m,\omega_B)\in \Qa{n}$ is a binary diagram with $|B|_\Pos=n$, then $B$ is the maximal binary diagram, and $p(\partial_Q(B,\fcan,m,\omega_B))=\partial_C(p(B,\fcan,m,\omega_B))$ is checked in Lemma \ref{B_max-lemma} below. Thus, we may now assume $p(\partial_Q(D,\fcan,m,\omega_D))=\partial_C(p(D,\fcan,m,\omega_D))$ for all $(D,\fcan,m,\omega_D)\in \QA$ with either
\begin{itemize}
\item $\mathcal L(D)<n$ leaves, or compositions thereof,
\item $\mathcal L(D)=n$ and $(D,\fcan,m,\omega_D)\in \Qa{k}$ but $|D_{\min}|_\Pos\neq k$, or
\item $\mathcal L(D)=n$ and $(D,\fcan,m,\omega_D)\in \Qa{l}$ with $k<l\leq n$.
\end{itemize}
We wish to prove $p(\partial_Q(D,\fcan,m,\omega_D))=\partial_C(p(D,\fcan,m,\omega_D))$ for $(D,\fcan,m,\omega_D)\in \Qa{k}$ with $n=\mathcal L(D)$ leaves and $|D_{\min}|_\Pos=k$ (\emph{i.e.} all $k$ edges in $D$ are inserted as positive edges). 

Since $D$ is non-binary, there exists a ternary (or higher) vertex $v$ of $D$. Following \cite{MS}, we denote by $D^+$ the diagram given by inserting $v$ in $D$ in such a way that $D^+_{\min}$ has an extra negative edge $e_v$. This insertion is always possible. One approach is to use the following scheme:
\[
\resizebox{!}{1.5cm}{
\begin{pspicture}(0,1)(3,3)
 \rput(.5,2){$D=$} \pscircle(2,2){.9}
 \psline(2,2)(1.2,3)
  \psline(2,2)(1.6,3)
 \psline(2,1)(2,3)
 \psline(2,2)(2.4,3)
 \psline(2,2)(2.8,3)
\end{pspicture}}
\quad\quad\quad
\resizebox{!}{1.5cm}{
\begin{pspicture}(0,1)(3,3)
 \rput(0,2){$\Rightarrow D^+=$}  \pscircle(2,2){.9}
 \psline(2,2)(1.2,3)
 \psline(1.7,2.375)(1.6,3)
 \psline(2,1)(2,2)
 \psline(1.7,2.375)(2,3)
 \psline(1.7,2.375)(2.4,3)
 \psline(2,2)(2.8,3)
\end{pspicture}}
\]
\[
\resizebox{!}{1.5cm}{
\begin{pspicture}(0,1)(3,3)
 \rput(.5,2){$D=$}\pscircle(2,2){.9}
 \psline(2,2)(1.1,3)
 \psline(2,2)(1.4,3)
 \psline(2,2)(1.7,3)
 \psline(2,2)(2.3,3)
 \psline(2,2)(2.6,3)
 \psline(2,2)(2.9,3)
 \psline[linewidth=3pt](2,1)(2,3)
\end{pspicture}}
\quad\quad\quad
\resizebox{!}{1.5cm}{
\begin{pspicture}(0,1)(3,3)
 \rput(0,2){$\Rightarrow D^+=$}\pscircle(2,2){.9}
 \psline(2,2)(1.1,3)
 \psline(1.65,2.38)(1.4,3)
 \psline(1.65,2.38)(1.7,3)
 \psline(2,2)(2.3,3)
 \psline(2,2)(2.6,3)
 \psline(2,2)(2.9,3)
 \psline[linewidth=3pt](2,1)(2,3)
\end{pspicture}}
\resizebox{!}{1.5cm}{
\begin{pspicture}(0,1)(3,3)
 \rput(.5,2){or}\pscircle(2,2){.9}
 \psline(2,2)(1.1,3)
 \psline(2,2)(1.4,3)
 \psline(2,2)(1.7,3)
 \psline(2,2.5)(2.3,3)
 \psline(2,2.5)(2.6,3)
 \psline(2,2.5)(2.9,3)
 \psline[linewidth=3pt](2,1)(2,3)
\end{pspicture}}
\resizebox{!}{1.5cm}{
\begin{pspicture}(0,1)(3,3)
 \rput(.5,2){or}\pscircle(2,2){.9}
 \psline(2,2)(1.1,3)
 \psline(2,2)(1.4,3)
 \psline(2,2)(1.7,3)
 \psline(2,2.5)(2.3,3)
 \psline(2,2.5)(2.6,3)
 \psline(2,2)(2.9,3)
 \psline[linewidth=3pt](2,1)(2,3)
\end{pspicture}}
\]
\[
\resizebox{!}{1.5cm}{
\begin{pspicture}(-.5,1)(3.5,3)
 \rput(0,2){$D=$}  \pscircle(2,2){.9}
 \psline(2,2)(1.6,3)
 \psline(2,2)(2,3)
 \psline(2,2)(2.4,3)
 \psline(2,2)(1.4,1)
 \psline(2,2)(1.8,1)
 \psline(2,2)(2.2,1)
 \psline(2,2)(2.6,1)
 \psline[linewidth=3pt](.7,2)(3.3,2)
 \pscircle*(2,2){.15}
\end{pspicture}}
\quad\quad\quad
\resizebox{!}{1.5cm}{
\begin{pspicture}(-.5,1)(3.5,3)
 \rput(-.5,2){$\Rightarrow D^+=$}  \pscircle(2,2){.9}
 \psline(1.5,2)(1.6,3)
 \psline(1.5,2)(2,3)
 \psline(1.5,2)(2.4,3)
 \psline(2,2)(1.4,1) 
 \psline(2,2)(1.8,1)
 \psline(2,2)(2.2,1)
 \psline(2,2)(2.6,1)
 \psline[linewidth=3pt](.7,2)(3.3,2)
 \pscircle*(2,2){.15}
\end{pspicture}}
\resizebox{!}{1.5cm}{
\begin{pspicture}(-.5,1)(3.5,3)
 \rput(0,2){or}  \pscircle(2,2){.9}
 \psline(2,2)(1.6,3)
 \psline(2,2)(2,3)
 \psline(2,2)(2.4,3)
 \psline(2.5,2)(1.4,1)
 \psline(2.5,2)(1.8,1)
 \psline(2.5,2)(2.2,1)
 \psline(2.5,2)(2.6,1)
 \psline[linewidth=3pt](.7,2)(3.3,2)
 \pscircle*(2,2){.15}
\end{pspicture}}
\]
Here, the diagram may be completed outside the circle in an arbitrary way, and the choice of $D^+$ may be determined by the number of incoming edges of a certain type.

We label the new edge $e_v$ as metric, so that we calculate $\partial_Q(D^+,\fcan,m,\omega)=\sum_e (D^+_e,\dots)+\sum_{e\neq e_v} (D^+/e,\dots) + (D^+/{e_v},\dots)$, where $D^+_e$ is obtained by making the edge $e$ non-metric, $D^+/e$ is obtained by collapsing the edge $e$, and $D^+/e_v=D$. Note, that $D^+_e$ has a non-metric edge and is thus a composition of diagrams with fewer than $n$ leaves, $(D^+/e,\dots)\in \Qa{k}$ with $|(D^+/e)_{\min}|_\Pos=k-1$, and $(D^+,\dots)\in \Qa{k+1}$ all satisfy the chain condition $\partial_C p=p\partial_Q$ by the above hypothesis. Furthermore, $(D^+,\dots)$ satisfying the chain condition implies the same for $\partial_Q(D^+,\dots)$, since $\partial_C p \partial_Q (D^+,\dots)=\partial_C\partial_C p (D^+,\dots) =0=p \partial_Q\partial_Q (D^+,\dots)$. Thus, $(D,\dots)=\partial_Q(D^+,\dots)-\sum_e (D^+_e,\dots)-\sum_{e\neq e_v} (D^+/e,\dots)$ also satisfies the chain condition. This completes the inductive step.
\end{proof}

\begin{lemma}\label{k-1-lemma}
Let $(D,\fcan,m,\omega_D)\in \Qa{k}$ be a fully metric diagram ($D$ has $k$ metric edges) with $|D_{\min}|_\Pos=k-1$. Then
\[p(\partial_Q(D,\fcan,m,\omega_D))=0=\partial_C(p(D,\fcan,m,\omega_D)). \]
\end{lemma}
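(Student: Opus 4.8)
The statement concerns a fully metric diagram $(D,\fcan,m,\omega_D)\in\Qa{k}$ whose minimal binary refinement $D_{\min}$ has exactly $k-1$ positive edges. By Lemma \ref{neq-k}(3), since $|D_{\min}|_\Pos=k-1\neq k$, we already know $p(D,\fcan,m,\omega_D)=0$, hence $\partial_C(p(D,\fcan,m,\omega_D))=0$ immediately. So the entire content of the lemma is the identity $p(\partial_Q(D,\fcan,m,\omega_D))=0$. First I would expand $\partial_Q$ according to its definition: $\partial_Q(D,\fcan,m,\omega_D)=\sum_{j}(-1)^{j}\big[(D/e_{j},\fcan,m/e_{j},\dots)-(D_{e_{j}},\fcan,g_{e_{j}},\dots)\big]$, summing over the $k$ metric edges of $D$. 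The terms $(D_{e_j},\dots)$ have a non-metric edge, so they are handled by the $\circ_i$-multiplicative clause in Definition \ref{DEF:p-map}(iii), while the terms $(D/e_j,\dots)$ are again fully metric, of degree $k-1$.

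\textbf{Key observation on the two types of terms.} For the collapse terms $(D/e_j,\fcan,m/e_j,\dots)\in\Qa{k-1}$: by Lemma \ref{neq-k}(2), $(D/e_j)_{\min}$ is obtained from $(D/e_j)$ by inserting negative edges, and collapsing an edge of $D$ can only decrease or preserve the number of positive edges of the minimal refinement; a careful count shows $|(D/e_j)_{\min}|_\Pos$ is either $k-1$ or $k-2$. If it is $\neq k-1$, then $p(D/e_j,\dots)=0$ by Lemma \ref{neq-k}(3). The plan is to show that whenever $|(D/e_j)_{\min}|_\Pos=k-1$, these collapse terms cancel in pairs, exactly as in the cancellation of ``$B/e_i$'' terms in the proof of Proposition \ref{q-bound}: two distinct edges $e_j,e_{j'}$ of $D$ (or an edge of $D$ matched with an edge of a neighbor under one of the six local moves in Definition \ref{DEF:order}) give rise to the same quotient diagram, and the sign discipline from $\xi$ and the standard orientation forces the two contributions to appear with opposite signs. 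For the relabeling terms $(D_{e_j},\dots)$: since $D_{e_j}$ has a non-metric edge, $p$ factors through a $\circ_i$-decomposition; the relevant factor is a fully metric binary diagram if and only if it is maximal (Lemma \ref{p-values}(ii)), which never happens here because $|D_{\min}|_\Pos=k-1<k$ means $D$ cannot be refined using only the $k-1$ available positive slots to reach a maximal configuration — I would make this precise by arguing that in the $\circ_i$-decomposition $(D_{e_j},\dots)=\pm\sigma\cdot[(E_1,\dots)\circ\cdots\circ(E_r,\dots)]$ along the non-metric edges, at least one fully metric piece $E_i$ fails to be its own maximum, so $p(E_i,\dots)=0$ by Lemma \ref{p-values}(ii) and hence $p(D_{e_j},\dots)=0$ by $\circ_i$-multiplicativity.

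\textbf{Putting it together.} Thus $p$ kills every relabeling term outright, and the surviving collapse terms (those with $|(D/e_j)_{\min}|_\Pos=k-1$) organize into cancelling pairs via the local-move structure of Definition \ref{DEF:order}; the sign bookkeeping is the same computation already carried out in Appendix \ref{APP:(3.1)-signs} for Proposition \ref{q-bound}, adapted to the $(-1)^j$ prefixes in $\partial_Q$ and to the orientation $\xi$. Hence $p(\partial_Q(D,\fcan,m,\omega_D))=0$, completing the proof.

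\textbf{Main obstacle.} The hard part will be the sign verification in the pairwise cancellation of collapse terms: one must check that for each pair of edges producing the same quotient, the induced orientations $\omega_{D/e_j}^{\hat{}}$ together with the $(-1)^j$ from the boundary formula and the conversion to the $\xi$-normalization indeed differ by an overall sign $-1$. This is technical but entirely parallel to the ``$B/e_i$'' cancellation argument in Proposition \ref{q-bound} and its sign appendix, so I would invoke those computations rather than redo them from scratch. A secondary subtlety is the careful case analysis showing $|(D/e_j)_{\min}|_\Pos\in\{k-1,k-2\}$ and that the boundary edge involved in a cancellation always lies in the ``positive'' part so the six local moves apply; this is a direct inspection using Lemma \ref{neq-k}(1)--(2).
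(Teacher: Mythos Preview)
Your proposal has a genuine gap: the claim that all relabeling terms $p(D_{e_j},\dots)$ vanish is false, and this wrecks the entire cancellation scheme.

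Here is the issue. Since $|D_{\min}|_\Pos=k-1$, exactly one edge of $D$---call it $e_0$---is inserted negatively in $D_{\min}$, while the remaining $k-1$ edges $e_1,\dots,e_{k-1}$ are positive there. When you relabel $e_0$ as non-metric and write $D_{e_0}=\sigma\cdot(D'\circ_{e_0}D'')$, the fully metric pieces $D'$ and $D''$ contain only edges from $\{e_1,\dots,e_{k-1}\}$, each of which is positive in the corresponding minimum. Hence $|D'_{\min}|_\Pos=\deg D'$ and $|D''_{\min}|_\Pos=\deg D''$, so by Lemma~\ref{neq-k}(3) neither $p(D',\dots)$ nor $p(D'',\dots)$ vanishes, and $p(D_{e_0},\dots)\neq 0$ in general. (Your appeal to Lemma~\ref{p-values}(ii) is misplaced: that lemma concerns fully metric \emph{binary} diagrams, whereas $D'$ and $D''$ need not be binary.) The relabeling terms $p(D_{e_i},\dots)$ for $i>0$ do vanish, since one factor then contains $e_0$; but $p(D_{e_0},\dots)$ survives.

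Correspondingly, the collapse terms do \emph{not} cancel among themselves. One checks that $(D/e_0)_{\min}=D_{\min}$, so $|(D/e_0)_{\min}|_\Pos=k-1$ and $p(D/e_0,\dots)$ survives; for the other edges, $|(D/e_i)_{\min}|_\Pos\leq k-2$ except possibly for a single distinguished edge $e'$ (depending on the local configuration of $e_0$) for which collapsing $e'$ converts $e_0$ to a positive edge. So there are at most two surviving collapse terms, never an even family that pairs off internally. The actual mechanism, as in the paper, is that $p(D/e_0,\dots)$ cancels against $p(D_{e_0},\dots)$ (Case~1), or against $p(D_{e_0},\dots)$ together with $p(D/e',\dots)$ (Case~2); the verification of these identities requires a case analysis of the six local pictures for how $e_0$ sits negatively in $D_{\min}$. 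The analogy you draw with the ``$B/e_i$'' cancellation in Proposition~\ref{q-bound} is misleading: there one is summing over \emph{all} binary diagrams $B$, which is what supplies the matching pairs; here $D$ is fixed.
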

\begin{proof}
Lemma \ref{neq-k}(3) implies that $\partial_C(p(D,\fcan,m,\omega_D))=0$; it is sufficient to show that $p(\partial_Q(D,\fcan,m,\omega_D))=0$. Recall that $D_{\min}$ is given by inserting negative edges in $D$. Consequently, the positive edges in $D_{\min}$ do not come from inserting edges into $D$. Let $e_1,\dots,e_{k-1}$ be the edges of $D$ that become positive edges in $D_{\min}$, and let $e_0$ be the edge of $D$ that becomes a negative edge in $D_{\min}$. Direct inspection shows that an inserted edge $e_0$ in $D_{\min}$ can be negative in six possible ways.
Figure \ref{5cases} shows the ``local'' pictures for these cases and their minima, where ``local'' means within a bigger diagram.
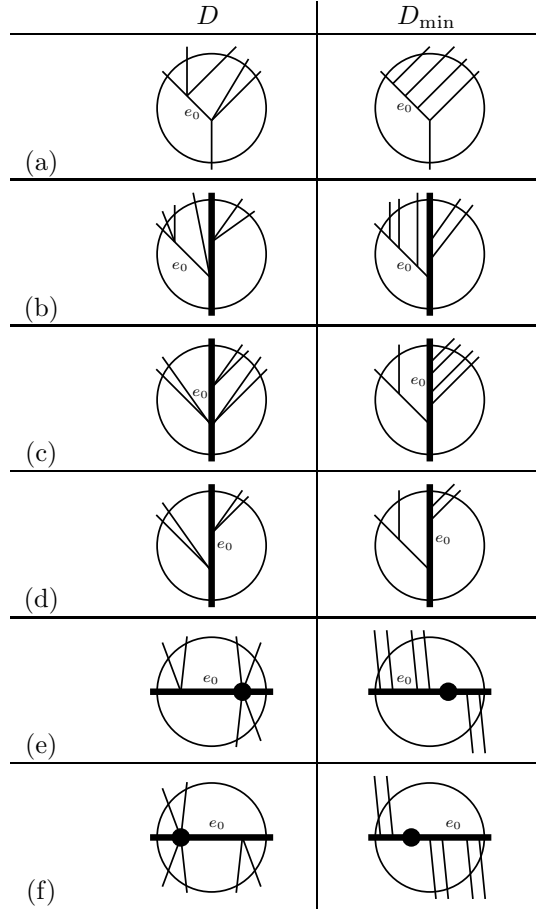
\begin{figure*}
$\begin{array}{cc|c}
 & D & D_{\min} \\ \hline
 \text{(a)}\quad & 
 \resizebox{!}{1.8cm}{ \begin{pspicture}(2.5,0)(5.5,2.2) 
\pscircle(4,1){.9} \psline(4,.8)(4.6,1.8) \psline(3.6,1.2)(3.6,2) \rput(3.7,.9){$_{e_0}$}
 \psline(4,0)(4,.8) \psline(4,.8)(3.2,1.6) \psline(4,.8)(4.8,1.6) \psline(3.6,1.2)(4.4,2)
 \end{pspicture}}
 & 
 \resizebox{!}{1.8cm}{ \begin{pspicture}(2.5,0)(5.5,2.2) 
\pscircle(4,1){.9} \psline(3.8,1)(4.6,1.8) \psline(3.4,1.4)(4,2) \rput(3.6,1){$_{e_0}$}
 \psline(4,0)(4,.8) \psline(4,.8)(3.2,1.6) \psline(4,.8)(4.8,1.6) \psline(3.6,1.2)(4.4,2)
 \end{pspicture}}
 \\ \hline
 \text{(b)} \quad & 
 \resizebox{!}{1.8cm}{ \begin{pspicture}(2.5,0)(5.5,2.2) 
\pscircle(4,1){.9} \psline[linewidth=3pt](4,0)(4,2)
 \psline(4,.6)(3.1, 1.5) \psline(4,1.2)(4.5,1.9) \rput(3.5,.8){$_{e_0}$}
 \psline(4,1.2)(4.7,1.7) \psline(3.98,.6)(3.7,2) 
 \psline(3.4,1.2)(3.4,1.8) \psline(3.4,1.2)(3.2,1.7)
 \end{pspicture}}
  & 
 \resizebox{!}{1.8cm}{ \begin{pspicture}(2.5,0)(5.5,2.2) 
\pscircle(4,1){.9} \psline[linewidth=3pt](4,0)(4,2)
 \psline(4,.6)(3.1, 1.5) \psline(4,1.2)(4.5,1.9) \rput(3.6,.8){$_{e_0}$}
 \psline(4,.9)(4.7,1.8) \psline(3.8,.8)(3.8,2) 
 \psline(3.5,1.1)(3.5,1.9) \psline(3.35,1.25)(3.35,1.85) 
 \end{pspicture}}
  \\ \hline
 \text{(c)} \quad & 
 \resizebox{!}{1.8cm}{ \begin{pspicture}(2.5,0)(5.5,2.2) 
\pscircle(4,1){.9} \psline[linewidth=3pt](4,0)(4,2)
 \psline(4,.6)(3.1, 1.5) \psline(4,1.2)(4.5,1.9) \rput(3.83,1.1){$_{e_0}$}
 \psline(4,1.2)(4.6,1.8) \psline(3.98,.6)(3.2,1.7) 
 \psline(4.02,.6)(4.8,1.7) \psline(4,.6)(4.9,1.5)
 \end{pspicture}}
  & 
 \resizebox{!}{1.8cm}{ \begin{pspicture}(2.5,0)(5.5,2.2) 
\pscircle(4,1){.9} \psline[linewidth=3pt](4,0)(4,2) \rput(3.83,1.3){$_{e_0}$}
 \psline(4,.6)(3.1, 1.5) \psline(3.5,1.1)(3.5,1.9) 
 \psline(4,.9)(4.8,1.7)  \psline(4,1.1)(4.7,1.8)  
 \psline(4,1.4)(4.5,1.9)  \psline(4,1.6)(4.4,2)  
 \end{pspicture}}
 \\ \hline
 \text{(d)} \quad & 
 \resizebox{!}{1.8cm}{ \begin{pspicture}(2.5,0)(5.5,2.2) 
\pscircle(4,1){.9} \psline[linewidth=3pt](4,0)(4,2)
 \psline(4,.6)(3.1, 1.5) \psline(4,1.2)(4.5,1.9) \rput(4.23,1){$_{e_0}$}
 \psline(4,1.2)(4.6,1.8) \psline(3.98,.6)(3.2,1.7) 
 \end{pspicture}}
  & 
 \resizebox{!}{1.8cm}{ \begin{pspicture}(2.5,0)(5.5,2.2) 
\pscircle(4,1){.9} \psline[linewidth=3pt](4,0)(4,2) \rput(4.23,1.1){$_{e_0}$}
 \psline(4,.6)(3.1, 1.5) \psline(3.5,1.1)(3.5,1.9) 
 \psline(4,1.4)(4.5,1.9)  \psline(4,1.6)(4.4,2)  
 \end{pspicture}}
  \\ \hline
 \text{(e)} \quad & 
 \resizebox{!}{1.8cm}{ \begin{pspicture}(2.5,0)(5.5,2.2) 
\pscircle(4,1){.9} \pscircle*(4.5,1){.15} \rput(4,1.2){$_{e_0}$}
 \psline[linewidth=3pt](3,1)(5,1)
 \psline(3.5,1)(3.2,1.8) \psline(3.5,1)(3.6,1.9)
 \psline(4.5,1)(4.8,1.8) \psline(4.5,1)(4.4,1.9)
 \psline(4.5,1)(4.8,.2) \psline(4.5,1)(4.4,.1)
 \end{pspicture}}
  & 
   \resizebox{!}{1.8cm}{ \begin{pspicture}(2.5,0)(5.5,2.2) 
\pscircle(4,1){.9} \pscircle*(4.3,1){.15} \rput(3.6,1.2){$_{e_0}$}
 \psline[linewidth=3pt](3,1)(5,1)
 \psline(3.2,1)(3.1,2) \psline(3.4,1)(3.3,2)
 \psline(4,1)(3.9,2) \psline(3.8,1)(3.7,2)
 \psline(4.6,1)(4.7,0) \psline(4.8,1)(4.9,0)
 \end{pspicture}}
 \\ \hline
 \text{(f)} \quad & 
 \resizebox{!}{1.8cm}{ \begin{pspicture}(2.5,0)(5.5,2.2) 
\pscircle(4,1){.9} \pscircle*(3.5,1){.15} \rput(4.1,1.2){$_{e_0}$}
 \psline[linewidth=3pt](3,1)(5,1)
 \psline(3.5,1)(3.2,1.8) \psline(3.5,1)(3.6,1.9)
 \psline(3.5,1)(3.2,.2) \psline(3.5,1)(3.6,.1)
 \psline(4.5,1)(4.8,.2) \psline(4.5,1)(4.4,.1)
 \end{pspicture}}
& 
   \resizebox{!}{1.8cm}{ \begin{pspicture}(2.5,0)(5.5,2.2) 
\pscircle(4,1){.9} \pscircle*(3.7,1){.15} \rput(4.4,1.2){$_{e_0}$}
 \psline[linewidth=3pt](3,1)(5,1)
 \psline(3.2,1)(3.1,2) \psline(3.4,1)(3.3,2)
 \psline(4,1)(4.1,0) \psline(4.2,1)(4.3,0)
 \psline(4.6,1)(4.7,0) \psline(4.8,1)(4.9,0)
 \end{pspicture}}
\end{array}$
\caption{Here are the six ways for an edge to be inserted as a negative edge when considering $D_{\min}$. }\label{5cases}
\end{figure*}

Recall from Definition \ref{operad-QA} that $\partial_Q(D,\fcan,m,\omega_D)\in\Qa{k-1}$ is given by a sum of diagrams $\sum_{i=0}^{k-1} D/e_i+ \sum_{i=0}^{k-1} D_{e_i}$, where $D/e_i$ is given by collapsing $e_i$, and $D_{e_i}$ is given by relabeling $e_i$ non-metric. Note that for $i>0$, $T_{e_i}$ is a composition of two diagrams along $e_i$. Consequently one of these two diagrams contains $e_0$, giving a negative edge in its minimum, so that $p(D_{e_i},\dots)=0$ by Lemma \ref{neq-k}. Similarly, for those edges $e_i$ such that $D/e_i$ has less than $k-1$ positive edges, we have $p(D/e_i,\dots)=0$ by Lemma \ref{neq-k}. However, this only happens in restricted cases. First, $(D/e_0)_{\min}=D_{\min}$ so that $|(D/e_0)_{\min}|_\Pos=k-1$, and additionally, in certain cases discussed below, it is possible that there is one more edge, denoted by $e'$, such that $e_0$ is converted to a positive edge when considering $(D/e')_{\min}$. Figure \ref{Ex:pdT=0} illustrates this situation.
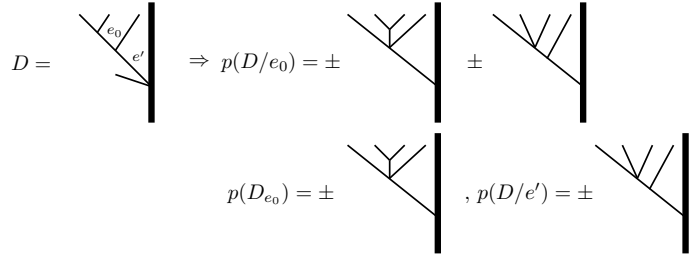
\begin{figure*}
\begin{eqnarray*}
\resizebox{!}{1.6cm}{ \begin{pspicture}(1.5,0)(5.2,2) 
 \psline[linewidth=3pt](4,0)(4,2) \rput(3.4,1.5){$_{e_0}$} \rput(3.8,1.1){$_{e'}$}
 \psline(4,.6)(2.8, 1.8) \psline(4,.6)(3.4,.8) \rput(2,1){$D=$} \rput(4.8,1){$\Rightarrow$}
 \psline(3.4,1.2)(3.8,1.8) \psline(3.1,1.5)(3.3,1.8)
 \end{pspicture}}  
 &
\resizebox{!}{1.6cm}{ \begin{pspicture}(1,0)(4.3,2) 
 \psline[linewidth=3pt](4,0)(4,2) 
 \psline(4,.6)(2.5, 1.8)  \rput(1.4,1){$p(D/e_0)=\pm$}
 \psline(3.2,1.25)(3.8,1.8) \psline(3.2,1.25)(3.2,1.55)
 \psline(3.2,1.55)(3.45,1.8)  \psline(3.2,1.55)(2.95,1.8)
 \end{pspicture}}  
 \resizebox{!}{1.6cm}{ \begin{pspicture}(2,0)(5.9,2) 
 \psline[linewidth=3pt](4,0)(4,2) 
 \psline(4,.6)(2.5, 1.8)  \rput(2.2,1){$\pm$}
 \psline(3.4,1.066)(3.8,1.8)
 \psline(3.2,1.25)(3.45,1.8)  \psline(3.2,1.25)(2.95,1.8)
 \end{pspicture}}  
  \\
 & 
\resizebox{!}{1.6cm}{ \begin{pspicture}(1,0)(4.3,2) 
 \psline[linewidth=3pt](4,0)(4,2) 
 \psline(4,.6)(2.5, 1.8)  \rput(1.4,1){$p(D_{e_0})=\pm$}
 \psline(3.2,1.25)(3.8,1.8) \psline(3.2,1.25)(3.2,1.55)
 \psline(3.2,1.55)(3.45,1.8)  \psline(3.2,1.55)(2.95,1.8)
 \end{pspicture}}  
\resizebox{!}{1.6cm}{ \begin{pspicture}(.3,0)(4.2,2) 
 \psline[linewidth=3pt](4,0)(4,2) 
 \psline(4,.6)(2.5, 1.8)  \rput(1.4,1){, $p(D/e')=\pm$}
 \psline(3.4,1.066)(3.8,1.8)
 \psline(3.2,1.25)(3.45,1.8)  \psline(3.2,1.25)(2.95,1.8)
 \end{pspicture}}  
 \end{eqnarray*}
\caption{In the diagram $D$ with edges $e_0$ and $e'$ as above, we have $p(D/e_0,\dots)=\pm p(D_{e_0},\dots)\pm p(D/e',\dots)$ and $p(D_{e'},\dots)=0$.}\label{Ex:pdT=0}
\end{figure*}
To summarize, $p(D/e_i,\dots)=0$ for all $e_i\notin \{e_0,e'\}$, and the check for $p(\partial_Q(D,\fcan,m,\omega_D))=0$ reduces to two cases:
$$ 
\bigg\{
\begin{array}{lll}
\text{Case }1:& p(D/e_0,\dots)=p(D_{e_0},\dots) & \text{, when $D$ has no edge }e',\\
\text{Case }2:& p(D/e_0,\dots)=p(D_{e_0},\dots)\pm p(D/e',\dots) & \text{, when $D$ has an  edge }e'.
\end{array}
$$

\smallskip
{\bf Case 1:} 
The diagrams in Figure \ref{case1:no-e'} depict the situations in which this case can occur.
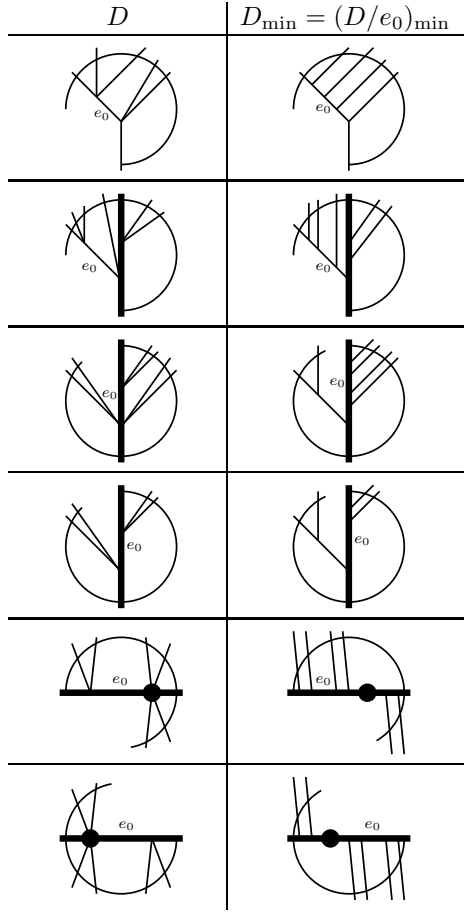
\begin{figure*}
$\begin{array}{c|c}
  D & D_{\min}=(D/e_0)_{\min} \\ \hline
 \resizebox{!}{1.8cm}{ \begin{pspicture}(2.5,0)(5.5,2.2) 
\psarc(4,1){.9}{-90}{180} \psline(4,.8)(4.6,1.8) \psline(3.6,1.2)(3.6,2) \rput(3.7,.9){$_{e_0}$}
 \psline(4,0)(4,.8) \psline(4,.8)(3.2,1.6) \psline(4,.8)(4.8,1.6) \psline(3.6,1.2)(4.4,2)
 \end{pspicture}}
 & 
 \resizebox{!}{1.8cm}{ \begin{pspicture}(2.5,0)(5.5,2.2) 
\psarc(4,1){.9}{-90}{180} \psline(3.8,1)(4.6,1.8) \psline(3.4,1.4)(4,2) \rput(3.6,1){$_{e_0}$}
 \psline(4,0)(4,.8) \psline(4,.8)(3.2,1.6) \psline(4,.8)(4.8,1.6) \psline(3.6,1.2)(4.4,2)
 \end{pspicture}}
 \\ \hline
 \resizebox{!}{1.8cm}{ \begin{pspicture}(2.5,0)(5.5,2.2) 
\psarc(4,1){.9}{-90}{180} \psline[linewidth=3pt](4,0)(4,2)
 \psline(4,.6)(3.1, 1.5) \psline(4,1.2)(4.5,1.9) \rput(3.5,.8){$_{e_0}$}
 \psline(4,1.2)(4.7,1.7) \psline(3.98,.6)(3.7,2) 
 \psline(3.4,1.2)(3.4,1.8) \psline(3.4,1.2)(3.2,1.7)
 \end{pspicture}}
  & 
 \resizebox{!}{1.8cm}{ \begin{pspicture}(2.5,0)(5.5,2.2) 
\psarc(4,1){.9}{-90}{180} \psline[linewidth=3pt](4,0)(4,2)
 \psline(4,.6)(3.1, 1.5) \psline(4,1.2)(4.5,1.9) \rput(3.6,.8){$_{e_0}$}
 \psline(4,.9)(4.7,1.8) \psline(3.8,.8)(3.8,2) 
 \psline(3.5,1.1)(3.5,1.9) \psline(3.35,1.25)(3.35,1.85) 
 \end{pspicture}}
  \\ \hline
 \resizebox{!}{1.8cm}{ \begin{pspicture}(2.5,0)(5.5,2.2) 
\psarc(4,1){.9}{-225}{90} \psline[linewidth=3pt](4,0)(4,2)
 \psline(4,.6)(3.1, 1.5) \psline(4,1.2)(4.5,1.9) \rput(3.83,1.1){$_{e_0}$}
 \psline(4,1.2)(4.6,1.8) \psline(3.98,.6)(3.2,1.7) 
 \psline(4.02,.6)(4.8,1.7) \psline(4,.6)(4.9,1.5)
 \end{pspicture}}
  & 
 \resizebox{!}{1.8cm}{ \begin{pspicture}(2.5,0)(5.5,2.2) 
\psarc(4,1){.9}{-245}{90} \psline[linewidth=3pt](4,0)(4,2) \rput(3.83,1.3){$_{e_0}$}
 \psline(4,.6)(3.1, 1.5) \psline(3.5,1.1)(3.5,1.9) 
 \psline(4,.9)(4.8,1.7)  \psline(4,1.1)(4.7,1.8)  
 \psline(4,1.4)(4.5,1.9)  \psline(4,1.6)(4.4,2)  
 \end{pspicture}}
 \\ \hline
 \resizebox{!}{1.8cm}{ \begin{pspicture}(2.5,0)(5.5,2.2) 
\psarc(4,1){.9}{-225}{90} \psline[linewidth=3pt](4,0)(4,2)
 \psline(4,.6)(3.1, 1.5) \psline(4,1.2)(4.5,1.9) \rput(4.23,1){$_{e_0}$}
 \psline(4,1.2)(4.6,1.8) \psline(3.98,.6)(3.2,1.7) 
 \end{pspicture}}
  & 
 \resizebox{!}{1.8cm}{ \begin{pspicture}(2.5,0)(5.5,2.2) 
\psarc(4,1){.9}{-245}{90} \psline[linewidth=3pt](4,0)(4,2) \rput(4.23,1.1){$_{e_0}$}
 \psline(4,.6)(3.1, 1.5) \psline(3.5,1.1)(3.5,1.9) 
 \psline(4,1.4)(4.5,1.9)  \psline(4,1.6)(4.4,2)  
 \end{pspicture}}
  \\ \hline
 \resizebox{!}{1.8cm}{ \begin{pspicture}(2.5,0)(5.5,2.2) 
\psarc(4,1){.9}{-80}{180} \pscircle*(4.5,1){.15} \rput(4,1.2){$_{e_0}$}
 \psline[linewidth=3pt](3,1)(5,1)
 \psline(3.5,1)(3.2,1.8) \psline(3.5,1)(3.6,1.9)
 \psline(4.5,1)(4.8,1.8) \psline(4.5,1)(4.4,1.9)
 \psline(4.5,1)(4.8,.2) \psline(4.5,1)(4.4,.1)
 \end{pspicture}}
  & 
   \resizebox{!}{1.8cm}{ \begin{pspicture}(2.5,0)(5.5,2.2) 
\psarc(4,1){.9}{-60}{180} \pscircle*(4.3,1){.15} \rput(3.6,1.2){$_{e_0}$}
 \psline[linewidth=3pt](3,1)(5,1)
 \psline(3.2,1)(3.1,2) \psline(3.4,1)(3.3,2)
 \psline(4,1)(3.9,2) \psline(3.8,1)(3.7,2)
 \psline(4.6,1)(4.7,0) \psline(4.8,1)(4.9,0)
 \end{pspicture}}
 \\ \hline
 \resizebox{!}{1.8cm}{ \begin{pspicture}(2.5,0)(5.5,2.2) 
\psarc(4,1){.9}{100}{360} \pscircle*(3.5,1){.15} \rput(4.1,1.2){$_{e_0}$}
 \psline[linewidth=3pt](3,1)(5,1)
 \psline(3.5,1)(3.2,1.8) \psline(3.5,1)(3.6,1.9)
 \psline(3.5,1)(3.2,.2) \psline(3.5,1)(3.6,.1)
 \psline(4.5,1)(4.8,.2) \psline(4.5,1)(4.4,.1)
 \end{pspicture}}
& 
   \resizebox{!}{1.8cm}{ \begin{pspicture}(2.5,0)(5.5,2.2) 
\psarc(4,1){.9}{120}{360} \pscircle*(3.7,1){.15} \rput(4.4,1.2){$_{e_0}$}
 \psline[linewidth=3pt](3,1)(5,1)
 \psline(3.2,1)(3.1,2) \psline(3.4,1)(3.3,2)
 \psline(4,1)(4.1,0) \psline(4.2,1)(4.3,0)
 \psline(4.6,1)(4.7,0) \psline(4.8,1)(4.9,0)
 \end{pspicture}}
\end{array}$
\caption{Case $1$: $p$ is non-vanishing only for $D/e_0$ and $D_{e_0}$. The open ``half-circles'' for $D$ indicate that each diagram $D$ may be completed in an arbitrary way outside the half-circle. Thus, a half-circle that connects to an edge denotes that more edges may be added on the side of the edge outside the half-circle.}\label{case1:no-e'}
\end{figure*}
The proof that $p(D/e_0,\dots)=p(D_{e_0},\dots)$ now follows. Since $e_0$ in $D_{e_0}$ is non-metric, $D_{e_0}$ decomposes as $$(D_{e_0},\fcan,m_{e_0},\omega'\wedge \omega'')=\sigma\cdot((D',\fcan,m,\omega')\circ_{e_0}(D'',\fcan,m,\omega'')),$$ so that 
\begin{eqnarray*}
p(D_{e_0},\fcan,m_{e_0},\omega'\wedge \omega'')&=&\sigma\cdot(p(D',\fcan,m,\omega')\circ_{e_0}p(D'',\fcan,m,\omega''))\\
&=&\sum_{S',S''} \sigma\cdot((S',\fcan,\omega(S',D'))\circ_{e_0}(S'',\fcan,\omega(S'',D''))),
\end{eqnarray*}
where we sum over all $S'$ and $S''$ with $S'_{\max}\leq D'_{\min}$ and $S''_{\max}\leq D''_{\min}$. On the other hand, $$p(D/e_0,\fcan,m,\omega'\wedge \omega'')=\sum(S,\fcan,\omega(S,D)),$$ where we sum over all $S$ with $S_{\max}\leq (D/e_0)_{\min}=D_{\min}$. Consequently, the equality $p(D/e_0,\dots)=p(D_{e_0},\dots)$ follows (up to sign) by noting that each $S_{\max}$ with $S_{\max}\leq D_{\min}$ is given by a composition along $e_0$, \emph {i.e.}, $S_{\max}=S'_{\max}\circ_{e_0}S''_{\max}$ (since, by the order relations, a change of $e_0$ when reducing $D_{\min}$ also changes the number of positive edges).
The proof of this case will be complete, once we compare the signs for $S'\circ_{e_0}S''$ and $S$. To calculate these signs, we assume that $D'$ has $r$ leaves and degree $p$  with $\omega'=e'_1\wedge\dots\wedge e'_p$, and $D''$ has $s$ leaves and degree $q$ with $\omega''=e''_1\wedge\dots\wedge e''_q$. Let $\xi_{D_{\min}}$, $\xi_{D'_{\min}}$ and $\xi_{D''_{\min}}$ denote the orientations from Step 1 in Appendix \ref{APP:binary-standard-orientation}. We first assume that $S_{\max}=D_{\min}$; then $S'_{\max}=D'_{\min}$ and $S''_{\max}=D''_{\min}$, and $\omega(S,D)=(\omega'\wedge \omega'')\rfloor \xi_{D_{\min}}$, $\omega(S',D')=\omega'\rfloor \xi_{D'_{\min}}$, and $\omega(S'',D'')=\omega''\rfloor \xi_{D''_{\min}}$. We need to consider the cases $\sigma=id$ or $\sigma\neq id$; the latter occurs when $S'$ is an inner product diagram and $e_0$ is a thick module edge on the left of $S'$.
\begin{itemize}
\item Let $\sigma=id$; then either $S'$ is not an inner product diagram or $\circ_{e_0}$ is not a composition at the first position. Assume $e_0$ composes at the $i\T$ position. Then,
\begin{multline*}
\quad\quad\quad(S',\fcan, \omega(S',D'))\circ_{i}(S'',\fcan, \omega(S'',D''))\\
=(-1)^{i(s+1)+r\cdot q}(S'\circ_{e_0}S'',\fcan,(\omega'\rfloor \xi_{D'_{\min}})\wedge(\omega''\rfloor \xi_{D''_{\min}})\wedge e_0).
\end{multline*}
On the other hand, since $D_{\min}=D'_{\min}\circ_{e_0}D''_{\min}$, Remark \ref{REM:xi-by-composition} shows that $\xi_{D_{\min}}$ is given by the canonically induced orientation coming from composing $D'_{\min}$ and $D''_{\min}$ with orientations $\xi_{D'_{\min}}$ and $\xi_{D''_{\min}}$, respectively. According to Equation \eqref{EQ:circ-i} the induced orientation of this composition with sign given by $\epsilon=i(s+1)+r\cdot 0$ is:
$$ \quad\quad\quad \xi_{D_{\min}}
=(-1)^{i(s+1)+r\cdot 0}\cdot \xi_{D'_{\min}}\wedge \xi_{D''_{\min}}\wedge e_0. $$ 
Therefore,
\begin{eqnarray*}
\quad\quad\quad \omega(S,D)&=&(\omega'\wedge \omega'')\rfloor \xi_{D_{\min}}=(-1)^{i(s+1)}(\omega'\wedge \omega'')\rfloor(\xi_{D'_{\min}}\wedge \xi_{D''_{\min}}\wedge e_0)\\
&=&(-1)^{i(s+1)+r\cdot q}(\omega'\rfloor \xi_{D'_{\min}})\wedge(\omega''\rfloor \xi_{D''_{\min}})\wedge e_0.
\end{eqnarray*}
\item Let $\sigma\neq id$; so that $S'$ is an inner product diagram and the composition at $e_0$ is at the first position of $S'$. Assume that $S''$ is a module diagram with $s'$ leaves to the left of the module branch, $s''$ leaves to the right of the module branch, and $s=s'+s''+1$; see Figure \ref{FIG:mod-inner}.
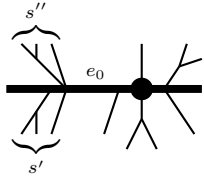
\begin{figure}
\begin{pspicture}(.2,1)(2.8,3) 
 \psline(1.5,1.4)(1.7,2)  \psline(2.7,1.4)(2.3,2)  \psline(2,1.6)(2,2.6)
 \psline(2,1.6)(1.8,1.2) \psline(2,1.6)(2.2,1.2)
 \psline(2.3,2)(2.5,2.3) \psline(2.5,2.3)(2.8,2.4) \psline(2.5,2.3)(2.6,2.6)
 \psline(1,2)(.8,2.6) \psline(.6,2.4)(.6,2.6) 
 \psline(1,2)(.4,2.6) \rput(1.4,2.2){$_{e_0}$}
 \rput(.6,2.9){$\overbrace{\quad}^{s''}$}  \rput(.6,1.1){$\underbrace{\quad}_{s'}$}
 \psline(1,2)(.8,1.4) \psline(.6,1.7)(.6,1.4) 
 \psline(.8,2)(.4,1.4)
 \psline[linewidth=3pt](0.2,2)(2.8,2)
 \pscircle*(2,2){.15}
\end{pspicture} 
\caption{The composition $S'\circ_{e_0}S''$.}\label{FIG:mod-inner}
\end{figure}
Then as before, we obtain a sign of $(-1)^{1\cdot (s+1)+r\cdot q}$ for the composition $S'\circ_{e_0}S''$ with orientation $(\omega'\rfloor \xi_{D'_{\min}})\wedge(\omega''\rfloor \xi_{D''_{\min}})\wedge e_0$, but the cyclic permutation introduces an additional sign of $(-1)^{s'(s''+r)}$. On the other hand, we may use Remark \ref{REM:xi-by-composition} to calculate $\xi_{D_{\min}}$, which is again given by the orientations $\xi_{D'_{\min}}$ and $\xi_{D''_{\min}}$ and the fact that $D_{\min}$ is given by the composition $D_{\min}=D'_{\min}\circ_{e_0}D''_{\min}$, now with an additional cyclic permutation $\sigma$. The sign for the composition $D_{\min}=D'_{\min}\circ_{e_0}D''_{\min}$ from Equation \eqref{EQ:circ-i} is given by $\epsilon=1\cdot (s+1)+r\cdot 0$, and the sign coming from the permutation $\sigma$ is $(-1)^{s' \cdot (s''+r)}$. This gives a total orientation $\xi_{D_{\min}}$ of
\[ \xi_{D_{\min}}=(-1)^{s' \cdot (s''+r)}\cdot (-1)^{1\cdot (s+1)+r\cdot 0} \cdot \xi_{D'_{\min}}\wedge \xi_{D''_{\min}}\wedge e_0. \]
%
%
Thus we obtain again the same sign as before:
\begin{eqnarray*}
\quad\quad\quad \omega(S,D)&=&(\omega'\wedge \omega'')\rfloor \xi_{D_{\min}} \\
&=&(-1)^{s+1+s'(s''+r)}(\omega'\wedge \omega'')\rfloor(\xi_{D'_{\min}}\wedge \xi_{D''_{\min}}\wedge e_0)\\
&=& (-1)^{1+s+r\cdot q+s'(s''+r)}(\omega'\rfloor\xi_{D'_{\min}})\wedge(\omega''\rfloor \xi_{D''_{\min}})\wedge e_0.
\end{eqnarray*}
\end{itemize}
For the general case $S_{\max}<D_{\min}$, note that any steps given by a local move that changes $\omega(S,D)$ correspond exactly to a local move that change $\omega(S',D')$ or $\omega(S'',D'')$ by the same sign.

\begin{figure*}[p]
$\begin{array}{c|c|c|c}
 D & (D/e_0)_{\min} & D'_{\min}\circ_{e_0} D''_{\min} & (D/e')_{\min} 
 \\ \hline \quad 
 \resizebox{!}{2.13cm}{\begin{pspicture}(.9,0)(4.2,4.2) 
 \psline(2.5,.2)(2.5,3.8)\psline(2.5,2.4)(3.6,3.8)
\psline(2.5,1)(2,1.6) \psline(2.5,1)(3,1.6) \psline(2.5,1)(1.4,1.6) \psline(2.5,1)(3.6,1.6)
\rput(1.7,2){$\overbrace{}^{>0}$} \rput(2.7,1.8){$e'$} \rput(2.2,3){$e_0$}
\psline(2.5,2.4)(3,3.8) \psline(2.5,2.4)(1.6,3.2) \psline(1.6,3.2)(1.6,3.8)
\psline(1.6,3.2)(1.1,3.8) \psline(1.6,3.2)(2.1,3.8)
 \end{pspicture}}
 \quad &\quad
 \resizebox{!}{2.13cm}{\begin{pspicture}(0.4,0.2)(4,3.8) 
 \psline(2.2,.4)(2.2,1.2) \psline(2.2,1.2)(.4,3) \psline(2,1.4)(3.6,3)
\psline(1.8,1.6)(3.2,3) \psline(2.2,2.4)(2.8,3) \psline(.6,2.8)(.8,3)
\rput(2.02,2.08){$e'$} \psline(2.4,2.2)(1.2,3.4) \rput(1.95,2.95){$e_0$} 
\psline(2,2.6)(2.8,3.4) \psline(1.6,3)(2,3.4) \psline(1.4,3.2)(1.6,3.4)
\psline(2.2,1.2)(4,3)
 \end{pspicture}} 
 \quad &\quad
 \resizebox{!}{2.13cm}{\begin{pspicture}(0.4,0.6)(4,4.4)
 \psline(2.2,.8)(2.2,1.2) \psline(2.2,1.2)(.4,3) \psline(2,1.4)(3.6,3)
\psline(1.8,1.6)(3.2,3)  \psline(.6,2.8)(.8,3) \psline(2.2,2.4)(2.8,3)
\rput(2.02,2.08){$e'$} \psline(2.4,2.2)(1.6,3) \rput(1.9,3.2){$e_0$} 
\psline(2,2.6)(2.4,3)  \psline[linestyle=dotted](1.6,3)(1.6,3.4) 
\psline(1.6,3.4)(1.6,3.6) \psline(1.6,3.6)(1.2,4) \psline(1.6,3.6)(2,4)
\psline(1.4,3.8)(1.6,4)\psline(2.2,1.2)(4,3)
 \end{pspicture}}
 \quad &
 \resizebox{!}{2.13cm}{\begin{pspicture}(0.2,0)(4,3.4) 
 \psline(2.2,.2)(2.2,1.2) \psline(2.2,1.2)(.4,3) \psline(2,1.4)(3.6,3)
\psline(1.8,1.6)(3.2,3) \psline(1.6,1.8)(2.8,3) \psline(.6,2.8)(.8,3)
\psline(1.4,2)(2.4,3) \psline(1.2,2.2)(2,3) \rput(1.27,2.53){$e_0$} 
\psline(1.6,2.6)(1.2,3) \psline(1.4,2.8)(1.6,3)\psline(2.2,1.2)(4,3)
 \end{pspicture}}
 \quad  \\ \hline 
 \resizebox{!}{2.13cm}{\begin{pspicture}(.5,0)(4,4) 
 \psline[linewidth=3pt](2.5,.2)(2.5,3.6)
\psline(2.5,1)(2.05,1.6)   \psline(2.5,1)(3,1.6) \psline(2.5,1)(1.3,1.6) 
\psline(2.5,1)(3.6,1.6) \psline(2.5,1)(2.25,1.6) \psline(2.5,1)(.9,1.6)
\rput(1.1,1.83){$\overbrace{}^{>0}$} \rput(1.8,2){$e'$} \rput(1,2.6){$e_0$}
\psline(2.5,1)(1.2,2.2) \psline(1.2,2.2)(1.2,3.6)
\psline(1.2,3)(.9,3.6) \psline(1.2,3)(1.5,3.6) 
\psline(1.2,2.2)(1.9,3.6) \psline(1.2,2.2)(2.3,3.6)
 \end{pspicture}}
 \quad &\quad
 \resizebox{!}{2.13cm}{\begin{pspicture}(0,0)(3.8,4.2)
\psline[linewidth=3pt](3,.4)(3,3.8) 
\rput(1.7,2.95){$e_0$} \rput(1.65,2.45){$e'$}
\psline(3,.8)(.4,3.4) \psline(3,1.2)(3.4,1.6) \psline(3,1.6)(3.4,2)
\psline(1.6,2.2)(2.8,3.4) \psline(1.8,2)(2.8,3) \psline(2,1.8)(2.8,2.6)
\psline(.6,3.2)(.8,3.4) \psline(1.9,2.5)(1,3.4) \psline(1.2,3.2)(1.4,3.4)
\psline(1.4,3)(1.8,3.4) \psline(1.7,2.7)(2.4,3.4) \end{pspicture}}
 \quad &\quad
 \resizebox{!}{2.13cm}{\begin{pspicture}(1,0)(3.8,4)
\psline[linewidth=3pt](3,.4)(3,2.6) 
\rput(2.2,2.7){$e_0$} \rput(2.15,1.95){$e'$}
\psline(3,.8)(1.4,2.4) \psline(3,1.2)(3.4,1.6) \psline(3,1.6)(3.4,2)
\psline(1.6,2.2)(1.8,2.4) \psline(2,2.4)(2.4,2) \psline(2.2,2.2)(2.4,2.4)
\psline(2.8,2.4)(2.1,1.7)  \psline(2.8,2)(2.3,1.5) \psline(2.8,1.6)(2.5,1.3)
\psline[linestyle=dotted](2,2.4)(2,3) \psline(2,3)(2,3.2)
\psline(2,3.2)(2.4,3.6) \psline(2,3.2)(1.6,3.6) \psline(1.8,3.4)(2,3.6) \end{pspicture}}
 \quad &
 \resizebox{!}{2.13cm}{\begin{pspicture}(0,0)(3.8,4.2)
\psline[linewidth=3pt](3,.4)(3,3.8) 
\rput(1.25,2.95){$e_0$} 
\psline(3,.8)(.4,3.4) \psline(3,1.2)(3.4,1.6) \psline(3,1.6)(3.4,2)
\psline(1.6,2.2)(2.8,3.4) \psline(1.8,2)(2.8,3) \psline(2,1.8)(2.8,2.6)
\psline(.6,3.2)(.8,3.4) \psline(1.6,3)(1.2,3.4) \psline(1.4,3.2)(1.6,3.4)
\psline(1.2,2.6)(2,3.4) \psline(1.4,2.4)(2.4,3.4)  \end{pspicture}}
 \quad  \\ \hline 
 \resizebox{!}{2.13cm}{\begin{pspicture}(0.5,0)(3.7,4)
 \psline[linewidth=3pt](1.5,.2)(1.5,3.6) \psline(1.5,1)(2.8,2.2) 
\psline(1.5,1)(.8,1.6) \psline(1.5,1)(1.2,1.6) \psline(1.5,1)(1,1.6) 
\psline(1.5,1)(1.8,1.6) \psline(1.5,1)(2,1.6) \psline(1.5,1)(2.4,1.6) 
\psline(1.5,1)(2.6,1.6) \psline(2.8,3)(2.5,3.6) \psline(2.8,3)(3.1,3.6)
\psline(2.8,2.2)(2.8,3.6) \psline(2.8,2.2)(3.1,2.8) \psline(2.8,2.2)(3.3,2.8)
\rput(2.3,2){$e'$} \rput(2.6,2.6){$e_0$}
  \end{pspicture}}
 \quad &\quad
 \resizebox{!}{2.13cm}{\begin{pspicture}(0,0)(3.6,4.2)
\psline[linewidth=3pt](1,.4)(1,3.8)
 \rput(2.2,3.05){$e_0$} \rput(1.7,2.3){$e'$}
 \psline(1,.8)(.2,1.6) \psline(.4,1.4)(.6,1.6) \psline(.5,1.3)(.8,1.6)
 \psline(1,1)(3.4,3.4) \psline(1,1.2)(3.2,3.4) \psline(1,1.4)(3,3.4)
\psline(1,3.2)(1.2,3.4) \psline(1,3)(1.4,3.4)  \psline(2.3,2.7)(1.6,3.4) 
 \psline(1.8,3.2)(2,3.4) \psline(1.9,3.1)(2.2,3.4) \psline(2.2,2.8)(2.8,3.4) \end{pspicture}}
 \quad &\quad
 \resizebox{!}{2.13cm}{\begin{pspicture}(0,0)(2.8,3.9)
\psline[linewidth=3pt](1,.4)(1,3)
 \rput(1.8,2.8){$e_0$} \rput(1.4,2){$e'$} \psline[linestyle=dotted](1.6,2.6)(1.6,3)
 \psline(1,.8)(.2,1.6) \psline(.4,1.4)(.6,1.6) \psline(.5,1.3)(.8,1.6)
 \psline(1,1)(2.6,2.6) \psline(1,1.2)(2.4,2.6) \psline(1,1.4)(2.2,2.6)
\psline(1,2.4)(1.2,2.6) \psline(1,2.2)(1.4,2.6)
 \psline(1.6,2.6)(1.9,2.3) \psline(1.8,2.4)(2,2.6) \psline(1.5,3.3)(1.7,3.5)
 \psline(1.6,3)(1.6,3.2) \psline(1.6,3.2)(1.9,3.5) \psline(1.6,3.2)(1.3,3.5)  \end{pspicture}}
 \quad &
 \resizebox{!}{2.13cm}{\begin{pspicture}(-.5,0)(3.6,4.2)
\psline[linewidth=3pt](1,.4)(1,3.8)
 \rput(1.5,2.6){$e_0$} 
 \psline(1,.8)(.2,1.6) \psline(.4,1.4)(.6,1.6) \psline(.5,1.3)(.8,1.6)
 \psline(1,1)(3.4,3.4) \psline(1,1.2)(3.2,3.4) \psline(1,1.4)(3,3.4)
 \psline(1,1.6)(2.8,3.4) \psline(1,1.8)(2.6,3.4)
\psline(1,3.2)(1.2,3.4) \psline(1,3)(1.4,3.4)  \psline(2.1,2.9)(1.6,3.4) 
 \psline(1.9,3.1)(2.2,3.4) 
 \end{pspicture}}
 \quad  \\ \hline 
 \resizebox{!}{2.13cm}{\begin{pspicture}(0,0.2)(4,4) 
 \psline[linewidth=3pt](.4,1.4)(3.6,1.4) \rput(2.2,2.4){$e'$} \rput(1.6,2.7){$e_0$}
\pscircle*(2,1.4){.15} \psline(2,2.6)(2,.6) \psline(2.3,2.2)(1.7,.6) 
\psline(1.7,2.2)(2.3,.6) \psline(2,1.4)(1.5,2.2) \psline(2,1.4)(2.5,2.2) 
\psline(2,2.6)(2.1,3.2) \psline(2,2.6)(2.4,3.2) \psline(2,2.6)(1.5,3.1)
\psline(1.5,3.1)(1.5,3.6) \psline(1.5,3.1)(1.2,3.6) \psline(1.5,3.1)(1.8,3.6)
 \end{pspicture}}
 \quad &\quad
 \resizebox{!}{2.13cm}{\begin{pspicture}(-.6,0)(2.4,3.8)
\psline[linewidth=3pt](-.6,1.4)(2.4,1.4) \pscircle*(1.3,1.4){.15} 
 \rput(.35,1.87){$e'$} \rput(-.03,2.7){$e_0$}
 \psline(.1,2)(.7,3.2) \psline(0,2.2)(.5,3.2)
\psline(.4,1.4)(-.5,3.2) \psline(.8,1.4)(.4,2.2) \psline(1,1.4)(.6,2.2)
\psline(1.6,1.4)(2,.6) \psline(1.8,1.4)(2.2,.6) \psline(2,1.4)(2.4,.6)
\psline(0,1.4)(-.4,2.2) \psline(-.2,1.4)(-.6,2.2) 
\psline(-.3,2.8)(-.1,3.2) \psline(-.4,3)(-.3,3.2) \end{pspicture}}
 \quad &\quad
 \resizebox{!}{2.13cm}{\begin{pspicture}(-.6,0.2)(2.4,4)
\psline[linewidth=3pt](-.6,1.4)(2.4,1.4) \pscircle*(1.3,1.4){.15} 
 \rput(.35,1.87){$e'$} \rput(-.4,2.8){$e_0$}
 \psline(.1,2)(.4,2.6) \psline(0,2.2)(.2,2.6)
\psline(.4,1.4)(-.2,2.6) \psline(.8,1.4)(.4,2.2) \psline(1,1.4)(.6,2.2)
\psline(1.6,1.4)(2,.6) \psline(1.8,1.4)(2.2,.6) \psline(2,1.4)(2.4,.6)
\psline(0,1.4)(-.4,2.2) \psline(-.2,1.4)(-.6,2.2) 
\psline[linestyle=dotted](-.2,2.6)(-.2,3) \psline(-.2,3.2)(-.2,3)
\psline(-.2,3.2)(0,3.6) \psline(-.3,3.4)(-.2,3.6) \psline(-.2,3.2)(-.4,3.6) \end{pspicture}}
 \quad &
 \resizebox{!}{2.13cm}{\begin{pspicture}(-.8,0)(2.8,3.6)
\psline[linewidth=3pt](-.6,1.4)(2.8,1.4) \pscircle*(1.7,1.4){.15} 
 \rput(.4,1.83){$e_0$} \psline(.1,2)(.4,2.6) \psline(0,2.2)(.2,2.6)
\psline(.4,1.4)(-.2,2.6) \psline(.8,1.4)(.4,2.2) \psline(1,1.4)(.6,2.2)
\psline(1.2,1.4)(.8,2.2) \psline(2.2,1.4)(2.6,.6) \psline(2,1.4)(2.4,.6)
\psline(1.4,1.4)(1,2.2) \psline(2.4,1.4)(2.8,.6) 
\psline(0,1.4)(-.4,2.2) \psline(-.2,1.4)(-.6,2.2)   \end{pspicture}}
 \quad  \\ \hline 
 \resizebox{!}{2.13cm}{\begin{pspicture}(1,0)(4,4.2)
 \psline[linewidth=3pt](2.5,.2)(2.5,3.8)
\psline(2.5,1)(1.9,1.6) \psline(2.5,1)(3,1.6) \psline(2.5,1)(1.5,1.6) \psline(2.5,1)(3.6,1.6)
\rput(1.7,2){$\overbrace{}^{>0}$} \rput(2.7,1.8){$e'$} \rput(2,3.05){$e_0$}
\psline(2.5,2.4)(1.6,3.2) \psline(2.5,2.4)(2.2,3.6) \psline(2.5,2.4)(2,3.6)
 \psline(2.5,2.4)(2.8,3.6) \psline(2.5,2.4)(3.2,3.6) \psline(2.5,2.4)(3,3.6)
\psline(1.6,3.2)(1.6,3.8) \psline(1.6,3.2)(1.4,3.8) \psline(1.6,3.2)(1.8,3.8)
  \end{pspicture}}
 \quad &\quad
 \resizebox{!}{2.13cm}{\begin{pspicture}(1,0)(3.8,4.4)
\psline[linewidth=3pt](3,.4)(3,4) 
\rput(2.1,3.19){$e_0$}  \rput(2.8,1.85){$e'$}
\psline(3,.8)(2.4,1.4) \psline(3,1.2)(3.4,1.6) \psline(3,1.6)(3.4,2)
\psline(2.6,1.2)(2.8,1.4) \psline(3,2.4)(3.4,2.8) \psline(3,2.8)(3.4,3.2)
\psline(3,3.2)(3.4,3.6) \psline(2.1,2.9)(2.8,3.6) \psline(2.3,2.7)(2.8,3.2)
\psline(3,2)(1.4,3.6) \psline(1.6,3.4)(1.8,3.6) \psline(1.8,3.2)(2.2,3.6)  \end{pspicture}}
 \quad &\quad
 \resizebox{!}{2.13cm}{\begin{pspicture}(1.2,0)(3.8,4.6) 
\psline[linewidth=3pt](3,.4)(3,3.8) 
\rput(1.8,3.3){$e_0$}  \rput(2.8,1.85){$e'$}
\psline(2.2,2.8)(2.4,3) \psline(2.4,2.6)(2.8,3) \psline(3,2)(2,3) 
\psline(3,.8)(2.4,1.4) \psline(3,1.2)(3.4,1.6) \psline(3,1.6)(3.4,2)
\psline(2.6,1.2)(2.8,1.4) \psline(3,2.4)(3.4,2.8) \psline(3,2.8)(3.4,3.2)
\psline(3,3.2)(3.4,3.6)  \psline[linestyle=dotted](2,3)(2,3.6)
\psline(2,3.6)(2,3.8) \psline(2,3.8)(2.4,4.2) \psline(2,3.8)(1.6,4.2)
\psline(1.8,4)(2,4.2) \end{pspicture}}
 \quad &
 \resizebox{!}{2.13cm}{\begin{pspicture}(0,0)(3.8,4.2)
\psline[linewidth=3pt](3,.4)(3,3.8) \rput(1.25,2.95){$e_0$} 
\psline(3,2.2)(3.4,2.6) \psline(3,2.6)(3.4,3) \psline(3,3)(3.4,3.4)
\psline(3,.8)(.4,3.4) \psline(3,1.2)(3.4,1.6) \psline(3,1.6)(3.4,2)
\psline(.6,3.2)(.8,3.4) \psline(1.6,3)(1.2,3.4) \psline(1.4,3.2)(1.6,3.4)
\psline(1.6,2.2)(2.8,3.4) \psline(1.2,2.6)(2,3.4) \psline(1.4,2.4)(2.4,3.4)   \end{pspicture}}
 \quad  \\ \hline 
 \resizebox{!}{2.13cm}{\begin{pspicture}(0.8,0)(4.8,3.6)
\psline[linewidth=3pt](1.2,1.4)(4.4,1.4) 
\rput(2.7,1.7){$e'$} \rput(3.2,2.3){$e_0$} \pscircle*(2,1.4){.15} 
\psline(2,2.2)(2,.6) \psline(2.3,2.2)(1.7,.6)  \psline(1.7,2.2)(2.3,.6) 
\psline(3.2,1.4)(3.4,.6) \psline(3.2,1.4)(3.7,.6)
\psline(3.2,1.4)(3.6,2.2) \psline(3.2,1.4)(3.8,2.2)
\psline(3.2,1.4)(3.6,3) \psline(3.5,2.6)(3.5,3) \psline(3.5,2.6)(3.7,3)
  \end{pspicture}}
 \quad &\quad
 \resizebox{!}{2.13cm}{\begin{pspicture}(0.2,0)(3.4,3.6)
\psline[linewidth=3pt](.2,1.4)(3.4,1.4) \pscircle*(1.3,1.4){.15} 
\rput(2.35,1.67){$e'$} \rput(2.55,2.35){$e_0$}
\psline(.6,1.4)(.2,2.2) \psline(.8,1.4)(.4,2.2) \psline(1,1.4)(.6,2.2)
\psline(1.6,1.4)(2,.6) \psline(1.8,1.4)(2.2,.6) \psline(2,1.4)(2.4,.6)
\psline(3,1.4)(3.4,.6) \psline(2.8,1.4)(3.2,.6) \psline(2.6,1.4)(3.4,3)
 \psline(2.9,2)(2.4,3) \psline(2.8,2.2)(3.2,3) \psline(2.6,2.6)(2.8,3) \psline(2.5,2.8)(2.6,3)  \end{pspicture}}
 \quad &\quad
 \resizebox{!}{2.13cm}{\begin{pspicture}(0.2,0.1)(3.4,3.7)
\psline[linewidth=3pt](.2,1.4)(3.4,1.4) \pscircle*(1.3,1.4){.15} 
\rput(2.35,1.67){$e'$} \rput(2.4,2.35){$e_0$}  \psline(2.8,2.2)(2.7,2)
\psline[linestyle=dotted](2.6,2.2)(2.6,2.6) \psline(2.6,2.6)(2.6,2.8)
\psline(.6,1.4)(.2,2.2) \psline(.8,1.4)(.4,2.2) \psline(1,1.4)(.6,2.2)
\psline(1.6,1.4)(2,.6) \psline(1.8,1.4)(2.2,.6) \psline(2,1.4)(2.4,.6)
\psline(3,1.4)(3.4,.6) \psline(2.8,1.4)(3.2,.6) \psline(2.6,1.4)(3,2.2)
\psline(2.6,2.8)(2.4,3.2) \psline(2.8,1.8)(2.6,2.2) \psline(2.6,2.8)(2.8,3.2)
\psline(2.5,3)(2.6,3.2)  \end{pspicture}}
 \quad &
 \resizebox{!}{2.13cm}{\begin{pspicture}(-.8,0)(2.6,3.6)
\psline[linewidth=3pt](-.8,1.4)(3,1.4) \pscircle*(1.3,1.4){.15} 
 \rput(.4,1.83){$e_0$} \psline(.1,2)(.4,2.6) \psline(0,2.2)(.2,2.6)
\psline(.4,1.4)(-.2,2.6) \psline(.8,1.4)(.4,2.2) \psline(1,1.4)(.6,2.2)
\psline(1.6,1.4)(2,.6) \psline(1.8,1.4)(2.2,.6) \psline(2,1.4)(2.4,.6)
\psline(2.6,1.4)(3,.6) \psline(2.4,1.4)(2.8,.6) 
\psline(0,1.4)(-.4,2.2) \psline(-.2,1.4)(-.6,2.2) \psline(-.4,1.4)(-.8,2.2) \end{pspicture}}
 \quad  \\ \hline 
 \resizebox{!}{2.13cm}{\begin{pspicture}(0.9,0)(3.1,4.2) 
 \psline[linewidth=3pt](2,.2)(2,3.6) \rput(1.8,2.5){$e'$} \rput(1.8,1.8){$e_0$}
\psline(2,1)(1.7,1.6) \psline(2,1)(1.5,1.6) \psline(2,1)(2.3,1.6) 
\psline(2,1)(2.5,1.6) \psline(2,2)(2.3,2.6) \psline(2,2)(2.5,2.6)
\psline(2,2.8)(1.7,3.4) \psline(2,2.8)(1.5,3.4) \psline(2,2.8)(2.3,3.4)
\psline(2,2.8)(2.5,3.4) \rput(1.6,3.7){$\overbrace{}^{>0}$} 
 \end{pspicture}}
 \quad &\quad
 \resizebox{!}{2.13cm}{\begin{pspicture}(0.8,0.2)(3.2,4) 
 \psline[linewidth=3pt](2,.4)(2,3.6) \psline(2,.8)(1.4,1.4) \psline(1.6,1.2)(1.8,1.4)
\psline(2,1)(2.4,1.4) \psline(2,1.2)(2.4,1.6) 
\psline(2,1.95)(2.4,2.35) \psline(2,2.15)(2.4,2.55)
\psline(2,2.6)(1.4,3.2) \psline(2,2.8)(2.4,3.2)  \psline(2,3)(2.4,3.4)
\psline(1.6,3)(1.8,3.2) \rput(2.26,1.8){$e_0$} \rput(1.8,2.4){$e'$}
 \end{pspicture}}
 \quad &\quad
 \resizebox{!}{2.13cm}{\begin{pspicture}(0.8,0.2)(3.2,4) 
 \psline[linewidth=3pt](2,.4)(2,1.6) \psline[linewidth=3pt](2,2)(2,3.6)
\psline[linestyle=dotted](2,1.6)(2,2) \rput(1.8,2.5){$e'$}
\psline(2,.8)(1.4,1.4) \psline(1.6,1.2)(1.8,1.4)
\psline(2,1)(2.4,1.4) \psline(2,1.2)(2.4,1.6) 
\psline(2,2.4)(2.4,2.8) \psline(2,2.2)(2.4,2.6) \psline(2,2.6)(1.4,3.2) 
\psline(2,2.8)(2.4,3.2)  \psline(2,3)(2.4,3.4)
\psline(1.6,3)(1.8,3.2) \rput(1.74,1.8){$e_0$} 
  \end{pspicture}}
 \quad &
 \resizebox{!}{2.13cm}{\begin{pspicture}(0.8,0.2)(3.2,4) 
 \psline[linewidth=3pt](2,.4)(2,3.6) \psline(2,.8)(1.4,1.4) \psline(1.6,1.2)(1.8,1.4)
\psline(2,1)(2.4,1.4) \psline(2,1.2)(2.4,1.6) 
\psline(2,2.4)(2.4,2.8) \psline(2,2.2)(2.4,2.6) \psline(2,2)(1.4,2.6) 
\psline(2,2.8)(2.4,3.2)  \psline(2,3)(2.4,3.4)
\psline(1.6,2.4)(1.8,2.6) \rput(1.74,1.8){$e_0$} 
 \end{pspicture}}
 \quad  \\ \hline 
 \resizebox{!}{2.13cm}{\begin{pspicture}(0.8,0)(5,3.6) 
\psline[linewidth=3pt](1.2,1.4)(4.6,1.4)  \rput(4.3,2.5){$\overbrace{}^{>0}$} 
\rput(3.6,1.67){$e'$} \rput(2.6,1.6){$e_0$} \pscircle*(2,1.4){.15} 
\psline(2,2.2)(2,.6) \psline(2.3,2.2)(1.7,.6)  \psline(1.7,2.2)(2.3,.6) 
\psline(3,1.4)(3.2,.6) \psline(3,1.4)(3.5,.6) \psline(4,1.4)(4.2,.6)
\psline(4,1.4)(4.5,.6) \psline(4,1.4)(4.2,2.2) \psline(4,1.4)(4.5,2.2)  \end{pspicture}}
 \quad &\quad
 \resizebox{!}{2.13cm}{\begin{pspicture}(0.3,0)(3.9,3.6)
\psline[linewidth=3pt](.2,1.4)(4,1.4) \pscircle*(1.3,1.4){.15} 
\rput(3.1,1.67){$e'$} \rput(2.4,1.2){$e_0$} 
\psline(.6,1.4)(.2,2.2) \psline(.8,1.4)(.4,2.2) \psline(1,1.4)(.6,2.2)
\psline(1.6,1.4)(2,.6) \psline(1.8,1.4)(2.2,.6) \psline(2,1.4)(2.4,.6)
\psline(2.6,1.4)(3,.6) \psline(2.8,1.4)(3.2,.6) \psline(3.2,1.4)(3.8,2.6)
\psline(3.4,1.4)(3.8,.6) \psline(3.6,1.4)(4,.6) \psline(3.5,2)(3.2,2.6)  \end{pspicture}}
 \quad &\quad
 \resizebox{!}{2.13cm}{\begin{pspicture}(0.3,0)(3.9,3.6)
 \psline[linewidth=3pt](0,1.4)(2.2,1.4) \psline[linewidth=3pt](2.6,1.4)(4.2,1.4)
\pscircle*(1.1,1.4){.15} \psline[linestyle=dotted](2.2,1.4)(2.6,1.4)
\rput(3.3,1.67){$e'$} \rput(2.4,1.2){$e_0$} 
\psline(.4,1.4)(0,2.2) \psline(.6,1.4)(.2,2.2) \psline(.8,1.4)(.4,2.2)
\psline(1.4,1.4)(1.8,.6) \psline(1.6,1.4)(2,.6) \psline(1.8,1.4)(2.2,.6)
\psline(2.8,1.4)(3.2,.6) \psline(3,1.4)(3.4,.6) \psline(3.4,1.4)(4,2.6)
\psline(3.6,1.4)(4,.6) \psline(3.8,1.4)(4.2,.6) \psline(3.7,2)(3.4,2.6)  \end{pspicture}}
 \quad &
 \resizebox{!}{2.13cm}{\begin{pspicture}(0,0)(3.8,3.6)
\psline[linewidth=3pt](.2,1.4)(3.8,1.4) \pscircle*(1.3,1.4){.15}  \rput(2.35,1.6){$e_0$} 
\psline(.6,1.4)(.2,2.2) \psline(.8,1.4)(.4,2.2) \psline(1,1.4)(.6,2.2)
\psline(1.6,1.4)(2,.6) \psline(1.8,1.4)(2.2,.6) \psline(2,1.4)(2.4,.6)
\psline(3,1.4)(3.4,.6) \psline(2.8,1.4)(3.2,.6) \psline(2.6,1.4)(3.2,2.6)
\psline(3.4,1.4)(3.8,.6) \psline(3.2,1.4)(3.6,.6) \psline(2.9,2)(2.6,2.6)   \end{pspicture}}
 \quad 
\end{array} $
\caption{Case $2$: $p$ is non-vanishing for $D/e_0$, $D_{e_0}$, and $D/e'$. Each of these diagrams may be completed in an arbitrary way. (To improve readability, the circle that were drawn in the previous diagrams have been omitted here.)}\label{Case2:e'}
\end{figure*}
{\bf Case 2:} Considering those diagrams depicted in Figure \ref{5cases} such that $e_0$ is the only negative edge, Figure \ref{Case2:e'} shows the situations in which Case $2$ occurs. (Here, the cases that are symmetric with respect to a $180^\circ$ symmetry, such as cases (e) and (f) in Figure \ref{5cases}, have been excluded.) %
As in Case $1$ above, $p(D/e_0,\dots)=\sum_{S_{\max}\leq D_{\min}} (S,\dots)$. This sum contains all the terms that appear in 
\begin{multline*}
p(D_{e_0},\dots)=p(D'\circ_{e_0} D'',\dots)=\sigma\cdot p(D',\dots,)\circ_{e_0}p(D'',\dots)\\
=\sum_{S'_{\max}\leq D'_{\min}, S''_{\max}\leq D''_{\min}} (S',\dots)\circ_{e_0} (S'',\dots),
\end{multline*}
where the signs can be checked as in Case 1. However, $p(D/e_0,\dots)$ can now have more terms. In fact, a direct inspection shows that the binary diagrams $S_{\max}\leq(D/e_0)_{\min}$ from Figure \ref{Case2:e'} are of two types: (1) those with terms using no local moves from Definition \ref{DEF:local-move} for $e_0$, and (2) those with terms that use local moves. The first type is given by $p(D_{e_0},\dots)$; the second type is given by $p(D/e',\dots)$. 

It only remains to check that the induced signs for $p(D/e_0,\dots)$ and $p(D/e',\dots)$ cancel. We first assume that $S_{\max}=(D/e')_{\min}$. Denote the orientation for $D$ by $\omega_D=\omega_R\wedge e_0\wedge e'$, where $\omega_R$ stands for the orientation of the remaining edges in $D$. With this, the two terms $D/e_0$ and $D/e'$ appear with opposite signs,
\[
\partial_Q(D,\fcan,m,\omega_D)=\pm\Big((D/e_0,\fcan,m,\omega_{D/e_0})-(D/e',\fcan,m,\omega_{D/e'})\Big)+\dots,
\]
where $\omega_{D/e_0}=\omega_R\wedge e'$ and $\omega_{D/e'}=\omega_R\wedge e_0$.
We claim that $p(D/e_0,\fcan,m,\omega_{D/e_0})=p(D/e',\fcan,m,\omega_{D/e'})$. By assumption $S_{\max}=(D/e')_{\min}$; let us calculate the induced orientations $\omega(S,D/e_0)$ and $\omega(S,D/e')$ and show they coincide.

\noindent
Note, that in each of the cases from Figure \ref{Case2:e'}, there is a unique path of local moves from $(D/e_0)_{\min}=D_{\min}$ to $(D/e')_{\min}$ that preserves the number of positive edges. This path is determined by a sequence of edges $e_1,\dots,e_j$ to which we apply the local moves. For example, in the first row in Figure \ref{Case2:e'}, this sequence is depicted as $e_1,e_2$ on the left of Figure \ref{FIG:orient-move-C2}.
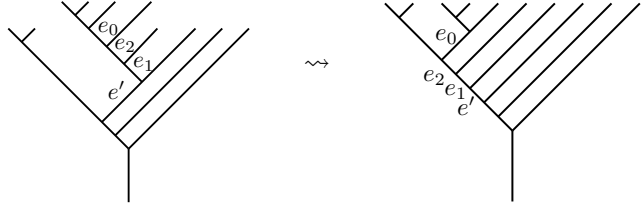
\begin{figure}[ht]
\[
 \resizebox{!}{3.2cm}{\begin{pspicture}(0.4,0.2)(4,3.8)
 \psline(2.2,.4)(2.2,1.2) \psline(2.2,1.2)(.4,3) \psline(2,1.4)(3.6,3)
\psline(1.8,1.6)(3.2,3) \psline(2.13,2.47)(2.63,3) \psline(.6,2.8)(.8,3)
\rput(2.02,2.08){$e'$} \psline(2.4,2.2)(1.2,3.4) \rput(1.9,3){$e_0$} 
\psline(1.87,2.73)(2.54,3.4) \psline(1.6,3)(2,3.4) \psline(1.4,3.2)(1.6,3.4)
\psline(2.2,1.2)(4,3) \rput(2.43,2.47){$e_1$} \rput(2.15,2.75){$e_2$}
 \end{pspicture}} 
 \quad \quad\twig{-2} \quad \quad
  \resizebox{!}{3.2cm}{\begin{pspicture}(0.4,0)(4,3.4) 
 \psline(2.2,.2)(2.2,1.2) \psline(2.2,1.2)(.4,3) \psline(2,1.4)(3.6,3)
\psline(1.8,1.6)(3.2,3) \psline(1.6,1.8)(2.8,3) \psline(.6,2.8)(.8,3)
\psline(1.4,2)(2.4,3) \psline(1.2,2.2)(2,3) \rput(1.27,2.53){$e_0$} 
\psline(1.6,2.6)(1.2,3) \psline(1.4,2.8)(1.6,3)\psline(2.2,1.2)(4,3)
\rput(1.1,1.95){$e_2$} \rput(1.4,1.75){$e_1$} \rput(1.55,1.55){$e'$}
 \end{pspicture}}
\]
\caption{Local moves change $(D/e_0)_{\min}$ to $(D/e')_{\min}$.}\label{FIG:orient-move-C2}
\end{figure}
Now, write $\xi_{D_{\min}}$ in the form $\xi_{D_{\min}}=e'\wedge e_1\wedge\dots\wedge e_j\wedge e_0\wedge \xi_R$ where $\xi_R$ is the orientation on the remaining edges. In order to calculate $\omega(S,D/e_0)$ we need to perform a sequence of moves on $e',e_1,\dots,e_j$ as depicted in Figure \ref{FIG:orient-move-C2}. At the $i\T$ local move, the $i\T$ orientation $\xi_i$ changes by a factor of $-1$, so that $\xi_i=(-1)^i \cdot e'\wedge e_1\wedge\dots\wedge e_j\wedge e_0\wedge \xi_R$. Keeping track of the positive edge, while making local moves, tells us that $e_i$ is positive after the $i\T$ move. After $j+1$ steps, we obtain the orientation $\xi_{j+1}=(-1)^{j+1} \cdot e'\wedge e_1\wedge\dots\wedge e_j\wedge e_0\wedge \xi_R$ which applies to the diagram $S_{\max}=(D/e')_{\min}$, as depicted on the right of Figure \ref{FIG:orient-move-C2} with positive edge $e_0$ so that the orientation induced from $\omega_{D/e_0}$ is now $\omega_R\wedge e_0$. With this, we calculate $\omega(S,D/e_0)=(\omega_R\wedge e_0)\rfloor\xi_{j+1}=\omega_R\rfloor (e'\wedge e_1\wedge\dots\wedge e_j\wedge \xi_R)$.

\noindent
On the other hand, we see from Remark \ref{REM:xi-by-composition}, that $\xi_{(D/e')_{\min}}=(-1)^{j+1}\cdot e'\wedge e_1\wedge\dots\wedge e_j\wedge e_0\wedge \xi_R$. Since $\omega_{D/e'}=\omega_R\wedge e_0$, we obtain $\omega(S,D/e')=(\omega_R\wedge e_0)\rfloor \xi_{(D/e')_{\min}}=\omega_R\rfloor (e'\wedge e_1\wedge\dots\wedge e_j\wedge \xi_R)$, which coincides with $\omega(S,D/e_0)$.

The cases where $S_{\max}<(D/e')_{\min}$ apply the same local moves and change the signs in $p(D/e_0,\dots)$ and $p(D/e',\dots)$ the same way, establishing equality of $\omega(S,D/e_0)=\omega(S,D/e')$. 

This completes the sign check for Case 2, and with this the proof of the lemma.
\end{proof}

\begin{lemma}\label{B_max-lemma}
Let $(B,\fcan,m,\ost_{B})\in \Qa{n}$ be a maximal binary diagram, \emph{i.e.} one of the following three diagrams,
\[
 \begin{pspicture}(0.5,.5)(4,2)
 \rput(1,1.25){$(T_n)_{\max}=$} \rput(3.6,1.2){,}
 \psline(3,.5)(3,1) \psline(3,1)(2,2) \psline(3,1)(4,2)
 \psline(3.8,1.8)(3.6,2)  \psline(3.6,1.6)(3.2,2)  \psline(3.4,1.4)(2.8,2)  \psline(3.2,1.2)(2.4,2)
 \end{pspicture} 
 \quad
 \begin{pspicture}(0,.5)(4,2)
 \rput(1,1.25){$(M_{k,l})_{\max}=$} \rput(3.6,1.2){,}
 \psline[linewidth=3pt](3,.5)(3,2) \psline(3,1)(4,2)
 \psline(3,1.2)(2.2,2) \psline(3,1.4)(2.4,2) \psline(3,1.6)(2.6,2) \psline(3,1.8)(2.8,2)
 \psline(3.2,1.2)(3.2,2) \psline(3.4,1.4)(3.4,2) \psline(3.6,1.6)(3.6,2) \psline(3.8,1.8)(3.8,2)
 \end{pspicture} 
 \quad
 \begin{pspicture}(0,.5)(3.8,2)
 \rput(1,1.25){$(I_{k,l})_{\max}=$}
 \psline[linewidth=3pt](2,1.25)(4,1.25) \pscircle*(3,1.25){.15}
 \psline(3.2,1.25)(3.4,2) \psline(3.4,1.25)(3.6,2) \psline(3.6,1.25)(3.8,2) \psline(3.8,1.25)(4,2)
 \psline(2.8,1.25)(2.6,.5) \psline(2.6,1.25)(2.4,.5) \psline(2.4,1.25)(2.2,.5) \psline(2.2,1.25)(2,.5) \rput(4.2,1.2){.}
 \end{pspicture}
\]
Then, $\partial_C \circ p(B,\fcan,m,\ost_{B})=p\circ \partial_Q (B,\fcan,m,\ost_{B})$.
\end{lemma}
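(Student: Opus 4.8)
The statement to prove is $\partial_C \circ p (B,\fcan,m,\ost_B) = p\circ \partial_Q (B,\fcan,m,\ost_B)$ for $B$ a maximal binary diagram $(T_n)_{\max}$, $(M_{k,l})_{\max}$, or $(I_{k,l})_{\max}$. This is the base case of the induction on $k$ (the number of positive edges of $D_{\min}$) in the proof of Proposition \ref{p-chain}, so I expect the argument to be a reasonably direct computation on both sides, mirroring Markl and Shnider's treatment of the associahedron case \cite[Proposition 4.6]{MS}. First I would compute the right-hand side. Since $B$ is maximal, all of its $n$ edges are positive, so by Lemma \ref{p-values}\emph{(ii)} we have $p(B,\fcan,m,\ost_B)=(c,\fcan,+1)$, the corolla; hence the left-hand side is $\partial_C(c,\fcan,+1)=\sum_{D'/e'=c}(D',\fcan,e'\wedge +1)$, the sum over all diagrams $D'$ with a single edge collapsing to $c$. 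This sum is precisely indexed by the ways of inserting one (positive or negative) edge at the unique higher-valence vertex of $c$.

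Next I would compute $\partial_Q(B,\fcan,m,\ost_B)$. By Definition \ref{operad-QA}, $\partial_Q(B,\fcan,m,e_1\wedge\dots\wedge e_n)=\sum_i (-1)^i\big[(B/e_i,\fcan,m/e_i,\dots)-(B_{e_i},\fcan,g_{e_i},\dots)\big]$. The task is then to apply $p$ to each term and match the result with $\partial_C(c,\fcan,+1)$. For the contraction terms $B/e_i$: since $B$ is maximal, $B/e_i$ still has $n-1$ positive edges and is maximal among diagrams with its underlying tree, so $(B/e_i)_{\max}=B/e_i$; by Lemma \ref{p-values}\emph{(ii)} again, $p(B/e_i,\dots)$ is a corolla only if $B/e_i$ is itself maximal of the appropriate type — in fact for a maximal binary $B$, each $B/e_i$ has degree $n-1$ and $\#\mathcal L(B/e_i)=n+1$ but $\#\mathcal E(B/e_i)=n-1$, so the summand $S$ of $p(B/e_i,\dots)$ would need $\#\mathcal E(S)=0$, impossible unless $B/e_i$ is a single-edge contraction landing on a corolla, which it is not. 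So I expect most contraction terms to vanish under $p$, and the surviving data to come from the relabeling terms $B_{e_i}$, each of which is a $\circ_j$-composition of two smaller fully metric binary pieces along the now-non-metric edge $e_i$; then $p(B_{e_i},\dots)=p(B',\dots)\circ_j p(B'',\dots)$ and one of the factors is a corolla iff the corresponding sub-diagram is maximal. Comparing with the description of $\partial_C(c,\fcan,+1)$ as a sum over one-edge insertions, and invoking the sign bookkeeping from Appendix \ref{APP:(3.1)-signs} (the identity $(-1)^{i+1}\omega_B^{\hat{e_i}}=(-1)^{\epsilon_2}\omega_j$ and the descriptions of $\xi_B$, $\omega(S,D)$ in Appendix \ref{APP:binary-standard-orientation}), I would check that the two sums agree term-by-term, orientation included.

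The cases of $(T_n)_{\max}$ and $(M_{k,l})_{\max}$ should be essentially the associahedron computation of \cite{MS} (the cellular complexes are $K_n$), so I would dispatch those by citing that proof and the isomorphisms noted in Appendix \ref{I_k,l k,l>0}. The genuinely new case is $(I_{k,l})_{\max}$; here I would use the standard-orientation computation of Example \ref{EXA:I_kl-orientation}, namely $\ost_{(I_{k,l})_{\max}}=(-1)^l e_1\wedge\dots\wedge e_{k+l}$, and the alternative description of $\xi_B$ via composition in Remark \ref{REM:xi-by-composition}, to track how the two ``thick'' branches of $I_{k,l}$ behave under edge insertion versus contraction. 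The main obstacle I anticipate is exactly this sign check in the inner-product case: one must verify that the extra $(-1)^l$ and the cyclic-permutation signs arising when $e_i$ is a thick module edge on the left (the ``Case 2'' situation of Appendix \ref{APP:(3.1)-signs}) conspire correctly, and that the orientations $\omega(S,D)$ induced on the surviving corolla summands match the orientations $e'\wedge +1$ appearing in $\partial_C$. Everything else is combinatorial bookkeeping of which one-edge insertions at the unique higher vertex of $c$ are positive versus negative, which is read off directly from Definition \ref{DEF:pos-neg-edge} and Figure \ref{5cases}.
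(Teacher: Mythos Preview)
Your plan has a genuine gap: the claim that the contraction terms $B/e_i$ vanish (or mostly vanish) under $p$ is wrong, and it stems from a miscount.  Contracting an internal edge does not change the number of leaves, so $\#\mathcal L(B/e_i)=\#\mathcal L(B)=n+2$, not $n+1$.  Hence a summand $S$ of $p(B/e_i,\ldots)\in C_{n-1}\hat{\mathcal A}$ must satisfy $(n+2)-\#\mathcal E(S)-2=n-1$, i.e.\ $\#\mathcal E(S)=1$, not $0$.  There are plenty of one-edge diagrams $S$ with $S_{\max}\leq (B/e_i)_{\min}$, and these are precisely what produce the bulk of $\partial_C(c,\fcan,+1)$.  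In the paper's computation for $(T_n)_{\max}$, for instance, the contraction terms yield all $T_n^{r,s}$ with $s>0$, while the non-metric terms $B_{e_j}$ give only the $s=0$ pieces; both halves are needed.  If you drop the $B/e_i$ contributions you will not be able to account for the full boundary $\partial_C(c)$.

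A second, smaller point: you cannot simply dispatch $(M_{k,l})_{\max}$ by citing the associahedron case.  Although the underlying polytope is a $K_{k+l+1}$, the module structure means the edges along the thick line behave differently from the purely thin edges, and the decomposition $B_{e_j}=B'\circ_{e_j}B''$ has different standard orientations and composition signs depending on whether $e_j$ lies on the thick line or on the right-branching thin part.  The paper carries out each of the three cases separately, and in each case splits the boundary $\partial_Q(B)$ into its contraction and non-metric parts, evaluates $p$ on each explicitly using Lemma \ref{p-values} and the orientation formulas of Appendix \ref{APP:binary-standard-orientation}, and matches the resulting sum against the list of one-edge insertions at $c$.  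Your overall strategy (compute both sides and match) is right, but you need to keep the contraction terms.
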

\begin{proof}
We show $\partial_C \circ p=p\circ \partial_Q$ when applied to $B=(T_n)_{\max}$, $B=(M_{k,l})_{\max}$, and $B=(I_{k,l})_{\max}$ by direct computation. Note that for any corolla $c$, Lemma \ref{p-values}\emph{(ii)} gives
\[
\partial_C(p(c_{\max},\fcan,m,\ost_B))=\partial_C(c,\fcan,+1)= \sum_{D/e=c} (D,\fcan,+e).
\]
We need to evaluate $p(\partial_Q(c_{\max},\fcan,m,\ost_B))$. To simplify notation, we will often ignore the label $\fcan$ and express the quadruple as a diagram together with its orientation.

The first case $B=(T_n)_{\max}$ appears in \cite{MS} (see \cite[Figures 6 and 7]{MS}),
\[\xymatrix{
\resizebox{!}{1.3cm}{
 \begin{pspicture}(2,.5)(4.4,2)
 \rput(3.3,1){$_{e_1}$} \rput(3.6,1.4){$_{\dots}$} \rput(4.1,1.6){$_{e_{n-2}}$}
 \psline(3,.5)(3,1) \psline(3,1)(2,2) \psline(3,1)(4,2)
 \psline(3.8,1.8)(3.6,2)  \psline(3.6,1.6)(3.2,2)  \psline(3.4,1.4)(2.8,2)  \psline(3.2,1.2)(2.4,2)
 \end{pspicture} }
\ar[r]^p \ar[d]_{\partial_Q} & 
\resizebox{!}{1.3cm}{
\begin{pspicture}(1,1)(3,3)
 \psline(2,1)(2,2)
 \psline(2,2)(1.25,3)
 \psline(2,2)(1.55,3)
 \psline(2,2)(1.85,3)
 \psline(2,2)(2.15,3)
 \psline(2,2)(2.45,3)
 \psline(2,2)(2.75,3)
\end{pspicture}}
 \ar[d]^{\partial_C} \\
\partial_Q((\A_n)_{\max}) \ar[r]^{p\quad} & 
\sum_{r\geq 2}\sum_{s\geq 0} \A_n^{r,s},
}\]
where $\ost_{(T_n)_{\max}}=e_1\wedge\dots\wedge e_{n-2}$ and
\begin{eqnarray*}
\begin{pspicture}(.3,0)(1,1) 
\rput(.65,1){$\A_n^{r,s}$} \end{pspicture} &\begin{pspicture}(0,0)(.3,1) \rput(.15,1){$=$} \end{pspicture}& 
\resizebox{!}{2.3cm}{
\begin{pspicture}(1.2,0)(6.4,2.2)
 \psline(2,0)(2,1.7) \psline(2,1.2)(1.8,1.7) \psline(2,1.2)(2.2,1.7)
 \psline(2,.5)(1.2,1.7)  \psline(2,.5)(1.5,1.7)
 \psline(2,.5)(2.5,1.7)  \psline(2,.5)(2.8,1.7) \psline(2,.5)(3.1,1.7)
\rput(2.8,2){$\overbrace{\quad\quad }^{s}$}
\rput(2,2){$\overbrace{\quad}^{r}$}
\rput(5,1){with orientation $e$; and}
\end{pspicture} } 
\\
\partial_Q((\A_n)_{\max}) &=& \sum_{\text{edge }e_j} (-1)^j \left(
 \resizebox{!}{.8cm}{
 \begin{pspicture}(2,1.1)(4,2)
 \psline(3,.5)(3,1) \psline(3,1)(2,2) \psline(3,1)(4,2)
 \psline(3.8,1.8)(3.6,2)  \psline(3.5,1.5)(3.2,2)  \psline(3.5,1.5)(2.8,2)  \psline(3.2,1.2)(2.4,2)
 \end{pspicture} } \omega_j -
\resizebox{!}{.8cm}{
 \begin{pspicture}(2,1.1)(4,2)
 \psline(3,.5)(3,1) \psline(3,1)(2,2)  \psline(3,1)(3.4,1.4) \psline(3.6,1.6)(4,2)
 \psline[linewidth=3pt](3.4,1.4)(3.6,1.6) \rput(3.7,1.3){$n$}
 \psline(3.8,1.8)(3.6,2)  \psline(3.6,1.6)(3.2,2)  \psline(3.4,1.4)(2.8,2)  \psline(3.2,1.2)(2.4,2)
 \end{pspicture} } \omega_j
\right),
\end{eqnarray*}
where the edge $e_j$ is collapsed, so that $\omega_j=e_1\wedge\dots\wedge\widehat{e_j}\wedge\dots\wedge e_{n-2}$.
Here, the thick edge labeled ``$\,n$'' denotes a non-metric edge. Note that
 \begin{equation*}
 p\left(\sum_j (-1)^j
 \resizebox{!}{.8cm}{
 \begin{pspicture}(2,1.1)(4,2)
 \psline(3,.5)(3,1) \psline(3,1)(2,2) \psline(3,1)(4,2)
 \psline(3.8,1.8)(3.6,2)  \psline(3.5,1.5)(3.2,2)  \psline(3.5,1.5)(2.8,2)  \psline(3.2,1.2)(2.4,2)
 \end{pspicture} } \omega_j
 \right) = \sum_{r}\sum_{s>0} \A_n^{r,s}.
 \end{equation*}
The sign calculation for $B/e_j$ is the same for all three cases $B=(T_n)_{\max},(M_{k,l})_{\max}$ and $(I_{k,l})_{\max}$. Since we make one edge move from $B$ to $(B/e_j)_{\min}$, we have:
\begin{eqnarray*}
\xi_{(B/e_j)_{\min}}&=&(-1)^1\cdot (-1)^{1+2+\dots+(n-3)}e_1\wedge\dots\wedge e_j\wedge\dots\wedge e_{n-2}\\
&=&(-1)^{(1+\dots+(n-3))+(n-2)-j+1} e_1\wedge\dots\wedge\widehat{e_j}\wedge\dots\wedge e_{n-2}\wedge e_j\\
&=&(-1)^{(1+\dots+(n-4))-j} e_1\wedge\dots\wedge\widehat{e_j}\wedge\dots\wedge e_{n-2}\wedge e_j.\\
\text{Thus } \omega(T_{n}^{r,s},B/e_j)&=& (-1)^j \omega_j\rfloor \xi_{(B/e_j)_{\min}} = +e_j.
\end{eqnarray*}
On the other hand,
\begin{equation*}
p\left(\sum_j (-1)^{j+1}
\resizebox{!}{.8cm}{
 \begin{pspicture}(2,1.1)(4,2)
 \psline(3,.5)(3,1) \psline(3,1)(2,2)  \psline(3,1)(3.4,1.4) \psline(3.6,1.6)(4,2)
 \psline[linewidth=3pt](3.4,1.4)(3.6,1.6) \rput(3.7,1.3){$n$}
 \psline(3.8,1.8)(3.6,2)  \psline(3.6,1.6)(3.2,2)  \psline(3.4,1.4)(2.8,2)  \psline(3.2,1.2)(2.4,2)
 \end{pspicture} }\omega_j
 \right) = \sum_r \A_n^{r,0},
 \end{equation*}
with the sign calculated as follows: Denote by $B'$ and $B''$ the subtrees of $B_{e_j}$ such that $B_{e_j}=B'\circ_{e_j} B''$. Then $\ost_{B'}=e_1\wedge\dots\wedge e_{j-1}$ and $\ost_{B''}=e_{j+1}\wedge\dots\wedge e_{n-2}$ so that
\[ (B_{e_j},\fcan,m_{e_j},\omega_j)=(B',\fcan,m,\ost_{B'})\circ_{j+1} (B'',\fcan,m,\ost_{B''}) \]
Thus
\begin{eqnarray*}
(-1)^{j+1} p(B_{e_j},\fcan,m_{e_j},\omega_j)&=&(-1)^{j+1} p(B',\fcan,m,\ost_{B'})\circ_{j+1} p(B'',\fcan,m,\ost_{B''})\\
&=& (-1)^{j+1} (c',\fcan,+1)\circ_{j+1} (c'',\fcan, +1)\\
&=& (-1)^{j+1+(j+1)(n-j+1)+(j+1)(n-2-j)} \\
&& \quad \cdot (c'\circ_{e_j}c'', \fcan, +e_j)\\
&=& (c'\circ_{e_j}c'', \fcan, +e_j).
\end{eqnarray*}

\smallskip  
Next, we consider the case $B=(M_{k,l})_{\max}$. We will show that the following diagram commutes:
\[\xymatrix{
\resizebox{!}{1.5cm}{
 \begin{pspicture}(2,.5)(4.6,2)
 \rput(2.8,1.15){$_{e_1}$} \rput(3.45,1){$_{e_{k+1}}$} \rput(3.7,1.4){$_{\dots}$}
 \rput(3.2,1.7){$_{e_{k}}$} \rput(3.1,1.5){$_{.}$}\rput(3.1,1.4){$_{.}$}\rput(3.1,1.3){$_{.}$}
 \rput(4.2,1.6){$_{e_{k+l-1}}$}
 \psline[linewidth=3pt](3,.5)(3,2) \psline(3,1)(4,2)
 \psline(3,1.2)(2.2,2) \psline(3,1.4)(2.4,2) \psline(3,1.6)(2.6,2) \psline(3,1.8)(2.8,2)
 \psline(3.2,1.2)(3.2,2) \psline(3.4,1.4)(3.4,2) \psline(3.6,1.6)(3.6,2) \psline(3.8,1.8)(3.8,2)
 \end{pspicture}  }
\ar[r]^p \ar[d]_{\partial_Q} & 
\resizebox{!}{1.5cm}{
\begin{pspicture}(1,1)(3,3)
 \psline(2,2)(1.0,3)
 \psline(2,2)(1.2,3)
 \psline(2,2)(1.4,3)
 \psline(2,2)(1.6,3)
 \psline(2,2)(2.4,3)
 \psline(2,2)(2.6,3)
 \psline(2,2)(2.8,3)
 \psline(2,2)(3.0,3)
 \psline(2,2)(3.2,3)
 \psline[linewidth=3pt](2,1)(2,3)
\end{pspicture}
}
 \ar[d]^{\partial_C} \\
\partial_Q((M_{k,l})_{\max}) \ar[r]^{p\quad\quad \quad\quad} & 
\sum_{r,s} M_{k,l}^{r,s} +\sum_{r,s} N_{k,l}^{r,s} + \sum_{r,s} O_{k,l}^{r,s},
}\]
where $\ost_{(M_{k,l})_{\max}}=(-1)^{k}\cdot e_1\wedge\dots\wedge e_{k+l-1}$; and the module trees
\[
\begin{pspicture}(.3,0)(1,1) \rput(.5,1){$M_{k,l}^{r,s}$} \end{pspicture} \begin{pspicture}(0,0)(.3,1) \rput(.15,1){$=$} \end{pspicture}
\resizebox{!}{2.1cm}{
\begin{pspicture}(.8,0)(3.2,2)
 \psline[linewidth=3pt](2,0)(2,2) \psline(2,1.2)(1.8,1.8) \psline(2,1.2)(2.2,1.8)
 \psline(2,1.2)(1.6,1.8)  \psline(2,1.2)(2.4,1.8)
 \psline(2,.5)(1.2,1.2)  \psline(2,.5)(1.5,1.2) \psline(2,.5)(.9,1.2)
 \psline(2,.5)(2.5,1.2)  \psline(2,.5)(2.8,1.2) \psline(2,.5)(3.1,1.2)
\rput(2.77,1.5){$\overbrace{\quad\quad }^{s}$}
\rput(1.23,1.5){$\overbrace{\quad}^{r}$}
\end{pspicture} }
\]
\[
\begin{pspicture}(.3,0)(1,1) \rput(.5,1){$N_{k,l}^{r,s}$} \end{pspicture} \begin{pspicture}(0,0)(.3,1) \rput(.15,1){$=$} \end{pspicture}
\resizebox{!}{2.1cm}{
\begin{pspicture}(.4,0)(3.2,2)
 \psline[linewidth=3pt](2,0)(2,2)  \psline(2,.5)(1.8,1.2) \psline(2,.5)(.6,1.2)
 \psline(2,.5)(1.2,1.2)  \psline(2,.5)(1.6,1.1) \psline(2,.5)(.9,1.2)
 \psline(2,.5)(2.5,1.2)  \psline(2,.5)(2.8,1.2) \psline(2,.5)(3.1,1.2)
 \psline(1.6,1.1)(1.5,1.6)  \psline(1.6,1.1)(1.7,1.6)  \psline(1.6,1.1)(1.3,1.6)
\rput(1.5,1.9){$\overbrace{}^{s}$}
\rput(.93,1.5){$\overbrace{\quad\quad}^{r}$}
\end{pspicture} }
\quad\quad\quad\quad
\begin{pspicture}(.3,0)(1,1) \rput(.5,1){$O_{k,l}^{r,s}$} \end{pspicture} \begin{pspicture}(0,0)(.3,1) \rput(.15,1){$=$} \end{pspicture}
\resizebox{!}{2.1cm}{
\begin{pspicture}(.7,0)(3.6,2)
 \psline[linewidth=3pt](2,0)(2,2)  \psline(2,.5)(2.2,1.2) \psline(2,.5)(3.4,1.2)
 \psline(2,.5)(2.8,1.2)  \psline(2,.5)(2.4,1.1) \psline(2,.5)(3.1,1.2)
 \psline(2,.5)(1.5,1.2)  \psline(2,.5)(1.2,1.2) \psline(2,.5)(.9,1.2)
 \psline(2.4,1.1)(2.5,1.6)  \psline(2.4,1.1)(2.3,1.6)  \psline(2.4,1.1)(2.7,1.6)
\rput(2.5,1.9){$\overbrace{}^{s}$}
\rput(3.07,1.5){$\overbrace{\quad\quad}^{r}$}
\end{pspicture} }
\]
have orientation $+e$, where $e$ is the unique edge in the diagram. Now:
\begin{multline*}
 \begin{pspicture}(-2.4,.5)(4,2) 
  \rput(0,1.2){$\partial_Q((M_{k,l})_{\max}) =\sum_{e_j} (-1)^j\left(\begin{matrix}\\ \\ \\ \quad \end{matrix}\right.$}
 \psline[linewidth=3pt](3,.5)(3,2) \psline(3,1)(4,2) 
 \psline(3,1.2)(2.2,2) \psline(3,1.5)(2.4,2) \psline(3,1.5)(2.6,2) \psline(3,1.8)(2.8,2)
 \psline(3.2,1.2)(3.2,2) \psline(3.4,1.4)(3.4,2) \psline(3.6,1.6)(3.6,2) \psline(3.8,1.8)(3.8,2)
 \end{pspicture}
   \begin{pspicture}(1.4,.5)(5,2)
 \psline[linewidth=3pt](3,.5)(3,2) \psline(3,1)(4,2)\rput(1.8,1.2){$-$}
 \psline(3,1.2)(2.2,2) \psline(3,1.4)(2.4,2) \psline(3,1.6)(2.6,2) \psline(3,1.8)(2.8,2)
 \psline(3.2,1.2)(3.2,2) \psline(3.4,1.4)(3.4,2) \psline(3.6,1.6)(3.6,2) \psline(3.8,1.8)(3.8,2)
 \psline[linewidth=5pt](3,1.45)(3,1.7) \rput(3.2,1.7){$n$}
\rput(4.1,1.2){$ \left.\begin{matrix}\\ \\ \\ \quad \end{matrix}\right)$} 
 \end{pspicture}
  \\
   \begin{pspicture}(1,.5)(4,2)
 \psline[linewidth=3pt](3,.5)(3,2) \psline(3,1)(4,2) \rput(1.8,1.2){$+(-1)^1$}
 \psline(3,1)(2.2,2) \psline(3,1.4)(2.4,2) \psline(3,1.6)(2.6,2) \psline(3,1.8)(2.8,2)
 \psline(3.2,1.2)(3.2,2) \psline(3.4,1.4)(3.4,2) \psline(3.6,1.6)(3.6,2) \psline(3.8,1.8)(3.8,2)
\end{pspicture}
   \begin{pspicture}(1.6,.5)(4,2)
 \psline[linewidth=3pt](3,.5)(3,2) \psline(3,1)(4,2) \rput(1.8,1.2){$-(-1)^1$}
 \psline(3,1.2)(2.2,2) \psline(3,1.4)(2.4,2) \psline(3,1.6)(2.6,2) \psline(3,1.8)(2.8,2)
 \psline(3.2,1.2)(3.2,2) \psline(3.4,1.4)(3.4,2) \psline(3.6,1.6)(3.6,2) \psline(3.8,1.8)(3.8,2)
 \psline[linewidth=5pt](3,1.05)(3,1.3) \rput(2.7,1){$n$}
\end{pspicture}
 \begin{pspicture}(-.2,.5)(4,2) \rput(1.1,1.2){$+\sum_{e_j}(-1)^{j} \left(\begin{matrix}\\ \\ \\ \quad \end{matrix}\right.$}
 \psline[linewidth=3pt](3,.5)(3,2) \psline(3,1)(4,2) 
 \psline(3,1.2)(2.2,2) \psline(3,1.4)(2.4,2) \psline(3,1.6)(2.6,2) \psline(3,1.8)(2.8,2)
 \psline(3.2,1.2)(3.2,2) \psline(3.5,1.5)(3.4,2) \psline(3.5,1.5)(3.6,2) \psline(3.8,1.8)(3.8,2)
 \end{pspicture}
   \begin{pspicture}(1.8,.5)(4.6,2)
 \psline[linewidth=3pt](3,.5)(3,2) \psline(3,1)(4,2)\rput(2,1.2){$-$}
 \psline(3,1.2)(2.2,2) \psline(3,1.4)(2.4,2) \psline(3,1.6)(2.6,2) \psline(3,1.8)(2.8,2)
 \psline(3.2,1.2)(3.2,2) \psline(3.4,1.4)(3.4,2) \psline(3.6,1.6)(3.6,2) \psline(3.8,1.8)(3.8,2)
 \psline[linewidth=3pt](3.4,1.4)(3.6,1.6) \rput(3.7,1.3){$n$}
\rput(4.1,1.2){$ \left.\begin{matrix}\\ \\ \\ \quad\,\, \end{matrix}\right).$} 
 \end{pspicture}
 \end{multline*}
All of these terms have orientation $\omega_j=(-1)^k\cdot e_1\wedge\dots\wedge\widehat{e_j} \wedge\dots\wedge e_{k+l-1}$.
To complete this case, it remains to evaluate $p$:
\[
p\left( 
  \begin{pspicture}(1.1,1.2)(4.2,2)
 \psline[linewidth=3pt](3,.5)(3,2) \psline(3,1)(4,2) \rput(1.8,1.2){$\sum_j (-1)^j$}
 \psline(3,1.2)(2.2,2) \psline(3,1.5)(2.4,2) \psline(3,1.5)(2.6,2) \psline(3,1.8)(2.8,2)
 \psline(3.2,1.2)(3.2,2) \psline(3.4,1.4)(3.4,2) \psline(3.6,1.6)(3.6,2) \psline(3.8,1.8)(3.8,2)
 \end{pspicture}
\right) = \sum_r \sum_s N_{k,l}^{r,s}
\]
and
\[
p\left( 
\begin{pspicture}(.9,1.2)(4.2,2)
\rput(1.7,1.2){$\sum_j (-1)^{j+1}$}
 \psline[linewidth=3pt](3,.5)(3,2) \psline(3,1)(4,2)
 \psline(3,1.2)(2.2,2) \psline(3,1.4)(2.4,2) \psline(3,1.6)(2.6,2) \psline(3,1.8)(2.8,2)
 \psline(3.2,1.2)(3.2,2) \psline(3.4,1.4)(3.4,2) \psline(3.6,1.6)(3.6,2) \psline(3.8,1.8)(3.8,2)
 \psline[linewidth=5pt](3,1.45)(3,1.7) \rput(3.2,1.7){$n$}
 \end{pspicture}
\right) = \sum_{r>0} M_{k,l}^{r,l}.
\]
The sign for $B/e_j$ in the first equation follows as in $B=(T_n)_{\max}$ above. On the other hand $B_{e_j}=B'\circ_{e_j}B''$ with $\ost_{B'}=(-1)^{j-1}e_1\wedge \dots\wedge e_{j-1}\wedge e_{k+1}\wedge\dots\wedge e_{k+l-1}$ and $\ost_{B''}=e_{j+1}\wedge \dots\wedge e_{k}$, and we get:
\[ (B_{e_j},\fcan,m_{e_j},\omega_j)=(-1)^{k+j-1+(k-j)(l-1)}(B',\fcan,m,\ost_{B'})\circ_j (B'',\fcan,m,\ost_{B''}). \]
Thus
\begin{eqnarray*}
 (-1)^{j+1} p(B_{e_j},\fcan,m_{e_j},\omega_j)&=&(-1)^{kl+jl+j} p(B',\fcan,m,\ost_{B'})\circ_j p(B'',\fcan,m,\ost_{B''})\\
&=& (-1)^{kl+jl+j} (c',\fcan,+1)\circ_j (c'',\fcan, +1)\\
&=& (-1)^{kl+jl+j+j(k-j+2+1)+(l+j)(k-j)} \\
&&\quad \cdot (c'\circ_{e_j}c'', \fcan, +e_j)\\
&=& (c'\circ_{e_j}c'', \fcan, +e_j).
\end{eqnarray*}
Next,
\[
p\left( (-1)
  \begin{pspicture}(2,1.2)(4.2,2)
 \psline[linewidth=3pt](3,.5)(3,2) \psline(3,1)(4,2) 
 \psline(3,1)(2.2,2) \psline(3,1.4)(2.4,2) \psline(3,1.6)(2.6,2) \psline(3,1.8)(2.8,2)
 \psline(3.2,1.2)(3.2,2) \psline(3.4,1.4)(3.4,2) \psline(3.6,1.6)(3.6,2) \psline(3.8,1.8)(3.8,2)
\end{pspicture}
\right) = \sum_{r} M_{k,l}^{r,0} 
\]
and
\[
p\left( 
   \begin{pspicture}(2,1.2)(4.2,2)
 \psline[linewidth=3pt](3,.5)(3,2) \psline(3,1)(4,2) 
 \psline(3,1.2)(2.2,2) \psline(3,1.4)(2.4,2) \psline(3,1.6)(2.6,2) \psline(3,1.8)(2.8,2)
 \psline(3.2,1.2)(3.2,2) \psline(3.4,1.4)(3.4,2) \psline(3.6,1.6)(3.6,2) \psline(3.8,1.8)(3.8,2)
 \psline[linewidth=5pt](3,1.05)(3,1.3) \rput(2.7,1){$n$}
\end{pspicture}
\right) = M_{k,l}^{0,l}.
\]
This time $B_{e_1}=B'\circ_{e_1}B''$ with $\ost_{B'}= e_{k+1}\wedge\dots\wedge e_{k+l-1}$ and $\ost_{B''}=e_{2}\wedge \dots\wedge e_{k}$, and we get:
\[ (B_{e_1},\fcan,m_{e_1},\omega_1)=(-1)^{k+(k-1)(l-1)}(B',\fcan,m,\ost_{B'})\circ_1 (B'',\fcan,m,\ost_{B''}).\]
Thus
\begin{eqnarray*}
 p(B_{e_1},\fcan,m_{e_1},\omega_1)&=&(-1)^{kl+l+1} p(B',\fcan,m,\ost_{B'})\circ_1 p(B'',\fcan,m,\ost_{B''})\\
&=& (-1)^{kl+l+1} (c',\fcan,+1)\circ_1 (c'',\fcan, +1)\\
&=& (-1)^{kl+l+1+1(k+1+1)+(l+1)(k-1)} (c'\circ_{e_1}c'', \fcan, +e_1)\\
&=& (c'\circ_{e_1}c'', \fcan, +e_1).
\end{eqnarray*}
Furthermore,
\[
p\left( 
   \begin{pspicture}(1.1,1.2)(4.2,2)
 \psline[linewidth=3pt](3,.5)(3,2) \psline(3,1)(4,2) \rput(1.8,1.2){$\sum_j (-1)^j$}
 \psline(3,1.2)(2.2,2) \psline(3,1.4)(2.4,2) \psline(3,1.6)(2.6,2) \psline(3,1.8)(2.8,2)
 \psline(3.2,1.2)(3.2,2) \psline(3.5,1.5)(3.4,2) \psline(3.5,1.5)(3.6,2) \psline(3.8,1.8)(3.8,2)
 \end{pspicture}
 \right) = \sum_{r> 0} \sum_s O_{k,l}^{r,s} + \sum_{r}\sum_{0<s<l} M_{k,l}^{r,s} 
 \]
and
\[
p\left( 
 \begin{pspicture}(.9,1.2)(4.2,2) \rput(1.8,1.2){$\sum_j (-1)^{j+1}$}
 \psline[linewidth=3pt](3,.5)(3,2) \psline(3,1)(4,2)
 \psline(3,1.2)(2.2,2) \psline(3,1.4)(2.4,2) \psline(3,1.6)(2.6,2) \psline(3,1.8)(2.8,2)
 \psline(3.2,1.2)(3.2,2) \psline(3.4,1.4)(3.4,2) \psline(3.6,1.6)(3.6,2) \psline(3.8,1.8)(3.8,2)
 \psline[linewidth=3pt](3.4,1.4)(3.6,1.6) \rput(3.7,1.3){$n$}
 \end{pspicture}
\right) = \sum_s O_{k,l}^{0,s}.
\]
Write $B_{e_j}=B'\circ_{e_j} B''$. Then $\ost_{B'}=(-1)^k \cdot e_1\wedge\dots\wedge e_{j-1}$, and $\ost_{B''}=e_{j+1}\wedge\dots\wedge e_{k+l-1}$, so that
\[(B_{e_j},\fcan,m_{e_j},\omega_j)=(B',\fcan,m,\ost_{B'})\circ_{j+1} (B'',\fcan,m,\ost_{B''}).\]
Thus
\begin{eqnarray*}
 (-1)^{j+1} p(B_{e_j},\fcan,m_{e_j},\omega_j)&=&(-1)^{j+1} p(B',\fcan,m,\omega_{B'})\circ_{j+1} p(B'',\fcan,m,\omega_{B''})\\
&=& (-1)^{j+1} (c',\fcan,+1)\circ_{j+1} (c'',\fcan, +1)\\
&=& (-1)^{j+1+(j+1)(k+l+1-j+1)+(j+1)(k+l-1-j)} \\
&& \quad\cdot (c'\circ_{e_j}c'', \fcan, +e_j)\\
&=& (c'\circ_{e_j}c'', \fcan, +e_j).
\end{eqnarray*}

\smallskip
Finally, we consider $B=(I_{k,l})_{\max}$. We will show the following diagram commutes:
\[\xymatrix{
\resizebox{!}{1.5cm}{  \begin{pspicture}(1.8,.5)(4,2)  
 \psline[linewidth=2pt](2,1.25)(4,1.25) \pscircle*(3,1.25){.15} 
 \rput(3.5,1){$_{e_1 \dots e_{k}}$} \rput(2.5,1.5){$_{e_{k+l} \dots e_{k+1}}$}
\psline(3.2,1.25)(3.4,2)\psline(3.4,1.25)(3.6,2)\psline(3.6,1.25)(3.8,2)\psline(3.8,1.25)(4,2)
\psline(2.8,1.25)(2.6,.5)\psline(2.6,1.25)(2.4,.5)\psline(2.4,1.25)(2.2,.5)\psline(2.2,1.25)(2,.5)  \end{pspicture}}
\ar[r]^p \ar[d]_{\partial_Q} & 
\resizebox{!}{1.5cm}{
\begin{pspicture}(.7,1)(3.3,3)
 \psline(2,2)(1.4,3)
 \psline(2,2)(1.8,3)
 \psline(2,2)(2.2,3)
 \psline(2,2)(2.6,3)
 \psline(2,2)(1.4,1)
 \psline(2,2)(1.8,1)
 \psline(2,2)(2.2,1)
 \psline(2,2)(2.6,1)
 \psline[linewidth=3pt](.7,2)(3.3,2)
 \pscircle*(2,2){.15}
\end{pspicture}
}
 \ar[d]^{\partial_C} \\
\partial_Q((I_{k,l})_{\max}) \ar[r]^{p\quad\quad \quad\quad\quad \quad\quad} & 
\sum_{r,s} I_{k,l}^{r,s} +\sum_{r,s} J_{k,l}^{r,s} + \sum_{r,s} K_{k,l}^{r,s} + \sum_{r,s} L_{k,l}^{r,s},
}\]
where $\ost_{(I_{k,l})_{\max}}=(-1)^l\cdot e_1\wedge\dots\wedge e_{k+l}$ was calculated in Example \ref{EXA:I_kl-orientation}, and
\[
\begin{pspicture}(.3,0)(1,1) \rput(.5,1){$I_{k,l}^{r,s}$} \end{pspicture} \begin{pspicture}(0,0)(.3,1) \rput(.15,1){$=$} \end{pspicture}
\begin{pspicture}(-.2,1)(2.8,3) 
 \psline(1.6,1.4)(2.4,2.6)  \psline(2.4,1.4)(1.6,2.6) \psline(2,1.4)(2,2.6)
 \psline(1,2)(.8,2.6) \psline(1,2)(.6,2.6) \psline(1,2)(.4,2.6) 
 \rput(.6,2.8){$\overbrace{\quad}^{r}$}
 \psline(1,2)(.8,1.4) \psline(1,2)(.6,1.4) \psline(1,2)(.4,1.4)
 \rput(.6,1.2){$\underbrace{\quad}_{s}$}
 \psline[linewidth=3pt](0,2)(2.8,2)
 \pscircle*(2,2){.15}
\end{pspicture} 
\quad\quad\quad\quad
\begin{pspicture}(.3,0)(1,1) \rput(.5,1){$J_{k,l}^{r,s}$} \end{pspicture} \begin{pspicture}(0,0)(.3,1) \rput(.15,1){$=$} \end{pspicture}
\begin{pspicture}(.6,1)(4,3)  
 \psline(1.6,1.4)(2.4,2.6)  \psline(2.4,1.4)(1.6,2.6) \psline(2,1.4)(2,2.6)
 \psline(3,2)(3.2,2.6) \psline(3,2)(3.4,2.6) \psline(3,2)(3.6,2.6) 
 \rput(3.4,2.8){$\overbrace{\quad}^{r}$}
 \psline(3,2)(3.2,1.4) \psline(3,2)(3.4,1.4) \psline(3,2)(3.6,1.4)
 \rput(3.4,1.2){$\underbrace{\quad}_{s}$}
 \psline[linewidth=3pt](0.8,2)(4,2)
 \pscircle*(2,2){.15}
\end{pspicture} 
\]\[
\begin{pspicture}(.3,0)(1,1) \rput(.5,1){$K_{k,l}^{r,s}$} \end{pspicture} \begin{pspicture}(0,0)(.3,1) \rput(.15,1){$=$} \end{pspicture}
\begin{pspicture}(.6,1.4)(3,3.8)  
\psline[linewidth=3pt](1,2)(3,2) \pscircle*(2,2){.15}
\psline(2,2)(1.7,1.4)\psline(2,2)(1.9,1.4)\psline(2,2)(2.1,1.4)\psline(2,2)(2.3,1.4)
\psline(2,2)(1.2,2.6)\psline(2,2)(1.5,2.6)\psline(2,2)(1.7,2.6)
\psline(2,2)(2.3,2.6)\psline(2,2)(2.5,2.6)\psline(2,2)(2.7,2.6)
\psline(2,2)(2,2.5)
\psline(2,2.5)(1.7,3.2)\psline(2,2.5)(1.9,3.2)\psline(2,2.5)(2.1,3.2)\psline(2,2.5)(2.3,3.2)
 \rput(1.45,2.95){$\overbrace{\quad}^{r}$}
 \rput(2,3.55){$\overbrace{\quad}^{s}$}
\end{pspicture}
\quad\quad\quad\quad
\begin{pspicture}(.3,0)(1,1) \rput(.5,1){$L_{k,l}^{r,s}$} \end{pspicture} \begin{pspicture}(0,0)(.3,1) \rput(.15,1){$=$} \end{pspicture}
\begin{pspicture}(.6,0.2)(3,2.6)  
\psline[linewidth=3pt](1,2)(3,2) \pscircle*(2,2){.15}
\psline(2,2)(1.7,2.6)\psline(2,2)(1.9,2.6)\psline(2,2)(2.1,2.6)\psline(2,2)(2.3,2.6)
\psline(2,2)(1.7,1.4)\psline(2,2)(1.5,1.4)\psline(2,2)(1.3,1.4)
\psline(2,2)(2.5,1.4)\psline(2,2)(2.3,1.4)\psline(2,2)(2.7,1.4)
\psline(2,2)(2,1.5)
\psline(2,1.5)(1.7,.8)\psline(2,1.5)(1.9,.8)\psline(2,1.5)(2.1,.8)\psline(2,1.5)(2.3,.8)
 \rput(1.45,1.05){$\underbrace{\quad}_{r}$}
 \rput(2,.45){$\underbrace{\quad}_{s}$}
\end{pspicture}
\]
have orientation $+e$, with $e$ being the unique edge in the diagram. Now, $\partial_Q((I_{k,l})_{\max})$ equals
\begin{multline*}
 \scalebox{.85}[.85] {
   \begin{pspicture}(1.3,.5)(4,2) 
  \psline[linewidth=2pt](2,1.25)(4,1.25) \pscircle*(3,1.25){.13} \rput(1.4,1.2){$= (-1)$}
\psline(3,1.25)(3,2)\psline(3.4,1.25)(3.6,2)\psline(3.6,1.25)(3.8,2)\psline(3.8,1.25)(4,2)
\psline(2.8,1.25)(2.6,.5)\psline(2.6,1.25)(2.4,.5)\psline(2.4,1.25)(2.2,.5)\psline(2.2,1.25)(2,.5)
\end{pspicture}
   \begin{pspicture}(.8,.5)(4,2)
  \psline[linewidth=2pt](2,1.25)(4,1.25) \pscircle*(3,1.25){.13} \rput(1.4,1.2){$-(-1)$}
    \rput(3.2,1){$n$} \psline[linewidth=5pt](3,1.25)(3.25,1.25)
\psline(3.2,1.25)(3.4,2)\psline(3.4,1.25)(3.6,2)\psline(3.6,1.25)(3.8,2)\psline(3.8,1.25)(4,2)
\psline(2.8,1.25)(2.6,.5)\psline(2.6,1.25)(2.4,.5)\psline(2.4,1.25)(2.2,.5)\psline(2.2,1.25)(2,.5)
\end{pspicture}
 \begin{pspicture}(-.4,.5)(4,2) 
  \rput(1,1.2){$+\sum_{e_j} (-1)^{j} \left(\begin{matrix}\\ \\ \\ \quad \end{matrix}\right.$}
  \psline[linewidth=2pt](2,1.25)(4,1.25) \pscircle*(3,1.25){.13} 
\psline(3.2,1.25)(3.4,2)\psline(3.5,1.25)(3.6,2)\psline(3.5,1.25)(3.8,2)\psline(3.8,1.25)(4,2)
\psline(2.8,1.25)(2.6,.5)\psline(2.6,1.25)(2.4,.5)\psline(2.4,1.25)(2.2,.5)\psline(2.2,1.25)(2,.5)
 \end{pspicture}
   \begin{pspicture}(1.4,.5)(5,2) 
  \psline[linewidth=2pt](2,1.25)(4,1.25) \pscircle*(3,1.25){.13} \rput(1.7,1.2){$-$}
      \rput(3.5,1){$n$} \psline[linewidth=5pt](3.4,1.25)(3.65,1.25)
\psline(3.2,1.25)(3.4,2)\psline(3.4,1.25)(3.6,2)\psline(3.6,1.25)(3.8,2)\psline(3.8,1.25)(4,2)
\psline(2.8,1.25)(2.6,.5)\psline(2.6,1.25)(2.4,.5)\psline(2.4,1.25)(2.2,.5)\psline(2.2,1.25)(2,.5)
\rput(4.1,1.2){$ \left.\begin{matrix}\\ \\ \\ \quad \end{matrix}\right)$} 
 \end{pspicture}
 } \\
  \scalebox{.85}[.85] {
  \begin{pspicture}(.5,.5)(4,2)  
    \psline[linewidth=2pt](2,1.25)(4,1.25) \pscircle*(3,1.25){.13} \rput(1.2,1.2){$+(-1)^{k+1}$}
\psline(3.2,1.25)(3.4,2)\psline(3.4,1.25)(3.6,2)\psline(3.6,1.25)(3.8,2)\psline(3.8,1.25)(4,2)
\psline(3,1.25)(3,.5)\psline(2.6,1.25)(2.4,.5)\psline(2.4,1.25)(2.2,.5)\psline(2.2,1.25)(2,.5)
 \end{pspicture}
   \begin{pspicture}(.3,.5)(4,2)  
  \rput(2.8,1.5){$n$} \psline[linewidth=5pt](3,1.25)(2.75,1.25)
 \psline[linewidth=2pt](2,1.25)(4,1.25) \pscircle*(3,1.25){.13} \rput(1.2,1.2){$-(-1)^{k+1}$}
\psline(3.2,1.25)(3.4,2)\psline(3.4,1.25)(3.6,2)\psline(3.6,1.25)(3.8,2)\psline(3.8,1.25)(4,2)
\psline(2.8,1.25)(2.6,.5)\psline(2.6,1.25)(2.4,.5)\psline(2.4,1.25)(2.2,.5)\psline(2.2,1.25)(2,.5) 
\end{pspicture}
 \begin{pspicture}(-.4,.5)(4,2)  
  \rput(1,1.2){$+\sum_{e_j} (-1)^{j} \left(\begin{matrix}\\ \\ \\ \quad \end{matrix}\right.$}
   \psline[linewidth=2pt](2,1.25)(4,1.25) \pscircle*(3,1.25){.13} 
\psline(3.2,1.25)(3.4,2)\psline(3.4,1.25)(3.6,2)\psline(3.6,1.25)(3.8,2)\psline(3.8,1.25)(4,2)
\psline(2.8,1.25)(2.6,.5)\psline(2.5,1.25)(2.4,.5)\psline(2.5,1.25)(2.2,.5)\psline(2.2,1.25)(2,.5)
 \end{pspicture}
   \begin{pspicture}(1.4,.5)(5,2) 
  \psline[linewidth=2pt](2,1.25)(4,1.25) \pscircle*(3,1.25){.13} \rput(1.7,1.2){$-$}
    \rput(2.5,1.5){$n$} \psline[linewidth=5pt](2.35,1.25)(2.6,1.25)
\psline(3.2,1.25)(3.4,2)\psline(3.4,1.25)(3.6,2)\psline(3.6,1.25)(3.8,2)\psline(3.8,1.25)(4,2)
\psline(2.8,1.25)(2.6,.5)\psline(2.6,1.25)(2.4,.5)\psline(2.4,1.25)(2.2,.5)\psline(2.2,1.25)(2,.5)
\rput(4.1,1.2){$ \left.\begin{matrix}\\ \\ \\ \quad\,\, \end{matrix}\right).$} 
 \end{pspicture}
 }
 \end{multline*}
Again, all of these terms have orientation $\omega_j=(-1)^l\cdot e_1\wedge\dots\wedge\widehat{e_j} \wedge\dots\wedge e_{k+l-1}$. We evaluate $p$ as follows. First,
\[
p\left( (-1) 
 \begin{pspicture}(1.9,1.17)(4.1,2) 
  \psline[linewidth=2pt](2,1.25)(4,1.25) \pscircle*(3,1.25){.13} 
\psline(3,1.25)(3,2)\psline(3.4,1.25)(3.6,2)\psline(3.6,1.25)(3.8,2)\psline(3.8,1.25)(4,2)
\psline(2.8,1.25)(2.6,.5)\psline(2.6,1.25)(2.4,.5)\psline(2.4,1.25)(2.2,.5)\psline(2.2,1.25)(2,.5)
\end{pspicture}
\right) = \sum_{s} I_{k,l}^{1,s} 
\]
and
\[
p\left( 
 \begin{pspicture}(1.9,1.17)(4.1,2) 
  \psline[linewidth=2pt](2,1.25)(4,1.25) \pscircle*(3,1.25){.13} 
    \rput(3.2,1){$n$} \psline[linewidth=5pt](3,1.25)(3.25,1.25)
\psline(3.2,1.25)(3.4,2)\psline(3.4,1.25)(3.6,2)\psline(3.6,1.25)(3.8,2)\psline(3.8,1.25)(4,2)
\psline(2.8,1.25)(2.6,.5)\psline(2.6,1.25)(2.4,.5)\psline(2.4,1.25)(2.2,.5)\psline(2.2,1.25)(2,.5)
\end{pspicture}
\right) = J_{k,l}^{k,0}.
\]
Writing $B_{e_1}=B'\circ_{e_1} B''$ gives $\ost_{B'}=(-1)^l\cdot e_{k+1}\wedge\dots\wedge e_{k+l}$ and $\ost_{B''}=e_{2}\wedge\dots\wedge e_{k}$. This gives
\begin{eqnarray*}
(B_{e_1},\fcan,m_{e_1},\omega_1)&=&(-1)^{l(k-1)}(B',\fcan,m,\ost_{B'})\circ_{2} (B'',\fcan,m,\ost_{B''}).\\
\text{Thus }  p(B_{e_1},\fcan,m_{e_1},\omega_1)&=&(-1)^{l(k-1)} p(B',\fcan,m,\ost_{B'})\circ_{2} p(B'',\fcan,m,\ost_{B''})\\
&=& (-1)^{l(k-1)} (c',\fcan,+1)\circ_{2} (c'',\fcan, +1)\\
&=& (-1)^{l(k-1)+2(k+1+1)+l(k+1)} (c'\circ_{e_1}c'', \fcan, +e_1)\\
&=& (c'\circ_{e_1}c'', \fcan, +e_1).
\end{eqnarray*}
Second,
\[
p\left(  \sum_{j} (-1)^j 
 \begin{pspicture}(1.9,1.17)(4.1,2) 
  \psline[linewidth=2pt](2,1.25)(4,1.25) \pscircle*(3,1.25){.13} 
\psline(3.2,1.25)(3.4,2)\psline(3.5,1.25)(3.6,2)\psline(3.5,1.25)(3.8,2)\psline(3.8,1.25)(4,2)
\psline(2.8,1.25)(2.6,.5)\psline(2.6,1.25)(2.4,.5)\psline(2.4,1.25)(2.2,.5)\psline(2.2,1.25)(2,.5)
 \end{pspicture}
\right) =  \sum_{r\geq 2}\sum_{s} I_{k,l}^{r,s} +\sum_{r} \sum_{s} K_{k,l}^{r,s}
\]
and
\[
p\left( \sum_{j} (-1)^{j+1} 
 \begin{pspicture}(1.9,1.17)(4.1,2) 
  \psline[linewidth=2pt](2,1.25)(4,1.25) \pscircle*(3,1.25){.13} 
      \rput(3.5,1){$n$} \psline[linewidth=5pt](3.4,1.25)(3.65,1.25)  
\psline(3.2,1.25)(3.4,2)\psline(3.4,1.25)(3.6,2)\psline(3.6,1.25)(3.8,2)\psline(3.8,1.25)(4,2)
\psline(2.8,1.25)(2.6,.5)\psline(2.6,1.25)(2.4,.5)\psline(2.4,1.25)(2.2,.5)\psline(2.2,1.25)(2,.5)
 \end{pspicture}
\right) = \sum_{r<k} J_{k,l}^{r,0}. 
\]
Writing $B_{e_j}=B'\circ_{e_j} B''$ gives $\ost_{B'}=(-1)^l \cdot e_1\wedge\dots\wedge e_{j-1}\wedge e_{k+1}\wedge\dots\wedge e_{k+l}$, and $\ost_{B''}=e_{j+1}\wedge\dots\wedge e_{k}$. This gives,
\[(B_{e_j},\fcan,m_{e_j},\omega_j)=(-1)^{l(k-j)}(B',\fcan,m,\ost_{B'})\circ_{j+1} (B'',\fcan,m,\ost_{B''})\]
Thus
\begin{eqnarray*}
(-1)^{j+1} p(B_{e_j},\fcan,m_{e_j},\omega_j)&=&(-1)^{j+1+l(k+j)} \\
&&\quad \cdot p(B',\fcan,m,\ost_{B'})\circ_{j+1} p(B'',\fcan,m,\ost_{B''})\\
&=& (-1)^{j+1+l(k+j)} (c',\fcan,+1)\circ_{j+1} (c'',\fcan, +1)\\
&=& (-1)^{j+1+l(k+j)+(j+1)(k-j+2+1)+(l+j+1)(k-j)}\\
&&\quad \cdot (c'\circ_{e_j}c'', \fcan, +e_j)\\
&=& (c'\circ_{e_j}c'', \fcan, +e_j).
\end{eqnarray*}
Third,
\[
p\left( (-1)^{k+1} 
 \begin{pspicture}(1.9,1.17)(4.1,2) 
  \psline[linewidth=2pt](2,1.25)(4,1.25) \pscircle*(3,1.25){.13} 
\psline(3.2,1.25)(3.4,2)\psline(3.4,1.25)(3.6,2)\psline(3.6,1.25)(3.8,2)\psline(3.8,1.25)(4,2)
\psline(3,1.25)(3,.5)\psline(2.6,1.25)(2.4,.5)\psline(2.4,1.25)(2.2,.5)\psline(2.2,1.25)(2,.5)
\end{pspicture}
\right) = \sum_{r} J_{k,l}^{r,1} 
\]
and
\[
p\left( (-1)^k 
 \begin{pspicture}(1.9,1.17)(4.1,2) 
  \rput(2.8,1.5){$n$} \psline[linewidth=5pt](3,1.25)(2.75,1.25)
 \psline[linewidth=2pt](2,1.25)(4,1.25) \pscircle*(3,1.25){.13} 
\psline(3.2,1.25)(3.4,2)\psline(3.4,1.25)(3.6,2)\psline(3.6,1.25)(3.8,2)\psline(3.8,1.25)(4,2)
\psline(2.8,1.25)(2.6,.5)\psline(2.6,1.25)(2.4,.5)\psline(2.4,1.25)(2.2,.5)\psline(2.2,1.25)(2,.5)  \end{pspicture}
\right) = I_{k,l}^{0,l}.
\]
Writing $B_{e_{k+1}}=\sigma\cdot (B'\circ_{e_{k+1}} B'')$ gives $\ost_{B'}=e_1\wedge\dots\wedge e_{k}$, and $\ost_{B''}=e_{k+2}\wedge\dots\wedge e_{k+l}$, where $\sigma\in S_{k+l+2}$ is the cyclic permutation ``$+(l+1)\text{ (mod }k+l+2$)''. This gives,
\[ (B_{e_{k+1}},\fcan,m_{e_{k+1}},\omega_j)=(-1)^l\cdot \sigma\cdot ((B',\fcan,m,\ost_{B'})\circ_{1} (B'',\fcan,m,\ost_{B''})).\]
Thus
\begin{eqnarray*}
 (-1)^{k} p(B_{e_{k+1}},\fcan,m_{e_{k+1}},\omega_j)&=&(-1)^{k+l} \\
 && \quad \cdot \sigma\cdot (p(B',\fcan,m,\ost_{B'})\circ_{1} p(B'',\fcan,m,\ost_{B''}))\\
&=& (-1)^{k+l} \sigma\cdot ((c',\fcan,+1)\circ_{1} (c'',\fcan, +1))\\
&=& (-1)^{k+l+1(l+1+1)+(k+2)(l-1)} \\
&& \quad \cdot \sigma\cdot (c'\circ_{e_{k+1}}c'', \fcan\circ \sigma^{-1}, +e_{k+1})\\
&=& (c'\circ_{e_{k+1}}c'', \fcan, +e_{k+1}).
\end{eqnarray*}
Fourth,
\[
p\left(\sum_j (-1)^j 
 \begin{pspicture}(1.9,1.17)(4.1,2) 
   \psline[linewidth=2pt](2,1.25)(4,1.25) \pscircle*(3,1.25){.13}
\psline(3.2,1.25)(3.4,2)\psline(3.4,1.25)(3.6,2)\psline(3.6,1.25)(3.8,2)\psline(3.8,1.25)(4,2)
\psline(2.8,1.25)(2.6,.5)\psline(2.5,1.25)(2.4,.5)\psline(2.5,1.25)(2.2,.5)\psline(2.2,1.25)(2,.5)
 \end{pspicture}
\right) = \sum_{r}\sum_{s\geq 2} J_{k,l}^{r,s} +\sum_{r}\sum_{s} L_{k,l}^{r,s} 
\]
and
\[
p\left( \sum_j (-1)^{j+1} 
 \begin{pspicture}(1.9,1.17)(4.1,2) 
\psline[linewidth=2pt](2,1.25)(4,1.25) \pscircle*(3,1.25){.13} 
    \rput(2.5,1.5){$n$} \psline[linewidth=5pt](2.35,1.25)(2.6,1.25)
\psline(3.2,1.25)(3.4,2)\psline(3.4,1.25)(3.6,2)\psline(3.6,1.25)(3.8,2)\psline(3.8,1.25)(4,2)
\psline(2.8,1.25)(2.6,.5)\psline(2.6,1.25)(2.4,.5)\psline(2.4,1.25)(2.2,.5)\psline(2.2,1.25)(2,.5)
 \end{pspicture}
\right) = \sum_{s<l} I_{k,l}^{0,s}.
\]
Writing $B_{e_j}=\sigma\cdot (B'\circ_{e_j} B'')$ gives $\ost_{B'}=(-1)^{j-k-1} \cdot e_1\wedge\dots\wedge e_{j-1}$, and $\ost_{B''}=e_{j+1}\wedge\dots\wedge e_{k+l}$, where $\sigma\in S_{k+l+2}$ is the cyclic permutation ``$+(k+l+1-j)\text{ (mod }k+l+2$)''. This gives,
\[ (B_{e_j},\fcan,m_{e_j},\omega_j)=(-1)^{l+j-k-1}\sigma\cdot ((B',\fcan,m,\ost_{B'})\circ_{1} (B'',\fcan,m,\ost_{B''})).\]
Thus
\begin{eqnarray*}
 (-1)^{j+1} p(B_{e_j},\fcan,m_{e_j},\omega_j)&=&(-1)^{k+l}\\
 && \quad\cdot \sigma\cdot ( p(B',\fcan,m,\ost_{B'})\circ_{1} p(B'',\fcan,m,\ost_{B''}))\\
&=& (-1)^{k+l}\sigma\cdot ((c',\fcan,+1)\circ_{1} (c'',\fcan, +1))\\
&=& (-1)^{k+l+1\cdot (k+l-j+2+1)+(j+1)(k+l-j)}\\
&&\quad \cdot\sigma\cdot (c'\circ_{e_j}c'', \fcan.\sigma^{-1}, +e_j)\\
&=& (-1)^{(j+1)(k+l-j+1)} \sigma\cdot (c'\circ_{e_j}c'', \fcan.\sigma^{-1}, +e_j)\\
&=& (c'\circ_{e_j}c'', \fcan, +e_j).
\end{eqnarray*}

This completes the proof the lemma.
\end{proof}

\bibliographystyle{amsplain}

\end{document}